\providecommand*{\toclevel@titlech}{0} 
\edef\toclevel@authorch{\the\numexpr\toclevel@titlech+1} 
\spnewtheorem{notation}[theorem]{Notation}{\itshape}{\rmfamily}
\spnewtheorem{choice}[theorem]{Choice}{\itshape}{\rmfamily}
\newcommand\sfrac[2]{{#1}/{#2}}
\newcommand\Psfrac[2]{({#1}/{#2})}
\newcommand\loccit{loc.\kern3pt cit.\kern-.5pt\xspace}
\newcommand\cf{see\kern.3em}
\newcommand\Cf{See\kern.3em}
\newcommand\eg{e.g.\kern.3em}
\newcommand\ie{i.e.,\ }
\newcommand\resp{\text{resp.}\kern.3em}
\newcommand\moins{\smallsetminus}
\newcommand\cswh{\widehat}
\newcommand\cswt{\widetilde}
\newcommand\csov{\overline}
\newcommand\csemptyset{\varnothing}
\newcommand{\csring}{A}
\newcommand{\lcr}{[\![}
\newcommand{\rcr}{]\!]}
\newcommand{\lpr}{(\!(}
\newcommand{\rpr}{)\!)}
\newcommand\CC{\mathbb{C}}
\newcommand\Gm{\mathbb{G}_\mathrm{m}}
\newcommand\NN{\mathbb{N}}
\newcommand\PP{\mathbb{P}}
\newcommand\QQ{\mathbb{Q}}
\newcommand\RR{\mathbb{R}}
\newcommand\ZZ{\mathbb{Z}}
\newcommand\bma{\boldsymbol{a}}
\newcommand\bme{\boldsymbol{e}}
\newcommand\bD{\boldsymbol{D}}
\newcommand\bL{\boldsymbol{L}}
\newcommand\bP{\boldsymbol{P}}
\newcommand\bR{\boldsymbol{R}}
\newcommand\cA{\mathcal{A}}
\newcommand\cC{\mathcal{C}}
\newcommand\cD{\mathcal{D}}
\newcommand\cE{\mathcal{E}}
\newcommand\cF{\mathcal{F}}
\newcommand\cG{\mathcal{G}}
\newcommand\cH{\mathcal{H}}
\newcommand\cK{\mathcal{K}}
\newcommand\cL{\mathcal{L}}
\newcommand\cN{\mathcal{N}}
\newcommand\cO{\mathcal{O}}
\newcommand\cS{\mathcal{S}}
\newcommand\cT{\mathcal{T}}
\newcommand{\cHom}{\mathop{\cH\!om}\nolimits}
\newcommand\scL{\mathscr{L}}
\DeclareMathOperator{\Can}{Can}
\DeclareMathOperator{\coker}{Coker}
\DeclareMathOperator{\EMHS}{\mathsf{EMHS}}
\newcommand{\sEMHS}{{\scriptscriptstyle\mathrm{EMHS}}}
\DeclareMathOperator{\id}{Id}
\DeclareMathOperator{\im}{Im}
\DeclareMathOperator{\image}{image}
\DeclareMathOperator{\MHM}{\mathsf{MHM}}
\DeclareMathOperator{\Perv}{Perv}
\DeclareMathOperator{\sPerv}{sPerv}
\DeclareMathOperator{\reel}{Re}
\DeclareMathOperator{\rk}{rk}
\DeclareMathOperator{\Var}{Var}
\newcommand{\sfp}{\mathsf{p}}
\newcommand{\catD}{\mathsf{D}}
\newcommand{\Mod}{\mathsf{Mod}}
\newcommand{\varh}{\mathsf{P}}
\newcommand{\pairing}{\mathsf{Q}}
\newcommand\bun{\boldsymbol{1}}
\newcommand\gS{\mathfrak{S}}
\newcommand{\an}{\mathrm{an}}
\newcommand{\rb}{\mathrm{b}}
\newcommand{\rc}{\mathrm{c}}
\newcommand{\rd}{\mathrm{d}}
\newcommand{\dR}{{\mathrm{dR}}}
\newcommand{\rme}{\mathrm{e}}
\newcommand{\gr}{\mathrm{gr}}
\newcommand{\irr}{\mathrm{irr}}
\newcommand{\rrd}{\mathrm{rd}}
\newcommand{\rmod}{\mathrm{mod}\,}
\newcommand{\rp}{\mathrm{p}}
\newcommand{\coH}{\mathrm{H}}\let\bH\coH
\newcommand{\cswtj}{\cswt\jmath}
\newcommand{\sfi}{\texttt{i}}
\newcommand{\BM}{{\scriptscriptstyle\mathrm{BM}}}
\newcommand{\cc}{{\CC\textup{-c}}}
\newcommand{\tS}{{{}^\mathrm{t}\!S}}
\newcommand{\wAfu}{{\cswt{\mathbb{A}}}^{\!1}}
\newcommand{\Af}[1]{\mathbb{A}^{\!#1}}
\newcommand{\p}[1]{{}^{\scriptscriptstyle\rp}#1}
\newcommand{\hp}[1]{\raisebox{6pt}{$\scriptscriptstyle\rp$}#1}
\newcommand{\pcH}{\p\cH{}}
\newcommand{\psip}{\p\psi}\let\ppsi\psip
\newcommand{\phip}{\p\mkern-1mu\phi}\let\pphi\phip
\newcommand{\pring}{\p\!\csring}
\newcommand{\pCC}{\hp\CC}
\newcommand{\pQQ}{\hp\QQ}
\newcommand{\pR}{\hp\!R}
\newcommand\rond{\mathaccent"7017}
\newcommand{\bbullet}{{\scriptscriptstyle\bullet}}
\newcommand{\cbbullet}{{\raisebox{1pt}{$\bbullet$}}}
\newcommand{\csvee}{{\scriptscriptstyle\vee}}
\newcommand{\csperp}{{\scriptscriptstyle\perp}}
\newcommand{\quand}{\quad\text{and}\quad}
\newcommand{\csto}{\mathchoice{\longrightarrow}{\rightarrow}{\rightarrow}{\rightarrow}}
\newcommand\mto{\mathchoice{\longmapsto}{\mapsto}{\mapsto}{\mapsto}}
\newcommand\hto{\mathrel{\lhook\joinrel\csto}}
\newcommand\To[1]{\mathchoice{\xrightarrow{\textstyle\kern4pt#1\kern3pt}}{\stackrel{#1}{\longrightarrow}}{}{}}
\newcommand{\isom}{\stackrel{\sim}{\longrightarrow}}
\newcommand\csbigoplus{\mathop{\textstyle\bigoplus}\displaylimits}
\newcommand\csbigsqcup{\mathop{\textstyle\bigsqcup}\displaylimits}
\begin{document}

\title{Singularities of Functions: a Global Point of View}
% Use \titlerunning{Short Title} for an abbreviated version of
% your contribution title if the original one is too long
\author{Claude Sabbah}
% Use \authorrunning{Short Title} for an abbreviated version of
% your contribution title if the original one is too long
\institute{Claude Sabbah \at{CMLS, CNRS, École polytechnique, Institut Polytechnique de Paris, 91128 Palaiseau cedex, France}
\email{Claude.Sabbah@polytechnique.edu}}
%
% Use the package "url.sty" to avoid
% problems with special characters
% used in your e-mail or web address
%
\maketitle

%%%%%%% Begining of text

\vspace*{-2\baselineskip}
\section*{Contents}
\setcounter{minitocdepth}{2}
\dominitoc

\abstract*{This text surveys cohomological properties of pairs $(U,f)$ consisting of a smooth complex quasi-projective variety $U$ together with a regular function on~it. On the one hand, one tries to mimic the case of a germe of holomorphic function in its Milnor ball and, on the other hand, one takes advantage of the algebraicity of~$U$ and $f$ to apply technique of algebraic geometry, in particular Hodge theory. The monodromy properties are expressed by means of tools provided by the theory of linear differential equations, by mimicking the Stokes phenomenon. In the case of tame functions on smooth affine varieties, which is an algebraic analogue of that of a holomorphic function with an isolated critical point, the theory simplifies much and the formulation of the results are nicer. Examples of such tame functions are exhibited.}

\abstract{This text surveys cohomological properties of pairs $(U,f)$ consisting of a smooth complex quasi-projective variety $U$ together with a regular function on~it. On the one hand, one tries to mimic the case of a germ of holomorphic function in its Milnor ball and, on the other hand, one takes advantage of the algebraicity of~$U$ and $f$ to apply technique of algebraic geometry, in particular Hodge theory. The monodromy properties are expressed by means of tools provided by the theory of linear differential equations, by mimicking the Stokes phenomenon. In the case of tame functions on smooth affine varieties, which is an algebraic analogue of that of a holomorphic function with an isolated critical point, the theory simplifies much and the formulation of the results are nicer. Examples of such tame functions are exhibited.}

\section{Introduction}
Milnor's theory of singularities of complex hypersurfaces \cite{Milnor68} focuses on the properties of a germ of a holomorphic function $f:(\CC^{d},0)\csto(\CC,0)$ at a critical point $0$, which produces a singularity of the germ of hypersurface $(f^{-1}(0),0)$. One realizes the germ $f$ as a holomorphic function defined on some neighborhood of $0$, usually taken as an open ball (a \index{Milnor!ball}Milnor ball) in some local coordinate system, and the properties considered do not depend on the radius of the ball, provided it is ``sufficiently small'' (a condition that can be made precise in terms of a Thom-Whitney stratification of this neighborhood). See \cite{L-NB-S20} for details. A more global aspect of the theory arises when considering unfoldings of a germ $(f,0)$: having fixed a Milnor ball for a germ $(f,0)$, this ball is usually not a Milnor ball for the nearby functions of the unfolding (see \cite{Ebeling20}). Nevertheless, some ``smallness'' properties remain valid.

In this text, we consider the global setting of a regular function $f:U\csto\Af1$ on a smooth complex quasi-projective variety $U$ of dimension $d$. Particular cases are those for which~$U$ is affine: \eg $U$ is the affine space $\Af{d}$, the complex torus $(\Gm)^{d}$ (where $\Gm$ is the multiplicative group $\Af1\moins\{0\}$), a product of those, or the complement of a hypersurface in these examples, like the complement of an arrangement of hyperplanes in an affine space.

Comparing with Milnor's situation, we can say in a figurative way that the singularity of $f$ that is of interest for us is ``the point at infinity on $U$'' and that its limit Milnor ball is $U$ itself. This approach suggests to analyze the map $f$ ``from inside'', that is, by means of a family of balls (for a suitable metric) of larger and larger radius. Many choices of such metrics exist however. On the other hand, in order to analyze the map $f$ ``from outside'', one can ``fill the hole at infinity'' by choosing a projectivization of $f$, that is, a complex quasi-projective variety~$X$ together with a projective morphism $f_X:X\csto\Af1$, such that $U$ is Zariski dense in~$X$ and~$f$ is the restriction of $f_X$ to $U$. Such a projectivization is also far from unique.

A vast literature is devoted to the analysis of the critical points at infinity and their influence (or the influence of their absence) on the topology of the map $f$, particularly when~$f$ is a polynomial mapping. Furthermore, several examples have highlighted unexpected behaviors. As it is impossible to list all papers related to this subject, we refer the reader to \cite{A-L-M00b,Tibar07} and the references therein.

In the case of an \emph{isolated critical point} of a germ of holomorphic function, the following (co)homological properties have been obtained:
\begin{enumerate}
\item
The reduced cohomology (with $\ZZ$-coefficients) of the Milnor fiber is non zero only in degree $d-1$; it is a free $\ZZ$-module whose rank $\mu$ is the \emph{Milnor number} of $f$ at its critical point; it cannot be zero at a critical point.
\item
This cohomology comes with an automorphism $T$ (monodromy), with an intersection pairing and a Seifert pairing, the latter being nondegenerate. \item
The corresponding homology groups are either the reduced homology of the Milnor fiber in degree $d-1$ (\index{vanishing cycles}vanishing cycles) or the homology of the \index{Milnor!ball}Milnor ball relative to the Milnor fiber (Lefschetz thimbles). The topology that gives rise to these homological objects is well understood, including in its symplectic aspects. The relations between the objects considered above, and others, are efficiently summarized in \cite{Hertling05}.

\item
The cohomology of the Milnor fiber underlies a $\ZZ$-mixed \index{Hodge!mixed -- structure}Hodge structure with \index{filtration!weight --}weight filtration determined by the nilpotent part of the monodromy (\cf \cite{Steenbrink22}). This structure provides us with numerical invariants: the spectrum and the spectral pairs.

\item
These Hodge invariants can also be recovered by means of the motivic Milnor fiber introduced by Denef and Loeser (\cf \cite{D-L01} and also \cite{Steenbrink22}).
\item
A major object that comes into play in the analytic theory for computing Hodge invariants is the \index{lattice!Brieskorn --}\emph{Brieskorn lattice} introduced by Brieskorn \cite{Brieskorn70} in order to obtain an algebraic expression of some of the topological invariants above. There is also a natural nondegenerate pairing on the Brieskorn lattice, with values in the ring of formal power series of one variable, called the higher residue pairing, introduced by K.\,Saito \cite{KSaito83} (\cf \cite{Pham85b} and \cite{Hertling05} for the relations with the topological pairings).
\end{enumerate}

In the case of a regular function $f\in\cO(U)\moins\CC$, having isolated critical points is not a sufficient condition to realize similar properties. One should also impose the absence of critical points at infinity: this is the \emph{tameness property} that we will discuss with details in Section \ref{sec:tameness}. We aim at defining a \index{vanishing cycles}vanishing cycle $\ZZ$-module that shares the properties above. Supplementary data occur, describing the relations between the various isolated critical points of $f$: the monodromy of the ``vanishing cohomology'' is enhanced with a supplementary set of data, called the \emph{Stokes matrices}. Such data already show up in the semi-local situation mentioned at the beginning. The term ``Stokes matrices'' is justified by the fact that such matrices are Stokes matrices of a suitable linear system of differential equations of one variable with an irregular singularity. This was already observed by F.\,Pham, with the title of his paper \cite{Pham85b}: ``La descente des cols par les onglets de Lefschetz avec vues sur Gauss-Manin''.\footnote{``The descent of the passes (saddle points) by Lefschetz thimbles with views on Gauss-Manin.''}

The general case of a possibly non tame regular function (with non-isolated critical points), while not being as simple as the tame case, shows many interesting properties. One cannot expect the absence of $\ZZ$-torsion in the various cohomology groups that occur, so we mainly consider the case of rational or complex coefficients. The exponentially twisted de~Rham complex plays an important role for obtaining Hodge-theoretic properties. Another approach, in the spirit of the motivic arc-space approach of Denef-Loeser, has been developed by Raibaut in \cite{Raibaut12}: the focus is on the motivic Milnor fiber of $f$ at $\infty$ and the various invariants that one can deduce from the computation of this object

In contrast with the case of the germs of holomorphic functions, the case of pairs $(U,f)$ that we are considering has strong relations to several other areas of algebraic geometry and arithmetic that we will very briefly mention in this text. On the one hand, the theory of exponential Nori motives, developed in \cite{F-J17} by Fresán and Jossen, makes it possible, when~$U$ and $f$ are defined over a subfield $\mathbbm{k}$ of $\CC$, to relate Hodge properties of $(U,f)$, in particular such as those defined in Section \ref{subsec:IrrMHS}, to arithmetic properties of $f$.

On the other hand, mirror symmetry for Fano manifolds aims at associating to any smooth quasi-projective Fano manifold a Landau-Ginzburg model, that is realized as a regular function on another quasi-projective manifold. The theory has been developed in \cite{K-K-P14}, where various conjectures have been settled. In the version of this correspondence involving the Frobenius manifolds introduced by Dubrovin (\cite{Dubrovin96}), the role of the \index{Hodge!irregular -- structure}irregular Hodge structure produced by the Brieskorn lattice is essential (\cf for example \cite{D-S02a,D-S02b,Douai07}). There is also a categorical approach to such a correspondence (\cf\cite{K-K-P08}), that has motivated a categorical realization of the twisted de~Rham complex, as proposed by Shklyarov in~\cite{Shklyarov17}.

All over the text, we use the following notation.

\begin{notation}\label{nota}\mbox{}
\begin{enumerate}
\item\label{nota1}
The exponential map identifies $\RR/2\pi\ZZ$ with $S^1$ by $\theta\mto\rme^{\sfi\theta}$ (with $\sfi=\sqrt{-1}$). We~will abuse notation by writing $\theta\in S^1$.
\item\label{nota2}
Let $Z$ be a topological space and $V$ be an open subset of $Z$. For a sheaf~$\cF$ on~$Z$, the notation $\cF_V$ refers to the sheaf on $Z$ obtained by restricting $\cF$ to the open set~$V$ and then extend it by zero to $Z$.
\item
The base ring $\csring$ is either $\ZZ$ or a field, usually $\QQ$ or $\CC$. For most arguments, a commutative ring with finite global homological dimension would suffice (\cf\cite[Conv.\,3.0]{K-S90}).
\end{enumerate}
\end{notation}

\section{Topological/cohomological properties}\label{sec:topprop}
In this text, we consider a regular function $f:U\csto\Af1$ from a connected smooth complex quasi-projective variety $U$ of dimension $d$ to the affine line $\Af1$. This set of data is also denoted by $f\in\cO(U)$. Let us emphasize that, unless otherwise stated, we~work with algebraic varieties endowed with their Zariski topology and, for example, we will denote by~$U^\an$ the underlying complex manifold. Some results below do not need the smoothness assumption on $U$ to hold, but we will not try to state them in the most general setting.

The \index{set!critical --}critical set of $f$ is a Zariski closed subset of $U$ whose image by $f$ is a finite subset $C=C(f)$ of $\Af1$ (by Sard's lemma in the algebraic context), called the \index{set!of critical values}set of critical values of~$f$. A fortiori, the restriction $f:U\moins f^{-1}(C)\csto\Af1\moins C$ does not have any critical point.

A \index{projectivization}\emph{projectivization} of $f$ is a \emph{projective morphism} $f_X$ from a (possibly not smooth) quasi-projective variety $X$ to $\Af1$ such that $U$ is a Zariski dense open subset of $X$ and $f$ is the restriction of $f_X$ to $U$. Assume $U$ is a locally closed subspace of some projective space $\PP^N$. A natural projectivization of $f$ is obtained by considering the Zariski closure $X$ in $\PP^N\times\Af1$ of the graph $\Gamma(f)\subset U\times\Af1$ and by defining $f_X$ as induced by the second projection. By resolving singularities of $X$ (they lie in $X\moins U$), we can choose, if needed, $X$ to be smooth and also $X\moins U$ to be a divisor with normal crossings.

\subsection{The fibration theorem}

\begin{theorem}\label{th:Bf}
There exists a smallest finite set $B(f)\subset\Af1$, called the \index{set!bifurcation --}\emph{bifurcation set of $f$}, such that the map $f^\an:(U\moins f^{-1}(B(f)))^\an\csto\CC\moins B(f)$ is a $C^\infty$ fibration.
\end{theorem}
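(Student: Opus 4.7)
My plan is to combine a smooth projectivization of $f$ with Thom's first isotopy lemma to exhibit \emph{some} finite $B\subset\Af1$ over whose complement $f^\an$ is a $C^\infty$ fibration, and then to derive a smallest such $B$ by an elementary lattice argument. Following the discussion preceding the statement, I choose a projectivization $f_X:X\csto\Af1$ with $X$ smooth and $D:=X\moins U$ a normal crossings divisor, so that $f_X$ is proper. By the classical Whitney stratification theorem for complex algebraic varieties, refined by Verdier's theorem ensuring that the Thom $a_{f_X}$ condition can be arranged in the algebraic setting, $X$ admits a finite complex-algebraic Whitney stratification $\mathcal{S}$ such that: $D$ and $\mathrm{Crit}(f)$ are unions of strata; on each stratum $S\in\mathcal{S}$ the restriction $f_X^\an|_S$ is either constant or a submersion onto an open subset of $\CC$; and the Thom $a_{f_X}$ condition holds along every pair of incident strata. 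Let $B_1\subset\Af1$ be the union of $C(f)$, the images of those strata on which $f_X^\an$ is constant, and the finitely many boundary points in $\CC$ of the images of the remaining strata; this is a finite set. Thom's first isotopy lemma, applied to the proper stratified map $f_X^\an$, then provides a locally trivial stratified $C^\infty$ fibration structure over $\CC\moins B_1$; restricting to the open subset $U\moins f^{-1}(B_1)$, which is a union of top-dimensional open strata since $C(f)\subset B_1$, shows that $f^\an:(U\moins f^{-1}(B_1))^\an\csto\CC\moins B_1$ is a $C^\infty$ fibration.

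Next, let $\mathcal{F}$ denote the family of all finite $B\subset\Af1$ over whose complement $f^\an$ is a $C^\infty$ fibration; by the above, $\mathcal{F}\neq\csemptyset$. I claim $\mathcal{F}$ is stable under pairwise intersection: given $B_0,B_1\in\mathcal{F}$, one has $\CC\moins(B_0\cap B_1)=(\CC\moins B_0)\cup(\CC\moins B_1)$, and on each of the two pieces the restriction of $f^\an$ is already a locally trivial fiber bundle. Local triviality being a local property on the paracompact Hausdorff base, we conclude $B_0\cap B_1\in\mathcal{F}$. Choosing any $B_0\in\mathcal{F}$ of minimal cardinality and intersecting it with an arbitrary $B\in\mathcal{F}$ yields a set in $\mathcal{F}$ of cardinality at most $\#B_0$, hence equal to $B_0$; thus $B_0\subset B$ for every $B\in\mathcal{F}$, and $B(f):=B_0$ is the desired smallest element.

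The substantive obstacle lies entirely in the first paragraph: producing a Whitney stratification of $X$ simultaneously compatible with $f_X$ and with the boundary $D$ and satisfying Thom's $a_{f_X}$ condition, which is what enables the first isotopy lemma to yield a genuine $C^\infty$ fibration rather than only a topological one. Existence of Whitney stratifications refining a given algebraic partition is classical (Whitney--Hironaka), and the Thom condition in the algebraic setting is guaranteed by Verdier's theorem; granted this input, Thom's first isotopy lemma and the minimality argument are routine.
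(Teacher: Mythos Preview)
Your proof is correct and follows the same route as the paper: projectivize, take a Whitney stratification satisfying Thom's $a_{f_X}$ condition, apply the first isotopy lemma to get a fibration over the complement of a finite set, then minimize. The only differences are cosmetic---the paper cites Hironaka (not Verdier) for the Thom-condition stratification and arranges $U\moins f^{-1}(B)$ to be a \emph{single} stratum rather than a union of top-dimensional ones, while your closure-under-intersection argument for minimality makes explicit what the paper dismisses as ``obvious''.
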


\begin{proof}[Sketch]
One chooses a projectivization $f_X:X\csto\Af1$ of $f$. A theorem of \hbox{Hironaka} \cite{Hironaka77} (\cf also \cite[Th.\,6.4.5]{L-NB-S20}) asserts that the underlying complex analytic space~$X^\an$ admits a Whitney stratification satisfying Thom's $A_{f_X}$ condition. In fact, in~the quasi-projective context, the strata can be chosen quasi-projective and connected, so that the $f_X$-image of a stratum is either a point or a Zariski dense open set of~$\Af1$. The construction can be made in order to ensure that $U\moins f^{-1}(B)$ is a stratum, for some finite set $B$ of $\Af1$. Thom's first isotopy lemma implies that the restriction of~$f_X$ to each stratum $S$ induces a $C^\infty$ fibration $S^\an\csto f_X(S)^\an$. Applying this to the stratum $U\moins f^{-1}(B)$ yields the assertion for some (possibly non minimal) finite set~$B$. Minimality can then obviously be achieved.
\end{proof}

The set $B(f)$ contains the set $C=C(f)$ of critical values of $f$ : otherwise, for some $c\in C$, the singular fiber $f^{-1}(c)^\an$ would be a $C^\infty$ manifold; this is not possible, according to \cite[Rem.\,p.\,13]{Milnor68}.

The set $B(f)\moins C$ consists of \index{set!of atypical critical values}\emph{atypical critical values of $f$}. It is in general not empty. For example (\cf \cite[Ex.\,3.4]{A-L-M00b}), the polynomial $f:\Af{3}\csto\Af1$ defined by $f(x,y,z)=x(1+xyz)$ has no critical point, but $B(f)\neq\csemptyset$.

Determining $C$ from $f$ is an algebraic computation. On the other hand, determining $B(f)$ is much harder. A topological criterion has been provided in \cite{H-L84} for $f:\Af3\csto\Af1$ in terms of the topological Euler characteristic of the fibers of $f$. Other results in this direction can be found in \cite{A-L-M00b} and the references therein.

On the other hand, we will focus on the case where $B(f)=C$ and, more precisely, where~$f_X$ has ``no critical point at infinity'', called the \index{tameness}\emph{tameness property}, in~Section~\ref{sec:tameness}.

\subsection{Cohomological tools}\label{subsec:cohomtools}
We aim at defining a space of global \index{vanishing cycles!global --}vanishing cycles for a regular function \hbox{$f:U\csto\Af1$}. For that purpose, we will make use of some of the tools explained in \cite{M-S22} (a good reference is also \cite{Dimca04}, and the reference book is \cite{K-S90}), that we recall here for convenience. \emph{In this section, the varieties are equipped with their analytic topology, and we will omit the exponent $\an$ for the sake of simplicity.}

\subsubsection*{Algebraically constructible complexes}
Our base ring $\csring$ is either $\ZZ$ (a PID) or $\QQ,\RR,\CC$ (a field). Given a complex quasi-projective variety $Y$ (which is possibly singular), an \emph{algebraically \index{complex!algebraically constructible --}constructible sheaf of $\csring$-modules} on $Y$ is a sheaf $\cF$ of $\csring$-modules on~$Y$ for which there exists a stratification of $Y$ by locally closed connected smooth quasi-projective subvarieties $(Y_a)_{a\in\cA}$ such that, for each $a\in\cA$, the sheaf $\cF|_{Y_a}$ is a \index{sheaf!locally constant --}locally constant sheaf of $\csring$-modules of finite type. We implicitly assume (\cf \eg \cite{Trotman20}) that a stratification is locally finite and satisfies the frontier condition. A priori, it is not assumed to satisfy any regularity condition, but one can always refine it to do so. These sheaves form a category for which the morphisms are all morphisms of sheaves (it is a \emph{full} subcategory of the category of sheaves of $\csring$-modules). This category is \emph{abelian}: this statement amounts to the property that the kernel and cokernel of a sheaf morphism between locally constant sheaves of $\csring$-modules of finite type are of the same type.

One can then form the bounded derived category of this abelian category, which is equivalent to the full subcategory $\catD^\rb_\cc(Y,\csring)$ of the bounded derived category of sheaves of $\csring$-modules whose objects consists of bounded complexes having constructible cohomology.

Given a morphism $f:Y\csto Y'$ between quasi-projective varieties, the derived \index{pushforward}pushforward functors $\bR f_*$ and $\bR f_!$ (\index{pushforward!with proper support}pushforward with proper support) are defined from $\catD^\rb_\cc(Y,\csring)$ to $\catD^\rb_\cc(Y',\csring)$ as well as the pullback functors $f^{-1}$ and $f^!$ from $\catD^\rb_\cc(Y',\csring)$ to $\catD^\rb_\cc(Y,\csring)$. \index{duality!Poincaré-Verdier --}Poincaré-Verdier duality $\bD$ is a contravariant equivalence of categories from $\catD^\rb_\cc(Y,\csring)$ to itself, and is its own quasi-inverse, that is, it~satisfies $\bD\circ\bD\simeq\id$.

\subsubsection*{Perverse and strongly perverse complexes}
The category $\catD^\rb_\cc(Y,\csring)$ is naturally equipped with a \index{t-structure}t-structure
\[
(\catD^{\rb,\leqslant0}_\cc(Y,\csring),\catD^{\rb,\geqslant0}_\cc(Y,\csring)).
\]
The heart of this t-structure is the abelian category of \index{sheaf!perverse --}perverse ``sheaves'' $\Perv(Y,\csring)$ (more accurately, \index{complex!perverse --}perverse complexes). For example, if $Y$ is smooth of dimension $n$, the shifted constant sheaf
\[
\pring_Y:=\csring_Y[n]
\]
belongs to $\Perv(Y,\csring)$.

If $\csring$ is a field, both terms of the t-structure are exchanged by Poincaré-Verdier duality~$\bD$, hence $\Perv(Y,\csring)$ is preserved by $\bD$. If $\csring$ is a PID, this property may not hold, and we are led to considering \index{complex!strongly perverse --}\emph{strongly} perverse complexes (\cf\cite[Def.\,10.2.50]{M-S22}). We denote by $\sPerv(Y,\csring)$ the corresponding full subcategory of $\Perv(Y,\csring)$, which is preserved by Poincaré-Verdier duality $\bD$ (it is equal to $\Perv(Y,\csring)$ if $\csring$ is a field). Be careful that it is not abelian, as for example the cokernel of a group morphism $\ZZ^p\csto\ZZ^q$ may acquire $\ZZ$-torsion.

\subsubsection*{Nearby and vanishing cycle functors}
We will make intensive use of these functors. Given a smooth quasi-projective variety $U$, a regular function $f:U\csto\Af1$, and a constructible complex $\cF\in\catD^\rb_\cc(U,\csring)$, one can associate to these data and each point $c\in\Af1$ a pair of objects of $\catD^\rb_\cc(U,\csring)$ supported on the fiber $f^{-1}(c)$, that we denote by $\ppsi_{f-c}(\cF)$ (\index{complex!of nearby cycles}complex of nearby cycles) and $\pphi_{f-c}(\cF)$ (\index{vanishing cycles!functor}\index{complex!of vanishing cycles}complex of vanishing cycles), where the upper index p indicates a shift by $-1$ with respect to the standard definition, see \cite[Def.\,10.4.4]{M-S22}, in order to obtain a better behavior with respect to Poincaré-Verdier duality, see \cite[Th.\,10.4.21]{M-S22}. These functors come equipped with a monodromy automorphism $T_c$ and a diagram of morphisms $(\Can_c,\Var_c)$ commuting with monodromy automorphisms $T_c$:
\begin{equation}\label{eq:canvar}
\xymatrix@C=2.5cm{
\ppsi_{f-c}(\cF)\ar@/^1pc/[r]^-{\Can_c}&\ar@/^1pc/[l]^-{\Var_c}\ar@/^1pc/[l]\pphi_{f-c}(\cF)
}
\end{equation}
and satisfying
\begin{equation}\label{eq:canvarT}
\begin{aligned}
\Var_c\circ\Can_c&=T_c-\id\text{ (on $\ppsi_{f-c}(\cF)$)},\\
\Can_c\circ\Var_c&=T_c-\id\text{ (on $\pphi_{f-c}(\cF)$)}.
\end{aligned}
\end{equation}
For example, the complex of vanishing cycles $\pphi_{f-c}(\pring_U)$ is supported on the critical locus of~$f$.

These functors $\ppsi_{f-c},\pphi_{f-c}$ are t-exact and preserve $\Perv(U,\csring)$ as well as $\sPerv(U,\csring)$ (\cf\cite[Rem.\,10.4.23]{M-S22}). In particular, $\pphi_{f-c}(\pring_U)$ is strongly perverse.

In dimension one, these functors can be used to characterize objects of $\sPerv$ with coefficients in $\ZZ$. For example, let $\Delta$ be a disc with coordinate $t$ and let $\Perv(\Delta,0;\ZZ)$ be the category of \index{sheaf!perverse --}perverse sheaves of $\ZZ$-modules on $\Delta$ which have singularities at $t=0$ at most.

\begin{lemma}\label{lem:sperv}
An object $\cF$ of $\Perv(\Delta,0;\ZZ)$ belongs to $\sPerv(\Delta,0;\ZZ)$ if and only if the $\ZZ$-modules $\ppsi_t(\cF)$ and $\pphi_t(\cF)$ are $\ZZ$-free.
\end{lemma}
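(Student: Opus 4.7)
The plan is to combine the compatibility of $\ppsi_t$ and $\pphi_t$ with Poincaré-Verdier duality~$\bD$ and the fact that, on the germ $(\Delta,0)$, these two functors detect membership in the perverse heart. By definition, $\cF\in\sPerv(\Delta,0;\ZZ)$ amounts to $\cF$ and $\bD\cF$ both belonging to $\Perv(\Delta,0;\ZZ)$; since $\cF\in\Perv$ by assumption, the content of the lemma is to characterize when $\bD\cF$ is again perverse.

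The main computation I would perform is the identification of $\ppsi_t(\bD\cF)$ and $\pphi_t(\bD\cF)$. The standard compatibilities recalled in \cite[\S10.4]{M-S22} provide natural isomorphisms (up to the shifts built into the $\rp$-normalization)
\[
\ppsi_t(\bD\cF)\simeq\bD_{\{0\}}\,\ppsi_t(\cF)=\bR\mathrm{Hom}_\ZZ\!\bigl(\ppsi_t(\cF),\ZZ\bigr),
\]
and similarly with $\pphi_t$ in place of $\ppsi_t$. Since $\ZZ$ has global homological dimension one, for a finitely generated $\ZZ$-module $M$ the complex $\bR\mathrm{Hom}_\ZZ(M,\ZZ)$ is concentrated in degree zero if and only if $M$ is $\ZZ$-free. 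This already gives the ``only if'' direction: if $\bD\cF\in\Perv$, then $\ppsi_t(\bD\cF)$ and $\pphi_t(\bD\cF)$ must be plain $\ZZ$-modules (not genuine two-term complexes), forcing $\ppsi_t(\cF)$ and $\pphi_t(\cF)$ to be $\ZZ$-free.

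For the converse, I would invoke the quiver description of $\Perv(\Delta,0;\ZZ)$, namely its equivalence with the category of quadruples $(\Psi,\Phi,c,v)$ consisting of finitely generated $\ZZ$-modules equipped with morphisms $c:\Psi\to\Phi$, $v:\Phi\to\Psi$ such that $\id+vc$ and $\id+cv$ are invertible (the invertibility is automatic from \eqref{eq:canvarT} via $\Psi=\ppsi_t(\cF)$, $\Phi=\pphi_t(\cF)$, $c=\Can$, $v=\Var$, $T=\id+vc$). If $\Psi$ and $\Phi$ are $\ZZ$-free, then the dual quadruple $(\Psi^{\csvee},\Phi^{\csvee},v^{\csvee},c^{\csvee})$ satisfies the same invertibility condition, hence defines an object of $\Perv(\Delta,0;\ZZ)$; by functoriality of $\bD$ this object must coincide with $\bD\cF$, whence $\bD\cF\in\Perv$.

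The main obstacle, I expect, is the bookkeeping for the displayed isomorphisms with the right perverse shifts and the careful check that passage to $\ZZ$-duals preserves the invertibility condition of the quiver. Both points, while folkloric over a field, require genuine care in the $\ZZ$-coefficient setting — most efficiently handled by tracing through the recollement for $\{0\}\hookrightarrow\Delta\hookleftarrow\Delta\moins\{0\}$ and applying $\bR\mathrm{Hom}_\ZZ(-,\ZZ)$ to each piece, extracting the desired statement from the resulting long exact sequences of $\mathrm{Ext}$.
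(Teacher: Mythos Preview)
Your approach is different from the paper's and mostly sound, but the converse direction as you have written it is circular. You construct the dual quadruple $(\Psi^{\csvee},\Phi^{\csvee},v^{\csvee},c^{\csvee})$, observe it lies in the quiver category, and then assert ``by functoriality of $\bD$ this object must coincide with $\bD\cF$''. But the quiver equivalence is only defined on $\Perv(\Delta,0;\ZZ)$, so you cannot feed $\bD\cF$ into it until you already know $\bD\cF$ is perverse --- which is what you are trying to prove. Your final paragraph correctly senses this and points to the recollement as the fix; that route does work, but it is essentially a different argument from the quiver one.

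The paper avoids this issue entirely by not passing through $\bD$. It uses the characterization of $\sPerv$ from \cite[Prop.\,10.2.49]{M-S22}: membership is equivalent to freeness of the local system $\cF|_{\Delta^*}$ together with freeness of $\cH^0(i_0^!\cF)$. The whole proof then reduces to the four-term exact sequence coming from the triangle $i_0^!\cF\to\pphi_t\cF\xrightarrow{\Var_t}\ppsi_t\cF\to{}$, namely
\[
0\longrightarrow\cH^0(i_0^!\cF)\longrightarrow\pphi_t(\cF)\xrightarrow{\ \Var_t\ }\ppsi_t(\cF)\longrightarrow\cH^1(i_0^!\cF)\longrightarrow0,
\]
from which both directions follow by elementary freeness arguments (a submodule of a free $\ZZ$-module is free; an extension of free by free is free). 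This is shorter and requires neither the duality compatibility of $\ppsi_t,\pphi_t$ nor the quiver description. Your duality argument for the ``only if'' direction is correct and conceptually pleasant, but for the ``if'' direction the paper's route is both cleaner and free of the gap above.
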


\begin{proof}
Let $\cF$ be an object of $\Perv(\Delta,0;\ZZ)$. Assume that $\ppsi_t(\cF)$ and $\pphi_t(\cF)$ are $\ZZ$\nobreakdash-free, and let us prove that $\cF$ is an object of $\sPerv(\Delta,0;\ZZ)$. Freeness of $\ppsi_t(\cF)$ as a $\ZZ$\nobreakdash-mod\-ule is equivalent to the property that the restriction of~$\cF$ to the punctured disc~$\Delta^*$ is a \index{sheaf!locally constant --}locally constant sheaf of free $\ZZ$-modules. According to \cite[Prop.\,10.2.49]{M-S22}, we~are reduced to checking that $\cH^0(i_0^!\cF)$ is free, if~$i_0:\{0\}\hto\Delta$ denotes the inclusion. The distinguished triangle \cite[(10.90)]{M-S22} reads
\[
i_0^!\cF\csto\phip_t(\cF)\To{\Var_t}\psip_t(\cF)\To{+1}
\]
and since $\psip_t(\cF),\phip_t(\cF)$ are concentrated in degree zero, it leads to the long exact sequence
\[
0\csto\cH^0(i_0^!\cF)\csto\phip_t(\cF)\To{\Var_t}\psip_t(\cF)\csto\cH^1(i_0^!\cF)\csto0.
\]
If $\phip_t(\cF)$ is free, then so is $\cH^0(i_0^!\cF)$.

Conversely, let us assume that $\cF$ is an object of $\sPerv(\Delta,0;\ZZ)$. Then the local system (up to a shift) $\cF|_{\Delta^*}$ is free, and this implies freeness of $\psip_t(\cF)$ and therefore that of $\image(\Var_t)$. The assumption implies that $\cH^0(i_0^!\cF)$ is free, and therefore so is $\phip_t(\cF)$.
\end{proof}

\subsubsection*{Compatibility with pushforward}
A technique to obtain cohomological information about the map $f:U\csto\Af1$ is to analyze the pushforward complexes $\bR f_*\csring_U$ (ordinary pushforward) and $\bR f_!\csring_U$ (pushforward with proper support), which are both algebraically constructible, that~is, both are objects of $\catD^\rb_\cc(\csring_{\Af1})$. This is especially useful for analyzing global properties of nearby or \index{vanishing cycles!functor}vanishing cycles of $f$ along a fiber $f^{-1}(c)$. We know (\cf \cite[Prop.\,10.4.19]{M-S22}) that the functors $\psi_{f-c},\phi_{f-c}$ do commute with $\bR f_*,\bR f_!$ when $f$ is \emph{proper} (so that the latter functors are the same), but in general not otherwise. More precisely, let $f_X:X\hto\Af1$ be a projectivization of $f$ and let $j:U\hto X$ denote the open inclusion. Since derived pushforwards compose well, we have (the second isomorphism since $f_X$ is proper)
\[
\bR f_*\csring_U\simeq\bR f_{X*}(\bR j_*\csring_U),\quad \bR f_!\csring_U\simeq\bR f_{X*}(\bR j_!\csring_U).
\]
It follows that, considering vanishing cycles for example, we have
\[
\phi_{t-c}(\bR f_*\csring_U)\simeq \bR f_{X*}(\phi_{f_X-c}(\bR j_*\csring_U))
\]
in a way compatible with monodromy, but the right-hand side is in general distinct from $\bR f_*(\phi_{f-c}\csring_U)\simeq \bR f_{X*}(\bR j_*(\phi_{f-c}\csring_U))$. A special case where this commutation does take place nevertheless if the case of tame functions considered in detail in Section~\ref{sec:tameness}.

On the other hand, in order to exploit t-exactness of the shifted nearby and \index{vanishing cycles!functor}vanishing cycle functors $\ppsi_{f-c},\pphi_{f-c}$, it is suitable to consider the \index{sheaf!perverse cohomology --}perverse cohomology sheaves of the pushforward complexes:
\[
\pR^k\!f_*(\pring_U):=\pcH^k(\bR f_*(\pring_U))\quand\pR^k\!f_!(\pring_U):=\pcH^k(\bR f_!(\pring_U)),
\]
where we recall that $\pring_U=\csring_U[d]$, with $d:=\dim U$.

\begin{proposition}\label{prop:pushphi}
Let $f_X$ be a projectivization of $f$ with $X$. Then, for each $k$, we~have
\[
\pphi_{t-c}(\pR^k\!f_*\csring_U)\simeq \pR^k\!f_{X*}(\pphi_{f_X-c}(\bR j_*\pring_U)),
\]
and similar isomorphisms with the functors $\ppsi$, or $f_!$ with $\bR j_!\pring_U$.
\end{proposition}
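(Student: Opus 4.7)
The plan is to reduce the assertion to two already-cited facts: the compatibility of $\pphi$ and $\ppsi$ with the pushforward by a \emph{proper} morphism (\cite[Prop.\,10.4.19]{M-S22}), and the perverse $t$-exactness of the shifted nearby/vanishing cycle functors (\cite[Rem.\,10.4.23]{M-S22}). The only extra ingredient is the identity $\bR f_{*}\pring_U\simeq \bR f_{X*}(\bR j_{*}\pring_U)$, already recalled in the excerpt, together with its proper-support variant $\bR f_{!}\pring_U\simeq \bR f_{X*}(\bR j_{!}\pring_U)$ (using that $\bR f_{X*}=\bR f_{X!}$ since $f_X$ is proper).

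First I would apply $\pphi_{t-c}$ to both sides of $\bR f_{*}\pring_U\simeq \bR f_{X*}(\bR j_{*}\pring_U)$. Since $f_X$ is proper, \cite[Prop.\,10.4.19]{M-S22} yields a canonical isomorphism
\[
\pphi_{t-c}\bigl(\bR f_{X*}(\bR j_{*}\pring_U)\bigr)\simeq \bR f_{X*}\bigl(\pphi_{f_X-c}(\bR j_{*}\pring_U)\bigr),
\]
compatible with the monodromy $T_c$. Hence $\pphi_{t-c}(\bR f_{*}\pring_U)\simeq \bR f_{X*}\bigl(\pphi_{f_X-c}(\bR j_{*}\pring_U)\bigr)$ in $\catD^{\rb}_{\cc}(\Af1,\csring)$.

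Next I would pass to perverse cohomology in degree~$k$. Because $\pphi_{t-c}$ is $t$-exact for the perverse $t$-structure on $\catD^{\rb}_{\cc}(\Af1,\csring)$, it commutes with $\pcH^k$, so that
\[
\pphi_{t-c}\bigl(\pcH^k(\bR f_{*}\pring_U)\bigr)\simeq \pcH^k\bigl(\pphi_{t-c}(\bR f_{*}\pring_U)\bigr)\simeq \pcH^k\bigl(\bR f_{X*}(\pphi_{f_X-c}(\bR j_{*}\pring_U))\bigr),
\]
the right-hand side being, by definition, $\pR^k\!f_{X*}\bigl(\pphi_{f_X-c}(\bR j_{*}\pring_U)\bigr)$. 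This gives the desired isomorphism for $\pphi$. The analogous statement for $\ppsi$ is obtained by repeating the argument verbatim, replacing $\pphi$ by $\ppsi$ throughout (both functors appear together in \cite[Prop.\,10.4.19]{M-S22} and \cite[Rem.\,10.4.23]{M-S22}). For the $f_!$ version one starts instead from $\bR f_{!}\pring_U\simeq \bR f_{X*}(\bR j_{!}\pring_U)$ and runs the same two steps.

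There is no serious obstacle: the compatibility with proper pushforward and the $t$-exactness are cited black boxes, and the rest is formal manipulation. The only point that would deserve a line of care is the compatibility of the displayed isomorphism with the monodromy automorphism $T_c$ and with the $(\Can_c,\Var_c)$ diagram \eqref{eq:canvar}; this however follows from the fact that all quasi-isomorphisms invoked are the functorial ones provided by \cite[Prop.\,10.4.19]{M-S22}, which are by construction compatible with $T_c$, $\Can_c$ and $\Var_c$.
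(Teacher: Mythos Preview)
Your proof is correct and follows essentially the same route as the paper: use $\bR f_*\pring_U\simeq\bR f_{X*}(\bR j_*\pring_U)$, commute $\pphi_{t-c}$ with the proper pushforward $\bR f_{X*}$, and then take perverse cohomology using the $t$-exactness of $\pphi_{t-c}$. Your additional remark on compatibility with $T_c$, $\Can_c$, $\Var_c$ is a welcome elaboration not spelled out in the paper.
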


\begin{proof}
Since $f_X$ is proper, we have identifications
\[
\pphi_{t-c}(\bR f_*\csring_U)\simeq\pphi_{t-c}(\bR f_{X*}(\bR j_*\csring_U))\simeq\bR f_{X*}(\pphi_{f_X-c}(\bR j_*\csring_U)).
\]
The result follows by taking the $k$-th perverse cohomology of both terms and using that $\pphi_{t-c}$ is t-exact.
\end{proof}

\section{Cohomologies attached to a pair \texorpdfstring{$(U,f)$}{Uf}}
\label{sec:cohUf}
A possible approach to associate with a regular function $f\in\cO(U)$ on a smooth quasi-projective variety $U$ some (co)homological invariants is to extend those existing for~$U$ itself, corresponding to the zero function on $U$.

On the topological side, to $U^\an$ is associated the singular homology and the singular homology with closed supports (Borel-Moore homology), as well as the singular cohomology and the singular cohomology with compact support (all with coefficients in $\csring$). When $\csring=\QQ$, various nondegenerate pairings relate these finite dimensional $\QQ$-vector spaces: intersection of cycles, integral of cohomology classes on cycles and cup product of cohomology classes.

On the de~Rham side, the algebraic de~Rham complex of $U$ leads to de~Rham cohomology and de~Rham cohomology with compact support. Grothendieck's comparison theorem identifies these complex vector spaces with their topological analogues with complex coefficients. If the variety $U$ is defined over $\QQ$ (\ie by means of equations with rational coefficients), the de~Rham complex is also defined over $\QQ$, as well as its cohomologies, so that the comparison isomorphism can be understood as comparing two different $\QQ$-structures on the same $\CC$-vectors space. Its matrix in $\QQ$-bases, usually consisting of transcendental numbers, is called the \emph{period matrix} of $U$ and can be realized by integrating a basis of rational differential forms over a basis of cycles.

On the other hand, the de~Rham cohomology can be filtered by $\CC$-subspaces in order to produce, together with the singular cohomology, a \index{Hodge!mixed -- structure}mixed Hodge structure (\cf \cite{DeligneHII}).

A unifying approach to these properties is obtained via the notion of motive, in~particular that of Nori motive (\cf \cite{H-MS17}).

Our aim in this section is to define similar homology and cohomology spaces, denoted generically by $\coH(U,f)$, with lower or upper decorations meaning ``Borel-Moore'', ``compact support'', ``de~Rham'', together with the various comparison isomorphisms and pairings, and with coefficients in $\csring$ for singular cohomology and~$\CC$ for de~Rham cohomology. A new set of decorations occurs, namely, \emph{rapid decay} and \emph{moderate growth}. We will also equip the de~Rham version with a pair of filtrations called the \emph{irregular Hodge filtration} and the weight filtration. Although this pair of filtrations does not form a mixed Hodge structure in the usual sense, in particular because the irregular Hodge filtration is indexed by rational numbers, this structure can be called an \index{Hodge!irregular -- structure}\emph{irregular mixed Hodge structure}, and the pairs consisting of the jumping indices of both filtrations are reminiscent of the spectral pairs occurring in the theory of isolated hypersurface singularities (\cf\cite[\S9.8]{Steenbrink22} and the reference therein). In~particular, a Thom-Sebastiani property holds in this context: given regular functions $f\in\cO(U)$ and $g\in\cO(V)$, this property relates the invariants of $f\boxplus g\in\cO(U\times V)$ with those of~$f$ and~$g$ (\cf\cite[\S3.4]{Bibi15}).

\subsection{Various expressions of the singular cohomology with growth conditions}\label{subsec:singcoh}

A \index{period}period of~$U$ is any integral $\int_\gamma\omega$, where $\omega$ is an algebraic differential form of some degree on $U$ and $\gamma$ is a cycle of suitable dimension on $U$. Such an integral may exist when~$\gamma$ is a Borel-Moore cycle (\ie a cycle with closed support), and the situation is controlled by the Poincaré-de~Rham duality theorem. The corresponding integral attached to a pair $(U,f)$ would be the integral $\int_\gamma\rme^{-f}\omega$, and, whatever the algebraic differential form $\omega$~is, it exists for Borel-Moore cycles~$\gamma$ as soon as the real part of~$f$ remains positive on the support of~$\gamma$ when $|f|$ is big enough. Indeed, the holomorphic form $\rme^{-f}\omega$ has then rapid decay along $\gamma$ when $|f|\csto\infty$, hence is integrable. These considerations lead us to define the \emph{moderate growth (co)homology} and, for duality purposes, the \emph{rapid decay (co)homology}, of~$(U,f)$. \emph{In the remaining part of Section~\ref{subsec:singcoh}, we consider the spaces with their analytic topology and we omit the exponent~$\an$.}

In other words, if $\omega$ is a closed algebraic $r$-form on $U$, we are led to considering that $\rme^{-f}\omega$ defines a class in the relative cohomology space $\coH^r(U,f^{-1}(t);\CC)$, where~$t$ is a positive real number ``large enough''. More intrinsically, we should consider the space $\lim_{t\csto+\infty}\coH^r(U,f^{-1}([t,+\infty));\CC)$. This space can be defined with coef\-ficients in~$\csring$ instead of~$\CC$, and we will give various different expressions for it, that we denote by $\coH^r(U,f;\csring)$.

Let us emphasize a way to simplify various arguments, which proves much useful. The idea is to simplify the space $U$ by replacing it with $\Af1$. The price to pay is making the sheaf~$\csring_U$ more complicated, by replacing it with a constructible complex of sheaves on~$\Af1$: we will consider the pushforward complexes $\bR f_*\csring_U$ and $\bR f_!\csring_U$, which have algebraic constructible cohomology on~$\Af1$. Let us also emphasize at this point that using \index{sheaf!perverse cohomology --}perverse cohomology sheaves instead of ordinary cohomology sheaves will simplify many arguments. We fix the following choice for $C\subset\Af1$ and $\rho\gg0$.

\begin{choice}\label{choice:Crho}\mbox{}
\begin{itemize}
\item
We fix a projectivization $f_X:X\csto\Af1$ of $f$ such that the complement $H=X\moins U$ is a divisor with normal crossings whose irreducible components are smooth. We~denote by $j:U\hto X$ the open inclusion.
\item
We fix a finite subset $C\subset\Af1$ such that the map $f_X:X\moins f_X^{-1}(C)\csto\Af1\moins C$ is smooth and, for each $x\in H\moins f_X^{-1}(C)$, the germ $f_{X,x}:((X,H),x)\csto(\CC,f(x))$ is isomorphic to the projection
\[
(\CC,f(x))\times ((\CC^{d-1},0),(H',0))\csto(\CC,f(x)),
\]
where $H'$ is a union of coordinate hyperplanes of $\CC^{d-1}$.
\item
We fix $\rho>0$ large enough so that the interior $\rond\Delta_\rho$ of a closed disc $\Delta_\rho$ in~$\Af1$ of radius $\rho>0$ contains $C$.
\end{itemize}
\end{choice}

\subsubsection*{Passage from \texorpdfstring{$U$ to $f^{-1}(\Delta_\rho)$}{U}}
With the previous choice, the complexes $\bR f_*\csring_U$ and $\bR f_!\csring_U$ have \index{sheaf!locally constant --}locally constant cohomology sheaves on $\Af1\moins C$.

\begin{lemma}
For $t=\rho$ and any $r\geqslant0$, the restriction morphism
\[
\coH^r(U,f^{-1}(\rho);\csring)\csto\coH^r(f^{-1}(\Delta_\rho),f^{-1}(\rho);\csring)
\]
is an isomorphism of relative cohomology spaces, and both spaces are independent of~$\rho$ up to natural isomorphisms.
\end{lemma}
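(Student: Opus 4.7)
The plan is to construct a smooth deformation retract of $U^{\an}$ onto $f^{-1}(\Delta_\rho)$ that fixes $f^{-1}(\Delta_\rho)$ pointwise. Since such a retract is the identity on $f^{-1}(\rho)\subset f^{-1}(\Delta_\rho)$, it realizes a homotopy equivalence of pairs $(f^{-1}(\Delta_\rho),f^{-1}(\rho))\simeq(U^{\an},f^{-1}(\rho))$, which yields the claimed isomorphism on relative cohomology.

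The first ingredient is Choice~\ref{choice:Crho}. Combined with Thom's first isotopy lemma, applied to a Whitney stratification of $X^{\an}$ compatible with $U^{\an}$ and with the components of $H^{\an}$ and satisfying the $A_{f_X}$-condition of Hironaka, it implies that $f^{\an}\colon f^{-1}(\CC\moins C)\csto\CC\moins C$ is a locally trivial $C^{\infty}$ fibration; a fortiori so is its restriction over $\CC\moins\rond\Delta_\rho$. The second ingredient is a smooth deformation retract $H\colon[0,1]\times\CC\csto\CC$ of $\CC$ onto $\Delta_\rho$ with $H_t|_{\Delta_\rho}=\id$ for every $t$, generated by a smooth time-dependent vector field $v_t$ that vanishes in a neighborhood of $\Delta_\rho$. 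Choosing a smooth Ehresmann connection for the fibration over $\CC\moins\rond\Delta_\rho$, the horizontal lift of $v_t$ integrates to a smooth family $\tilde H_t$ of diffeomorphisms of $f^{-1}(\CC\moins\rond\Delta_\rho)$ covering $H_t$. Because $v_t$ vanishes in a neighborhood of $\partial\Delta_\rho$, its horizontal lift vanishes on a neighborhood of $f^{-1}(\partial\Delta_\rho)$, so $\tilde H_t$ glues smoothly with the identity on $f^{-1}(\Delta_\rho)$ to produce the required deformation retract of $U^{\an}$.

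For the independence of $\rho$, given $\rho'>\rho$ both satisfying Choice~\ref{choice:Crho}, the real segment $[\rho,\rho']$ lies in $\CC\moins C$. I~would construct a compactly supported smooth isotopy of $\CC$, supported in an annular neighborhood of $[\rho,\rho']$ disjoint from $C$, which carries $(\Delta_\rho,\rho)$ diffeomorphically onto $(\Delta_{\rho'},\rho')$. Lifting via an Ehresmann connection yields a diffeomorphism of pairs $(f^{-1}(\Delta_\rho),f^{-1}(\rho))\isom(f^{-1}(\Delta_{\rho'}),f^{-1}(\rho'))$, inducing the natural isomorphism on relative cohomology; the canonicity comes from the contractibility of the space of such isotopies in that annulus. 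The analogous isotopy extended by the identity on a large disc also moves $(U^{\an},f^{-1}(\rho))$ to $(U^{\an},f^{-1}(\rho'))$, which takes care of the first space as well.

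The main obstacle is ensuring that the lift $\tilde H_t$ is the identity on a neighborhood of $f^{-1}(\partial\Delta_\rho)$, which is required for a smooth gluing with the identity on $f^{-1}(\Delta_\rho)$. This is precisely what Ehresmann integration delivers, since the horizontal lift of a vector field vanishing on a subset $Y\subset\CC\moins\rond\Delta_\rho$ vanishes on the whole of $f^{-1}(Y)$.
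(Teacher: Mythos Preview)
Your argument is essentially correct but takes a different route from the paper. The paper works sheaf-theoretically on the base: by the long exact sequence of the pair it suffices that the restriction $\coH^r(U;\csring)\to\coH^r(f^{-1}(\Delta_\rho);\csring)$ be an isomorphism; pushing forward by~$f$, this becomes the vanishing of the hypercohomology of $\bR j_!j^{-1}(\bR f_*\csring_U)$ (with $j:\Af1\moins\Delta_\rho\hto\Af1$), which follows from local constancy of the cohomology sheaves of $\bR f_*\csring_U$ on $\Af1\moins C$ together with the elementary fact $\coH^r([\rho,+\infty),\{\rho\};\csring)=0$. Your construction of a geometric deformation retract of $U^\an$ onto $f^{-1}(\Delta_\rho)$ is more hands-on and gives the same conclusion; the paper's approach has the advantage of staying within the sheaf-theoretic formalism used throughout the section (pushforward to $\Af1$, constructibility), and independence of~$\rho$ is immediate since both spaces are identified with the cohomology of~$U$ relative to a generic fiber.

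One step in your argument needs care. An arbitrary Ehresmann connection on the non-proper fibration $f|_{f^{-1}(\CC\moins\rond\Delta_\rho)}$ need not have complete horizontal flow, so ``the horizontal lift of $v_t$ integrates to $\tilde H_t$'' is not automatic. The remedy is already implicit in what you wrote earlier: lift $v_t$ instead on the \emph{proper} stratified map $f_X$ using a Thom--Mather controlled vector field (this is precisely what the first isotopy lemma supplies); its flow is complete on $X^\an$ and preserves the open stratum~$U$, so restricting to~$U$ gives the deformation retract you need.
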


\begin{proof}
In view of the exact sequence of relative cohomology, it is enough to show the restriction morphism $\coH^r(U;\csring)\csto\coH^r(f^{-1}(\Delta_\rho);\csring)$ is an isomorphism. By~pushing forward by $f$, we are led to showing a similar result for the hypercohomology of $\bR f_*\csring_U$ on $\Af1$. Let $j:\Af1\moins\Delta_\rho\hto\Af1$ denote the open inclusion. We are led to showing the vanishing of any hypercohomology space of the complex $\bR j_!j^{-1}(\bR f_*\csring_U)$. Since $j^{-1}(\bR f_*\csring_U)$ has \index{sheaf!locally constant --}locally constant cohomology on $\Af1\moins\Delta_\rho$ by our assumption on $\rho$, a simple induction reduces the question to the vanishing, for any $r\in\NN$, of the cohomology \hbox{$\coH^r(\Af1\moins\rond\Delta_\rho;j_!\cL)$} for a locally constant sheaf $\cL$ of $\csring$-modules on $\Af1\moins\Delta_\rho$. By~considering the radial projection $\Af1\moins\rond\Delta_\rho\csto\partial\Delta_\rho$, this follows from the vanishing of the relative cohomology $\coH^r([\rho,+\infty),\{\rho\};\csring)$ for each $r\in\NN$.
\end{proof}

\subsubsection*{Real oriented blow-up (1)}
As $f$ is a $C^\infty$ fibration over $\partial\Delta_\rho$ (Theorem \ref{th:Bf}), we can replace in the above formula the point $\rho$ with the closed half-circle $\rho\rme^{\sfi[-\pi/2,\pi/2]}\subset\partial\Delta_\rho$ and write the right-hand side as $\coH^r(f^{-1}(\Delta_\rho),f^{-1}(\rho\rme^{\sfi[-\pi/2,\pi/2]});\csring)$. We will interpret this relative cohomology as a cohomology with compact support. With the above choice of $X$, we can rewrite this relative cohomology as the relative hypercohomology of the complex $\bR j_*\csring_U$ on~$X$:
\[
\coH^r(f^{-1}(\Delta_\rho),f^{-1}(\rho\rme^{\sfi[-\pi/2,\pi/2]});\csring)\simeq\coH^r(f_X^{-1}(\Delta_\rho),f_X^{-1}(\rho\rme^{\sfi[-\pi/2,\pi/2]});\bR j_*\csring_U).
\]
We can however replace the complex $\bR j_*\csring_U$ by a single sheaf if we consider the \index{real oriented blow-up}\emph{real oriented blowing up} $\varpi:\cswt X(H)\csto X$ of the components of $H$ in $X$, in which polar coordinates are taken with respect to components of $H$: near each point of $H$, in local coordinates of~$X$ adapted to $H$, $\cswt X(H)$ has the corresponding polar coordinates normal to the components of~$H$; for $x$ belonging to exactly $\ell$ components of~$H$, we~have $\varpi^{-1}(x)\simeq(S^1)^\ell$. Denoting by $\cswtj:U\hto\cswt X(H)$ the inclusion, we have
\[
\bR\cswtj_*\csring_U\simeq \csring_{\cswt X(H)},
\]
and thus, setting $\cswt f_X=f_X\circ\varpi$,
\begin{multline*}
\coH^r(f_X^{-1}(\Delta_\rho),f_X^{-1}(\rho\rme^{\sfi[-\pi/2,\pi/2]});\bR j_*\csring_U)\\
\simeq\coH^r(\cswt f_X^{-1}(\Delta_\rho),\cswt f_X^{-1}(\rho\rme^{\sfi[-\pi/2,\pi/2]});\csring_{\cswt X(H)}).
\end{multline*}
\begin{figure}[htb]
\centerline{\includegraphics[scale=.4]{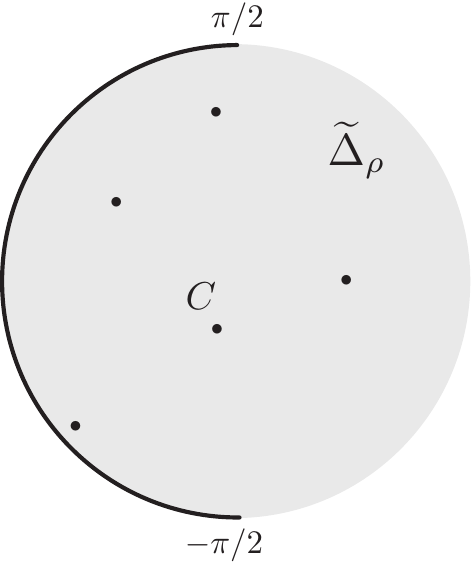}}
\caption{The semi-closed disk $\widetilde\Delta_\rho$}\label{fig:tDeltaR}
\end{figure}

Last, let $\cswt\Delta_\rho$ be the complement in $\Delta_\rho$ of closed interval $[-\pi/2,\pi/2]$ in its boundary like in Figure~\ref{fig:tDeltaR}.
We interpret the latter cohomology as the cohomology with compact support:
\[
\coH^r(\cswt f_X^{-1}(\Delta_\rho),\cswt f_X^{-1}(\rho\rme^{\sfi[-\pi/2,\pi/2]});\csring_{\cswt X(H)})\simeq\coH^r_\rc(\cswt f_X^{-1}(\cswt\Delta_\rho);\csring_{\cswt X(H)}).
\]
On noting the isomorphisms
\[
\bR\cswt f_{X*}\csring_{\cswt X(H)}\simeq\bR\cswt f_{X*}(\bR\cswtj_*\csring_U)\simeq\bR f_*\csring_U,
\]
we interpret this cohomology, denoted from now on as $\coH^r(U,f;\csring)$, as the hypercohomology
\[
\coH^r(U,f;\csring):=\coH^r_\rc(\cswt\Delta_\rho;\bR f_*\csring_U).
\]
It is natural to define the compact support analogue of $\coH^r(U,f;\csring)$ as
\[
\coH^r_\rc(U,f;\csring)=\coH^r_\rc(\cswt\Delta_\rho;\bR f_!\csring_U).
\]
We have thus proved:

\begin{lemma}\label{lem:coHUf}
There are canonical isomorphisms (for $\rho\gg0$)
\[
\coH^r(U,f;\csring)\simeq
\begin{cases}
\coH^r(f^{-1}(\Delta_\rho),f^{-1}(\rho\rme^{\sfi[-\pi/2,\pi/2]});\csring),\\[1pt]\coH^r(f_X^{-1}(\Delta_\rho),f_X^{-1}(\rho\rme^{\sfi[-\pi/2,\pi/2]});\bR j_*\csring_U),\\[3pt]
\coH^r_\rc(\cswt f_X^{-1}(\cswt\Delta_\rho);\csring_{\cswt X(H)}),\\[3pt]
\coH^r_\rc(\cswt\Delta_\rho;\bR f_*\csring_U).
\end{cases}
\]
and
\[
\coH^r_\rc(U,f;\csring)\simeq
\begin{cases}
\coH^r_\rc(f_X^{-1}(\Delta_\rho);\bR j_!\csring_U),\\[3pt]
\coH^r_\rc(\cswt f_X^{-1}(\cswt\Delta_\rho);\cswtj_!\csring_U),\\[3pt]
\coH^r_\rc(\cswt\Delta_\rho;\bR f_!\csring_U).
\end{cases}
\]
\end{lemma}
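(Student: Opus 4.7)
The proof is essentially a concatenation of the four identifications sketched in the discussion preceding the statement; the work is to organize them and verify that each step is really an isomorphism. The plan is to move progressively from the most intuitive expression on $U$ all the way down to a relative cohomology on the arc $\cswt\Delta_\rho\subset\Af1$.

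First I would establish the top row of the moderate-growth list. The previous lemma gives $\coH^r(U,f^{-1}(\rho);\csring)\simeq\coH^r(f^{-1}(\Delta_\rho),f^{-1}(\rho);\csring)$. Then Theorem~\ref{th:Bf}, applied to the annular neighborhood of $\partial\Delta_\rho$ (which lies outside $B(f)$ by Choice~\ref{choice:Crho}), gives that $f^\an$ is a $C^\infty$ fibration there, and the contractible arc $\rho\rme^{\sfi[-\pi/2,\pi/2]}$ deformation retracts onto the point $\rho$. By the homotopy invariance of relative cohomology this identifies the right-hand side with $\coH^r(f^{-1}(\Delta_\rho),f^{-1}(\rho\rme^{\sfi[-\pi/2,\pi/2]});\csring)$, which gives the first case.

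Next I pass to the compactification $X$. Since $f_X$ is proper we have $\bR f_{X*}(\bR j_*\csring_U)\simeq\bR f_*\csring_U$, so applying $\bR f_{X*}$ turns the previous relative cohomology into $\coH^r(f_X^{-1}(\Delta_\rho),f_X^{-1}(\rho\rme^{\sfi[-\pi/2,\pi/2]});\bR j_*\csring_U)$: this is the second case. For the third case I replace $X$ by the real oriented blow-up $\varpi:\cswt X(H)\csto X$ and use the key computation $\bR\cswtj_*\csring_U\simeq\csring_{\cswt X(H)}$, whose verification is local: at a boundary point of $\cswt X(H)$ lying over the intersection of $\ell$ components of $H$, a neighborhood in $\cswt X(H)$ intersected with $U$ is contractible, so the stalks of $\bR\cswtj_*\csring_U$ agree with those of the constant sheaf. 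Pulling back the pair $(\Delta_\rho,\rho\rme^{\sfi[-\pi/2,\pi/2]})$ via $\cswt f_X$ and then reinterpreting the relative cohomology as compactly supported cohomology on the complement of the closed arc in the disc yields
\[
\coH^r\bigl(\cswt f_X^{-1}(\Delta_\rho),\cswt f_X^{-1}(\rho\rme^{\sfi[-\pi/2,\pi/2]});\csring_{\cswt X(H)}\bigr)\simeq\coH^r_\rc\bigl(\cswt f_X^{-1}(\cswt\Delta_\rho);\csring_{\cswt X(H)}\bigr),
\]
which is the third case. The fourth case is then a direct application of $\bR\cswt f_{X*}\csring_{\cswt X(H)}\simeq\bR f_*\csring_U$ and the fact that $\cswt f_X:\cswt f_X^{-1}(\cswt\Delta_\rho)\csto\cswt\Delta_\rho$ is proper, so $\bR\cswt f_{X*}$ and $\bR\cswt f_{X!}$ coincide and one may push the coefficients to the target.

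For the compact-support version $\coH^r_\rc(U,f;\csring)$ the same chain of arguments applies verbatim, with $\bR j_*$ replaced by $\bR j_!$ and $\csring_{\cswt X(H)}$ by $\cswtj_!\csring_U$; properness of $\cswt f_X$ over $\cswt\Delta_\rho$ allows the final push to $\cswt\Delta_\rho$. The main delicate point I expect is the identification of relative cohomology with compactly supported cohomology when passing to the semi-closed disc $\cswt\Delta_\rho$: one needs that the complement of the removed arc in the (compact) preimage is a locally closed subspace to which the standard excision–plus–compact-support long exact sequence applies; the verification that $\cswt f_X^{-1}(\rho\rme^{\sfi[-\pi/2,\pi/2]})$ is a closed subspace of $\cswt f_X^{-1}(\Delta_\rho)$ on whose complement the sheaf is extended by zero is what makes this rewriting rigorous. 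Everything else reduces either to the homotopy invariance of relative cohomology along a retraction furnished by Theorem~\ref{th:Bf} or to the standard compatibility of derived pushforwards under composition along the proper maps $f_X$ and~$\varpi$.
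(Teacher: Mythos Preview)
Your proposal is correct and follows essentially the same route as the paper: the paper's proof is the discussion immediately preceding the lemma (concluding with ``We have thus proved''), and your four steps---replacing the point by the half-circle via the fibration theorem, passing to $X$ with $\bR j_*\csring_U$, lifting to the real blow-up using $\bR\cswtj_*\csring_U\simeq\csring_{\cswt X(H)}$, and pushing forward by the proper map $\cswt f_X$---match it step for step, including the identification of relative cohomology with compactly supported cohomology on $\cswt\Delta_\rho$. Your remark on the local contractibility justification for $\bR\cswtj_*\csring_U\simeq\csring_{\cswt X(H)}$ is a welcome elaboration that the paper leaves implicit.
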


\subsubsection*{Perverse sheaves}
The \index{sheaf!perverse cohomology --}perverse cohomology sheaves $\pR^{r-d}f_*(\pring_U)$ and $\pR^{r-d}f_!(\pring_U)$ of $\bR f_*\csring_U$ and $\bR f_!\csring_U$ are suitable for expressing $\coH^r(U,f;\csring)$ and $\coH^r_\rc(U,f;\csring)$ because of the next proposition.

\begin{proposition}\label{prop:pervexact}
Let $\cF$ be an \index{sheaf!perverse --}$\csring$-perverse sheaf on $\Af1$ with singular set contained in $C\subset\rond\Delta_\rho$. Then $\coH^k_\rc(\cswt\Delta_\rho;\cF)=0$ for $k\neq0$ and the functor $\cF\mto\coH^0_\rc(\cswt\Delta_\rho;\cF)$ from $\Perv(\Af1;\csring)$ to $\Mod(\csring)$ is exact.
\end{proposition}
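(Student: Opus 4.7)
The plan is to deduce the exactness assertion formally from the vanishing assertion, and to prove the vanishing by dévissage of $\cF$ into two elementary building blocks.

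For exactness: given a short exact sequence $0\to\cF'\to\cF\to\cF''\to0$ in $\Perv(\Af1;\csring)$, the long exact sequence associated with $\bR\Gamma_\rc(\cswt\Delta_\rho;-)$, combined with the vanishing $\coH^k_\rc=0$ for $k\neq 0$, collapses to a short exact sequence in degree $0$.

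For the vanishing, let $j:\Af1\moins C\hookrightarrow\Af1$ denote the complementary open immersion; this is affine, so $j_!$ is t-exact for the perverse t-structure. Writing $j^{-1}\cF=\cL[1]$ for a local system $\cL$ on $\Af1\moins C$, the adjunction morphism $\phi:j_!(\cL[1])\to\cF$ restricts to an isomorphism over $\Af1\moins C$, so the perverse kernel and cokernel of $\phi$ are supported on the finite set $C$, hence are finite direct sums of skyscrapers. The two short exact sequences $0\to\ker\phi\to j_!\cL[1]\to\im\phi\to0$ and $0\to\im\phi\to\cF\to\coker\phi\to0$, together with their long exact sequences in $\coH^*_\rc$, reduce the vanishing to two cases: $(\mathrm{a})$ a skyscraper $\cF=i_{c*}M$ for $c\in C$, for which $\coH^0_\rc(\cswt\Delta_\rho;\cF)=M$ and all other $\coH^k_\rc$ vanish, since $c\in\rond\Delta_\rho$ is interior to $\cswt\Delta_\rho$; and $(\mathrm{b})$ $\cF=j_!(\cL[1])$.

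For case $(\mathrm{b})$, I would invoke the closed-open decomposition of the compact disc $\Delta_\rho=I\sqcup\cswt\Delta_\rho$, which yields the long exact sequence
\[
\cdots\to\coH^k_\rc(\cswt\Delta_\rho;j_!\cL[1])\to\coH^k(\Delta_\rho;j_!\cL[1])\to\coH^k(I;\cL[1]|_I)\to\cdots
\]
(the restriction to $I$ is literally $\cL[1]|_I$ because $I\cap C=\csemptyset$). Each outer term concentrates in a single cohomological degree: the arc $I$ is contractible and $\cL|_I$ constant, so $\coH^k(I;\cL[1])=\cL_p$ for $k=-1$ and $0$ otherwise; and the compactness of $\Delta_\rho$ combined with adjunction gives $\coH^k(\Delta_\rho;j_!\cL[1])\simeq\coH^{k+1}_\rc(\Delta_\rho\moins C;\cL)$, which a separate argument shows is concentrated in degree $0$ when $C\neq\csemptyset$ (and in degree $-1$ when $C=\csemptyset$, in which case the connecting map with the $I$-term is an isomorphism). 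Threading this through the long exact sequence forces $\coH^k_\rc(\cswt\Delta_\rho;j_!\cL[1])=0$ for $k\neq 0$.

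The main obstacle is the auxiliary vanishing $\coH^j_\rc(\Delta_\rho\moins C;\cL)=0$ for $j\neq 1$ in the case $C\neq\csemptyset$. The vanishing in degree~$0$ is immediate since $\Delta_\rho\moins C$ is connected and non-compact, so no nonzero section of a local system has compact support. For the vanishing in degree~$2$, I would use the distinguished triangle $j_!\cL\to\bR j_*\cL\to i_{C*}i_C^{-1}\bR j_*\cL\to[+1]$ on $\Delta_\rho$ to obtain the exact sequence
\[
\coH^1(\Delta_\rho\moins C;\cL)\to\textstyle\bigoplus_{c\in C}\cL_{T_c}\to\coH^2_\rc(\Delta_\rho\moins C;\cL)\to 0,
\]
where $T_c$ denotes the local monodromy around $c$; surjectivity of the first (``residue'') map, and hence the desired vanishing, follows from the free-group description of $\pi_1(\Delta_\rho\moins C)$ as generated by loops around the punctures.
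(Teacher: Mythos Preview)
Your argument is correct, but it proceeds quite differently from the paper. The paper does not d\'evisse $\cF$ at all: it instead invokes a geometric vanishing lemma (stated immediately after the proposition) to the effect that for any perverse sheaf $\cF$ and any subset $B\subset\Af1$ homeomorphic to a half-open box $[a,b)\times[c,d]$ with $C\cap\partial B=\csemptyset$, one has $\coH^k_\rc(B;\cF)=0$ for $k\neq0$. Since $\cswt\Delta_\rho$ is such a box, the proposition follows at once; the underlying input is simply the vanishing of $\coH^\cbbullet_\rc([a,b);\csring)$. The paper's route is shorter and yields a reusable lemma that is invoked repeatedly later (for the filtration by vertical strips in the ``global vanishing cycles'' discussion, and again in the construction of the Stokes filtration). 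Your d\'evissage route is more hands-on and self-contained---it avoids having to recognize the box structure and instead reduces to an explicit group-cohomology computation for the free group $\pi_1(\Delta_\rho\moins C)$---but it does not produce that auxiliary lemma, so if you continue reading you will want to absorb the paper's formulation as well.
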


The vanishing statement is obtained as a consequence of the following vanishing lemma, which relies on the property already used that, for any semi-closed non-empty interval $[a,b)\subset\RR$ and any abelian group $\csring$, the cohomology $\coH^\cbbullet_\rc([a,b);\csring)$ vanishes (this cohomology is nothing but the relative cohomology $\coH^\cbbullet([a,b],\{b\};\csring)$). This follows \eg from a cohomology analogue of \cite[Lem.\,V.3.3]{Massey67}. The exactness statement follows by considering the hypercohomology long exact sequence associated to a short exact sequence of perverse sheaves.

\begin{lemma}\label{lem:vanishing}
Let $\cF$ be a perverse sheaf on $\Af1$ with singular points in $C$ and let $B\subset\Af1$ be a subset homeomorphic to a product $[a,b)\times[c,d]$, such that $C\cap\partial B=\csemptyset$. Then
\begin{enumerate}
\item
$\coH^k_\rc(B,\cF)=0$ for $k\neq0$,
\item
$\coH^0_\rc(B,\cF)=0$ if $C\cap B=\csemptyset$.\hfill\hbox{\rlap{$\sqcap$}$\sqcup$}
\end{enumerate}
\end{lemma}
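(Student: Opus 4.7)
The key one-dimensional input is the ``cohomology analogue of Massey's lemma'' in the form: for any locally constant sheaf $\cL$ of finite-type $\csring$-modules on the semi-closed interval $[a,b)$, one has $\coH^\cbbullet_\rc([a,b); \cL) = 0$; indeed $[a,b)$ is contractible so $\cL$ is constant, and $\coH^\cbbullet_\rc([a,b); \csring) \simeq \coH^\cbbullet([a,b], \{b\}; \csring) = 0$. The strategy is to reduce the computation on $B$ to this one-dimensional vanishing by projecting onto the compact factor.

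I would use the second projection $\pi \colon B \to [c,d]$. By the Leray spectral sequence for $\bR\pi_!$, it suffices to analyze the constructible complex $\bR\pi_!\cF$ on $[c,d]$; its stalk at $s$ is $(\bR\pi_!\cF)_s \simeq \bR\Gamma_\rc([a,b)\times\{s\}; \cF|_{[a,b)\times\{s\}})$. Call $s$ \emph{generic} if the horizontal segment $[a,b)\times\{s\}$ avoids $C$, and \emph{special} otherwise. The hypothesis $C \cap \partial B = \csemptyset$ forces the set of special values to be finite and contained in $(c,d)$ (in particular, $s = c$ and $s = d$ are generic). At a generic $s$, $\cF|_{[a,b)\times\{s\}}$ is a shifted locally constant sheaf on a contractible segment, hence constant, and the one-dimensional vanishing yields $(\bR\pi_!\cF)_s = 0$.

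At a special $s$ with $C$-points $q_1 < \cdots < q_k \in (a,b)$ on the segment, I would apply the excision triangle $j_! j^{-1}\cG \csto \cG \csto i_* i^{-1}\cG \To{+1}$, where $\cG = \cF|_{[a,b)\times\{s\}}$, $i$ is the inclusion of $\{q_1, \ldots, q_k\}$, and $j$ its open complement. The closed-stratum term yields $\bigoplus_\nu \cF_{q_\nu}$, concentrated in degrees $-1, 0$ by perversity on $\Af1$. The open complement decomposes as $[a, q_1) \sqcup (q_1, q_2) \sqcup \cdots \sqcup (q_k, b)$; on each component $\cF$ restricts to a shifted constant sheaf of some common rank $r$; the semi-closed piece $[a, q_1)$ contributes zero by the one-dimensional vanishing, while each of the $k$ open intervals contributes $\csring^r$ in degree $0$ (since $\coH^\cbbullet_\rc(\RR; \csring) \simeq \csring[-1]$). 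The long exact sequence then forces $\coH^1((\bR\pi_!\cF)_s) = 0$ automatically and identifies $\coH^{-1}((\bR\pi_!\cF)_s)$ with the kernel of a specialization map $\bigoplus_\nu \cH^{-1}(\cF)_{q_\nu} \csto \bigoplus_{\ell = 1}^k \csring^r$. This map is injective: each individual specialization $\cH^{-1}(\cF)_{q_\nu} \hto \csring^r$ into an adjacent interval is injective because $\cH^{-1}(\cF)$ has no point-supported subsheaf (a consequence of the cosupport condition of perversity), and a triangular cancellation starting from the rightmost interval $(q_k, b)$, which receives contributions only from $\cH^{-1}(\cF)_{q_k}$, propagates leftward to yield the full injectivity. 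Hence $(\bR\pi_!\cF)_s$ is concentrated in degree $0$.

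Combining, $\bR\pi_!\cF$ is a sheaf in degree $0$ on $[c,d]$ supported on finitely many points of $(c,d)$, so $\coH^k_\rc([c,d]; \bR\pi_!\cF) = 0$ for $k \neq 0$, which proves (i). Under the extra hypothesis of (ii), the set of special values is empty, so $\bR\pi_!\cF = 0$ and every compactly supported cohomology vanishes. The principal obstacle is the degree-zero concentration at special $s$: this is exactly where the perverse hypothesis on $\cF$ is essential, and is what makes the argument work uniformly over any PID base ring $\csring$.
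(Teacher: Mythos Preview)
The paper does not actually prove this lemma: it is stated with a \textsc{qed} symbol immediately after the enunciation, and the only indication given beforehand is the key one-dimensional input $\coH^\cbbullet_\rc([a,b);\csring)=0$, with a pointer to a cohomology analogue of a lemma in Massey. Your argument is correct and supplies the missing details; the reduction via the projection $\pi:B\to[c,d]$ together with the stalk-by-stalk analysis of $\bR\pi_!\cF$ is precisely the kind of argument the paper's hint is gesturing at.

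One small remark on the degree-zero concentration at a special fiber $s$. Your triangular cancellation is fine, but you can avoid identifying the connecting map explicitly by using instead the triangle $\cH^{-1}(\cF)[1]\to\cF\to\cH^0(\cF)$ on $\Af1$ and applying $\bR\Gamma_\rc$ of the segment. The $\cH^0$-piece is a skyscraper, contributing only in degree $0$. For $\cS:=\cH^{-1}(\cF)$ one needs $\coH^i_\rc([a,b)\times\{s\};\cS)=0$ for $i\neq1$: vanishing for $i\geq2$ is by real dimension, and $\coH^0_\rc=\Gamma_\rc$ vanishes by the same right-to-left propagation you describe, now phrased directly for sections of $\cS$ (a compactly supported section is zero near $b$, hence on $(q_k,b)$, hence at $q_k$ by your one-sided injectivity, etc.). Note that this one-sided injectivity genuinely uses that $\Af1$ is the \emph{complex} line: the stalk $\cS_{q_\nu}$ injects into the monodromy invariants of the surrounding local system on a punctured disc, which in turn injects into any single nearby fiber. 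You invoke this correctly via the cosupport condition; on a real line ``no point-supported sections'' alone would only give injectivity into the product of both adjacent stalks.
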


Let us use the perverse shift convention: $\pring_U:=\csring_U[d]$ with $d=\dim U$. As~a consequence, applying the previous result to each \index{sheaf!perverse cohomology --}perverse cohomology sheaf $\pR^{r-d}f_*(\pring_U)$ and $\pR^{r-d}f_!(\pring_U)$, and due to the degeneration of the perverse Leray spectral sequence, we obtain:

\begin{corollary}\label{cor:Ufpush}
We have natural isomorphisms
\begin{align*}
\coH^r(U,f;\csring)&\simeq \coH^0_\rc(\cswt\Delta_\rho;\pR^{r-d}f_*(\pring_U)),\\
\coH^r_\rc(U,f;\csring)&\simeq \coH^0_\rc(\cswt\Delta_\rho;\pR^{r-d}f_!(\pring_U)).
\end{align*}
\end{corollary}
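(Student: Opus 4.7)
The plan is to reduce the corollary to Proposition \ref{prop:pervexact} via the perverse Leray spectral sequence. Let me set up the argument for the first isomorphism; the second follows identically with $\bR f_!$ replacing $\bR f_*$.

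First, I would rewrite the left-hand side using the last formula of Lemma \ref{lem:coHUf} together with the shift convention $\pring_U = \csring_U[d]$:
\[
\coH^r(U,f;\csring) = \coH^r_\rc(\cswt\Delta_\rho;\bR f_*\csring_U) = \coH^{r-d}_\rc(\cswt\Delta_\rho;\bR f_*\pring_U).
\]
Next, for the complex $K := \bR f_*\pring_U \in \catD^\rb_\cc(\Af1,\csring)$, the standard spectral sequence associated with the perverse t-structure reads
\[
E_2^{p,q} = \coH^p_\rc(\cswt\Delta_\rho;\pcH^q(K)) \Longrightarrow \coH^{p+q}_\rc(\cswt\Delta_\rho;K),
\]
where $\pcH^q(K) = \pR^q f_*(\pring_U)$ by definition.

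The key input is then Proposition \ref{prop:pervexact}: it applies provided the perverse sheaves $\pR^q f_*(\pring_U)$ have their singular set contained in $C \subset \rond\Delta_\rho$. I would verify this from Choice \ref{choice:Crho}: over $\Af1 \moins C$, the map $f_X$ is smooth and along $H = X \moins U$ the pair $(X,H)$ is locally a product of a coordinate-hyperplane situation in the fiber direction with $(\Af1, \csemptyset)$ in the base direction. Hence $\bR f_{X*}(\bR j_* \pring_U) \simeq \bR f_*\pring_U$ has locally constant $\csring$-constructible cohomology sheaves on $\Af1 \moins C$, and therefore so does each of its perverse cohomology sheaves. This is exactly the hypothesis required by Proposition \ref{prop:pervexact}, which then asserts $E_2^{p,q} = 0$ for $p \neq 0$.

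Consequently the spectral sequence degenerates at $E_2$ and yields an edge isomorphism
\[
\coH^0_\rc(\cswt\Delta_\rho;\pR^q f_*(\pring_U)) \isom \coH^q_\rc(\cswt\Delta_\rho;\bR f_*\pring_U).
\]
Taking $q = r-d$ produces the claimed natural isomorphism for $\coH^r(U,f;\csring)$. The argument for $\coH^r_\rc(U,f;\csring)$ is formally identical: one uses $\bR f_!\csring_U$ in place of $\bR f_*\csring_U$ and the parallel expression of $\coH^r_\rc(U,f;\csring)$ from Lemma \ref{lem:coHUf}.

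The main obstacle I anticipate is the verification of the hypothesis on the singular set of $\pR^q f_*(\pring_U)$; once that local-triviality statement over $\Af1 \moins C$ is in hand, Proposition \ref{prop:pervexact} does all the work and the degeneration is immediate. The shift bookkeeping between $\csring_U$ and $\pring_U$ (accounting for the $r-d$ on the right-hand side) is routine.
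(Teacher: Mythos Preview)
Your proof is correct and follows the same approach as the paper: use Proposition \ref{prop:pervexact} to kill the $E_2^{p,q}$ terms of the perverse Leray spectral sequence for $p\neq0$, forcing degeneration and yielding the edge isomorphism. The paper states this in one line (``due to the degeneration of the perverse Leray spectral sequence''), taking the local constancy of $\bR f_*\csring_U$ and $\bR f_!\csring_U$ on $\Af1\moins C$ as already established right after Choice \ref{choice:Crho}, whereas you spell out both that verification and the spectral-sequence bookkeeping explicitly.
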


\subsubsection*{Real oriented blow-up (2)}
We now give expressions which are independent of $\rho\gg0$, but depend on the choice of a projectivization of $f$. As~these notions depend on the direction of reaching $f=\infty$, we are led to considering a full projectivization $\csov f:\csov X\csto\PP^1$ of $f$, and it is easier for future computations to assume that the complement $D=\csov X\moins U$ is a divisor with normal crossings whose irreducible components are smooth. We decompose $D=H\cup P$, with $P=\csov f{}^{-1}(\infty)$ (the support of the pole divisor of~$f$) and $H$ is the union of the remaining components. Note that $D\cap X=H\cap X$. In such a case, we~say that the full projectivization $\csov f:\csov X\csto\PP^1$ of~$f$ is \emph{good}.

In order to distinguish between directions when reaching the pole divisor~$P$, it~is convenient to consider the \index{real oriented blow-up}\emph{real oriented blow-up} of $\csov X$ along the components of~$P$. However, in order to treat the components $P$ and $H$ of $D$ on an equal footing, and to better understand Poincaré-Verdier duality, it is convenient to work with the real oriented blowing-up $\varpi:\cswt X=\cswt X(D)\csto \csov X$ of \emph{all irreducible components} of~$D$, in which polar coordinates are taken with respect to all components of~$D$: near each point of~$D$, in local coordinates of $\csov X$ adapted to $D$, $\cswt X$ has the corresponding polar coordinates normal to the components of~$D$; for $x$ belonging to exactly $\ell$ components of~$D$, we have $\varpi^{-1}(x)\simeq(S^1)^\ell$. Over $\Af1$, $\cswt X(D)$ restricts to $\cswt X(H\cap X)$ already considered above. This is summarized in the following diagram:
\begin{equation}\label{eq:diagrealblowup}
\begin{array}{c}
\xymatrix@C=1cm@R=.5cm{
&&&&\cswt X\ar@<-.5ex>[lld]_(.3)\varpi\ar[dd]^{\cswt f}|(.25)\hole&\ar@{_{ (}->}[l]\cswt P\ar[lld]\ar[dd]\\
U\ar[dd]_f\ar@{^{ (}->}[r]^-j& X\ar[dd]^{f_X}\ar@{^{ (}->}[r]^-\kappa&\csov X\ar[dd]^{\csov f}&\ar@{_{ (}->}[l]P\ar[dd] \\
&&&&\cswt\PP^1\ar@<-.5ex>[lld]_(.3)\varpi|(.5)\hole&\ar@{_{ (}->}[l]\partial\cswt\PP^1\ar[lld]\\
\Af1\ar@{=}[r]&\Af1\ar@{^{ (}->}[r]&\PP^1&\ar@{_{ (}->}[l]\infty
}
\end{array}
\end{equation}
where
\begin{itemize}
\item
all rectangles are Cartesian except, in general, both involving $(\varpi,\csov f,\cswt f,\varpi)$;
\item
$\cswt\PP^1$ is the real oriented blow-up of $\PP^1$ at infinity; it is homeomorphic to a closed disc with boundary $\partial\cswt\PP^1=S^1$ (directions at infinity);
\item
$\cswt X$ is a manifold with corners and $\varpi^{-1}(D)=:\partial\cswt X$ is its boundary; it~contains $\cswt P=\varpi^{-1}(P)=\cswt f^{-1}(\partial\cswt\PP^1)$ as a closed subset.
\end{itemize}
Let us make explicit the map $\cswt f$ near a point $\cswt x$ of $P$. There exist local coordinates $(x_1,\dots,x_d)$ of $\csov X$ near $x=\varpi(\cswt x)$ in which $D=\{x_1\cdots x_\ell=0\}$ and $f(x_1,\dots,x_d)=x_1^{-m_1}\cdots x_\ell^{-m_\ell}$, $m_j\geqslant0$. The corresponding coordinates are $\bigl((\rho_j,\rme^{\sfi\theta_j})_{j=1,\dots,\ell},x_{\ell+1},\dots,x_d\bigr)$, with $\varpi$ sending $(\rho_j,\rme^{\sfi\theta_j})$ to $x_j=\rho_j\rme^{\sfi\theta_j}$, and
\[
\cswt f((\rho_j,\rme^{\sfi\theta_j})_{j=1,\dots,\ell},x_{\ell+1},\dots,x_d)=\Bigl(\prod_{j=0}^\ell\rho_j^{-m_j}\Bigr)\cdot\exp\bigl(-\textstyle\sum_{j=0}^\ell m_j\theta_j\bigr).
\]

The dichotomy compact support\,/\,no support condition is now replaced with the dichotomy \index{rapid decay}rapid decay\,/\,\index{moderate growth}moderate growth. Let $\partial_\rmod\cswt\PP^1\subset S^1$ be the \emph{open} half-circle in the neighborhood of which which $\reel t$ is $>0$ (\ie $\rme^{-t}$ has moderate growth, equivalently, rapid decay), and let $\partial_{\exp}\cswt\PP^1$ be the complementary \emph{closed} half-circle (\index{exponential growth}exponential growth of $\rme^{-t}$). Let also $\partial_{\exp}\cswt X$ denote the pullback $\cswt f^{-1}(\partial_{\exp}\cswt\PP^1)$, which is a closed subset of $\cswt P$. We then consider the following \emph{open subsets} of the boundary~$\partial\cswt X$:
\begin{itemize}
\item
$\partial_\rmod\cswt X=\partial\cswt X\moins\partial_{\exp}\cswt X$ is the open subset of $\partial\cswt X$ in the neighborhood of which $\rme^{-f}$ has moderate growth (it contains $\varpi^{-1}(D\moins P)$),
\item
$\partial_\rrd\cswt X=\partial\cswt X\moins(\varpi^{-1}(H)\cup\partial_{\exp}\cswt X)$ is the open subset of $\partial\cswt X$ in the neighborhood of which $\rme^{-f}$ has rapid decay; it is contained in $\varpi^{-1}(P)$ and is also equal to $\cswt P\moins\partial_{\exp}\cswt X$; it also consists of points of $\cswt P$ in the neighborhood of which $\rme^{-f}$ has rapid decay, equivalently moderate growth.
\end{itemize}

\subsubsection*{Singular cohomology with growth conditions of the pair \texorpdfstring{$(U,f)$}{Uf}}
We consider the open subsets of $\cswt X$:
\[
\cswt U_\rmod=U\cup\partial_\rmod\cswt X\quand\cswt U_\rrd=U\cup\partial_\rrd\cswt X.
\]

\begin{definition}[{\cf\cite[(A.19)]{F-S-Y20b}}]\label{def:coHUf}
The singular cohomologies in degree $r$ of the pair $(U,f)$ are defined as singular cohomology spaces with compact support:
\begin{align*}
\coH^r(U,f;\csring):=\coH^r_\rc(\cswt U_\rmod;\csring)\quand\coH^r_{\rc}(U,f;\csring):=\coH^r_\rc(\cswt U_\rrd;\csring).
\end{align*}
\end{definition}

Since $\cswt X\moins\cswt U_\rmod=\partial_{\exp}\cswt X$, we can also interpret $\coH^r(U,f;\csring)$ as the relative cohomology space $\coH^r(\cswt X,\partial_{\exp}\cswt X;\csring)$. Similarly, we have
\[
\coH^r_{\rc}(U,f;\csring)\simeq \coH^r(\cswt X,\partial_{\exp}\cswt X\cup\varpi^{-1}(H);\csring).
\]

\begin{proposition}\label{prop:coHUf}
The expressions of $\coH^r(U,f;\csring)$ and $\coH^r_{\rc}(U,f;\csring)$ of Definition \ref{def:coHUf} are respectively naturally isomorphic to those of Lemma \ref{lem:coHUf}.
\end{proposition}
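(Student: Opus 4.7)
The plan is to use the proper map $\cswt f\colon\cswt X\csto\cswt\PP^1$ to push both expressions to compactly supported cohomologies on $\cswt\PP^1$, and then to identify them via a deformation retract compatible with the sheaf structure, using the local constancy of $\bR f_*\csring_U$ outside~$\Delta_\rho$.

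First I would apply proper base change. Since $\cswt U_\rmod=\cswt f^{-1}(\widetilde V_\rmod)$ with $\widetilde V_\rmod:=\cswt\PP^1\moins\partial_{\exp}\cswt\PP^1$ open in $\cswt\PP^1$, properness of $\cswt f$ yields
\[
\coH^r_\rc(\cswt U_\rmod;\csring)\simeq\coH^r_\rc\bigl(\widetilde V_\rmod;\bR\cswt f_*\csring_{\cswt X}|_{\widetilde V_\rmod}\bigr).
\]
The identification $\bR\cswtj_*\csring_U\simeq\csring_{\cswt X}$, which follows from contractibility of the fibers of the real oriented blow-up $\varpi$ (as already used just before Lemma~\ref{lem:coHUf}), combined with $\cswt f\circ\cswtj=\iota\circ f$ for $\iota\colon\Af1\hto\cswt\PP^1$ the open inclusion, gives $\bR\cswt f_*\csring_{\cswt X}\simeq\bR\iota_*\cF$, with $\cF:=\bR f_*\csring_U$. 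Hence
\[
\coH^r_\rc(\cswt U_\rmod;\csring)\simeq\coH^r_\rc(\widetilde V_\rmod;\bR\iota_*\cF).
\]
The rapid decay variant is treated analogously by replacing $\bR\cswtj_*\csring_U$ with $\cswtj_!\csring_U$, accounting for the closed subset $\varpi^{-1}(H)\subset\cswt U_\rmod$ that has been removed when passing to $\cswt U_\rrd=\cswt U_\rmod\moins\varpi^{-1}(H)$, and produces an analogous reduction to a compactly supported cohomology on $\cswt\PP^1$ with coefficients in $\iota_!\bR f_!\csring_U$.

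It then remains to compare $\coH^r_\rc(\widetilde V_\rmod;\bR\iota_*\cF)$ with the fourth expression of Lemma~\ref{lem:coHUf}, namely $\coH^r_\rc(\cswt\Delta_\rho;\cF)$. The essential input, furnished by Theorem~\ref{th:Bf} together with Choice~\ref{choice:Crho} ($C\subset\rond\Delta_\rho$), is that $\cF$ has locally constant cohomology on the annular region $\Af1\moins\Delta_\rho$. A radial deformation that contracts $\cswt\PP^1\moins\rond\Delta_\rho$ onto $\partial\Delta_\rho$ (sending the direction $\theta\in\partial\cswt\PP^1$ to $\rho\rme^{\sfi\theta}\in\partial\Delta_\rho$) is then compatible with $\bR\iota_*\cF$ because of this local constancy, and transforms the open arc $\partial_\rmod\cswt\PP^1$ into the corresponding open half-arc of $\partial\Delta_\rho$. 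Applying Lemma~\ref{lem:vanishing} to the semi-closed radial annular strips swept out by the retract shows that the auxiliary contributions vanish, leaving the desired natural identification with $\coH^r_\rc(\cswt\Delta_\rho;\cF)$, hence with the other equivalent expressions of Lemma~\ref{lem:coHUf}.

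The delicate step is the last one: since $\cswt\Delta_\rho$ is neither open nor closed in $\widetilde V_\rmod$, the comparison cannot rely on a direct open/closed distinguished triangle. Instead, the deformation retract must be engineered so as to be sheaf-theoretically compatible with the extension by $\bR\iota_*$ across $\partial_\rmod\cswt\PP^1$; the local constancy of $\cF$ on $\{|t|\geqslant\rho\}$ is precisely what permits such a compatible retract to exist and what ensures that the half-arc of $\partial\Delta_\rho$ produced by the retract is canonically matched with the one adjoined in the definition of $\cswt\Delta_\rho$.
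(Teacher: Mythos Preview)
Your argument for the moderate–growth case is essentially the paper's: reduce to $\wAfu_\rmod$ by proper pushforward along $\cswt f$, identify the coefficient complex with $\bR\alpha_*\bR f_*\csring_U$, and then compare $\wAfu_\rmod$ with $\cswt\Delta_\rho$. The paper packages the last step as Corollary~\ref{cor:pervexact} (whose proof is exactly your retraction statement, stated for any constructible complex) and then filters by perverse degree to match Corollary~\ref{cor:Ufpush}; you work directly with the full complex, which is fine.

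The rapid–decay case, however, has a genuine gap. You write that replacing $\bR\cswtj_*\csring_U$ by $\cswtj_!\csring_U$ ``produces an analogous reduction to a compactly supported cohomology on $\cswt\PP^1$ with coefficients in $\iota_!\bR f_!\csring_U$''. Taken literally this is false: $\coH^r_\rc(\cswt\PP^1;\iota_!\bR f_!\csring_U)=\coH^r_\rc(\Af1;\bR f_!\csring_U)=\coH^r_\rc(U;\csring)$, which is not $\coH^r_\rc(U,f;\csring)$. The error is upstream: the sheaf you must push forward is $\csring_{\cswt U_\rrd}$, and this is \emph{not} $\cswtj_!\csring_U$. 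They agree over $\Af1$ (where $\cswt U_\rrd\cap\cswt f^{-1}(\Af1)=U$), but over $\partial_\rmod\cswt\PP^1$ the fibre $\cswt f^{-1}(\theta)$ lies entirely in $\partial_\rrd\cswt X$, so $\csring_{\cswt U_\rrd}$ restricts to the full constant sheaf there, whereas $\cswtj_!\csring_U$ restricts to zero.

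This observation is exactly what makes the correct identification work: because $\csring_{\cswt U_\rrd}$ is constant along the boundary fibres, the proper pushforward $\bR\cswt f_*(\csring_{\cswt U_\rrd})$ restricted to $\wAfu_\rmod$ is $\bR\alpha_*(\bR f_!\csring_U)$, not $\alpha_!(\bR f_!\csring_U)$. Once you have $\bR\alpha_*$ on both sides, your retraction argument (equivalently, the isomorphism in the proof of Corollary~\ref{cor:pervexact}) applies verbatim and yields $\coH^r_\rc(\cswt\Delta_\rho;\bR f_!\csring_U)$ as required. So the fix is to argue that the adjunction map $\csring_{\cswt U_\rrd}\to\bR\beta_*\beta^{-1}\csring_{\cswt U_\rrd}$ (with $\beta$ the open inclusion of the part over $\Af1$) becomes an isomorphism after $\bR\cswt f_*$, using that over each $\theta\in\partial_\rmod\cswt\PP^1$ the fibre of $\cswt U_\rrd$ coincides with the full fibre of $\cswt f$.
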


This shows in particular that the expressions of Definition \ref{def:coHUf} do not depend, up to natural isomorphisms, of the projectivization $\csov f$ of $f$. We first note that the analogue of Proposition \ref{prop:pervexact} holds, when setting $\wAfu_\rmod=\cswt\PP^1\moins \partial_{\exp}\cswt\PP^1$ and denoting by~$\alpha$ the open inclusion $\Af1\hto\wAfu_\rmod$:

\begin{corollary}\label{cor:pervexact}
Let $\cF$ be an $\csring$-perverse sheaf on $\Af1$. Then $\coH^k_\rc(\wAfu_\rmod;\cF)=0$ for $k\neq0$ and the functor $\cF\mto\coH^0_\rc(\wAfu_\rmod;\bR\alpha_*\cF)$ from $\Perv(\Af1;\csring)$ to $\Mod(\csring)$ is exact.
\end{corollary}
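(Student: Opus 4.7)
My plan is to mimic the proof of Proposition~\ref{prop:pervexact}, decomposing $\wAfu_\rmod$ into pieces to which an infinity-aware analogue of Lemma~\ref{lem:vanishing} applies. Fix $\rho\gg0$ with $C\subset\rond\Delta_\rho$. Inside $\Delta_\rho$, the same covering by semi-open rectangles used in the proof of Proposition~\ref{prop:pervexact} is available and produces the same contribution. Near the open arc at infinity $\partial_\rmod\cswt\PP^1$, I would work in real-oriented-blow-up coordinates $(r,\theta)$ with $r=1/|t|\in[0,1/\rho]$ and cover a neighborhood of $\partial_\rmod\cswt\PP^1$ in $\wAfu_\rmod$ by semi-open rectangles of the form $[0,\epsilon)\times[\theta_1,\theta_2]$, where $[\theta_1,\theta_2]$ is a short closed subarc of $\partial_\rmod\cswt\PP^1$; here the factor $[0,\epsilon)$ is closed at $r=0$ (which lies in $\partial_\rmod\cswt\PP^1\subset\wAfu_\rmod$) and open at $r=\epsilon$.

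Since $C\subset\rond\Delta_\rho$, the complex $\cF$ restricted to $\{|t|>\rho\}$ is, after the perverse shift, a locally constant sheaf, and correspondingly $\bR\alpha_*\cF$ restricted to $\{|t|>\rho\}\cup\partial_\rmod\cswt\PP^1$ is locally constant. On each infinity rectangle, the analogue of Lemma~\ref{lem:vanishing}(ii) — combining the vanishing $\coH^\bbullet_\rc([0,\epsilon);\csring)=0$ of the semi-open interval with the local constancy in the $\theta$ direction — then gives $\coH^k_\rc(\text{rectangle};\bR\alpha_*\cF)=0$ in all degrees. Assembling this infinity vanishing with the contributions from the rectangles inside $\Delta_\rho$ via the long exact sequences of successive open/closed decompositions exactly as in the proof of Proposition~\ref{prop:pervexact}, I obtain $\coH^k_\rc(\wAfu_\rmod;\bR\alpha_*\cF)=0$ for $k\neq0$. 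The exactness of the functor $\cF\mapsto\coH^0_\rc(\wAfu_\rmod;\bR\alpha_*\cF)$ then follows, exactly as in the closing paragraph of that proof, from the long exact hypercohomology sequence attached to a short exact sequence in $\Perv(\Af1;\csring)$, which collapses thanks to the vanishing just established.

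The main obstacle is making rigorous the claim that $\bR\alpha_*\cF$ behaves on $\partial_\rmod\cswt\PP^1$ as a genuine locally constant sheaf (after the perverse shift), so that the infinity-rectangle analogue of Lemma~\ref{lem:vanishing} genuinely applies. Concretely, for $\theta\in\partial_\rmod\cswt\PP^1$, one must verify that the stalk $(\bR\alpha_*\cF)_\theta$ coincides with the generic stalk of the local system $\cF|_{\{|t|>\rho\}}$, with no spurious higher cohomology. This is exactly where the openness of $\partial_\rmod\cswt\PP^1$ (its two endpoints lie in $\partial_{\exp}\cswt\PP^1$, hence outside $\wAfu_\rmod$) plays the decisive role: a small neighborhood of any such $\theta$ intersected with $\Af1$ is a contractible ``fan'' sector on which the local system trivializes.
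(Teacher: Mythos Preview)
Your approach is sound and rests on the same idea as the paper's, but the paper's execution is much more direct. Rather than redoing the rectangle decomposition on $\wAfu_\rmod$, the paper simply observes that for any constructible complex $\cF$ on $\Af1$ the inclusion $\cswt\Delta_\rho\subset\wAfu_\rmod$ induces an isomorphism
\[
\coH^k_\rc(\wAfu_\rmod;\bR\alpha_*\cF)\simeq\coH^k_\rc(\cswt\Delta_\rho;\cF|_{\cswt\Delta_\rho})\qquad(\forall k\in\ZZ),
\]
and then invokes Proposition~\ref{prop:pervexact} directly. This single isomorphism packages in one step exactly what your ``infinity rectangles'' establish piece by piece: the region $\wAfu_\rmod\moins\cswt\Delta_\rho$ contributes nothing to $\coH^\cbbullet_\rc$ because $\bR\alpha_*\cF$ is (a shift of) a local system there and the region carries a semi-open interval factor.

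One small gap in your covering: rectangles $[0,\epsilon)\times[\theta_1,\theta_2]$ with $[\theta_1,\theta_2]$ a \emph{closed} subarc of the \emph{open} arc $\partial_\rmod\cswt\PP^1=(-\pi/2,\pi/2)$ cannot exhaust $\wAfu_\rmod\moins\cswt\Delta_\rho$; the annular piece $\{|t|\geqslant\rho,\ \arg t\in[\pi/2,3\pi/2]\}$ and neighbourhoods of the corners $\arg t=\pm\pi/2$ are left out. These leftover pieces obey the same principle---for those arguments the point at infinity lies in $\partial_{\exp}\cswt\PP^1$, hence outside $\wAfu_\rmod$, so the relevant rectangles are of the form $(0,1/\rho]\times[\theta_1,\theta_2]$ and again have a semi-open factor---but once you see this, it is cleaner to treat the whole complement $\wAfu_\rmod\moins\cswt\Delta_\rho$ at once, which is precisely the paper's move.
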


\begin{proof}
It is done by noticing that, for any constructible complex~$\cF$ on $\Af1$, the inclusion $\cswt\Delta_\rho\subset\wAfu_\rmod$ induces an isomorphism
\[
\coH^k_\rc(\wAfu_\rmod;\bR\alpha_*\cF)\simeq\coH^k_\rc(\cswt\Delta_\rho;\cF|_{\cswt\Delta_\rho})\quad\forall k\in\ZZ.
\]
\end{proof}

\begin{proof}[of Proposition \ref{prop:coHUf}]
We will consider the perverse $(r-d)$-cohomology of the pushforward complex $\bR f_*\csring_U$ and $\bR f_!\csring_U$. At this point, using perverse cohomology sheaves instead of ordinary cohomology sheaves will simplify many arguments.

From Corollary \ref{cor:pervexact} we deduce the expressions
\begin{equation}\label{eq:Ufpush}
\begin{aligned}
\coH^r(U,f;\csring)&\simeq \coH^0_\rc(\wAfu_\rmod;\bR\alpha_*\pR^{r-d}f_*(\pring_U)),\\
\coH^r_\rc(U,f;\csring)&\simeq \coH^0_\rc(\wAfu_\rmod;\bR\alpha_*\pR^{r-d}f_!(\pring_U)),
\end{aligned}
\end{equation}
which can be identified with those of Corollary \ref{cor:Ufpush}, proving thereby Proposition~\ref{prop:coHUf}.
\end{proof}

\subsubsection*{Global vanishing cycles}
We interpret $\coH^r(U,f;\csring)$ and $\coH^r_\rc(U,f;\csring)$ as $\csring$-modules of \index{vanishing cycles!global --}\emph{global vanishing cycles for $f$}. If we interpret $U$ as the global \index{Milnor!ball}Milnor ball and the cohomology of the fiber $f^{-1}(t)$ for $t\gg0$ as the nearby cohomology at $f=\infty$, the exact sequence
\[\let\csto\rightarrow
\cdots\csto\coH^{r-1}(U;\csring)\csto\coH^{r-1}(f^{-1}(t);\csring)\csto\coH^r(U,f^{-1}(t);\csring)\csto\coH^r(U;\csring)\csto\cdots
\]
can be interpreted as coming from the canonical morphism from nearby cycles at infinity to \index{vanishing cycles!at infinity}vanishing cycles at infinity. This interpretation is furthermore justified by the next proposition. Let us enumerate the ordered set $\reel(C)$ of real parts of the elements of $C$ in increasing order as $a_1<a_2<\cdots<a_\ell$.

\begin{proposition}\label{prop:HUfphifiltration}
Both $\coH^r(U,f;\csring)$ and $\coH^r_{\rc}(U,f;\csring)$ have a natural increasing filtration indexed by $\{a_1,\dots,a_\ell\}$ such that, for each $i\in\{1,\dots,\ell\}$, we have
\begin{align*}
\gr_{a_i}\coH^r(U,f;\csring)&\simeq\csbigoplus_{\substack{c\in C\\\reel(c)=a_i}}\bH^{r-1}\bigl(f_X^{-1}(c),\phi_{f_X-c}(\bR j_*\csring_U)\bigr),
\\
\gr_{a_i}\coH^r_\rc(U,f;\csring)&\simeq\csbigoplus_{\substack{c\in C\\\reel(c)=a_i}}\bH^{r-1}\bigl(f_X^{-1}(c),\phi_{f_X-c}(\bR j_!\csring_U)\bigr).
\end{align*}
\end{proposition}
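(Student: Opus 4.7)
The plan is to establish the filtration for $\coH^r(U,f;\csring)$ and remark that substituting $\bR j_!$ (hence $\bR f_!$) for $\bR j_*$ (hence $\bR f_*$) yields verbatim the compact-support version. By Corollary~\ref{cor:Ufpush}, it suffices to equip $\coH^0_\rc(\cswt\Delta_\rho;\cF_r)$, with $\cF_r:=\pR^{r-d}f_*(\pring_U)$ perverse on $\Af1$, with an increasing filtration whose graded pieces are $\csbigoplus_{\reel(c)=a_i}\pphi_{t-c}(\cF_r)$. The stated form of the graded piece then follows at once from Proposition~\ref{prop:pushphi} together with the shift conventions $\pring_U=\csring_U[d]$ and $\pphi=\phi[-1]$, which supply the identifications $\pphi_{t-c}(\cF_r)\simeq\bH^{r-d}(f_X^{-1}(c);\pphi_{f_X-c}(\bR j_*\pring_U))\simeq\bH^{r-1}(f_X^{-1}(c);\phi_{f_X-c}(\bR j_*\csring_U))$.

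To construct the filtration, I would sweep $\cswt\Delta_\rho$ from left to right by real part. Pick reals $b_0<a_1<b_1<\cdots<a_\ell<b_\ell$ with $b_0<-\rho$ and $b_\ell>\rho$, and define
\[
W_i:=\cswt\Delta_\rho\cap\{\reel(t)<b_i\},\qquad T_i:=W_i\setminus W_{i-1}.
\]
After straightening the arc on $\partial\cswt\Delta_\rho$, each $W_i$ and each strip $T_i$ is of the semi-closed product type $[a,b)\times[c,d]$ to which Lemma~\ref{lem:vanishing}(1) applies, and $T_i$ contains precisely the critical values of $\cF_r$ with real part $a_i$. The open--closed long exact sequence attached to $W_{i-1}\subset W_i\supset T_i$ therefore collapses to the short exact sequence
\[
0\csto\coH^0_\rc(W_{i-1};\cF_r)\csto\coH^0_\rc(W_i;\cF_r)\csto\coH^0_\rc(T_i;\cF_r|_{T_i})\csto0,
\]
and setting $F_{a_i}\coH^r(U,f;\csring):=\image\bigl(\coH^0_\rc(W_i;\cF_r)\hookrightarrow\coH^0_\rc(\cswt\Delta_\rho;\cF_r)\bigr)$ produces an increasing filtration indexed by $\{a_1,\dots,a_\ell\}$ with $F_{a_i}/F_{a_{i-1}}\simeq\coH^0_\rc(T_i;\cF_r|_{T_i})$.

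To finish, I would localize this last group over the critical values in $T_i$ by choosing disjoint semi-closed boxes $R_c\subset T_i$, one around each $c$ with $\reel(c)=a_i$, each of product type and containing $c$ in its interior as sole singularity of $\cF_r|_{R_c}$. Lemma~\ref{lem:vanishing}(2) kills the compactly supported cohomology of $T_i\setminus\csbigsqcup_c R_c$ in every degree, so additivity on the disjoint family $\{R_c\}$ yields $\coH^0_\rc(T_i;\cF_r|_{T_i})\simeq\csbigoplus_c\coH^0_\rc(R_c;\cF_r|_{R_c})$. The identification $\coH^0_\rc(R_c;\cF_r|_{R_c})\simeq\pphi_{t-c}(\cF_r)$---the purely local step---is extracted from the distinguished triangle $i_c^!\cF_r\csto\pphi_{t-c}(\cF_r)\To{\Var_c}\ppsi_{t-c}(\cF_r)\To{+1}$ of~\cite[(10.90)]{M-S22}, combined with a direct computation of the compactly supported cohomology of the generic local system on the punctured semi-closed box $R_c\setminus\{c\}$.

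The principal obstacle is precisely this last local identification: one must run the long exact sequence attached to the above triangle and match its terms against the compactly supported cohomology of $R_c\setminus\{c\}$ with monodromy coefficients, so as to see that the nearby contribution $\ppsi_{t-c}(\cF_r)$ is cancelled and only the vanishing part $\pphi_{t-c}(\cF_r)$ survives. The test cases $\cF_r=(i_c)_*\csring$ and $\cF_r=(j_c)_!\csring_{\Af1\setminus\{c\}}[1]$ can be checked explicitly to confirm the formula, and the general case then follows by an extension argument inside the abelian category $\Perv(\Af1;\csring)$.
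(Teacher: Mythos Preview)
Your strategy is essentially the paper's: reduce via Corollary~\ref{cor:Ufpush} to a perverse sheaf $\cF$ on $\Af1$, filter $\coH^0_\rc(\cswt\Delta_\rho;\cF)$ by sweeping real parts, use Lemma~\ref{lem:vanishing} to collapse the long exact sequences to short ones, and finish with Proposition~\ref{prop:pushphi}. The paper slices at the values $a_i$ themselves (half-planes $\reel<a_i$ and $\reel\geqslant a_i$) and disposes of multiple $c$ with the same real part by tilting to $\reel+\varepsilon\im$, whereas you insert separators $b_i$ and then localize with boxes $R_c$; these are equivalent bookkeeping choices.

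The one place your argument is less efficient is the local identification $\coH^0_\rc(R_c;\cF)\simeq\pphi_{t-c}\cF$, which you flag as the principal obstacle and attack via the triangle~\cite[(10.90)]{M-S22} involving $i_c^!$ together with a computation on the punctured box. The paper's route is shorter: replace $R_c$ by a small \emph{closed} disc $\Delta_c$ with the open/closed decomposition $\Delta_c=\cswt\Delta_c\sqcup\delta_c$ into the semi-closed disc and the deleted closed boundary arc. Since $\Delta_c$ is contractible to $c$ one has $\coH^\ast(\Delta_c;\cF)=\cH^\ast(i_c^{-1}\cF)$, and since $\delta_c$ is a contractible arc avoiding $c$ one has $\coH^\ast(\delta_c;\cF)=\ppsi_{t-c}\cF$ (in the appropriate degree). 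The long exact sequence of this decomposition is then \emph{literally} the long exact sequence of the triangle $\ppsi_{t-c}\cF\to\pphi_{t-c}\cF\to i_c^{-1}\cF[-1]$ of~\cite[(10.87)]{M-S22}, and t-exactness of $\pphi$ gives the identification in one line---no puncturing, no $i_c^!$, no test cases, no extension argument needed.
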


\begin{proof}
Let us start with a perverse sheaf $\cF$ on $\Af1$ with singularities on~$C$. We~will prove that the cohomology space $\bH^0_\rc(\cswt\Delta_\rho;\cF)$ has a natural increasing filtration indexed by $\{a_1,\dots,a_\ell\}$ such that
\begin{equation}\label{eq:graiH0}
\gr_{a_i}\bH^0_\rc(\cswt\Delta_\rho;\cF)\simeq\phip_{c_i}\cF\quad\forall i=1,\dots,\ell.
\end{equation}
For the sake of simplicity, we assume that each $a_i$ is the real part of a unique $c_i\in C$ (otherwise, one replaces the map $\reel:C\csto\RR$ by the map $\reel+\varepsilon\im$ with $\varepsilon>0$ small). Let us decompose $\cswt\Delta_\rho$ as $\cswt\Delta_{\rho,<a_i}\sqcup\cswt\Delta_{\rho,\geqslant a_i}$, denoting thereby the intersection of~$\cswt\Delta_\rho$ with the open half-plane $\reel(t)<a_i$, respectively closed half-plane \hbox{$\reel(t)\geqslant a_i$}. These subsets have the same topology as $\cswt\Delta_\rho$, and we have a long exact sequence of cohomology with compact support
\begin{equation}\label{eq:H0exact}
\cdots\csto\bH^0_\rc(\cswt\Delta_{\rho,<a_i};\cF)\csto\bH^0_\rc(\cswt\Delta_\rho;\cF)\csto\bH^0_\rc(\cswt\Delta_{\rho,\geqslant a_i};\cF)\csto\cdots
\end{equation}
We first claim that $\bH^k_\rc(\cswt\Delta_\rho;\cF)=0$ for $k\neq0$. Indeed, from this exact sequence and by induction on $\#C$, it is enough to prove this when $\#C=1$. In such a case, if $\cswt\Delta_c$ is a small disc centered at $c\in C$ with a closed half-circle $\delta_c$ deleted on its boundary, we have $\bH^k_\rc(\cswt\Delta_c;\cF)\simeq\bH^k_\rc(\cswt\Delta_\rho;\cF)$. The cohomology long exact sequence associated with the open/closed decomposition $\Delta_c=\cswt\Delta_c\sqcup\delta_c$ is then identified with that associated with the shifted distinguished triangle of \cite[(10.87)]{M-S22}
\[
\ppsi_c\cF\csto\phip_c\cF\csto i_c^{-1}\cF[-1]
\]
and one concludes with the property that $\pphi_c\cF$ is concentrated in degree zero since it is perverse on a point $c$ (because the functor $\pphi_c$ is t-exact, \cf Rem.\,10.4.23 in \loccit).

It follows that \eqref{eq:H0exact} is a short exact sequence and thus $\bH^0_\rc(\cswt\Delta_{\rho,<a_{i+1}};\cF)\csto\bH^0_\rc(\cswt\Delta_\rho;\cF)$ is an inclusion, defining a filtration by the formula $\bH^0_\rc(\cswt\Delta_\rho;\cF)_{\leqslant a_i}:=\bH^0_\rc(\cswt\Delta_{\rho,<a_{i+1}};\cF)$. The similar exact sequence with $\cswt\Delta_\rho$ replaced with $\cswt\Delta_{\rho,<a_{i+1}}$ shows that $\gr_{a_i}\bH^0_\rc(\cswt\Delta_\rho;\cF)$ is identified with $\bH^0_\rc(\cswt\Delta_{\rho, [a_i,a_{i+1})};\cF)$. The latter space is isomorphic to $\bH^0_\rc(\cswt\Delta_{c_i};\cF)$, that we have already identified with $\phip_{c_i}\cF$. This concludes the proof of \eqref{eq:graiH0}.

Let us now come back to the statement of the proposition. The formulas of Corollary \ref{cor:Ufpush} lead us to consider the perverse cohomology sheaves $\cF=\pR^{r-d}f_*(\pring_U)$ and $\cF=\pR^{r-d}f_!(\pring_U)$, and to apply the previous result to them. However, we are faced to the problem of making the functor of \index{vanishing cycles!functor}vanishing cycles commute with pushforward by $f$ when $f$ is not proper. This question is solved by Proposition \ref{prop:pushphi}.
\end{proof}

\subsubsection*{The Poincaré pairing}
We now fix $\csring=\QQ$. In order to define the \index{pairing!Poincaré --}Poincaré pairing between the $\QQ$-vector spaces $\coH^r(U,f;\QQ)$ and $\coH^r_\rc(U,f;\QQ)$, we make precise that $\cswt U_\rmod$ is relative to~$f$, so~we denote it by $\cswt U_\rmod(f)$, and similarly $\cswt U_\rrd(f)$. Considering both~$f$ and $-f$, and thus growth properties of~$\rme^{-f}$ and~$\rme^f$, one checks that
\[
\cswt U_\rmod(f)\cap \cswt U_\rrd(-f)=U.
\]
Assume that $U$ is connected for simplicity. It follows that we have natural pairings for $r\in\{0,\dots,2d\}$:
\begin{equation}\label{eq:dualHUf}
\coH^r(U,f;\csring)\otimes \coH^{2d-r}_{\rc}(U,-f;\csring)\csto\coH^{2d}_\rc(U;\csring)\simeq \csring.
\end{equation}

\begin{proposition}[Poincaré-Verdier duality]\label{prop:PDUf}
If $\csring$ is a field, \eg $\csring=\QQ$, then the above pairings are nondegenerate.
\end{proposition}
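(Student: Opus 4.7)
The plan is to reinterpret both cohomology groups as hypercohomology on the compact real-analytic manifold-with-corners $\cswt X$ of natural ``extension by zero'' sheaves, construct the pairing by cup product, and derive non-degeneracy from Poincar\'e--Verdier duality on $\cswt X$.

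First I will set $\cF:=j_{\rmod,!}\csring_{\cswt U_\rmod(f)}$ and $\cG:=j_{\rrd,!}\csring_{\cswt U_\rrd(-f)}$, where $j_\rmod$ and $j_\rrd$ denote the respective open inclusions into $\cswt X$. Compactness of $\cswt X$ yields $\coH^r(U,f;\csring)\simeq\bH^r(\cswt X;\cF)$ and $\coH^{2d-r}_\rc(U,-f;\csring)\simeq\bH^{2d-r}(\cswt X;\cG)$. The crucial sheaf-theoretic observation is that since $\cswt U_\rmod(f)\cap\cswt U_\rrd(-f)=U$ (already noted in the text) and both sheaves are constant of rank one on their supports, one has
$$\cF\otimes\cG\simeq\cswtj_!\csring_U,\qquad\cswtj:U\hto\cswt X.$$
Cup product then produces the pairing
$$\bH^r(\cswt X;\cF)\otimes\bH^{2d-r}(\cswt X;\cG)\csto\bH^{2d}(\cswt X;\cswtj_!\csring_U)\simeq\coH^{2d}_\rc(U;\csring)\simeq\csring,$$
and I will verify that it coincides with \eqref{eq:dualHUf}.

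For non-degeneracy when $\csring$ is a field, I will invoke Poincar\'e--Verdier duality on $\cswt X$: as a compact real-analytic manifold with corners of real dimension $2d$ with interior $U$, its dualizing complex is $\omega_{\cswt X}\simeq\cswtj_!\csring_U[2d]$. It thus suffices to produce a natural isomorphism $\bD_{\cswt X}\cF\simeq\cG[2d]$ compatible with the cup product. The standard identity $\bD\,j_{\rmod,!}=\bR j_{\rmod,*}\,\bD$ together with the analogous dualizing-complex formula $\omega_{\cswt U_\rmod(f)}\simeq\cswtj'_!\csring_U[2d]$ for the inclusion $\cswtj':U\hto\cswt U_\rmod(f)$ reduces this to the sheaf identity
$$\bR j_{\rmod,*}\cswtj'_!\csring_U\simeq\cG\qquad\text{on }\cswt X.$$

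I will check this identity stalk by stalk. Over $U$ both sides are $\csring$. At a point of $\partial_\rmod\cswt X(f)\setminus U$, both stalks vanish: the left one equals $(\cswtj'_!\csring_U)_{\cswt x}=0$, and the right one vanishes because $\partial_\rmod\cswt X(f)\cap\partial_\rrd\cswt X(-f)=\csemptyset$. The substantive case is $\cswt x\in\partial_{\exp}\cswt X(f)$: using the local coordinates on the real oriented blow-up described after \eqref{eq:diagrealblowup}, I will compute the left-hand stalk $\bR\Gamma(N\cap\cswt U_\rmod(f);\cswtj'_!\csring_U)$ for a small neighborhood $N$ of $\cswt x$, by means of the exact triangle $\cswtj'_!\csring_U\csto\csring_{\cswt U_\rmod(f)}\csto\csring_{\partial_\rmod\cswt X(f)\cap\cswt U_\rmod(f)}$; local contractibility of both $N\cap\cswt U_\rmod(f)$ and $N\cap\partial_\rmod\cswt X(f)$ will collapse the answer to $\csring$ or to $0$ according as $\cswt x$ lies in $\partial_\rrd\cswt X(-f)$ or not. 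The hard part will be this local computation at corner points of $\cswt X$ lying over $H\cap P$, where $\cswt f$ is a product $\bigl(\prod_j\rho_j^{-m_j}\bigr)\exp\bigl(-\sfi\sum_j m_j\theta_j\bigr)$ and one must analyze multi-angular inequalities jointly with the normal crossing with $H$. What ultimately makes the argument go through is the complementarity of the closed half-circles $\partial_{\exp}\cswt\PP^1(\pm f)$ (meeting only along the two Stokes directions), which is precisely what underlies the moderate-growth versus rapid-decay duality.
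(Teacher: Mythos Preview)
Your proposal is correct and follows essentially the same route as the paper's own proof: the paper also works on $\cswt X$, identifies the dualizing complex as $\cswtj_!\csring_U[2d]$, and reduces to showing that the Verdier dual of (the shifted extension by zero of) $\csring_{\cswt U_\rmod(f)}$ is (the shifted extension by zero of) $\csring_{\cswt U_\rrd(-f)}$. You have simply spelled out in more detail the stalk computation that the paper leaves implicit; the paper also mentions an alternative proof via pushforward to $\cswt\PP^1$, which you do not pursue.
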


\begin{proof}[Sketch]
One can give two proofs: one by computing on $\cswt X$ and the other one by computing on $\cswt\PP^1$ and using the commutation of Verdier duality with proper pushforward. We will sketch the first one. The point is to compute the dualizing complex on the manifold with corners $\cswt X$: one finds that it is the extension by zero to~$\cswt X$ of the shifted constant sheaf $\QQ_U[2d]$. Then the computation of the Verdier dual of the shifted sheaf $\pQQ_{\cswt U_\rmod(f)}$ (recall Notation \ref{nota}\eqref{nota2}) is seen to be isomorphic to $\pQQ_{\cswt U_\rrd(-f)}$. The result is obtained by applying Verdier duality to the cohomology of these sheaves.
\end{proof}

\subsubsection*{Weight filtration}
As a prelude to Hodge theory, let us consider the \index{filtration!weight --}weight filtration on the cohomology spaces $\coH^r(U,f;\QQ)$ and $\coH^r_\rc(U,f;\QQ)$. When $f=0$, the $\QQ$-vector spaces $\coH^r(U;\QQ)$ and $\coH^r_\rc(U;\QQ)$ come naturally equip\-ped with an increasing filtration, called the \emph{weight filtration} (\cf \cite{DeligneHII}).

Given a $W$-filtered vector space $(H,W_\bbullet)$, we say that $(H,W_\bbullet)$ has weights $\geqslant w$, \resp $\leqslant w$, if $\gr^W_\ell H=0$ for $\ell<w$, \resp $\ell>w$. Furthermore, we define a filtration on the dual vector space $H^\csvee$ by $W_\ell(H^\csvee)=(W_{<-\ell}H)^\csperp$ (with ${<}-\ell=-\ell-1$), so that
\[
\gr_\ell^W(H^\csvee)\simeq(\gr_{-\ell}^WH)^\csvee.
\]
Therefore, $(H,W_\bbullet)$ has weights $\geqslant w$ if and only if its dual $(H^\csvee,W_\bbullet)$ has weights $\leqslant -w$. We also define the \emph{Tate twist} by $n\in\ZZ$ by the formula
\[
(H,W_\cbbullet)(n):=(H,W_{\cbbullet-2n}).
\]
In particular, it is known that $\coH^r(U;\QQ)$ has weights $\geqslant r$ and $\coH^r_\rc(U;\QQ)$ has weights $\leqslant r$.

One can also define a weight filtration on the $\QQ$-vector spaces $\coH^r(U,f;\QQ)$ and $\coH^r_\rc(U,f;\QQ)$ as follows (although without referring to mixed Hodge structures).\footnote{This weight filtration should be thought of as an analogue of the weight filtration of the mixed Hodge structure on the relative cohomology $\coH^r(U,f^{-1}(t);\QQ)$ with $t$ \emph{fixed}; it should not be confused with the (relative) monodromy weight filtration at the limit $t\csto\infty$.} For that purpose, we make use of the theory of mixed Hodge modules (\cf \cite{MSaito87}, and also \cite{Steenbrink22}), which endows the perverse complexes $\pR^{r-d}f_*(\pQQ_U)$ and $\pR^{r-d}f_!(\pQQ_U)$ with an increasing weight filtration $W_\bbullet$ in the abelian category of perverse complexes, the former having weights $\geqslant r$ and the latter weights $\leqslant r$ (in a sense similar to that for vector spaces). One then define, by using the expressions \eqref{eq:Ufpush},
\begin{multline*}
W_\ell\coH^r(U,f;\QQ)=\image\Bigl[\coH^0_\rc(\wAfu_\rmod,\bR\alpha_*W_\ell\pR^{r-d}f_*(\pQQ_U))\\
\csto\coH^0_\rc(\wAfu_\rmod,\bR\alpha_*\pR^{r-d}f_*(\pQQ_U))\Bigr]
\end{multline*}
and similarly for $\coH^r_\rc(U,f;\QQ)$ by replacing $f_*$ with $f_!$. This expression simplifies if we notice that the functor which associates to each perverse sheaf $\cF$ on $\Af1$ the $\QQ$\nobreakdash-vector space $\coH^0_\rc(\wAfu_\rmod,\bR\alpha_*\cF)$ is \emph{exact}. In view of Proposition \ref{prop:HUfphifiltration}, this is similar to the exactness of the functor $\phip_{t-c}$. It follows that, for each $\ell\in\ZZ$, we have
\begin{align*}
W_\ell\coH^r(U,f;\QQ)&\simeq\coH^0_\rc(\wAfu_\rmod,\bR\alpha_*W_\ell\pR^{r-d}f_*(\pQQ_U)),\\
\gr^W_\ell\coH^r(U,f;\QQ)&\simeq\coH^0_\rc(\wAfu_\rmod,\bR\alpha_*\gr^W_\ell\pR^{r-d}f_*(\pQQ_U)),
\end{align*}
and similarly for $\coH^r_\rc(U,f;\QQ)$ by replacing $f_*$ with $f_!$.

\begin{corollary}\label{cor:weights}
The vector spaces $\coH^r(U,f;\QQ)$ and $\coH^r_\rc(U,f;\QQ)$ have respective weights $\geqslant r$ and $\leqslant r$, and the vector space (middle cohomology)
\[
\coH^d_\mathrm{mid}(U,f;\QQ):=\image\Bigl[\coH^d_\rc(U,f;\QQ)\csto\coH^d(U,f;\QQ)\Bigr]
\]
is pure of weight $d$.
\end{corollary}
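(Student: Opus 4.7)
The plan is to transport the weight bounds from the perverse pushforwards to the cohomology groups via the exact functor identified just before the statement, and then to deduce the purity of the middle cohomology as a formal consequence of the Poincaré pairing together with the weight bounds for the two sides.

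First I would unpack the definitions. By the theory of mixed Hodge modules recalled just above, $\pR^{r-d}f_*(\pQQ_U)$ carries a weight filtration with $\gr^W_\ell=0$ for $\ell<r$, and $\pR^{r-d}f_!(\pQQ_U)$ satisfies the dual vanishing $\gr^W_\ell=0$ for $\ell>r$. The key input is the exactness, on $\Perv(\Af1;\QQ)$, of the functor $\cF\mto\coH^0_\rc(\wAfu_\rmod,\bR\alpha_*\cF)$, which was established (via Corollary \ref{cor:pervexact} and Proposition \ref{prop:HUfphifiltration}) just before the statement. Applying this exactness to the $W_\bbullet$-filtered objects yields the identifications
\[
\gr^W_\ell\coH^r(U,f;\QQ)\simeq\coH^0_\rc(\wAfu_\rmod,\bR\alpha_*\gr^W_\ell\pR^{r-d}f_*(\pQQ_U)),
\]
and analogously for $\coH^r_\rc(U,f;\QQ)$ with $f_!$ in place of $f_*$. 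The first assertion of the corollary follows immediately from these formulas and the weight bounds on the perverse pushforwards: $\gr^W_\ell\coH^r(U,f;\QQ)=0$ for $\ell<r$, and $\gr^W_\ell\coH^r_\rc(U,f;\QQ)=0$ for $\ell>r$.

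For the purity statement, I would argue as follows. The middle cohomology $\coH^d_\mathrm{mid}(U,f;\QQ)$ is by definition the image of the canonical map $\coH^d_\rc(U,f;\QQ)\csto\coH^d(U,f;\QQ)$. As a quotient of $\coH^d_\rc(U,f;\QQ)$, which has weights ${\leqslant d}$, it inherits weights ${\leqslant d}$; as a subobject of $\coH^d(U,f;\QQ)$, which has weights ${\geqslant d}$, it inherits weights ${\geqslant d}$. (Both of these inheritance properties use only the strictness of $W_\bbullet$ with respect to morphisms of the underlying perverse mixed Hodge modules, which in turn is again a consequence of the exactness of the functor $\cF\mto\coH^0_\rc(\wAfu_\rmod,\bR\alpha_*\cF)$ applied to the filtered morphism $\pR^{r-d}f_!(\pQQ_U)\csto\pR^{r-d}f_*(\pQQ_U)$.) Consequently $\coH^d_\mathrm{mid}(U,f;\QQ)$ is pure of weight $d$.

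The only point that requires some care, and which I expect to be the main obstacle, is the strictness argument used for the purity: one must know that the canonical morphism $\pR^{d-d}f_!(\pQQ_U)\csto\pR^{d-d}f_*(\pQQ_U)$ is strict with respect to $W_\bbullet$ in the abelian category of mixed Hodge modules on $\Af1$. This is a standard feature of the six-functor formalism for mixed Hodge modules (morphisms of mixed Hodge modules are strictly compatible with $W_\bbullet$), so once one appeals to \cite{MSaito87} the argument goes through without further calculation.
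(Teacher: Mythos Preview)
Your proposal is correct and follows essentially the same route as the paper: the first assertion is read off from the displayed identification of $\gr^W_\ell$ with the exact functor applied to $\gr^W_\ell$ of the perverse pushforwards, and the purity of the middle cohomology is obtained by combining the two weight bounds via strictness of $W_\bbullet$ for morphisms of mixed Hodge modules, transported through the exact functor $\cF\mto\coH^0_\rc(\wAfu_\rmod,\bR\alpha_*\cF)$. The paper phrases the strictness step as the coincidence of the two candidate filtrations $\image(W_\bbullet\coH^d_\rc)$ and $W_\bbullet\coH^d\cap\coH^d_\mathrm{mid}$, which is exactly what you invoke; your mention of the Poincar\'e pairing in the plan is not actually used and can be dropped.
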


An ambiguity could occur in the second statement: which filtration do we put on $\coH^d_\mathrm{mid}(U,f;\QQ)$? Is it $\image(W_\bbullet\coH^d_\rc(U,f;\QQ))$ or $W_\bbullet\coH^d(U,f;\QQ))\cap\coH^d_\mathrm{mid}(U,f;\QQ)$? Fortunately, both coincide: firstly, it follows from the theory of mixed Hodge modules that, for any morphism $\cF\csto\cG$ between $W$-filtered perverse sheaves underlying a morphism of mixed Hodge modules, the equality $\image(W_\bbullet\cF)=W_\bbullet\cG\cap\image(\cF)$ holds; secondly, by exactness of the functor $\cF\mto\coH^0_\rc(\wAfu_\rmod,\bR\alpha_*\cF)$ for $\cF$ perverse, the previous equality passes through this functor.

Duality is also compatible with the \index{filtration!weight --}weight filtration:

\begin{theorem}
The Poincaré-Verdier duality pairing of Proposition \ref{prop:PDUf} induces an isomorphism of $W$-filtered vector spaces:
\[
(\coH^r_{\rc}(U,f;\QQ),W_\cbbullet)\simeq(\coH^{2d-r}(U,-f;\QQ),W_\cbbullet)^\csvee(d),
\]
so that, for each $\ell\in\ZZ$, we have a nondegenerate pairing
\[
\gr^W_\ell\coH^r_{\rc}(U,f;\QQ)\otimes\gr_{2d-\ell}^W\coH^{2d-r}(U,-f;\QQ)\csto\QQ.
\]
\end{theorem}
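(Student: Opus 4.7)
The plan is to lift the Poincaré--Verdier pairing of Proposition~\ref{prop:PDUf} to the level of mixed Hodge modules on $\Af1$, where the weight filtration interacts transparently with Verdier duality thanks to Saito's theory, and then to push this dualized structure through the exact extraction functor that computes $\coH^r(U,f;\QQ)$ and $\coH^r_\rc(U,f;\QQ)$ via the formulas~\eqref{eq:Ufpush}.

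The first step takes place at the level of mixed Hodge modules on $\Af1$. By Saito's theory, the perverse cohomology sheaves $\pR^{r-d}f_*(\pQQ_U)$ and $\pR^{r-d}f_!(\pQQ_U)$ underlie mixed Hodge modules equipped with the weight filtration $W_\bbullet$. Since $U$ is smooth of dimension~$d$, $\pQQ_U$ is self-dual up to a Tate twist; combined with the exchange $\bD\circ \bR f_*\simeq\bR f_!\circ\bD$ and the symmetry $\bD\pcH^k=\pcH^{-k}\bD$, this yields a canonical isomorphism of mixed Hodge modules
$$
\bD\,\pR^{r-d}f_!(\pQQ_U)\simeq\pR^{d-r}f_*(\pQQ_U)(d).
$$
At this level, Verdier duality transforms the weight filtration by the standard rule $\gr^W_\ell(\bD M)\simeq\bD(\gr^W_{-\ell}M)$.

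The second step concerns the exact functor $\Phi_f(\cF):=\coH^0_\rc(\wAfu_\rmod;\bR\alpha_*\cF)$ of Corollary~\ref{cor:pervexact}. This functor depends on the choice of the ``exponential growth'' half-circle deleted from $\partial\cswt\PP^1$; writing $\Phi_{-f}$ for the companion functor obtained with the complementary half-circle, the two resulting open subsets of $\cswt\PP^1$ are complementary with common intersection $\Af1$. One has to establish that for every perverse sheaf $\cF$ on~$\Af1$ there is a canonical perfect pairing
$$
\Phi_f(\cF)\otimes\Phi_{-f}(\bD\cF)\csto\QQ,
$$
identifying $\Phi_{-f}(\bD\cF)$ with the dual of $\Phi_f(\cF)$. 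This is the one-variable shadow on $\cswt\PP^1$ of the sheaf identification $\bD\pQQ_{\cswt U_\rmod(f)}\simeq\pQQ_{\cswt U_\rrd(-f)}$ used in the proof of Proposition~\ref{prop:PDUf}, transported by the proper map $\cswt f:\cswt X\csto\cswt\PP^1$, which commutes with Verdier duality.

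Combining Steps~1 and~2 through the expressions~\eqref{eq:Ufpush} yields the desired isomorphism $(\coH^r_\rc(U,f;\QQ),W_\bbullet)\simeq(\coH^{2d-r}(U,-f;\QQ),W_\bbullet)^\csvee(d)$; strict compatibility of the weight filtrations is automatic from the strictness of the exact functors~$\Phi_{\pm f}$ applied to a $W$-filtered morphism of mixed Hodge modules. The assertion on $\gr^W_\ell$ then follows by applying $\gr^W_\ell$ to both sides and invoking exactness of $\Phi_{-f}$ one more time. The main obstacle is Step~2: the rigorous identification of Verdier duality on the closed disc $\cswt\PP^1$ with the dual pairing of the functors $\Phi_{\pm f}$, including careful bookkeeping of supports on the boundary circle $\partial\cswt\PP^1$, where the moderate-growth and rapid-decay regimes are swapped under passage from $f$ to $-f$. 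Once this compatibility is in place, the rest of the argument is a formal consequence of Saito's formalism for duality of the weight filtration.
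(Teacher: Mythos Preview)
Your proposal is correct and follows essentially the same route as the paper's sketch: both reduce to the one-variable situation on $\cswt\PP^1$, establish the nondegenerate pairing $\coH^0_\rc(\wAfu_\rmod(t);\bR\alpha_*\cF)\otimes\coH^0_\rc(\wAfu_\rmod(-t);\bR\alpha_*\bD\cF)\to\QQ$ for perverse $\cF$ (your Step~2, which the paper obtains from the ``second proof'' of Proposition~\ref{prop:PDUf} via commutation of Verdier duality with proper pushforward), and then invoke Saito's formalism for the behavior of $W_\bbullet$ under duality of mixed Hodge modules together with exactness of the extraction functor. Your write-up is more explicit about the intermediate isomorphism $\bD\,\pR^{r-d}f_!(\pQQ_U)\simeq\pR^{d-r}f_*(\pQQ_U)(d)$ and about why strictness is automatic, but structurally the argument is the same.
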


\begin{proof}[Sketch]
The second proof of Proposition \ref{prop:PDUf} would yield that, for a perverse sheaf $\cF$ on $\Af1$ with Verdier dual $\cF^\csvee$, the natural pairing
\[
\coH^0_\rc(\wAfu_\rmod(t);\bR\alpha_*\cF)\otimes\coH^0_\rc(\wAfu_\rmod(-t);\bR\alpha_*\cF^\csvee)\csto\CC
\]
is nondegenerate, where $\alpha$ denotes any of the open inclusions $\Af1\hto\wAfu_\rmod(t)$ and $\Af1\hto\wAfu_\rmod(-t)$. The theory of mixed Hodge modules expresses the weight filtration $W_\bbullet(\cF^\csvee)$ in terms of $W_\bbullet\cF$, and this leads to the formulas of the theorem.
\end{proof}

\subsection{Singular homology with growth conditions}
We consider the real blow-up space $\cswt X$ with its open subsets $\cswt U_\rrd\subset \cswt U_\rmod$ (\cf Diagram \eqref{eq:diagrealblowup}). Recall that, over $\Af1$, we have
\[
\cswt U_\rrd|_{\cswt f^{-1}(\Af1)}=U,\quad \cswt U_\rmod|_{\cswt f^{-1}(\Af1)}=\cswt X|_{\cswt f^{-1}(\Af1)},
\]
while $\cswt U_\rrd\cap \cswt P=\cswt U_\rmod\cap \cswt P$. The boundary of these open subsets are
\[
\partial\cswt U_\rrd=\cswt U_\rrd\moins U=\cswt U_\rrd\cap\partial\cswt X,\quad\partial\cswt U_\rmod=\cswt U_\rmod\moins U=\cswt U_\rmod\cap\partial\cswt X.
\]
The analogues of the homology $\coH_r(U;\csring)$ and the homology with closed support (Borel-Moore) $\coH_r^\BM(U;\csring)$ are the following relative homology groups (\cf \cite[App.]{F-S-Y20b}):
\[
\coH_r(U,f;\csring):=\coH_r(\cswt U_\rmod,\partial\cswt U_\rmod;\csring),\quad \coH_r^\BM(U,f;\csring):=\coH_r(\cswt U_\rrd,\partial\cswt U_\rrd;\csring).
\]

\begin{proposition}[Intersection and period pairings, {\cf\cite[\S2]{F-S-Y20a}}]
Assume that~$U$ is connected. For each $r=0,\dots,2d$, the \index{pairing!intersection --}intersection pairing
\[
\coH_r(U,f;\QQ)\otimes \coH_{2d-r}^\BM(U,-f;\QQ)\csto\coH_0(U;\QQ)\simeq\QQ
\]
is nondegenerate. Moreover, the \index{pairing!period --}period pairing
\[
\coH^r(U,f;\QQ)\otimes \coH_r(U,f;\QQ)\csto\QQ,\quad \coH^r_\rc(U,f;\QQ)\otimes \coH_r^\BM(U,f;\QQ)\csto\QQ
\]
is nondegenerate and relates the intersection pairing with the Poincaré-Verdier pairing.
\end{proposition}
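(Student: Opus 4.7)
The plan is to reduce both nondegeneracy statements to the Poincar\'e--Verdier duality of Proposition~\ref{prop:PDUf} via a Lefschetz-type isomorphism between the singular homology groups of Section~6.2 and the cohomology groups with growth conditions introduced earlier. Concretely, the key preliminary step is to establish isomorphisms
\[
\coH_r(U,f;\QQ)\simeq\coH^{2d-r}(U,f;\QQ)\quand\coH_r^\BM(U,f;\QQ)\simeq\coH^{2d-r}_{\rc}(U,f;\QQ),
\]
realized by cap product with the fundamental class of the oriented real manifold-with-corners $\cswt X$ of real dimension $2d$. Sheaf-theoretically this follows from the dualizing complex computation already invoked in the sketch of Proposition~\ref{prop:PDUf}: one has $\omega_{\cswt X}\simeq\cswtj_!\QQ_U[2d]$, and applying Verdier duality to the constant sheaf on $\cswt U_\rmod(f)$ (respectively $\cswt U_\rrd(f)$) extended by zero to $\cswt X$ gives the two isomorphisms, once one identifies the left-hand sides with the hypercohomology of $\omega_{\cswt X}$ restricted to the appropriate pairs.

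Granted these identifications, I would verify that the geometric intersection pairing on the homology side corresponds to the cup product pairing on the cohomology side, that is, to the Poincar\'e--Verdier pairing of Proposition~\ref{prop:PDUf}. The intersection is geometrically well defined because the observation at the start of the proof of Proposition~\ref{prop:PDUf} yields
\[
\cswt U_\rmod(f)\cap\cswt U_\rrd(-f)=U\quand\partial\cswt U_\rmod(f)\cap\partial\cswt U_\rrd(-f)=\csemptyset,
\]
so that two transverse relative cycles of complementary dimensions meet in a compact finite set of points lying entirely in $U$, defining an intersection number in $\coH_0(U;\QQ)\simeq\QQ$. The compatibility between intersection and cup product under Lefschetz duality is the classical identity $D(\alpha\cup\beta)=D(\alpha)\cap D(\beta)$, transferred to the present relative-with-boundary setting. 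Nondegeneracy of the intersection pairing then follows directly from that of Poincar\'e--Verdier.

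For the period pairings the argument is simpler: over the field~$\QQ$, the universal coefficient theorem identifies the singular cohomology of a pair with the dual of the corresponding singular homology, once one has finite-dimensionality of the groups in play; this last point follows from the algebraic constructibility of $\bR f_*\QQ_U$ and $\bR f_!\QQ_U$ and the finiteness built into Lemma~\ref{lem:coHUf} and Proposition~\ref{prop:coHUf}. The claim that the period pairing \emph{relates} the intersection pairing and the Poincar\'e--Verdier pairing is the standard commutative diagram produced by cap product with the fundamental class: Lefschetz duality on the homology factor intertwines cohomology--homology evaluation with the intersection pairing on one side and with the cup product pairing on the other.

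The step I expect to be the main obstacle is the Lefschetz duality identification, because $\cswt X$ is a genuine manifold with corners and the boundary subsets $\partial\cswt U_\rmod(f)$ and $\partial\cswt U_\rrd(f)$ are only open subsets of $\partial\cswt X$ rather than closed submanifolds. Attempting to set up cap products with geometric chains through the corners is the hard point-set route; the sheaf-theoretic approach already used in the sketch of Proposition~\ref{prop:PDUf}, which works directly with the dualizing complex on $\cswt X$ and reduces the duality to properties of $\cswtj_!\QQ_U[2d]$, is the cleanest way to bypass these difficulties. Once the Lefschetz isomorphisms are in hand, all that remains is standard homological algebra and appeal to Proposition~\ref{prop:PDUf}.
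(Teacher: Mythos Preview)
The paper does not supply a proof of this proposition: it is stated with a reference to \cite[\S2]{F-S-Y20a} and then the text moves directly to the next subsection. So there is no ``paper's own proof'' to compare against here.

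That said, your outline is correct and is the standard route (and is, in spirit, what the cited reference does). The Lefschetz-duality isomorphisms you write down are exactly the relative Lefschetz duality for the compact oriented manifold-with-corners $\cswt X$ with its boundary decomposed as $\partial\cswt X=\overline{\partial_\rmod\cswt X}\cup\partial_{\exp}\cswt X$ (for the first isomorphism) and as $\overline{\partial_\rrd\cswt X}\cup(\varpi^{-1}(H)\cup\partial_{\exp}\cswt X)$ (for the second): in each case one has $\coH^{2d-r}(\cswt X,A;\QQ)\simeq\coH_r(\cswt X,B;\QQ)$ for the two closed pieces $A,B$ covering $\partial\cswt X$, and excision of the relative interior of $A$ turns the right-hand side into the relative homology of $(\cswt U_\rmod,\partial\cswt U_\rmod)$, respectively $(\cswt U_\rrd,\partial\cswt U_\rrd)$. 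Your identification of the obstacle is accurate, and your proposed workaround---phrasing Lefschetz duality sheaf-theoretically via $\omega_{\cswt X}\simeq\cswtj_!\QQ_U[2d]$---is precisely how one avoids point-set difficulties at the corners; this is the same computation already invoked in the sketch of Proposition~\ref{prop:PDUf}. Once those isomorphisms are in hand, the rest (compatibility of cap and cup, universal coefficients over~$\QQ$) is indeed routine.
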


\subsection{Algebraic de~Rham cohomology}\label{subsec:algDR}

\subsubsection*{Algebraic de~Rham cohomology of the pair \texorpdfstring{$(U,f)$}{Uf}}
We now consider $U$ with its Zariski topology. The algebraic de~Rham cohomology $\coH^r_\dR(U)$ of the variety $U$ is by definition the hypercohomology on $U$ of the \index{complex!algebraic de~Rham --}algebraic de~Rham complex $(\Omega_U^\cbbullet,\rd)$, whose terms are the sheaves of algebraic differential forms on $U$. In a similar way, we introduce the \index{cohomology!algebraic de~Rham --}algebraic de~Rham cohomology $\coH^r_\dR(U,f)$ of the pair $(U,f)$ as being the hypercohomology of the \index{complex!twisted algebraic de~Rham --}\emph{twisted algebraic de~Rham complex}
\[
(\Omega_U^\cbbullet,\rd+\rd f).
\]
(The differential $\rd+\rd f$ can be regarded as the twisted differential \hbox{$\rme^{-f}\circ\rd\circ\rme^f$}, although this expression is only meaningful in the analytic context.)

When $f=0$, it is helpful to consider a good projectivization $\csov X$ of $U$ and to introduce the \index{complex!logarithmic de~Rham --}logarithmic de~Rham complex $(\Omega_{\csov X}^\cbbullet(\log D),\rd)$, where $\Omega^k_{\csov X}(\log D)$ is the sheaf of logarithmic differential $k$\nobreakdash-forms (these are the rational $k$-forms~$\omega$ on~$\csov X$ with poles along $D$ at most such that~$\omega$ and $\rd\omega$ have at most simple poles along $D$, \cf\cite{DeligneHII}). Then, for each integer $r$, the cohomology space $\coH^r_\dR(U)$ can be computed as the $r$\nobreakdash-th hypercohomology space of the complex $(\Omega_{\csov X}^\cbbullet(\log D),\rd)$. One advantage is that each term $\Omega^k_{\csov X}(\log D)$ is a locally free sheaf of finite rank on the smooth projective variety~$\csov X$.

Pursuing the analogy with $\coH^r_\dR(U)$, we introduce the \index{complex!Kontsevich --}\emph{Kontsevich complex} of $(U,f)$ (\cf \cite{E-S-Y13}, \cite{K-K-P14}, and the references therein): this is the complex\footnote{For the sake of simplicity, we simply denote by $f$ the morphism denoted above by $\csov f$. The notation $\Omega_{\csov X}^\cbbullet(\log D,f)$ is used in \cite{K-K-P14}.}
\begin{equation}\label{eq:Kontcompl}
(\Omega_f^\cbbullet,\rd+\rd f),
\end{equation}
where $\Omega_f^k$ is the subsheaf of $\Omega_{\csov X}^k(\log D)$ consisting of logarithmic $k$\nobreakdash-forms~$\omega$ such that $\rd f\wedge\omega$ is still a logarithmic $(k+1)$-form. In other words:
\begin{equation}\label{eq:Omegaf}
\Omega_f^k=\ker\Bigl[\rd f:\Omega_{\csov X}^k(\log D)\csto\Omega_{\csov X}^{k+1}(*D)/\Omega_{\csov X}^{k+1}(\log D)\Bigr].
\end{equation}
Away from $P$, this subsheaf coincides with $\Omega_{\csov X}^k(\log D)$. On the other hand, along~$P$, it is a strictly smaller subsheaf: writing $\rd f$ as the product $f\cdot \rd f/f$, we note that $(\rd f/f)\wedge\omega$ is also logarithmic, but the poles of $f$ possibly introduce higher order poles of $\rd f\wedge\omega$, so~that the condition required for $\omega$ is strong. However, one can compute the sheaves~$\Omega_f^k$ in local analytic coordinates where $f$ is written as the inverse of a monomial, and show that they are locally $\cO_{\csov X}$-free (\cf \cite{Yu12}). For example, we~have $\Omega_f^0=\cO_{\csov X}(-\bP)$, where~$\bP$ is the pole divisor (with multiplicities) of $f$.

In the algebraic setting, the notion of \index{cohomology!de~Rham -- with compact support}de~Rham cohomology with compact support $\coH^r_{\dR,\rc}(U)$ is also defined (\cf\cite{Hartshorne72}) and it also has an expression in terms of a logarithmic complex as the hypercohomology of the complex
\[
(\Omega_{\csov X}^\cbbullet(\log D)(-D),\rd).
\]
The termwise wedge product
\[
\Omega_{\csov X}^k(\log D)\otimes\Omega_{\csov X}^\ell(\log D)(-D)\csto\Omega_{\csov X}^{k+\ell}(\log D)(-D)\hto\Omega_{\csov X}^{k+\ell}
\]
is compatible with differentials and, for $\ell\geqslant0$, identifies $\Omega_{\csov X}^\ell(\log D)(-D)$ with the Serre dual $\omega_{\csov X}\otimes(\Omega_{\csov X}^{d-\ell}(\log D))^\csvee$. It provides thus, for each $r$, by passing to complexes and their hypercohomologies, a nondegenerate pairing (\index{duality!de~Rham --}\index{pairing!de~Rham --}de~Rham pairing)
\[
\coH^r_{\dR}(U)\otimes_\CC\coH^{2d-r}_{\dR,\rc}(U)\csto\coH^{2d}_{\dR}(\csov X)\simeq\CC.
\]

In an analogous way (\cf\cite{Yu12}), the termwise wedge product
\[
\Omega_f^k\otimes\Omega_{-f}^\ell(-D)\csto\Omega_{\csov X}^{k+\ell}(\log D)(-D)\hto\Omega_{\csov X}^{k+\ell}
\]
is compatible with the differentials $\rd+\rd f,\rd-\rd f$ and, for $\ell\geqslant0$, identifies the Serre dual $\Omega_{-f}^\ell(-D)$ with $\omega_{\csov X}\otimes(\Omega_f^{d-\ell})^\csvee$. It provides thus, for each $r$, by passing to complexes and their hypercohomologies, a nondegenerate pairing (de~Rham pairing)
\[
\coH^r\bigl(\csov X,(\Omega_f^\cbbullet,\rd+\rd f)\bigr)\otimes_\CC\coH^{2d-r}\bigl(\csov X,(\Omega_{-f}^\cbbullet(-D),\rd-\rd f)\bigr)\csto\CC.
\]
We can interpret this pairing according to the following result (\cf\cite{E-S-Y13}).

\begin{theorem}\mbox{}\label{th:kontsevichyu}
\begin{enumerate}
\item\label{th:kontsevichyu1}
The restriction to $U$ induces isomorphisms for all $r\in\NN$:
\begin{align*}
\coH^r_\dR(U,f)&\simeq \coH^r\bigl(\csov X,(\Omega_f^\cbbullet,\rd+\rd f)\bigr),
\\
\coH^r_{\dR,\rc}(U,f)&\simeq \coH^r\bigl(\csov X,(\Omega_f^\cbbullet(-D),\rd+\rd f)\bigr).
\end{align*}
\item\label{th:kontsevichyu2}
For each $r\in\NN$, the natural pairing that one deduces from these identifications is nondegenerate:
\[
\coH^r_\dR(U,f)\otimes_\CC\coH^{2d-r}_{\dR,\rc}(U,-f)\csto \coH^{2d}_{\dR}(\csov X)\simeq\CC.
\]
\end{enumerate}
\end{theorem}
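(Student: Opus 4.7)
The plan is to reduce Part~(1) to a local quasi-isomorphism of complexes on $\csov X$, and then to derive Part~(2) from Part~(1) by applying Serre duality termwise. By definition, $\coH^r_\dR(U,f)$ is the hypercohomology on $U$ of $(\Omega_U^\cbbullet,\rd+\rd f)$. Denoting by $\kappa:U\hto\csov X$ the open inclusion, Grothendieck's algebraic--analytic comparison $\bR\kappa_*\Omega_U^\cbbullet\simeq\Omega_{\csov X}^\cbbullet(*D)$ carries over to the twisted differentials since $f$ is regular on $U$, so that
\[
\coH^r_\dR(U,f)\simeq\coH^r\bigl(\csov X,(\Omega_{\csov X}^\cbbullet(*D),\rd+\rd f)\bigr).
\]
It therefore suffices to show that the inclusion $(\Omega_f^\cbbullet,\rd+\rd f)\hto(\Omega_{\csov X}^\cbbullet(*D),\rd+\rd f)$ is a quasi-isomorphism of complexes of sheaves on $\csov X$. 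This is a well-defined morphism of complexes because $\rd f\wedge\rd f=0$ and $\rd$ preserves log forms; the compact-support statement is obtained from the same reduction after tensoring by $\cO_{\csov X}(-D)$.

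The local claim is trivial on $U$, and near $H\moins P$ it reduces to Deligne's log comparison (with holomorphic twist), since there $\rd f$ is holomorphic and $\Omega_f^k$ coincides with $\Omega_{\csov X}^k(\log D)$. The main work is at a point $x\in P$. I would choose local analytic coordinates $(x_1,\dots,x_d)$ in which $D=\{x_1\cdots x_\ell=0\}$ and $f=x_1^{-m_1}\cdots x_\ell^{-m_\ell}$, with $m_j\geqslant1$ for $1\leqslant j\leqslant p$ (the $P$\nobreakdash-directions) and $m_j=0$ otherwise, and exhibit $\Omega_f^\cbbullet$ as an explicit $\cO_{\csov X}$\nobreakdash-free subcomplex of $\Omega_{\csov X}^\cbbullet(*D)$ whose generators encode the permitted powers of $x_1,\dots,x_p$; the quotient complex, equipped with the induced twisted differential, is then shown to be acyclic by a Koszul-type homotopy built from contraction with an Euler vector field adapted to~$f$. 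This is the content of the local computations of \cite{Yu12,E-S-Y13}, which I would invoke.

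For Part~(2), the identifications of Part~(1) reduce the problem to showing that the pairing induced by the termwise wedge product $\Omega_f^k\otimes\Omega_{-f}^\ell(-D)\csto\Omega_{\csov X}^{k+\ell}$ followed by the trace $\coH^{2d}_\dR(\csov X)\simeq\CC$ is perfect. The Leibniz rule, together with $\rd f+(-\rd f)=0$, shows that this pairing of sheaves intertwines the differentials $\rd+\rd f$ and $\rd-\rd f$, so it descends to hypercohomology. The termwise Serre-dual identification $\Omega_{-f}^\ell(-D)\simeq\omega_{\csov X}\otimes(\Omega_f^{d-\ell})^\csvee$ recalled in the excerpt, combined with the local freeness of each $\Omega_f^k$ and the smoothness and projectivity of~$\csov X$, realizes $(\Omega_{-f}^\cbbullet(-D),\rd-\rd f)$ as the Serre dual (shifted by $d$) of $(\Omega_f^\cbbullet,\rd+\rd f)$. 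Nondegeneracy of the resulting hypercohomology pairing then follows from the standard double-complex formulation of Serre duality on the smooth projective variety~$\csov X$.

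The main obstacle is the local acyclicity in Part~(1) at points of~$P$. The subsheaf $\Omega_f^\cbbullet$ is engineered precisely so that $\rd f\wedge$ preserves log forms along~$P$, but verifying that this minimal repair of the log de~Rham complex already captures the full meromorphic twisted cohomology requires controlling the interaction between the negative powers of $x_j$ introduced by $\rd f$ and the pole-order filtration on $\Omega_{\csov X}^\cbbullet(*D)$. Once this local statement is secured, Part~(2) is essentially a formal consequence.
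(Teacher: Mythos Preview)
The paper does not give its own proof of this theorem; it cites \cite{E-S-Y13} (and, implicitly, \cite{Yu12}) and only records, in the paragraph preceding the statement, the Serre-dual identification $\Omega_{-f}^\ell(-D)\simeq\omega_{\csov X}\otimes(\Omega_f^{d-\ell})^\csvee$ that you also invoke. Your outline for the first isomorphism of Part~\eqref{th:kontsevichyu1} and for Part~\eqref{th:kontsevichyu2} is exactly the strategy of those references: reduce to a local quasi-isomorphism along $P$ (the Koszul-type acyclicity in monomial coordinates), then read off the duality pairing from the termwise Serre duality of locally free sheaves.

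There is, however, a genuine gap in your treatment of the compact-support half of Part~\eqref{th:kontsevichyu1}. The sentence ``the compact-support statement is obtained from the same reduction after tensoring by $\cO_{\csov X}(-D)$'' does not make sense: since $\Omega_{\csov X}^\cbbullet(*D)\otimes\cO_{\csov X}(-D)=\Omega_{\csov X}^\cbbullet(*D)$, there is no meromorphic target complex on $\csov X$ that computes $\coH^r_{\dR,\rc}(U,f)$ and into which $(\Omega_f^\cbbullet(-D),\rd+\rd f)$ would embed. Unlike the case $f=0$, the definition of $\coH^r_{\dR,\rc}(U,f)$ is not given by a naive subcomplex of meromorphic forms; it is defined either via $\cD$-module pushforward with proper support of the exponential module $\cE^f$, or, equivalently, as the Verdier dual of $\coH^{2d-r}_\dR(U,-f)$. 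The correct logical order is therefore: prove the first isomorphism of Part~\eqref{th:kontsevichyu1} by your local argument, establish the Serre-dual identification of complexes (your Part~\eqref{th:kontsevichyu2} input), and then \emph{deduce} the second isomorphism of Part~\eqref{th:kontsevichyu1} from these two together with the duality definition of $\coH^r_{\dR,\rc}(U,f)$. In other words, the compact-support identification and the nondegeneracy of the pairing are proved simultaneously, not in the sequential order your sketch suggests.
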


\begin{remark}\label{rem:kontsevichyu}
At this step, one can avoid the use of the Kontsevich complex and simply consider the usual logarithmic de~Rham complex on $X=\csov X\moins P$ with respect to the divisor $H=D\moins P$. One can replace in the statement of Theorem~\ref{th:kontsevichyu} the Kontsevich complexes with
\[
(\Omega_X^\cbbullet(\log H),\rd+\rd f_X)\quand(\Omega_X^\cbbullet(\log H)(-H),\rd+\rd f_X)
\]
respectively (\cf\cite[Cor.\,1.4.3]{E-S-Y13} and \cite[Lem.\,2.8]{Bibi22a}). However, the role of the pole divisor will be emphasized when considering the irregular Hodge filtration.
\end{remark}

Grothendieck's comparison isomorphisms
\[
\coH^r_\dR(U)\simeq\coH^r(U^\an;\QQ)\otimes\CC,\quad\coH^r_{\dR,\rc}(U)\simeq\coH^r_\rc(U^\an;\QQ)\otimes\CC
\]
can be extended in the following way (much details are given in \cite[\S2]{F-S-Y20a}):

\begin{theorem}\label{th:isoBdR}
We have natural isomorphisms
\[
\coH^r_\dR(U,f)\simeq\coH^r(U^\an,f;\QQ)\otimes\CC,\quad \coH^r_{\dR,\rc}(U,f)\simeq\coH^r_\rc(U^\an,f;\QQ)\otimes\CC
\]
which transform the de~Rham pairing into the Poincaré-Verdier duality pairing.
\end{theorem}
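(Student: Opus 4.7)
The plan is to work on the real blow-up space $\cswt X$ and, using the Kontsevich presentation of Theorem~\ref{th:kontsevichyu}, compare the algebraic twisted de~Rham complex on $\csov X$ with a $C^\infty$ twisted de~Rham complex on $\cswt X$ whose boundary growth conditions match the definition of $\coH^r_\rc(\cswt U_\rmod;\CC)$ and $\coH^r_\rc(\cswt U_\rrd;\CC)$ given in Definition~\ref{def:coHUf}. First, by Theorem~\ref{th:kontsevichyu}\eqref{th:kontsevichyu1} and GAGA on the projective variety $\csov X$, the algebraic cohomologies $\coH^r_\dR(U,f)$ and $\coH^r_{\dR,\rc}(U,f)$ coincide with the analytic hypercohomologies of the Kontsevich complexes $(\Omega_f^\cbbullet,\rd+\rd f)$ and $(\Omega_f^\cbbullet(-D),\rd+\rd f)$, so it suffices to identify these with $\coH^r_\rc(\cswt U_\rmod;\CC)$ and $\coH^r_\rc(\cswt U_\rrd;\CC)$ respectively.

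On $\cswt X$ I introduce, in the spirit of the asymptotic analysis used for irregular connections, a $C^\infty$ twisted de~Rham complex $(\cE^\cbbullet_{\rmod f},\rd)$ whose sections are $C^\infty$ forms $\omega$ such that $\rme^{-f}\omega$ has moderate growth along $\partial\cswt X$ (a condition that automatically forces $\omega$ to decay along $\partial_{\exp}\cswt X$), together with the analogous rapid-decay variant $(\cE^\cbbullet_{\rrd f},\rd)$ in which $\rme^{-f}\omega$ is required in addition to vanish to infinite order along $\varpi^{-1}(H)\cup\partial_{\exp}\cswt X$. The comparison then proceeds in two steps. Step~(a): the direct images $\bR\varpi_*(\cE^\cbbullet_{\rmod f},\rd)$ and $\bR\varpi_*(\cE^\cbbullet_{\rrd f},\rd)$ on $\csov X^\an$ are quasi-isomorphic to the respective Kontsevich complexes. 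This is a twisted variant of Deligne's logarithmic comparison: the subsheaf $\Omega_f^k\subset\Omega_{\csov X}^k(\log D)$ was defined in~\eqref{eq:Omegaf} precisely so that $\rd+\rd f$ preserves the moderate-growth class of $\rme^{-f}\omega$, and local freeness of $\Omega_f^k$~\cite{Yu12} reduces the check to a model calculation in coordinates where~$f$ is a monomial. Step~(b): these two complexes are, respectively, resolutions of the extension-by-zero sheaves $\CC_{\cswt U_\rmod}$ and $\CC_{\cswt U_\rrd}$ on $\cswt X$. Combining (a) and~(b) yields the claimed isomorphisms.

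Compatibility with the Poincaré-Verdier pairing then follows from the fact that the termwise wedge product $\Omega_f^k\otimes\Omega_{-f}^{d-k}(-D)\csto\omega_{\csov X}$ underlying Theorem~\ref{th:kontsevichyu}\eqref{th:kontsevichyu2} is transported by~(a)+(b) into the $C^\infty$ wedge product of a moderate-growth form with respect to~$f$ and a rapid-decay form with respect to~$-f$; such a product is integrable on $U^\an$ and pairs into $\coH^{2d}_\rc(U^\an;\CC)$ compatibly with~\eqref{eq:dualHUf}, the sign flip $f\leftrightarrow -f$ reflecting the identity $\cswt U_\rmod(f)\cap\cswt U_\rrd(-f)=U$ already used in Proposition~\ref{prop:PDUf}.

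The main obstacle is step~(b), the twisted Poincaré lemma with prescribed growth at $\partial\cswt X$, in the presence of the irregular component~$P$ of the divisor. Near a point of $\partial_\rmod\cswt X\cap\cswt P$ one must construct a $(\rd+\rd f)$-primitive of a closed moderate-growth form by integrating along paths on which $\reel f\csto+\infty$, using the explicit monomial expression of $\cswt f$ recorded just after Diagram~\eqref{eq:diagrealblowup}; conversely, near $\partial_{\exp}\cswt X$ the exponential blow-up of $|\rme^{-f}|$ combined with the moderate-growth condition forces the complex to be acyclic there. This asymptotic analysis, in line with Hien's work on rapid-decay cohomology, is the technical heart of the comparison; step~(a) and the pairing identification are then essentially formal consequences.
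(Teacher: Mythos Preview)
The paper does not give its own proof of this theorem: it simply states the result, pointing to \cite[\S2]{F-S-Y20a} for details. Your outline is correct and is essentially the strategy carried out in that reference (and in the earlier literature on rapid-decay homology by Hien and Bloch--Esnault that it builds on): pass to the analytic category via GAGA, lift the twisted de~Rham complex to the real blow-up $\cswt X$ equipped with moderate-growth/rapid-decay $C^\infty$ coefficients, and prove the two local steps you call~(a) and~(b).

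One small adjustment worth making: in the cited reference the comparison in step~(a) is organised through the \emph{meromorphic} twisted de~Rham complex $(\Omega_{\csov X}^\cbbullet(*D),\rd+\rd f)$ and its lift to $\varpi^{-1}\cO_{\csov X}(*D)$-modules on $\cswt X$, rather than directly through the Kontsevich subcomplex. The Kontsevich complex is quasi-isomorphic to the meromorphic one (this is part of the content of \cite{E-S-Y13}), so your formulation is equivalent, but working meromorphically makes step~(a) a clean application of the asymptotic flatness of $\cA^{\rmod}_{\cswt X}$ over $\varpi^{-1}\cO_{\csov X}$ rather than a computation tied to the specific shape of $\Omega_f^k$. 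Your identification of step~(b) as the technical core---a Hukuhara--Malgrange--Sibuya type twisted Poincar\'e lemma distinguishing $\partial_\rmod\cswt X$ from $\partial_{\exp}\cswt X$---is accurate, and the pairing argument via the wedge product is exactly how compatibility with Poincar\'e--Verdier duality is obtained in \cite{F-S-Y20a}.
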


\begin{remark}
One can regard Theorem \ref{th:isoBdR}, together with Proposition \ref{prop:HUfphifiltration}, as giving an algebraic formula for the total dimension of the $(r-1)$-st hypercohomology of the complexes of vanishing cycles $\phi_{f_X-c}(\bR j_*\QQ_U)$ and $\phi_{f_X-c}(\bR j_!\QQ_U)$ on~$X$ when~$c$ varies in $\Af1$. Is it possible to obtain an algebraic formula when replacing these complexes with the complex $\phi_{f-c}(\QQ_U)$? This would amount to replacing any of the former complexes with their image by the functor $\bR j_*j^{-1}$. We will find such an expression in Theorem \ref{th:lpru}.
\end{remark}

\subsection{Irregular mixed Hodge theory}\label{subsec:IrrMHS}

\subsubsection*{The irregular Hodge filtration}
Continuing the analogy between the cohomologies $\coH^r(U)$ and $\coH^r(U,f)$, let us recall, after \cite{DeligneHII}, that the filtration by ``stupid'' truncation (``filtration bête'') of the logarithmic de~Rham complex $(\Omega_{\csov X}^\cbbullet(\log D),\rd)$ is the filtration
\[\let\csto\rightarrow
F^p(\Omega_{\csov X}^\cbbullet(\log D),\rd)=\bigl\{0\csto\cdots\csto0\csto\Omega_{\csov X}^p(\log D)\csto\cdots\csto\Omega_{\csov X}^d(\log D)\csto0\bigr\},
\]
and, for each $p$ and $r$, the natural morphism
\[
\coH^r\bigl(\csov X,F^p(\Omega_{\csov X}^\cbbullet(\log D),\rd)\bigr)\csto\coH_\dR^r(U)
\]
is injective, with image defining the decreasing filtration $F^\cbbullet\coH^r_\dR(U)$. This is the \index{Hodge!filtration}\index{filtration!Hodge --}Hodge filtration of a \index{Hodge!mixed -- structure}mixed Hodge structure on $\coH^r(U^\an,\QQ)$, and we have for each~\hbox{$r\!\geqslant\!0$} a decomposition
\[
\coH^r_\dR(U)\simeq\csbigoplus_{p\geqslant0}\gr^p_F\coH^r_\dR(U)\simeq\csbigoplus_{p+q=r}\coH^q(X,\Omega_{\csov X}^p(\log D)).
\]
Similar properties hold for the cohomology with compact support
\[
\coH_{\dR,\rc}^r(U)=\coH^r\bigl(\csov X,(\Omega_{\csov X}^\cbbullet(\log D)(-D),\rd)\bigr).
\]

It is therefore tempting to consider the filtration by stupid truncation on the Kontsevich complex \eqref{eq:Kontcompl}. Before doing so, let us note a new phenomenon that appears in the context of a pair $(U,f)$. Indeed, the pole divisor of $f$ contains information that has not been exploited. Recall that $\bP$ denotes the (non-reduced) divisor $\csov f{}^*(\infty)$ with support~$P$. For any $a\in\QQ$, we~can consider the integral part $\lfloor a\bP\rfloor$, that is, if $\bP=\sum_im_iP_i$ with $P_i$ reduced, we set $\lfloor a\bP\rfloor=\sum_i\lfloor a m_i\rfloor P_i$. The family of divisors $\lfloor a\bP\rfloor$ with $a\in\QQ$ is increasing, and there exists a finite set $\cA$ of rational numbers in $[0,1)$ such that the jumps occur at most for $a\in\cA+\ZZ$ (since the jumps occur at most when the denominator of~$a$ divides some $m_i$). Multiplication by~$f$ sends $\cO_{\csov X}(\lfloor a\bP\rfloor)$ to $\cO_{\csov X}(\lfloor(a+1)\bP\rfloor)$. On noting that $\rd f=f\cdot\rd f/f$ and that both~$\rd$ and~$\rd f/f$ preserve logarithmic poles along $D$, we can consider the Kontsevich-Yu complex $(\Omega_f^\cbbullet(\alpha),\rd+\rd f)$, for each $\alpha\in\cA$, with a definition similar to \eqref{eq:Omegaf}, that is,
\begin{equation}\label{eq:Omegalogalpha}
\Omega_f^k(\alpha)
=\ker\Bigl[\rd f:\Omega_{\csov X}^k(\log D)(\lfloor\alpha\bP\rfloor)\rightarrow\Omega_{\csov X}^{k+1}(*D)/\Omega_{\csov X}^{k+1}(\log D)(\lfloor\alpha\bP\rfloor)\Bigr].
\end{equation}

The properties previously recalled for the logarithmic de~Rham complex extend to the Kontsevich complex. We define the filtration $F^p(\Omega_f^\cbbullet(\alpha),\rd+\rd f)$ as the filtration by stupid truncation.

\enlargethispage{-2\baselineskip}%
\pagebreak[2]
\begin{theorem}[\cite{E-S-Y13}]\label{th:E1degalpha}\mbox{}
\begin{enumerate}
\item
For each $\alpha\in\cA$, the inclusion
\[
(\Omega_f^\cbbullet,\rd+\rd f)\hto(\Omega_f^\cbbullet(\alpha),\rd+\rd f)
\]
is a quasi-isomorphism, leading to an identification
\[
\coH^r\bigl(\csov X,(\Omega_f^\cbbullet,\rd+\rd f)\bigr)=\coH^r\bigl(\csov X,(\Omega_f^\cbbullet(\alpha),\rd+\rd f)\bigr).
\]

\item
Furthermore, for each $\alpha\in\cA$, $p\geqslant0$ and $r\geqslant0$, the natural morphism
\[
\coH^r\bigl(\csov X,F^p(\Omega_f^\cbbullet(\alpha),\rd+\rd f)\bigr)\csto\coH_\dR^r(U,f)
\]
is injective, with image defining the decreasing filtration $F_{\irr,\alpha}^\cbbullet\coH^r_\dR(U,f)$. For each~$r$, we~have a decomposition
\[
\coH^r_\dR(U,f)\simeq\csbigoplus_{p\geqslant0}\gr^p_{F_{\irr,\alpha}}\coH^r_\dR(U,f)\simeq\csbigoplus_{p+q=r}\coH^q(X,\Omega_f^p(\alpha)).
\]

\item
A similar result holds for the Kontsevich-Yu complex ``with compact support'' $(\Omega_f^\cbbullet(\alpha)(-D),\rd+\rd f)$.
\end{enumerate}
\end{theorem}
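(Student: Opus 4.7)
The plan is to establish the three parts in turn, with part~(2) being the central and most delicate step. For part~(1), the claim that the inclusion is a quasi-isomorphism is local on $\csov X$ and trivial away from $P$. Near a point of $P$, I choose local analytic coordinates in which $D=\{x_1\cdots x_k=0\}$ and $f=(x_1^{m_1}\cdots x_\ell^{m_\ell})^{-1}$ with $1\leqslant\ell\leqslant k$. In such coordinates both $\Omega_f^\cbbullet$ and $\Omega_f^\cbbullet(\alpha)$ factor as external tensor products of one-variable complexes with the twisted differential $\rd+\rd f$, so it suffices to treat the one-variable case. There, writing out the two Koszul-type complexes explicitly, I would show the inclusion's cokernel is acyclic by checking that, on each graded piece for the $x_j$-order filtration on $\lfloor\alpha\bP\rfloor$, multiplication by $\rd f$ is invertible.

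For part~(2), the heart of the matter is the $E_1$-degeneration of the Hodge-to-de~Rham spectral sequence attached to the stupid filtration $F^\cbbullet$ on $(\Omega_f^\cbbullet(\alpha),\rd+\rd f)$. Once this degeneration is established, the injectivity of $\coH^r(\csov X,F^p)\to\coH^r_\dR(U,f)$ and the direct-sum decomposition follow formally. The general spectral-sequence estimate
\[
\sum_r \dim \coH^r_\dR(U,f)\leqslant\sum_{p+q=r}\dim\coH^q(\csov X,\Omega_f^p(\alpha))
\]
is automatic, so one must establish the reverse inequality. My approach is to combine the Serre-type pairing between $\Omega_f^p(\alpha)$ and an $\Omega_{-f}^{d-p}$-variant with a shift along $P$ (in the style of Theorem~\ref{th:kontsevichyu}) with the global Poincaré-Verdier duality between $\coH^r_\dR(U,f)$ and $\coH^{2d-r}_{\dR,\rc}(U,-f)$: matching the bi-graded dimensions on both sides of the pairing forces the two total sums to coincide, which by the previous inequality yields the desired degeneration.

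Part~(3) is proved by exactly the same argument after replacing $\Omega_f^\cbbullet(\alpha)$ throughout by $\Omega_f^\cbbullet(\alpha)(-D)$; the same Serre-duality pairing reduces it to part~(2). The main obstacle throughout is the $E_1$-degeneration in (2): unlike the classical case $f=0$, where Deligne's weight argument applies directly to the logarithmic de~Rham complex, the twist by $\rd f$ together with the rational index $\alpha$ rule out a direct appeal to standard mixed Hodge theory, so one must either invoke the theory of irregular Hodge structures on mixed twistor modules or carry out a Deligne--Illusie style reduction modulo~$p$ adapted to the fractional-pole Kontsevich--Yu complex, with careful bookkeeping of the extra pole contribution along $P$.
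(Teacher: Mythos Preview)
The paper does not supply a proof of this theorem: it is stated with attribution to \cite{E-S-Y13} and no argument is given in the text. So there is no ``paper's own proof'' to compare against; what follows are comments on the internal coherence of your sketch.

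Your outline for part~(1) is reasonable and is indeed how such statements are handled in \cite{E-S-Y13} and \cite{Yu12}: the question is local along $P$, and in adapted coordinates one reduces to a Koszul-type computation in which the extra pole contribution from $\lfloor\alpha\bP\rfloor$ is killed by $\rd f$.

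Your proposed argument for part~(2), however, has a genuine gap. The duality trick you describe does not force the $E_1$-degeneration. Concretely, Poincar\'e--Verdier duality gives $\sum_r\dim\coH^r_\dR(U,f)=\sum_r\dim\coH^r_{\dR,\rc}(U,-f)$, and Serre duality gives $\sum_{p,q}\dim\coH^q(\csov X,\Omega_f^p(\alpha))=\sum_{p,q}\dim\coH^q(\csov X,\Omega_{-f}^p(\alpha')(-D))$ for the appropriate $\alpha'$. But the two spectral-sequence inequalities
\[
\sum_r\dim\coH^r_\dR(U,f)\leqslant\sum_{p,q}\dim\coH^q(\csov X,\Omega_f^p(\alpha)),\qquad
\sum_r\dim\coH^r_{\dR,\rc}(U,-f)\leqslant\sum_{p,q}\dim\coH^q(\csov X,\Omega_{-f}^p(\alpha')(-D))
\]
are perfectly compatible with both being \emph{strict} simultaneously: duality only tells you the two left-hand sides agree and the two right-hand sides agree, not that left equals right. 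So ``matching bi-graded dimensions on both sides of the pairing'' does not yield the reverse inequality you need.

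You seem to sense this, since your final paragraph retreats to ``one must either invoke the theory of irregular Hodge structures on mixed twistor modules or carry out a Deligne--Illusie style reduction modulo~$p$''. That is correct, and those are precisely the two routes taken in \cite{E-S-Y13}: one proof goes through Hodge-theoretic methods (ultimately via M.~Saito's theory, or the twistor approach of \cite{Mochizuki15a}), and another, due originally to Kontsevich, spreads out to characteristic $p$ and adapts the Deligne--Illusie decomposition to the Kontsevich complex. A third route, also in \cite{E-S-Y13} and alluded to later in this paper (see the proof sketch of Theorem~\ref{th:A}), is to introduce the rescaling parameter $u$ and prove directly that $\dim\coH^r\bigl(\csov X,(\Omega_f^\cbbullet(\alpha),u_o\rd+v_o\rd f)\bigr)$ is independent of $(u_o,v_o)\in\CC^2$; degeneration at $E_1$ then follows from the semicontinuity of these dimensions. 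Any of these is substantial; none of them is the duality argument you first propose.
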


Let us emphasize that, for each $\alpha\in\cA$, the only interesting exponents of $F_{\irr,\alpha}^\cbbullet$ belong to $\{0,\dots,d\}$ in the sense that $\gr^p_{F_{\irr,\alpha}}=0$ for $p$ not in this set.

\begin{remark}[Filtration indexed by $\QQ$]\label{rem:FirrQ}
The \index{filtration!indexed by $\QQ$}filtrations $F_{\irr,\alpha}^\cbbullet\coH^r_\dR(U,f)$ and $F_{\irr,\alpha}^\cbbullet\coH^r_{\dR,\rc}(U,f)$ also have an expression in terms of the \index{complex!twisted meromorphic de~Rham --}\emph{twisted meromorphic de~Rham complex} (\cf\cite{Yu12}), which makes clear that these filtrations increase with~$\alpha$, so that we can regard all of them as forming a decreasing filtration $F_{\irr}^\sfp\coH^r_\dR(U,f)$ indexed by $\sfp=p-\alpha\in-\cA+\ZZ\subset\QQ$ by setting
\[
F_{\irr}^\sfp\coH^r_\dR(U,f)=F_{\irr}^{p-\alpha}\coH^r_\dR(U,f):=F_{\irr,\alpha}^p\coH^r_\dR(U,f)
\]
for each $\alpha\in\cA$ and $p\in\ZZ$, and similarly with compact support. This filtration is called the \index{filtration!irregular Hodge --}\emph{irregular Hodge filtration} of $\coH^r_\dR(U,f)$, \resp $\coH^r_{\dR,\rc}(U,f)$. More precisely, J.-D.\,Yu \cite{Yu12} (\cf also \cite[Cor.\,1.4.5]{E-S-Y13}) considers the complex
\[
0\csto\cO_{\csov X}\To{\rd+\rd f}\Omega^1_{\csov X}(\log D)(\bP)\csto\cdots\csto\Omega^d_{\csov X}(\log D)(d\bP)\csto0
\]
filtered by the subcomplexes $F_\mathrm{Yu}^\sfp$ with $\sfp\in\QQ_{\geqslant0}$:
\begin{multline*}
0\csto\cO_{\csov X}(\lfloor(-\sfp)_+\bP\rfloor)\csto\Omega^1_{\csov X}(\log D)(\lfloor(1-\sfp)_+\bP\rfloor)\csto\cdots\\
\csto\Omega^d_{\csov X}(\log D)(\lfloor(d-\sfp)_+\bP\rfloor)\csto0,
\end{multline*}
where $(a)_+:=\max(a,0)$ and, for $a\in\QQ$, $\lfloor a\bP\rfloor$ has been defined above. J.\nobreakdash-D.\,Yu shows that $\coH^r_\dR(U,f)$ is the hypercohomology of the latter complex and that $F_\irr^\sfp\coH^r_\dR(U,f)$ is the filtration induced by the subcomplex $F_\mathrm{Yu}^\sfp$. As a consequence, we have $\gr^\sfp_{F_\irr}\coH^r_\dR(U,f)=0$ for $\sfp<0$ (the expression in terms of the Kontsevich complex only yields vanishing for $\sfp\leqslant-1$).
\end{remark}

Such a theorem was already envisioned by Deligne in 1984 (\cf \cite{Deligne8406}) in the case where $\dim U=1$.

As in the case of the Hodge filtration on $\coH^r_\dR(U)$ and $\coH^r_{\dR,\rc}(U)$, the irregular Hodge filtration behaves well with respect to the nondegenerate pairing of Theorem \ref{th:kontsevichyu}\eqref{th:kontsevichyu2}. If $(H,F^\cbbullet)$ is a filtered finite dimensional $\CC$-vector space (with a filtration indexed by $\sfp\in -\cA+\ZZ\subset\QQ$), we denote by $\gr^\sfp_FH$ the quotient space $F^\sfp H/F^{>\sfp}H$, where ``${>}\sfp$'' is the successor of $\sfp$ in $-\cA+\ZZ$.
We define a filtration on the dual space~$H^\csvee$ by setting $F^\sfp(H^\csvee)=(F^{>-\sfp}H)^\csperp$ (orthogonality taken with respect to the tautological pairing $H\otimes H^\csvee\csto\CC$). It follows that $\gr^\sfp_F(H^\csvee)\simeq(\gr^{-\sfp}_FH)^\csvee$. On~the other hand, we define the \emph{Tate twist}, for $n\in\ZZ$, by the formula
\[
(H,F^\cbbullet)(n):=(H,F^{\cbbullet-n}),
\]
so that, for each $\sfp$, we have $\gr^\sfp_F(H(n))=\gr^{\sfp-n}_F(H)$.

\begin{theorem}[{\cite[Th.\,2.2]{Yu12}}]\label{th:yuduality}
The \index{pairing!de~Rham --}de~Rham duality pairing of Theorem \ref{th:kontsevichyu}\eqref{th:kontsevichyu2} induces an isomorphism of filtered vector spaces (indexed by $-\cA+\ZZ\subset\QQ$):
\[
(\coH^r_{\dR,\rc}(U,f),F^\cbbullet_\irr)\simeq(\coH^{2d-r}_{\dR}(U,-f),F^\cbbullet_\irr)^\csvee(d),
\]
so that, for each $\sfp\in -\cA+\ZZ$, we have a nondegenerate pairing
\[
\gr^\sfp_{F_\irr}\coH^r_\dR(U,f)\otimes\gr^{d-\sfp}_{F_\irr}\coH^{2d-r}_{\dR,\rc}(U,-f)\csto\CC.
\]
\end{theorem}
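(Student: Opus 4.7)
The strategy is to upgrade the nondegenerate de~Rham pairing of Theorem~\ref{th:kontsevichyu}\eqref{th:kontsevichyu2} to a pairing of filtered complexes on the good projectivization~$\csov X$ and then apply Serre duality on each graded piece. Fix $\alpha\in\cA$ and denote by $\alpha'\in\cA$ its associated index ($\alpha'=1-\alpha$ if $\alpha\neq0$, $\alpha'=0$ if $\alpha=0$, using the symmetry of $\cA$ in $[0,1)$ under this involution). By Theorem~\ref{th:E1degalpha}, $F^p_{\irr,\alpha}\coH^r_\dR(U,f)$ is realized as the image in $\coH^r_\dR(U,f)$ of the hypercohomology of the stupidly truncated Kontsevich--Yu complex $\sigma^{\geqslant p}(\Omega^\cbbullet_f(\alpha),\rd+\rd f)$, and analogously $F^p_{\irr,\alpha'}\coH^{2d-r}_{\dR,\rc}(U,-f)$ is the image of the hypercohomology of $\sigma^{\geqslant p}(\Omega^\cbbullet_{-f}(\alpha')(-D),\rd-\rd f)$.

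The sheaf-level input is the extension to arbitrary $\alpha\in\cA$ of the Serre dual identification already recalled in the excerpt before Theorem~\ref{th:kontsevichyu}: the termwise wedge product induces, for each $k$, a perfect pairing
\[
\Omega^k_f(\alpha)\otimes_{\cO_{\csov X}}\Omega^{d-k}_{-f}(\alpha')(-D)\longrightarrow\omega_{\csov X}
\]
exhibiting $\Omega^{d-k}_{-f}(\alpha')(-D)$ as $\omega_{\csov X}\otimes(\Omega^k_f(\alpha))^\csvee$, a local computation carried out in \cite{Yu12} with nontrivial content along the pole divisor~$P$. Combined with the compatibility with the twisted differentials $\rd\pm\rd f$, this promotes to a morphism of filtered complexes whose restriction to $\sigma^{\geqslant p}\otimes\sigma^{\geqslant d-p}$ lands in $\omega_{\csov X}[-d]$ and which vanishes on $\sigma^{\geqslant p}\otimes\sigma^{\geqslant d-p+1}$ for elementary total-degree reasons, so that the stupid filtrations on the two Kontsevich--Yu complexes are mutually orthogonal in the way required for a filtered duality.

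Taking hypercohomology and invoking the $E_1$-degeneration from Theorem~\ref{th:E1degalpha}, one obtains
\[
\gr^p_{F_{\irr,\alpha}}\coH^r_\dR(U,f)\simeq\coH^{r-p}(\csov X,\Omega^p_f(\alpha))
\]
and its compact support analogue, and the induced pairing on graded pieces is tautologically identified with the classical Serre duality pairing
\[
\coH^{r-p}(\csov X,\Omega^p_f(\alpha))\otimes\coH^{d-r+p}(\csov X,\omega_{\csov X}\otimes(\Omega^p_f(\alpha))^\csvee)\longrightarrow\coH^d(\csov X,\omega_{\csov X})\simeq\CC,
\]
nondegenerate since $\csov X$ is smooth projective and $\Omega^p_f(\alpha)$ is locally $\cO_{\csov X}$-free of finite rank. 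Reindexing via $\sfp=p-\alpha$ and tracking the complementary index $d-\sfp$ through the involution $\alpha\leftrightarrow\alpha'$ of $\cA$ yields the asserted nondegenerate pairing on $\sfp$-graded pieces; the filtered isomorphism with Tate twist $(d)$ then follows by combining graded nondegeneracy with the global nondegeneracy of Theorem~\ref{th:kontsevichyu}\eqref{th:kontsevichyu2} and a dimension count, the twist $(d)$ accounting for the appearance of $2d$ in the total degree of the Serre duality target.

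The main technical obstacle is the local Serre dual identification of the second step. In coordinates where $f=(x_1^{m_1}\cdots x_\ell^{m_\ell})^{-1}$ along~$P$, both $\Omega^k_f(\alpha)$ and $\Omega^{d-k}_{-f}(\alpha')(-D)$ are defined as \emph{kernels} of explicit $\rd f$-wedge morphisms and are strictly smaller than the corresponding logarithmic sheaves with pole orders $\lfloor\alpha m_i\rfloor$ and $\lfloor\alpha'm_i\rfloor$. Writing out generators of each kernel in these coordinates and verifying that the wedge pairing identifies them as Serre duals is the technical heart of \cite{Yu12}, and dictates the precise rule relating $\alpha'$ to $\alpha$ as well as the compatibility of the $\sfp$-indexing with the successor structure of the combined index set $-\cA+\ZZ$.
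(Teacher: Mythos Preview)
The paper does not supply its own proof of this theorem: it is stated as a citation from \cite[Th.\,2.2]{Yu12} and is immediately followed by the consequence drawn from Remark~\ref{rem:FirrQ}, with no intervening proof environment. So there is nothing in the paper to compare your argument against directly.

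That said, your proposal is a faithful outline of the argument in \cite{Yu12}. The three ingredients you isolate---the local Serre-dual identification $\Omega^{d-k}_{-f}(\alpha')(-D)\simeq\omega_{\csov X}\otimes(\Omega^k_f(\alpha))^\csvee$ along the pole divisor, the $E_1$-degeneration of Theorem~\ref{th:E1degalpha} giving $\gr^p_{F_{\irr,\alpha}}\coH^r_\dR(U,f)\simeq\coH^{r-p}(\csov X,\Omega^p_f(\alpha))$, and classical Serre duality on the smooth projective~$\csov X$---are exactly Yu's steps, and your bookkeeping with the involution $\alpha\leftrightarrow\alpha'$ on~$\cA$ and the reindexing $\sfp=p-\alpha$ is correct. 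You are also right that the substantive work is the first step, the explicit coordinate computation of the kernels defining~$\Omega^k_f(\alpha)$ near~$P$; the paper itself alludes to this computation (just after~\eqref{eq:Omegaf}) when it notes that these sheaves are locally free by \cite{Yu12}.

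One small remark: in your final paragraph you speak of ``the symmetry of $\cA$ in $[0,1)$ under this involution'' as though it were given; strictly, the fact that $\alpha\in\cA$ implies $1-\alpha\in\cA$ (for $\alpha\neq0$) follows from the description of~$\cA$ via denominators dividing the multiplicities~$m_i$ of~$\bP$, and is worth stating rather than assuming.
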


From Remark \ref{rem:FirrQ} we deduce that $\gr^\sfp_{F_\irr}\coH^r_\dR(U,f)=0$ for $\sfp\notin[0,d]\cap\QQ$, and a similar property for $\gr^\sfp_{F_\irr}\coH^r_{\dR,\rc}(U,f)$.

\subsubsection*{Some examples}
The \index{Thom-Sebastiani}\emph{Thom-Sebastiani property} allows for simple computations: set for example $U=\Gm^d$ and $f=x_1+\cdots+x_d$; then $\coH^r_\dR(\Gm^d,f)=0$ for $r\neq d$ and $\dim \coH^d_\dR(\Gm^d,f)=1$ with irregular Hodge filtration jumping at $d$ only (\cf \cite[(A.23)]{F-S-Y18}).

On the other hand, it can happen that the jumping indices of the irregular Hodge filtration of $\coH^r_\dR(U,f)$ are integers and, even more, that $\coH^r_\dR(U,f)$ is isomorphic to the cohomology of some algebraic variety in such a way that the irregular Hodge filtration of $\coH^r_\dR(U,f)$ corresponds to that of the canonical mixed Hodge structure on the latter cohomology. In such a case, the computation of the irregular Hodge numbers can be easier than a direct computation of the Hodge numbers of this mixed Hodge structure.

For example, if $U=\Af1_t\times V$ and $f=t\cdot g$, where $g:V\csto\Af1$ is a regular function, then $\coH^r_{\dR,\rc}(U,f)\simeq\coH^{r-2}_{\dR,\rc}(g^{-1}(0))$ and the irregular Hodge filtration of the left-hand side corresponds to the Hodge filtration of the mixed Hodge structure on the right-hand side Tate-twisted by $-1$ (\cf\cite[Ex.\,A.27]{F-S-Y18}).

If $g$ is defined by means of \emph{Thom-Sebastiani sums}, the computation can be simplified. Let us mention one remarkable computation obtained in this way by Y.\,Qin \cite{Qin23}, extending that made in \cite{F-S-Y18}.

Let $k$ be an integer $\geqslant1$ and let us consider the case of the $k$-fold Thom-Sebastiani sum $g_k:V^k\csto\Af1$ of a regular function $g:V\csto\Af1$---\ie for \hbox{$(x_1,\dots,x_k)\in V$}, $g_k(x_1,\dots,x_k)=\sum_{i=1}^kg(x_i)$---and the corresponding hypersurface
\[
\cH_k(g)=\{g_k=0\}\subset V^k.
\]
The symmetric group $\gS_k$ acts on $V^k$ and preserves $\cH_k(g)$, and we are interested in computing the Hodge numbers of the isotypical component $\coH^r_\dR(\cH_k(g),g_k)^\sigma$ with respect to the signature character $\sigma$ of $\gS_k$.

Y.\,Qin considered, as in \cite{F-S-Y18}, the case of the function
\[
\begin{aligned}
g:\Gm^d&\csto\Af1\\[-3pt]
(y_1,\dots,y_d)&\mto y_1+\cdots+y_d+\frac{1}{y_1\cdots y_d}.
\end{aligned}
\]
In Arithmetic, the function $g$ is at the source of the \emph{generalized Kloosterman sums}, and in Mirror symmetry it plays the role of the \emph{mirror of $\PP^d$}. There is a supplementary action of the group $\mu_2=\{\pm1\}$ on $\cH_k(g)$ induced by $(y_1,\dots,y_d)\mto\pm(y_1,\dots,y_d)$, and we are interested in the invariant part with respect to this action. We denote by $\chi:\gS_k\times\mu_2\csto\{\pm1\}\subset\CC^*$ the corresponding character $\sigma\cdot\id$. Y.\,Qin has given a closed formula for the \index{Hodge!numbers}Hodge numbers of the $(dk+1)$-pure part of $\coH^{dk-1}_\rc\bigl(\cH_k(g)\bigr)^\chi(-1)$, which extends one obtained in \cite{F-S-Y18} when $d=1$, both proved by means of computation of \index{Hodge!irregular -- numbers}irregular Hodge numbers.

\begin{theorem}[Y.\,Qin \cite{Qin23}]\mbox{}
Assume that $\gcd(k,d+1)=1$. Then the Hodge numbers $h^{p,dk+1-p}$ of the \index{Hodge!Tate-twisted -- structure}Tate-twisted Hodge structure $\gr^W_{dk+1}\bigl[\coH^{dk-1}_\rc\bigl(\cH_k(g)\bigr)^\chi(-1)\bigr]$ are, for $p\leqslant dk/2$, the coefficients of $t^px^k$ in the power series expansion of
\[
\frac{(1-t)t^{d+1}}{(1-t^{d+1})(1-x)(1-tx)\cdots(1-t^dx)},
\]
and for $p> dk/2$, they are obtained by duality $h^{p,dk+1-p}=h^{dk+1-p,p}$.
\end{theorem}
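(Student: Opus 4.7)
The strategy is to translate the computation of classical Hodge numbers of $\gr^W_{dk+1}\bigl[\coH^{dk-1}_\rc(\cH_k(g))^\chi(-1)\bigr]$ into a computation of \emph{irregular} Hodge numbers on a pair $(U,f)$, where the Thom-Sebastiani structure of $g_k$ and the toric shape of $g$ make the filtered twisted de~Rham complex amenable to explicit analysis.

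First, I would invoke the Tate-twist isomorphism recalled in Section \ref{subsec:IrrMHS}: for $U=\Af1_t\times V$ and $f=t\cdot G$ one has $\coH^r_{\dR,\rc}(U,f)\simeq\coH^{r-2}_{\dR,\rc}(G^{-1}(0))(-1)$, compatibly with irregular Hodge filtration on the left and Hodge filtration on the right. Applied with $V=\Gm^{dk}$ and $G=g_k$, and noting $(\gS_k\times\mu_2)$-equivariance, the problem reduces to computing $\gr^\sfp_{F_\irr}$ on the weight $(dk+1)$-pure part of the $\chi$-isotype of $\coH^{dk+1}_{\dR,\rc}(\Af1_t\times\Gm^{dk},\,t\cdot g_k)$.

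Second, I would exploit Thom-Sebastiani for irregular mixed Hodge structures. Since $t\cdot g_k(y_1,\dots,y_k)=\sum_{i=1}^k(t\cdot g)(y_i)$, working in a family over $\Af1_t$ and taking the $k$-fold convolution (external product followed by extraction of the $t$-direction) expresses the cohomology of $(\Af1_t\times\Gm^{dk},tg_k)$ in terms of $k$ copies of that of $(\Af1_t\times\Gm^d,tg)$. The $\chi$-isotype under $\gS_k$ (signature) picks out the $k$-th exterior-type summand of this $k$-fold product, which after the change of coordinates $z_i=ty_i$ (yielding $tg=\sum z_i+t^{d+1}/\prod z_i$) is naturally governed by a graded $(d+1)$-dimensional vector space; its $k$-th symmetric/exterior power in the graded sense produces the denominator $\prod_{i=0}^d(1-t^ix)$ of the generating function (coefficients of $x^k$ being $h_k(1,t,\dots,t^d)$).

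Third, I would compute the irregular Hodge filtration on the single pair $(\Af1_t\times\Gm^d,tg)$ explicitly using Yu's complex $F^\sfp_{\mathrm{Yu}}\bigl(\Omega^\cbbullet_{\csov X}(\log D)(\lfloor(\cbbullet-\sfp)_+\bP\rfloor),\rd+\rd(tg)\bigr)$ of Remark~\ref{rem:FirrQ}, after choosing a good toric/blow-up projectivization $\csov X$ of $\Af1_t\times\Gm^d$ that resolves the poles of~$tg$ along $y_i=0,\infty$ and $t=\infty$ (the change of variables $z_i=ty_i$ suggests a blow-up adapted to the monomial $t^{d+1}/\prod z_i$). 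The resulting irregular Hodge numbers turn out to be $1$ in each degree $0,1,\dots,d$; combined with Step~2 this yields the factor $1/\prod_{i=0}^d(1-t^ix)$. The prefactor $(1-t)t^{d+1}/(1-t^{d+1})=t^{d+1}/(1+t+\cdots+t^d)$ then arises from two sources: the shift $t^{d+1}$ from the Tate twist $(-1)$ (which shifts the irregular filtration by one unit, and the weight by two), and the averaging $1/(1+t+\cdots+t^d)$ coming from the $\mu_{d+1}$-Galois action of the $(d+1)$-fold cover $t\mapsto t^{d+1}$ implicit in the above change of variables; extracting the weight $(dk+1)$-pure part and the $\mu_2\subset\mu_{d+1}$-isotype dictated by $\chi$ precisely kills the residual invariants, which is where the hypothesis $\gcd(k,d+1)=1$ enters (it guarantees that the $\mu_{d+1}$-action has a free orbit on the $k$-tuples, making the averaging formula exact).

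The main obstacle, I expect, is Step~2: rigorously carrying out the Thom-Sebastiani decomposition when the parameter $t$ is \emph{shared} among the $k$ factors, keeping track of the interaction between the $\gS_k$-action on the $y$-variables, the $\mu_2$-action $y\mapsto -y$, and the $\mu_{d+1}$-Galois action produced by the coordinate change. Once these three commuting actions are disentangled and the combinatorics of the $\chi$-isotype is identified with a symmetric/exterior power of a graded vector space of graded dimension $1+t+\cdots+t^d$, the generating-function formula of the statement is just the standard combinatorial identity for such powers, and the duality $h^{p,dk+1-p}=h^{dk+1-p,p}$ for $p>dk/2$ follows from Theorem~\ref{th:yuduality} applied to the pair $(U,\pm f)$.
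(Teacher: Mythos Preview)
The paper does not contain a proof of this theorem. It is stated as a result of Y.~Qin with a citation to \cite{Qin23}, and the only indication of method is the sentence immediately preceding it: the formula (and the $d=1$ case from \cite{F-S-Y18}) is ``proved by means of computation of irregular Hodge numbers.'' There is therefore no proof in the paper against which to compare your proposal.

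That said, your overall strategy---pass to irregular Hodge numbers via the identification $\coH^r_{\dR,\rc}(\Af1_t\times V,\,tG)\simeq\coH^{r-2}_{\dR,\rc}(G^{-1}(0))(-1)$ recalled in Section~\ref{subsec:IrrMHS}, and then exploit the Thom-Sebastiani shape of $g_k$---is consistent with that hint and with the framework of the paper. But several steps of your sketch are not yet arguments. In Step~2, $t\cdot g_k$ is \emph{not} a Thom-Sebastiani sum in the sense used in the paper, precisely because the variable $t$ is shared by all $k$ summands; your ``convolution over $\Af1_t$'' is a name for the difficulty, not a resolution of it. In Step~3, the claimed $\mu_{d+1}$-Galois action and the inclusion $\mu_2\subset\mu_{d+1}$ are unmotivated: the paper's $\mu_2$-action is $(y_1,\dots,y_d)\mapsto\pm(y_1,\dots,y_d)$, which bears no evident relation to $(d+1)$-st roots of unity, and your explanation of where the hypothesis $\gcd(k,d+1)=1$ enters is a guess rather than a mechanism. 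To make this rigorous you would have to go to \cite{Qin23} (or \cite{F-S-Y18} for $d=1$) for the actual bookkeeping.
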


\subsubsection*{The weight filtration}
On the de~Rham cohomology spaces $\coH^r_{\dR}(U)$, the \index{filtration!weight --}weight filtration is induced by a filtration of the de~Rham complexes: for a logarithmic form (of some degree), the weight is the number of components of $D$ along which the form as a pole. On the other hand, the weight filtration on $\coH^r_{\dR}(U,f)$ cannot be defined as simply as in the case where $f=0$ since the pole set $P$ disturbs the notion of pole of a logarithmic form. We~define it in a different way, that can also be used in the case where $f=0$. As in the topological setting, we make use of the theory of mixed Hodge modules \cite{MSaito87} and we compute $\coH^r_{\dR}(U,f)$ and $\coH^r_{\dR,\rc}(U,f)$ by means of the de~Rham complex of the regular holonomic $\cD_{\Af1}$-modules $M$ corresponding to the perverse complexes $\pR^{r-d}f_*(\pQQ_U)$ and $\pR^{r-d}f_!(\pQQ_U)$ respectively. Given such a module $M$, that we regard as a quasi-coherent $\cO_U$-module with a connection $\nabla$, one shows that the twisted de~Rham complex
\[
M\To{\nabla+\rd t}\Omega^1_{\Af1}\otimes M
\]
has nonzero cohomology in degree $1$ only, giving rise to an exact functor
\[
M\mto\coH^1_{\dR}\bigl(\Af1,(\Omega_{\Af1}^\cbbullet\otimes M,\nabla+\rd t)\bigr)
\]
from regular holonomic $\cD_{\Af1}$-modules to $\CC$-vector spaces. When $M$ underlies a mixed Hodge module, it is endowed with a weight filtration $W_\bbullet M$ by regular holonomic submodules, and we set
\[
W_\bbullet\coH^1_{\dR}\bigl(\Af1,(\Omega_{\Af1}^\cbbullet\otimes M,\rd+\rd t)\bigr)=\coH^1_{\dR}\bigl(\Af1,(\Omega_{\Af1}^\cbbullet\otimes W_\bbullet M,\rd+\rd t)\bigr),
\]
in a way similar to the topological case. In this way, we obtain a weight filtration for $\coH^r_{\dR}(U,f)$ and $\coH^r_{\dR,\rc}(U,f)$. \emph{Let us emphasize that, in general, contrary to the example above, the pair $(F_\irr^\cbbullet,W_\bbullet)$ does not form a mixed Hodge structure of $\coH^r(U,f)$ or $\coH^r_{\dR,\rc}(U,f)$.}

\section{Monodromy properties of a pair \texorpdfstring{$(U,f)$}{Uf}}

While the purpose of Section \ref{sec:cohUf} was to extend as much as possible cohomological properties of $U$ (\ie pairs, $(U,0)$) to such properties for any pair $(U,f)$, we~consider in this section properties that do not exist (\ie are trivial) when $f\equiv0$, and which are related to monodromy. A non trivial monodromy operator may occur around each critical fiber, that is, a fiber of $f$ above a typical or atypical critical value, \ie any point of the bifurcation set $B(f)$ (\cf Theorem \ref{th:Bf}), and it also may occur around the ``fiber at infinity''. However, mimicking now the local properties of critical points, we aim at defining a \emph{global vanishing space with monodromy} that takes into account all critical values at the same time, including~$\infty$, so that it is not just the direct sum over all (typical or atypical) critical values of $f$ at finite distance. Furthermore, when considering complex coefficients, we will produce an algebraic formula for these vanishing cycles, similar in spirit to the expression discovered by Brieskorn in the case of an isolated singularity of hypersurface, in terms of the \emph{Gauss-Manin system} and the \emph{Brieskorn lattice}. This algebraic formula is obtained via the consideration of the \emph{twisted de~Rham complex with parameter}.

\subsection{Monodromy on global vanishing cycles attached to \texorpdfstring{$(U,f)$}{Uf}}\label{sec:quiverperverse}

From the initial description of the \index{vanishing cycles!global --}global vanishing cycle space $\coH^r(U,f;\csring)=\coH^r(U,f^{-1}(\rho);\csring)$ for $\rho\gg0$ in Section \ref{subsec:singcoh}, one obtains that this space carries a monodromy operator produced by rotating the fiber $f^{-1}(\rho)$, that is, by considering the \index{sheaf!locally constant --}locally constant sheaf on the circle $S^1$ with stalk at $\rme^{\sfi\theta}$ equal to $\coH^r(U,f^{-1}(\rho\rme^{\sfi\theta});\csring)$ (recall the choice \ref{choice:Crho}). This description does neither make clear the relation with the Poincaré-Verdier duality pairing of Proposition \ref{prop:PDUf} nor with the supplementary structure that is produced on this locally constant sheaf by the critical values of $f$. We will thus give a family of isomorphic descriptions of this space by rotating $f$ and considering the rotation angle as a new parameter of the description.

For any $\theta\in S^1$, let us consider the pair $(U,\rme^{-\sfi\theta}f)$ and its global vanishing cycle spaces. Recalling that $\cswt\Delta_\rho$ is pictured in Figure \ref{fig:tDeltaR}, let us rotate it by setting
\begin{equation}\label{eq:Deltainf}
\cswt\Delta_\rho^{\leqslant\infty}:=\csbigsqcup_\theta\rme^{\sfi\theta}\cswt\Delta_\rho\subset S^1\times\Delta_\rho,
\end{equation}
with projections $p:\cswt\Delta_\rho^{\leqslant\infty}\csto S^1$ and $q:\cswt\Delta_\rho^{\leqslant\infty}\csto\Delta_\rho$. Due to the choices \ref{choice:Crho}, one easily proves:

\begin{lemma}
There exists a semi-analytic Whitney stratification $\cswt\cS_\rho$ of $\cswt Z_\rho:=(\id\times\cswt f)^{-1}(\cswt\Delta_\rho^{\leqslant\infty})\subset S^1\times X$ such that
\begin{enumerate}
\item
$Z_\rho:=(\id\times f)^{-1}(\cswt\Delta_\rho^{\leqslant\infty})\subset S^1\times U$ is a union of strata,
\item
and for each stratum $\cswt S$ of $\cswt\cS_\rho$, the map $p\circ(\id\times\cswt f)|_{\cswt S}:\cswt S\csto S^1$ is a submersion.
\end{enumerate}
\end{lemma}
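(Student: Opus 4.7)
The plan is to build the stratification in stages: start from a Thom $A_{f_X}$-stratification of $X$ downstairs, pull it back through the real blow-up~$\varpi$, take the product with $S^1$, and finally refine along the twisted boundary of $\cswt Z_\rho$ using the submersivity of $\cswt f$ on each stratum. The main obstacle will be maintaining Whitney regularity between the new strata along the boundary locus.

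By Hironaka's theorem (cited in the proof of Theorem \ref{th:Bf}), there is a semi-algebraic Whitney stratification $\cS$ of $X$ satisfying Thom's $A_{f_X}$-condition, refined so that $U$, each intersection of irreducible components of $H$, and each critical fiber $f_X^{-1}(c)$ ($c\in C$) is a union of strata. Decomposing into connected pieces and using the local product structure along $H$ provided by Choice~\ref{choice:Crho}, each stratum $S$ is either \emph{vertical} ($f_X(S)=\{c\}$ for some $c\in C$) or \emph{horizontal} ($f_X|_S$ is a submersion onto a Zariski open subset of $\Af1\smallsetminus C$). Pulling back through $\varpi$ yields a Whitney stratification $\cswt\cS=\{\varpi^{-1}(S):S\in\cS\}$ of $\cswt X(H)$: along $H$ the map $\varpi$ is locally a product with tori $(S^1)^\ell$, so each $\varpi^{-1}(S)$ is a smooth semi-analytic submanifold, $\cswt f=f_X\circ\varpi$ is constant or submersive on it according to the type of $S$, and the Whitney conditions transfer from $\cS$ via the explicit local models.

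Take now the product stratification $\{S^1\times\cswt S\}_{\cswt S\in\cswt\cS}$ of $S^1\times\cswt X(H)$ and refine it along $\cswt Z_\rho$ as follows: for a vertical $\cswt S$, since $\cswt f(\cswt S)=\{c\}\subset\rond\Delta_\rho$, the whole product $S^1\times\cswt S$ lies in $\cswt Z_\rho$ and is kept as a single stratum; for a horizontal $\cswt S$, split $(S^1\times\cswt S)\cap\cswt Z_\rho$ into the ``bulk'' $S^1\times\bigl(\cswt S\cap\cswt f^{-1}(\rond\Delta_\rho)\bigr)$ and the ``boundary''
\[
\bigl\{(\theta,\cswt x)\in S^1\times\cswt S:|\cswt f(\cswt x)|=\rho,\ \arg\cswt f(\cswt x)-\theta\in(\pi/2,3\pi/2)\bmod 2\pi\bigr\},
\]
the latter being open in $S^1\times\bigl(\cswt S\cap\cswt f^{-1}(\partial\Delta_\rho)\bigr)$ because $\cswt f|_{\cswt S}$ is a submersion and hence transverse to $\partial\Delta_\rho$. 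Call $\cswt\cS_\rho$ the resulting semi-analytic partition of $\cswt Z_\rho$.

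For property~(1), the strata of $\cswt\cS_\rho$ coming from $\cswt S\subset U$ lie in $S^1\times U$ and are thus contained in $Z_\rho$, while those coming from $\cswt S\subset\varpi^{-1}(H)$ are disjoint from $S^1\times U$; hence $Z_\rho$ is precisely the union of the former. For property~(2), vertical strata and bulk strata are products with $S^1$, so the first projection is tautologically submersive; at any point of a horizontal boundary stratum, the openness of the angular condition makes $\partial/\partial\theta$ tangent to the stratum, so the first projection is again submersive. Whitney regularity between these pieces follows from that of $\cswt\cS$, the transversality of $\cswt f^{-1}(\partial\Delta_\rho)$ with each horizontal stratum, and the transferred Thom $A_{f_X}$-condition at the frontier between horizontal and vertical strata; this last point is the core technical step and is precisely what Choice~\ref{choice:Crho} together with the $A_{f_X}$-stratification have been arranged to guarantee.
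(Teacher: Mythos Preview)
The paper gives no proof here beyond the phrase ``Due to the choices~\ref{choice:Crho}, one easily proves''. Your construction is exactly the argument left implicit: a Thom $A_{f_X}$-stratification adapted to $U$, $H$, and the critical fibers (available by Choice~\ref{choice:Crho}), producted with $S^1$ and then refined along the rotating boundary using that each horizontal stratum is transverse to the circle $\{|f_X|=\rho\}$. Your verifications of (1) and (2) are correct---in particular, the point that the angular constraint defining a boundary stratum is open in $\theta$, so that $\partial/\partial\theta$ remains tangent, is the right mechanism for~(2). The Whitney regularity after refinement is the only genuinely technical step, and your attribution of it to transversality together with the $A_{f_X}$-condition is accurate; this is precisely what Choice~\ref{choice:Crho} was set up to guarantee.
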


Since the morphism $(\cswt Z_\rho,\cswt\cS)\csto (S^1,\cT)$ is stratified with $\cT$ having the only stratum $S^1$, and due to the $\cT$-constructibility of the complexes $\bR f_*\csring_{\cswt Z_\rho}$, $\bR f_!\csring_{\cswt Z_\rho}$, $\bR f_*\csring_{Z_\rho}$, $\bR f_!\csring_{Z_\rho}$ (\cf\eg\cite[Th.\,10.2.6]{M-S22}), that is, the local constancy of their cohomologies, it follows that the spaces $\coH^r(\cswt f^{-1}(\rme^{\sfi\theta}\cswt\Delta_\rho);\csring)\simeq\coH^r(U,\rme^{-\sfi\theta}f;\csring)$, \resp $\coH^r_\rc(f^{-1}(\rme^{\sfi\theta}\cswt\Delta_\rho);\csring)\simeq\coH^r_\rc(U,\rme^{-\sfi\theta}f;\csring)$, glue as a local system on~$S^1$ when $\theta$ varies. If~we make use of the correspondence between local systems of $\csring$\nobreakdash-modules of finite type on~$S^1$ and pairs consisting of an $\csring$\nobreakdash-module of finite type together with an automorphism, we are entitled to consider the previous local systems as the ``\index{vanishing cycles!at infinity}vanishing cycle spaces of $(U,f)$ at infinity''.

\begin{definition}\label{def:globalvanishingcycles}
The \index{vanishing cycles!global --}\emph{local system of global vanishing cycles} of the pair $(U,f)$ are the local systems $\cswt\Phi_\infty^r(U,f)$ and $\cswt\Phi_{\infty,\rc}^r(U,f)$ with respective fibers $\coH^r(U,\rme^{-\sfi\theta}f;\csring)$ and $\coH^r_\rc(U,\rme^{-\sfi\theta}f;\csring)$ at $\theta\in S^1$.
\end{definition}

\begin{remark}\label{rem:Phiperv}\mbox{}
\begin{enumerate}
\item\label{rem:Phiperv1}
To any perverse complex $\cF$ on $\Af1$ with singular set $C$, one can associate similarly a local system $\cswt\Phi_\infty(\cF):=p_!(q^{-1}\cF)$ on $S^1$ whose fiber at $\theta$ is $\coH^0_\rc(\rme^{\sfi\theta}\cswt\Delta_\rho;\cF)$. Note that, applying the commutativity of $\bR p_!$ with restriction to $\theta$ and Proposition \ref{prop:pervexact}, the higher direct images $R^kp_!(q^{-1}\cF)$ vanish for $k\geqslant1$. One recovers $\cswt\Phi_\infty^r(U,f)$, \resp $\cswt\Phi_{\infty,\rc}^r(U,f)$, by setting $\cF=\pR^{r-d}f_*\csring_U$, \resp $\cF=\pR^{r-d}f_!\csring_U$.
\item\label{rem:Phiperv2}
By replacing $\Delta_\rho$ by a small closed disc centered at $c\in C$, one defines similarly the local system $\cswt\pphi_c\cF$.
\end{enumerate}
\end{remark}

These local systems can be nontrivial: when expressing these spaces in terms of perverse sheaves $\cF$ as in Corollary \ref{cor:Ufpush}, what we are doing is to rotate the picture in Figure \ref{fig:tDeltaR} with respect to its center, and the support of a cohomology class $\coH^0_\rc(\cswt\Delta_\rho;\cF)$ could intersect the open interval $(\pi/2,3\pi/2)$ at the boundary $\cswt\Delta_\rho$, so that it does not remain with compact support in $\rme^{\sfi\theta}\cswt\Delta_\rho$ for some~$\theta$.

The question that remains is to relate these global vanishing cycle spaces with monodromy to the local ones with monodromy, as in Proposition \ref{prop:HUfphifiltration}. The new phenomenon that occurs is that the terms of the filtration considered in Proposition~\ref{prop:HUfphifiltration} do not glue as local systems on $S^1$. Let us explain why. When $a\in\RR$ is fixed and~$\theta$ varies, some elements of $\rme^{-\sfi\theta}C$ may enter or leave the subset \hbox{$\cswt\Delta_\rho\cap\{\reel(t)<a\}$} so that the dimension of the cohomology space
\[
\coH^0_\rc(\cswt\Delta_\rho\cap\{\reel(t)<a\};\cF_\theta),
\]
with $\cF_\theta=\pR^{r-d}(\rme^{-\sfi\theta}f)_*(\pring_U)$ or $\pR^{r-d}(\rme^{-\sfi\theta}f)_!(\pring_U)$, changes at $\theta_o$.

We need to adapt the notion of filtration in order to take into account such a phenomenon, which is very similar to that occurring in the asymptotic theory of differential equations near an irregular singular point, as explained \eg in \cite{Wasow65} and~\cite{Malgrange91}. We develop in the Section \ref{subsec:Stokesfilt} the general framework of such \emph{Stokes-filtered local systems}, that we will first introduce in the case of a pair $(U,f)$. Let us emphasize that, instead of indexing the filtration by $\RR$ with its natural order, we~index the filtration by $C$ that we equip with a partial order depending on $\theta$.

\subsection{The Stokes filtration attached to a pair \texorpdfstring{$(U,f)$}{UF}}\label{sec:constStokesfilt}
Let $(U,f)$ be as in Section~\ref{sec:cohUf}. We recognize in the expressions of Corollary \ref{cor:Ufpush} for $\coH^r(U,f)$ and $\coH^r_\rc(U,f)$ the fiber at $\theta=\pi$ of $\cswt\Phi_\infty(\cF)$ with the perverse sheaf $\cF$ being respectively $\pR^{r-d}f_*(\pring_U)$ and $\pR^{r-d}f_!(\pring_U)$ (\cf Remark \ref{rem:Phiperv}). We will define the Stokes filtration on $\cswt\Phi_\infty^r(U,f)$ and $\cswt\Phi_{\infty,\rc}^r(U,f)$ by means of such an identification. We thus treat the general case of an $\csring$-perverse sheaf $\cF$ on $\Af1$ with critical set $C$. For the application to the pair $(U,f)$, one can use expressions similar to those of Lemma \ref{lem:coHUf}, but for proving the Stokes properties, it is easier to work on $\Af1$ with a perverse sheaf~$\cF$, as we did in Proposition \ref{prop:HUfphifiltration}.

To a perverse sheaf $\cF$ of $\csring$-modules on $\Af1$ with singular set $C$ we will attach a \index{filtration!indexed by $C$}family indexed by $c\in C$ of nested subsheaves\footnote{These subsheaves are \emph{not} local systems.} $\cswt\Phi_\infty(\cF)_{<c}\subset \cswt\Phi_\infty(\cF)_{\leqslant c}$ of the local system $\cswt\Phi_\infty(\cF)$. We intend that this family of subsheaves satisfies the properties defining a \index{filtration!Stokes --}\index{Stokes!filtration}\emph{Stokes filtration} that will be analyzed with more details in the next section.

Let us first define a partial order on $C$ depending on $\theta\in S^1$ by the formula
\begin{equation}\label{eq:partialorder}
c'\leqslant_\theta c\quad\text{if}\quad c'=c\text{ or } c'\neq c\text{ and }\arg(c'-c)\in\theta+(\pi/2,3\pi/2)\bmod2\pi,
\end{equation}
and $c'<_\theta c$ means $c'\!\neq\!c$ and $c'\leqslant_\theta c$. For example, if $\theta\!=\!0$, $c'<_0 c$ means $\reel(c'-c)\!<\!0$.

The subsheaves $\cswt\Phi_\infty(\cF)_{<c}\subset \cswt\Phi_\infty(\cF)_{\leqslant c}$ should satisfy the following properties:
\begin{enumerate}
\item\label{def:PhiStokes1}
for each $\theta\in S^1$, the germs of $\cswt\Phi_\infty(\cF)_{<c},\cswt\Phi_\infty(\cF)_{\leqslant c}$ at $\theta$ satisfy
\[
c'<_\theta c\implies\cswt\Phi_\infty(\cF)_{\leqslant c',\theta}\subset\cswt\Phi_\infty(\cF)_{<c,\theta};
\]
\item\label{def:PhiStokes2}
each quotient sheaf $\gr_c\cswt\Phi_\infty(\cF):=\cswt\Phi_\infty(\cF)_{\leqslant c}/\cswt\Phi_\infty(\cF)_{<c}$ is a \index{sheaf!locally constant --}\emph{locally constant sheaf} of $\csring$-modules;
\item\label{def:PhiStokes3}
for any $\theta$ and any $c\in C$, there exists an isomorphism (possibly depending on~$\theta$) of germs at $\theta$: $\cswt\Phi_\infty(\cF)_{\leqslant c,\theta}\simeq\csbigoplus_{c'\leqslant_\theta c}\gr_{c'}\cswt\Phi_\infty(\cF)_\theta$, in a way compatible with the inclusions in Item~\ref{def:PhiStokes1}.
\end{enumerate}

These nested subsheaves are obtained from the following geometric construction. For each $\theta\in S^1$, we set $\cswt\Delta_{\rho,\theta}=\rme^{\sfi\theta}\cswt\Delta_\rho$, and for each $c\in C$, we define two nested open subsets $\cswt\Delta_{\rho,\theta}^{<c}\subset\cswt\Delta_{\rho,\theta}^{\leqslant c}$ of $\cswt\Delta_\rho$ as pictured in Figure \ref{fig:4}:
\begin{itemize}
\item
$\cswt\Delta_{\rho,\theta}^{<c}$ is the intersection of $\cswt\Delta_{\rho,\theta}$ with the \emph{open half-plane} having the line passing through $c$ and of direction $\theta\pm\pi/2$ as boundary, and which contains the point of argument $\theta+\pi$ on $\partial\cswt\Delta_{\rho,\theta}$;
\item
$\cswt\Delta_{\rho,\theta}^{\leqslant c}$ is the union of $\cswt\Delta_{\rho,\theta}^{<c}$ and $\cswt\Delta_{c,\theta}$.
\end{itemize}
Of course, this definition can be made for any $c$ in the interior of $\Delta_\rho$, but only $c\in C$ will matter. From now on, we denote by $\cswt\Delta_\rho^{<c}$ and $\cswt\Delta_\rho^{\leqslant c}$ the union over $\theta\in S^1$ of these subsets. We have a diagram
\[
\xymatrix{
\cswt\Delta_\rho^{<c}\subset\cswt\Delta_\rho^{\leqslant c}\subset\hspace*{-.93cm}&S^1\times\Delta_\rho\ar[r]^-{q}\ar@<-.5ex>[d]_p&\Delta_\rho\\
&S^1&
}
\]

\begin{figure}[htb]
\centerline{\includegraphics[scale=.4]{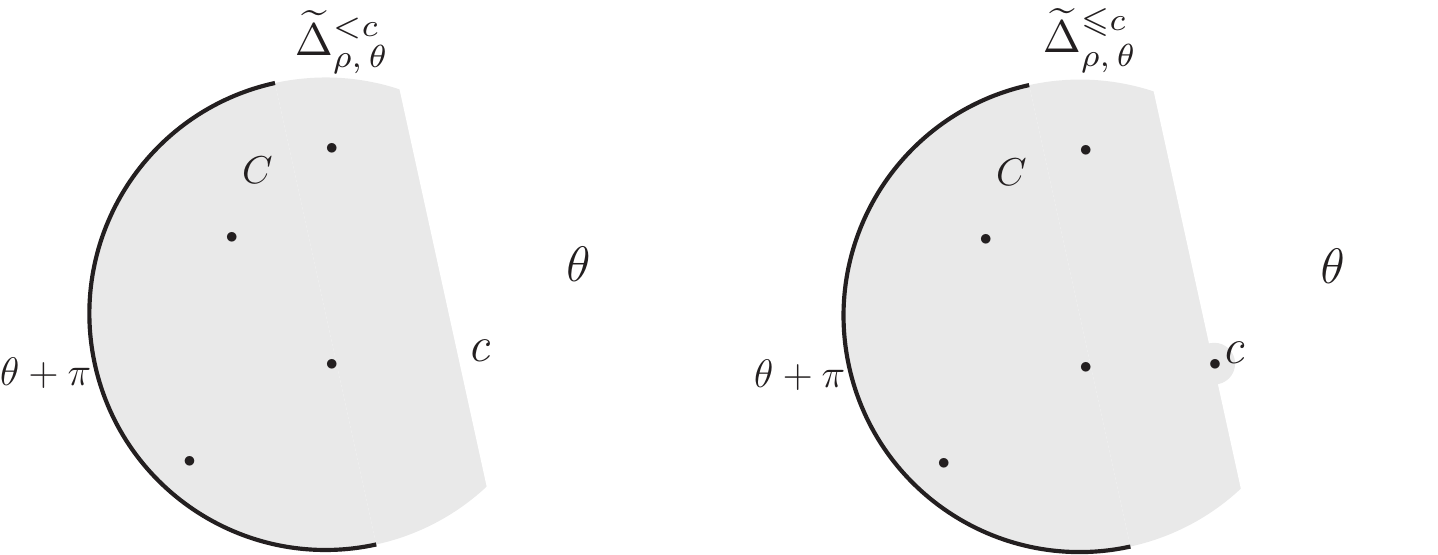}}
\caption{The nested open subsets of the semi-closed disk}\label{fig:4}
\end{figure}

We set $\cswt\cF=q^{-1}\cF$. Correspondingly, we consider the pair of nested subsheaves of $\cswt\Phi_\infty(\cF)$:
\[
\cswt\Phi_\infty(\cF)_{<c}=p_!(\cswt\cF_{\cswt\Delta_\rho^{<c}})\subset\cswt\Phi_\infty(\cF)_{\leqslant c}=p_!(\cswt\cF_{\cswt\Delta_\rho^{\leqslant c}})\subset\cswt\Phi_\infty(\cF)=p_!(\cswt\cF_{\cswt\Delta_\rho^{\leqslant \infty}}),
\]
where we recall that $\cswt\cF_{\cswt\Delta_\rho^{<c}}$ denotes the extension by zero of the sheaf-theoretic restric\-tion $\cswt\cF|_{\cswt\Delta_\rho^{<c}}$ (if $c\notin C$, we have $\cswt\Phi_\infty(\cF)_{<c}=\cswt\Phi_\infty(\cF)_{\leqslant c}$). We note that (by~using base change under a proper morphism) the derived pushforwards
\[
R^jp_!(\cswt\cF_{\cswt\Delta_\rho^{<c}})\quand R^jp_!(\cswt\cF_{\cswt\Delta_\rho^{\leqslant c}})
\]
are zero for $j\neq0$ (\cf the proof of Proposition \ref{def:Stokes-fillocsys}). We also note that
\[
\cswt\Phi_\infty(\cF)_{<c}=p_!(\csring_{\cswt\Delta_\rho^{<c}}\otimes\cswt\Phi_\infty(\cF)),\quad \cswt\Phi_\infty(\cF)_{\leqslant c}=p_!(\csring_{\cswt\Delta_\rho^{\leqslant c}}\otimes\cswt\Phi_\infty(\cF)).
\]

\begin{proposition}\label{prop:StokesF}
The family of pairs of nested subsheaves
\[
(\cswt\Phi_\infty(\cF)_{<c},\cswt\Phi_\infty(\cF)_{\leqslant c})_{c\in C}
\]
of the local system $\cswt\Phi_\infty(\cF)$ satisfy the properties \ref{def:PhiStokes1}--\ref{def:PhiStokes3} of a Stokes filtration and moreover we have an identification $\gr_c\cswt\Phi_\infty(\cF)\simeq\cswt\pphi_c\cF$ for each $c\in C$.
\end{proposition}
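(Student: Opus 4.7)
The plan is to exploit proper base change for $p$, which identifies the stalk at $\theta\in S^1$ of $\cswt\Phi_\infty(\cF)_{\leqslant c}=p_!(\cswt\cF_{\cswt\Delta_\rho^{\leqslant c}})$ with $\coH^0_\rc(\cswt\Delta_{\rho,\theta}^{\leqslant c};\cF)$, and then to verify the three Stokes axioms, together with the identification $\gr_c\cswt\Phi_\infty(\cF)\simeq\cswt\pphi_c\cF$, by analyzing how the geometry of the open sets $\cswt\Delta_{\rho,\theta}^{\leqslant c}$ varies with $\theta$. As a preliminary step I would show that $R^jp_!\cswt\cF_S$ vanishes for $j\neq0$ when $S$ is any of $\cswt\Delta_\rho^{<c}$, $\cswt\Delta_\rho^{\leqslant c}$, or their difference, imitating the proof of Proposition~\ref{prop:pervexact}: each slice decomposes into a finite union of semi-closed rectangles with the relevant points of $C$ in the interior, to which Lemma~\ref{lem:vanishing} applies. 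This both pins down the stalks as stated and ensures that the short exact sequences of extensions-by-zero used below push forward to genuine short exact sequences of sheaves on $S^1$.

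Property~\ref{def:PhiStokes1} would then follow from an inclusion of supports. The condition $c'<_\theta c$ amounts to $\reel(\rme^{-\sfi\theta}(c'-c))<0$, so $c'$ sits in the open half-plane defining $\cswt\Delta_{\rho,\theta}^{<c}$; by continuity and the smallness of $\cswt\Delta_{c',\theta'}$, both this small disc and the entire half-plane $\cswt\Delta_{\rho,\theta'}^{<c'}$ remain inside $\cswt\Delta_{\rho,\theta'}^{<c}$ for $\theta'$ in a neighborhood of $\theta$. Applying $p_!$ to the resulting inclusion of extensions-by-zero yields the required inclusion of germs.

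The identification $\gr_c\cswt\Phi_\infty(\cF)\simeq\cswt\pphi_c\cF$ of property~\ref{def:PhiStokes2} would come from pushing forward by $p_!$ the short exact sequence
\[
0\csto\cswt\cF_{\cswt\Delta_\rho^{<c}}\csto\cswt\cF_{\cswt\Delta_\rho^{\leqslant c}}\csto\cswt\cF_{\cswt\Delta_\rho^{\leqslant c}\moins\cswt\Delta_\rho^{<c}}\csto 0.
\]
By the preliminary vanishing this is a short exact sequence on $S^1$, and the stalk of the quotient at $\theta$ is $\coH^0_\rc(\cswt\Delta_{c,\theta};\cF)$, which by Remark~\ref{rem:Phiperv}\eqref{rem:Phiperv2} is the fiber of $\cswt\pphi_c\cF$ at $\theta$. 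Local constancy of the quotient---the remaining content of property~\ref{def:PhiStokes2}---would follow from the same Whitney stratification of $S^1\times X$ used in Section~\ref{sec:quiverperverse} to define $\cswt\Phi_\infty(\cF)$ itself.

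I expect property~\ref{def:PhiStokes3}, the local splitting, to be the main obstacle. Near a fixed $\theta_0$, I would enumerate $\{c'\in C:c'\leqslant_{\theta_0}c\}$ as $c_1,\dots,c_k$ by increasing $\reel(\rme^{-\sfi\theta_0}c_i)$ (a total order refining $<_{\theta_0}$), choose real numbers $a_0<a_1<\cdots<a_{k-1}$ interspersed strictly between these values, and for $\theta$ in a small arc around $\theta_0$ partition $\cswt\Delta_{\rho,\theta}^{\leqslant c}$ into ``strips''
\[
\Sigma_i(\theta)=\{z\in\cswt\Delta_{\rho,\theta}^{\leqslant c}:a_{i-1}\leqslant\reel(\rme^{-\sfi\theta}z)<a_i\}\quad(i<k),
\]
together with a terminal cap $\Sigma_k(\theta)$ absorbing the small disc $\cswt\Delta_{c,\theta}$ around $c=c_k$. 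Each $\Sigma_i(\theta)$ is a semi-closed region of the type covered by Lemma~\ref{lem:vanishing} containing the unique critical value $c_i$, so $\coH^\cbbullet_\rc(\Sigma_i(\theta);\cF)$ is concentrated in degree~$0$ and identifies with $\cswt\pphi_{c_i}\cF$ at $\theta$ by Remark~\ref{rem:Phiperv}\eqref{rem:Phiperv2}. The additivity of compactly supported cohomology along the partition yields the direct-sum decomposition, and compatibility with the filtration of Item~\ref{def:PhiStokes1} holds because $\cswt\Delta_{\rho,\theta}^{\leqslant c_j}$ differs from $\bigsqcup_{i\leqslant j}\Sigma_i(\theta)$ only by sets avoiding $C$, on which compactly supported cohomology of $\cF$ vanishes by Lemma~\ref{lem:vanishing}. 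The delicate point is that this splitting depends on the auxiliary choice of the $a_i$: it is canonical only up to an isomorphism of germs at $\theta_0$, which is exactly the nonuniqueness encoded by axiom~\ref{def:PhiStokes3} and responsible for the Stokes phenomenon.
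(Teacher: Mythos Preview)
Your argument is correct and follows the same route as the paper's: proper base change plus Lemma~\ref{lem:vanishing} for the vanishing and stalk computations, the inclusion-of-supports observation for Item~\ref{def:PhiStokes1}, the open/closed short exact sequence for Item~\ref{def:PhiStokes2}, and an induction on~$c$ with respect to~$<_\theta$ (which you have unrolled into an explicit strip decomposition, essentially the argument of Proposition~\ref{prop:HUfphifiltration} localized near~$\theta_0$) for Item~\ref{def:PhiStokes3}.

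One small imprecision to fix in your treatment of Item~\ref{def:PhiStokes2}: the set $\cswt\Delta_{\rho,\theta}^{\leqslant c}\moins\cswt\Delta_{\rho,\theta}^{<c}$ is not the full small disc $\cswt\Delta_{c,\theta}$ but only its closed half $\cswt\Delta'_{c,\theta}$ lying outside the open half-plane, since by definition $\cswt\Delta_{\rho,\theta}^{\leqslant c}=\cswt\Delta_{\rho,\theta}^{<c}\cup\cswt\Delta_{c,\theta}$ and the two overlap on the other half. The paper closes this gap with one more application of Lemma~\ref{lem:vanishing} to the open half $\cswt\Delta_{c,\theta}\moins\cswt\Delta'_{c,\theta}$, which avoids~$c$; with that correction your identification $\gr_c\cswt\Phi_\infty(\cF)\simeq\cswt\pphi_c\cF$ goes through exactly as you say.
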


(\Cf Remark \ref{rem:Phiperv}\eqref{rem:Phiperv2} for the definition of $\cswt\pphi_c\cF$.)

\begin{proof}
We start with Item \ref{def:PhiStokes2} of the definition of a Stokes filtration, and we compute the sheaf $\gr_c\cswt\Phi_\infty(\cF)$. We set $\cswt\Delta'_c=\cswt\Delta_\rho^{\leqslant c}\moins \cswt\Delta_\rho^{<c}$. (\Cf Figure \ref{fig:3}.)
\begin{figure}[htb]
\centerline{\includegraphics[scale=.4]{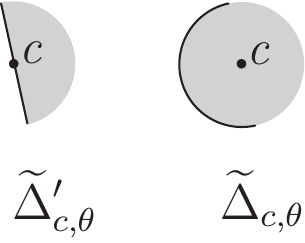}}
\caption{Stalks of $\cswt\Delta'_c$ and $\cswt\Delta_c$ at $\theta$}\label{fig:3}
\end{figure}

From the exact sequence of sheaves
\[
0\csto \csring_{\Delta_{\rho,<c}}\csto \csring_{\Delta_{\rho,\leqslant c}}\csto \csring_{\cswt\Delta'_c}\csto0
\]
we deduce that $\gr_c\cswt\Phi_\infty(\cF)\simeq p_!(\cswt\cF_{\cswt\Delta'_c})$. An easy computation on the stalks together with the vanishing lemma \ref{lem:vanishing} shows that $p_!(\csring_{\cswt\Delta_c\moins\cswt\Delta'_c}\otimes\cswt\cF)=0$, leading to the isomorphism $\gr_c\cswt\Phi_\infty(\cF)\simeq\cswt\pphi_c\cF$.

For Item \ref{def:PhiStokes1}, one notices that, for $c'\neq c$, the inequality $c'<_\theta c$ is equivalent to the inclusion $\Delta_{\rho,\leqslant c',\theta}\subset \Delta_{\rho,<c,\theta}$. Since proper pushforward commutes with base change, the exact sequence
\begin{equation}\label{eq:ZS}
0\csto \csring_{\Delta_{\rho,\leqslant c'}}\csto \csring_{\Delta_{\rho,<c}}\csto \csring_{\Delta_{\rho,<c}\moins\Delta_{\rho,\leqslant c'}}\csto0
\end{equation}
together with Lemma \ref{lem:vanishing}, leads to the exact sequence (with an obvious notation)
\begin{equation}\label{eq:ZSPhi}
0\csto\cswt\Phi_\infty(\cF)_{\leqslant c',\theta}\csto\cswt\Phi_\infty(\cF)_{<c,\theta}\csto\cswt\Phi_\infty(\cF)_{c'<\cbbullet<c,\theta}\csto0,
\end{equation}
hence the filtration property.

Verification of Item \ref{def:PhiStokes3} follows by induction on $c$ with respect to the order at $\theta$.
\end{proof}

\begin{remark}[Stokes filtration of a strongly perverse sheaf]\label{rem:sperv}
Assume that $\cF$ is \index{complex!strongly perverse --}strongly perverse (see Section~\ref{subsec:cohomtools}). Then each $\cswt\phip_c\cF$ is a local system of \emph{free} $\csring$-modules of finite rank by Lemma \ref{lem:sperv}, and thus so is $\cswt\Phi_\infty(\cF)$. Furthermore, $\cswt\Phi_\infty(\cF)_{<c}$ and $\cswt\Phi_\infty(\cF)_{\leqslant c}$ are torsion-free.
\end{remark}

\subsection{The notion of a Stokes-filtered local system}\label{subsec:Stokesfilt}

We develop the general properties of a Stokes filtration, that we can apply to that attached to $\coH^r(U,f/u;\csring)$ and $\coH^r_\rc(U,f/u;\csring)$.

The notion of a Stokes filtration as considered here has been introduced by Deligne \cite{Deligne78}; in \cite[Chap.\,XII]{Malgrange91}, Malgrange explains it in the framework of differential equations. See also \cite{Bibi10} for a more expanded version.

\subsubsection*{Stokes filtration}
This \index{filtration!Stokes --}\index{Stokes!filtration}section takes up \cite[\S\S2\,\&\,3]{H-S09}, with the difference that we work over the ring~$\csring$ and we do not make use of complex conjugation, which has to be read here as the identity in \loccit

We fix a finite set $C\subset\Af1$. Recall the partial order \eqref{eq:partialorder} on $C$. For each pair $c'\neq c\in C$, there are exactly two values of $\theta\bmod2\pi$, say $\theta_{c,c'}$ and $\theta'_{c,c'}$, such that~$c$ and $c'$ are not comparable at~$\theta$, namely $\theta_{c,c'}=\arg(c-c')-\pi/2$ and $\theta'_{c,c'}=\theta_{c,c'}+\pi$. These values are called the \index{Stokes!arguments}\emph{Stokes arguments} (or~directions) of the pair $(c,c')$. For any~$\theta$ in one component of $S^1\moins\{\theta_{c,c'},\theta'_{c,c'}\}$, we~have $c<_\theta c'$, and the reverse inequality for any $\theta$ in the other component. We denote these open intervals in $S^1$ by $S^1_{c<c'}$ and $S^1_{c'<c}$ respectively.

Let $\scL$ be a sheaf of $\csring$-modules on~$S^1$ and, for each $c\in C$, let $\scL_{<c}\subset\scL_{\leqslant c}\subset\scL$ be of a pair of nested subsheaves of $\csring$-modules. We set $\gr_c\scL=\scL_{\leqslant c}/\scL_{<c}$ and $\gr\scL=\csbigoplus_{c\in C}\gr_c\scL$. We say that $\gr\scL$ underlies a \index{Stokes!graded -- -filtered local system}\emph{graded Stokes-filtered local system} if each $\gr_c\scL$ is a \index{sheaf!locally constant --}locally constant sheaf of $\csring$-modules. The graded Stokes filtration on $\gr\scL$ is the family of pairs $(\gr\scL)_{<c}\subset(\gr\scL)_{\leqslant c}\subset\gr\scL$ of nested subsheaves of $\csring$-modules defined by (\cf Notation \ref{nota}\eqref{nota2})
\begin{equation}\label{eq:grnested}
(\gr\scL)_{<c}=\csbigoplus_{c'\in C}(\gr_{c'}\scL)_{S^1_{c'<c}},\quad (\gr\scL)_{\leqslant c}=(\gr\scL)_{<c}\oplus\gr_c\scL.
\end{equation}

\begin{definition}[Stokes filtration and Stokes-filtered local system]\label{def:Stokes-fillocsys}
A \index{filtration!Stokes --}\index{Stokes!filtration}Stokes filtration indexed by $C$ on a sheaf $\scL$ of $\csring$-modules consists of a family of pairs $\scL_{<c}\subset\scL_{\leqslant c}\subset\scL$ ($c\in C$) of nested subsheaves of $\csring$-modules such that
\begin{enumerate}
\item\label{def:Stokes1}
for each $\theta\in S^1$, the germs of $\scL_{<c},\scL_{\leqslant c}$ at $\theta$ satisfy
\[
c<_\theta c'\implies\scL_{\leqslant c,\theta}\subset\scL_{<c',\theta};
\]
\item\label{def:Stokes2}
each quotient sheaf $\gr_c\scL=\scL_{\leqslant c}/\scL_{<c}$ is a \index{sheaf!locally constant --}\emph{locally constant sheaf} of $\csring$\nobreakdash-modules;
\item\label{def:Stokes3}
in the neighborhood of each $\theta\in S^1$, there exists an isomorphism $\scL\simeq\gr\scL$ compatible with the family of nested subsheaves, where $\gr\scL$ is equipped with that defined by \eqref{eq:grnested}.
\end{enumerate}
\end{definition}

We denote these data as a pair $(\scL,\scL_\bbullet)$, which is called a \index{Stokes!-filtered local system}\index{filtration!indexed by $C$}\emph{Stokes-filtered local system indexed by $C$}, and the family $(\scL_{<c},\scL_{\leqslant c})_{c\in C}$ a \index{filtration!Stokes --}\emph{Stokes filtration} of $\scL$. Indeed, as a consequence of Items \ref{def:Stokes2} and \ref{def:Stokes3}, $\scL$ is a \index{sheaf!locally constant --}\emph{locally constant sheaf} of $\csring$-modules.

If $C_1\supset C$ is a finite subset containing $C$, one can naturally extend the Stokes filtration $\scL_\bbullet$ indexed by $C$ to one indexed by $C_1$ in such a way that, for any $c\in C_1\moins C$, the local system $\gr_c\scL$ is zero.

A morphism $\lambda:(\scL,\scL_\bbullet)\csto(\scL',\scL'_\bbullet)$ of Stokes-filtered local systems is a morphism of local systems compatible with the families of nested subsheaves (due to the remark above, one can assume that both Stokes filtrations are indexed by the same set $C$).

Let us state the main properties of Stokes-filtered local systems that we use. By~a \emph{$C$-good open interval} $I\subset S^1$, we mean an open interval containing \emph{exactly one} Stokes argument for each pair $c\neq c'$ in $C$. Such an interval is thus of length $>\pi$. As an example of a $C$-good open interval we can take the image in $S^1$ of an interval $(\theta_o-\varepsilon, \theta_o+\pi+\varepsilon)$, for $\theta_o$ arbitrary and $\varepsilon>0$ small enough, and one can enlarge it by pushing on the left and on the right the boundary points to the next Stokes arguments.

\begin{proposition}[{\cf\cite[\S5]{Malgrange83bb} and \cite[Chap.\,3]{Bibi10}}]\label{prop:strict}
Let $(\scL,\scL_\bbullet)$ be a Stokes-filtered local system indexed by $C$.
\begin{enumerate}
\item\label{prop:strict1}
On any $C$-good open interval $I\subset S^1$, there exists a unique splitting $\scL_{|I}\simeq\csbigoplus_c\gr_c\scL_{|I}$ compatible with the Stokes filtrations.
\item\label{prop:strict2}
Let $\lambda:(\scL,\scL_\bbullet)\csto(\scL',\scL'_\bbullet)$ be a morphism of Stokes-filtered local systems indexed by $C$. Then, for any $C$-good open interval $I\subset S^1$, the morphism~$\lambda_{|I}$ is graded with respect to the splittings in \eqref{prop:strict1}.
\item\label{prop:strict3}
The category of Stokes-filtered local systems $(\scL,\scL_\bbullet)$ is abelian.
\end{enumerate}
\end{proposition}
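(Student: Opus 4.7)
The plan is to prove all three parts by exploiting a single \emph{vanishing lemma} for off-diagonal morphisms of graded pieces over a $C$-good interval~$I$: for any pair $c\neq c'\in C$, the Stokes-filtration compatibility forces any morphism $\phi:\gr_c\scL|_I\csto\gr_{c'}\scL|_I$ arising as an off-diagonal component of a filtered endomorphism or morphism to vanish on the open subinterval of $I\moins\{\theta_{c,c'}\}$ where $c'<_\theta c$. By $C$-goodness, $I$ contains the Stokes argument $\theta_{c,c'}$, so this subinterval is a non-empty open subset of~$I$; since $\cHom(\gr_c\scL,\gr_{c'}\scL)|_I$ is locally constant, a section vanishing on a non-empty open set must vanish identically, so $\phi\equiv 0$. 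The $C$-goodness hypothesis enters exactly at this point.

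For \eqref{prop:strict1}, the vanishing lemma yields uniqueness directly: two filtered splittings on~$I$ differ by a filtered automorphism of $\gr\scL|_I$ whose off-diagonal components are forced to be zero, hence the difference is the identity. For existence, I would argue by induction on $|C|$; in the inductive step, fix $c_0\in C$ and consider the short exact sequence
\[
0\csto\scL_{<c_0}\csto\scL\csto\scL/\scL_{<c_0}\csto 0,
\]
whose outer terms carry Stokes filtrations indexed by smaller subsets of $C$. Applying the inductive hypothesis to both and splitting the sequence on $I$ reduces to a \v{C}ech gluing whose obstruction lies in an $H^1$ on $I$ with coefficients in a locally constant sheaf; since $I$ is contractible this $H^1$ vanishes.

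For \eqref{prop:strict2}, apply \eqref{prop:strict1} to $\scL$ and $\scL'$ to obtain canonical splittings on~$I$. The morphism $\lambda|_I$ then decomposes as a matrix $(\lambda_{c,c'})$ with $\lambda_{c,c'}:\gr_c\scL|_I\csto\gr_{c'}\scL'|_I$, and the hypothesis that $\lambda$ is filtered places each off-diagonal entry ($c\neq c'$) in exactly the situation of the vanishing lemma. Hence every off-diagonal entry vanishes and $\lambda|_I$ coincides with $\gr(\lambda)|_I$. For \eqref{prop:strict3}, using \eqref{prop:strict1} and \eqref{prop:strict2}, the restriction functor to any $C$-good interval $I$ lands in the abelian category of $C$-graded locally constant sheaves on $I$. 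Kernels and cokernels are thus constructed locally via the graded description, and the induced Stokes filtrations $(\ker\lambda)_{\leqslant c}:=\ker\lambda\cap\scL_{\leqslant c}$ and $(\coker\lambda)_{\leqslant c}:=\image(\scL'_{\leqslant c}\csto\coker\lambda)$ glue across intersections of $C$-good intervals covering $S^1$; the strictness of $\lambda$ provided by \eqref{prop:strict2} ensures these satisfy axiom \eqref{def:Stokes3} of Definition~\ref{def:Stokes-fillocsys}.

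The main obstacle will be the existence half of \eqref{prop:strict1}: while uniqueness and the vanishing lemma are essentially tautological once set up, gluing the locally-defined filtered splittings from Definition~\ref{def:Stokes-fillocsys}\eqref{def:Stokes3} across every Stokes argument $\theta_{c,c'}$ lying in $I$ without breaking filtration compatibility requires careful bookkeeping of how the partial order $\leqslant_\theta$ jumps at each such argument. The inductive reduction to a smaller indexing set, combined with the contractibility of~$I$, is what makes the obstruction tractable.
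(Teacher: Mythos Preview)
Your vanishing-lemma approach to uniqueness in \eqref{prop:strict1}, to \eqref{prop:strict2}, and to \eqref{prop:strict3} is essentially the paper's argument: an off-diagonal component $\gr_{c_i}\scL|_I\to\gr_{c_j}\scL'|_I$ of a filtered morphism is a section of a locally constant sheaf that vanishes on one of the two open arcs of $I\moins\{\theta_{c_i,c_j}\}$, hence everywhere on $I$; the abelianness then follows formally.

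The gap is in your existence argument for \eqref{prop:strict1}. The sheaf $\scL_{<c_0}$ is \emph{not} a local system: its stalk at $\theta$ is (in any local splitting) $\bigoplus_{c<_\theta c_0}\gr_{c,\theta}$, and the index set $\{c:c<_\theta c_0\}$ changes every time $\theta$ crosses one of the Stokes arguments $\theta_{c,c_0}$. On a $C$-good interval $I$ there is exactly one such crossing for each $c\neq c_0$, so $\scL_{<c_0}|_I$ has $\#C-1$ rank jumps. It therefore does not carry a Stokes-filtered local system structure in the sense of Definition~\ref{def:Stokes-fillocsys}, and you cannot invoke the inductive hypothesis on it. The same objection applies to $\scL/\scL_{<c_0}$. (Even setting this aside, the $H^1$-vanishing on $I$ would only produce \emph{some} splitting of the short exact sequence, not one compatible with the full family of nested subsheaves.)

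The paper's existence proof avoids this by inducting not on $\#C$ but on the number of Stokes arguments contained in $I$. The base case (one Stokes argument, or none) is exactly axiom~\eqref{def:Stokes3} of Definition~\ref{def:Stokes-fillocsys}. For the inductive step, one has a splitting $\lambda$ on an interval $I$ and a local splitting $\lambda'$ on a small neighbourhood $I'$ of the next Stokes argument $\theta$; on the overlap $J=I\cap I'$ they differ by a unipotent automorphism whose off-diagonal pieces $\eta_{ji}$ are indexed by pairs $j<_J i$. One then separates each such pair according to whether $j<_I i$ or $j<_{I'}i$ (one of these must hold, since $I\cup I'$ contains at most one Stokes argument for the pair $(c_i,c_j)$), and absorbs the corresponding $\eta_{ji}$ into a modification of $\lambda$ or of $\lambda'$ respectively. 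Iterating this rewriting yields splittings on $I$ and $I'$ that agree on $J$, hence glue to $I\cup I'$. This is precisely the ``careful bookkeeping'' you anticipated as the main obstacle, and the paper carries it out explicitly rather than reducing it to an $H^1$ obstruction.
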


\begin{proof}[Sketch]
Let us start with Item~\ref{prop:strict1}. Let $I\neq S^1$ be any strict open interval in $S^1$ and assume that a splitting of $\scL|_I$ exists, \ie a morphism $\lambda:\csbigoplus_c\gr_c\scL|_I\isom\scL|_I$ compatible with the Stokes filtrations and whose associated graded morphism is the identity. Let us set $C=\{c_1,\dots,c_n\}$ and let $\lambda_i:\gr_{c_i}\scL|_I\csto\scL_{\leqslant c_i}|_I$ be the restriction of $\lambda$ to $\gr_{c_i}\scL|_I$. Another lifting can be obtained as follows. For $j\in\{1,\dots,n\}$ we denote $j<_Ii$ if $c_j<_\theta c_i$ for any $\theta\in I$. For any $i,j$ with $j<_Ii$, let $\psi_{ji}:\gr_{c_i}\scL|_I\csto\gr_{c_j}\scL|_I$ be any morphism of (constant) local systems of $\csring$\nobreakdash-modules. Then $\cswt\lambda$ with restriction for each $i$ given by $\cswt\lambda_i=\lambda_i+\sum_{j<_Ii}\lambda_j\circ\psi_{ji}$ defines another splitting on~$I$, and any other splitting on~$I$ is obtained like this. The condition $j<_Ii$ implies that there is no Stokes argument for the pair $(c_i,c_j)$ in $I$. It~follows that, if~$I$ contains one Stokes argument for each pair $(c_i,c_j)$, a splitting on~$I$, if it exists, is unique.

We now show that a splitting exists on any interval $I$ which satisfies:
\par\noindent$(*)$ $I$ contains at most one Stokes argument for each pair $c\neq c'\in C$.
\par\noindent
The proof is by induction on the number of such Stokes arguments in $I$. One first notes that if a splitting exists on~$I$, it can be extended to any $J\supset I$ provided that~$J$ contains the same Stokes argument as~$I$ does. Furthermore, the case of one Stokes argument follows from Item~\ref{def:Stokes3} of Definition \ref{def:Stokes-fillocsys}. Assume then that a splitting~$\lambda$ exists on $I$ satisfying $(*)$ with a boundary point being a Stokes argument $\theta$ such that, if~$I'$ is a small open neighbourhood of $\theta$, $I\cup I'$ still satisfies $(*)$. We will prove that a splitting exists on $I\cup I'$. Let~$\lambda'$ be a splitting of $\scL_\bbullet$ on $I'$ and set $J=I\cap I'$.

Given any $i$ and any $j<_Ji$, we can find $\eta_{ji},\eta'_{ji}:\gr_{c_i}\scL_{|J}\csto\gr_{c_j}\scL_{|J}$ such that
\[
\lambda'_i|_J=\lambda_i|_J+\sum_{j<_Ji}\lambda_j|_J\circ\eta_{ji}\quand\lambda_i|_J=\lambda'_i|_J+\sum_{j<_Ji}\lambda'_j|_J\circ\eta'_{ji}.
\]
If $j<_Ji$ and $j\not<_Ii$, then we must have $j<_{I'}i$. We can write
\[
\lambda'_i|_J-\sum_{j<_{I'}i}\lambda_j|_J\circ\eta_{ji}=\lambda_i|_J+\sum_{j<_Ii}\lambda_j|_J\circ\eta_{ji}.
\]
The right-hand side is a new splitting on $I$. In the left-hand side, we use the equality $\lambda_j|_J\circ\eta_{ji}=\lambda'_j\circ\eta_{ji}+\sum_{k<_Jj}\lambda'_k|_J\circ\eta'_{kj}\circ\eta_{ji}$ and we find a new splitting on the left-hand side (but possibly not on the right-hand side)
\begin{multline*}
\lambda'_i|_J-\sum_{j<_{I'}i}\biggl(\lambda'_j|_J\circ\eta_{ji}+\sum_{k<_{I'}j}\lambda'_k|_J\circ\eta'_{kj}\circ\eta_{ji} \biggr)\\
=\lambda_i|_J+\sum_{j<_Ii} \biggl(\lambda_j|_J\circ\eta_{ji}-\sum_{k<_Ij}\lambda'_k|_J\circ\eta'_{kj}\circ\eta_{ji} \biggr).
\end{multline*}
By iterating this process, we get new splittings both on $I'$ and $I$ that coincide on $J$, defining thus a splitting on $I\cup I'$.

For Item \ref{prop:strict2}, we can assume that $(\scL,\scL_\bbullet)|_I$ and $\scL',\scL'_\bbullet)|_I$ are graded, according to Item~\ref{prop:strict1}, and write $\lambda=(\lambda_{ji})$ with $\lambda_{ji}:\gr_{c_i}\scL|_I\csto\gr_{c_j}\scL'|_I$ being constant and zero at any $\theta\in I$ such that $c_j<_\theta c_i$. Since such a $\theta$ exists by assumption on $I$ for any pair $c_i\neq c_j$, it follows that $\lambda|_I$ is diagonal. Then, Item \ref{prop:strict3} follows easily.
\end{proof}

\begin{corollary}\label{cor:isoStokes}
Let $\lambda:(\scL,\scL_\bbullet)\csto(\scL',\scL'_\bbullet)$ be a morphism of Stokes-filtered $\csring$\nobreakdash-local systems indexed by $C$. Assume that $\lambda:\scL\csto\scL'$ is an isomorphism. Then~$\lambda$ is an isomorphism of Stokes-filtered $\csring$-local systems.
\end{corollary}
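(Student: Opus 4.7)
The strategy is to use Proposition \ref{prop:strict} to reduce the question to a pointwise (or rather, interval-wise) statement where everything is graded, and then invoke an elementary linear-algebra fact: a diagonal morphism between direct sums is an isomorphism iff each diagonal piece is. The hypothesis that $\lambda$ is an isomorphism of the underlying local systems is used to force each graded piece to be an isomorphism, which in turn forces compatibility on each filtered piece.

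First, fix an arbitrary $\theta_0 \in S^1$ and choose a $C$-good open interval $I \subset S^1$ containing $\theta_0$; such an interval exists by the explicit construction mentioned just before Proposition \ref{prop:strict}. Applying Proposition \ref{prop:strict}\eqref{prop:strict1} to both $(\scL,\scL_\bullet)$ and $(\scL',\scL'_\bullet)$, we obtain unique splittings $\scL|_I \simeq \bigoplus_{c\in C} \gr_c\scL|_I$ and $\scL'|_I \simeq \bigoplus_{c\in C} \gr_c\scL'|_I$ compatible with the Stokes filtrations. Applying Proposition \ref{prop:strict}\eqref{prop:strict2}, the morphism $\lambda|_I$ is diagonal with respect to these splittings, \ie it coincides with $\bigoplus_{c\in C} \gr_c\lambda|_I$.

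Now the key observation: since $\lambda$ is an isomorphism of sheaves on $S^1$, its restriction $\lambda|_I$ is an isomorphism of sheaves on $I$. Because it is diagonal, this forces each $\gr_c\lambda|_I : \gr_c\scL|_I \csto \gr_c\scL'|_I$ to be an isomorphism. By compatibility of the splitting with the Stokes filtrations, we deduce that $\lambda|_I$ restricts to an isomorphism $\scL_{<c}|_I \isom \scL'_{<c}|_I$ and $\scL_{\leqslant c}|_I \isom \scL'_{\leqslant c}|_I$ for every $c \in C$. Since $\theta_0$ was arbitrary and being an isomorphism of sheaves is a local property, we conclude that $\lambda$ restricts to isomorphisms $\scL_{<c} \isom \scL'_{<c}$ and $\scL_{\leqslant c} \isom \scL'_{\leqslant c}$ for every $c \in C$, \ie that $\lambda$ is an isomorphism of Stokes-filtered local systems.

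\textbf{Main difficulty.} There is essentially no obstacle here beyond correctly invoking Proposition \ref{prop:strict}; the content of the corollary is already packaged in statements \eqref{prop:strict1} and \eqref{prop:strict2} of that proposition. The only subtle point to keep in mind is that the splittings and the diagonal form of $\lambda$ are only guaranteed on $C$-good open intervals (not globally on $S^1$, where nontrivial monodromy generally obstructs a global splitting), but since isomorphism is a local property, a local argument on such intervals covering $S^1$ suffices.
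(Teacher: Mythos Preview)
Your proof is correct and follows essentially the same approach as the paper. The paper's argument is slightly more terse: it invokes Proposition~\ref{prop:strict}\eqref{prop:strict2} to conclude that each $\gr_c\lambda$ is an isomorphism, and then cites Definition~\ref{def:Stokes-fillocsys}\eqref{def:Stokes3} (the local splitting axiom) rather than Proposition~\ref{prop:strict}\eqref{prop:strict1} to pass from the graded pieces back to the filtered pieces; but since the splitting of Proposition~\ref{prop:strict}\eqref{prop:strict1} is a refinement of the axiomatic local splitting, the two routes are interchangeable.
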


\begin{proof}
By Proposition \ref{prop:strict}\eqref{prop:strict2}, $\lambda$ is graded on each good interval, hence each $\gr_c\lambda$ is also an isomorphism. The assertion follows from Property \ref{def:Stokes-fillocsys}\eqref{def:Stokes3}.
\end{proof}

\subsubsection*{Stokes data}
In the same way that the datum of a local system $\scL$ on $S^1$ is equivalent, when fixing a base point $\theta_o\in S^1$, to the data of the $\csring$-module $\scL_{\theta_o}$ together with an automor\-phism~$T_{\theta_o}$, we can represent in an equivalent way a Stokes-filtered local system on~$S^1$ by a set of linear data, called the \emph{Stokes data}. Below, by an increasing filtration of an $\csring$-module $L$, we mean a \emph{finite exhaustive} increasing filtration $F_\bbullet L$ indexed by~$\ZZ$ by $\csring$-submodules, \ie such that $F_kL=0$ for $k\ll0$ and $F_kL=L$ for $k\gg0$. Similar convention for a decreasing filtration.

\begin{definition}[Stokes data]
By \index{Stokes!data}\emph{Stokes data} we mean the data of
\begin{itemize}
\item
a pair of $\csring$-modules $L_o,L'_o$,
\item
an increasing filtration $F_\bbullet L_o$, and a decreasing filtration $F^\bbullet L'_o$,
\item
a pair of isomorphisms $S_o^\pm:L_o\isom L'_o$ (called the Stokes isomorphisms),
\end{itemize}
subject to the following condition:
\begin{itemize}
\item
The image increasing filtration $S_o^+(F_\bbullet L_o)$ is \emph{opposite} to the decreasing filtration $F^\bbullet L'_o$, and the image decreasing filtration $(S_o^-)^{-1}(F^\bbullet L'_o)$ is opposite to the increasing filtration $F_\bbullet L_o$.
\end{itemize}
\end{definition}

Recall that, for an $\csring$-module $E$, an increasing filtration $F_\bbullet E$ is \index{filtration!opposite --}\emph{opposite} to a decreasing filtration $G^\cbbullet E$ if any of the equivalent properties is satisfied:
\begin{itemize}
\item
for each $k\in\ZZ$, $E=F_kE\oplus G^{k+1}E$;
\item
the $\csring$-module $E$ decomposes as $E=\csbigoplus_k(F_kE\cap G^kE)$ and the filtrations are those obtained from this grading.
\end{itemize}

We now attach Stokes data to a Stokes-filtered local system $(\scL,\scL_\bbullet)$. In order to do so, we \emph{choose} $\theta_o$ general, that is, not a Stokes argument, and we set $\theta'_o=\theta_o+\pi$. The Stokes data associated to $((\scL,\scL_\bbullet),\theta_o)$ consist of
\begin{itemize}
\item
the $\csring$-modules $\scL_{\theta_o},\scL_{\theta'_o}$ (the stalks of~$\scL$ at~$\theta_o,\theta'_o$),
\item
the Stokes isomorphisms \hbox{$S^+_{\theta_o},S^-_{\theta_o}:\scL_{\theta_o}\!\isom\!\scL_{\theta'_o}$} defined as follows: setting
\[
I^+=(\theta_o-\varepsilon,\theta'_o+\varepsilon),\quad I^-=(\theta'_o-\varepsilon,\theta_o+\varepsilon), \quand L^\pm=\Gamma(I_\pm,\scL),
\]
and considering the diagram of restriction isomorphisms
\[
\xymatrix@=.5cm{
&L^+\ar[dl]_{a'_+}\ar[dr]^{a_+}&\\
\scL_{\theta'_o}&&\scL_{\theta_o}\\
&L^-\ar[ul]^{a'_-}\ar[ur]_{a_-}&
}
\]
we define
\[
\left.
\begin{aligned}
S^+_{\theta_o}&=a'_+a_+^{-1}\\
S^-_{\theta_o}&=a'_-a_-^{-1}
\end{aligned}
\right\}
:\scL_{\theta_o}\isom \scL'_{\theta_o}.
\]

\item
In order to define the filtrations, we write $C=\{c_1,\dots,c_n\}$ where the numbering respects the order at $\theta_o$, and thus the reverse order at $\theta'_o$. By Proposition \ref{prop:strict}, there exist \emph{unique decompositions}
\[
L^+=\csbigoplus_{i=1}^nG_{c_i}^+,\quad L^-=\csbigoplus_{i=1}^nG_{c_i}^-
\]
from which we deduce decompositions
\begin{equation}\label{eq:decthetao}
\scL_{\theta_o}=\csbigoplus_{i=1}^nG_{c_i,\theta_o},\quad \scL_{\theta'_o}=\csbigoplus_{i=1}^nG_{c_i,\theta'_o},
\end{equation}
pushed by $a'_+$, \resp $a_-$, from the decompositions of $L^+$, \resp $L^-$. With respect to these decompositions, $a_-^{-1}a_+$ is compatible with the associated increasing filtrations, while $a^{\prime-1}_-a'_+$ is compatible with the associated decreasing filtrations, and their associated graded morphisms are isomorphisms. It follows that, with respect to the decompositions \eqref{eq:decthetao}, $S^+_{\theta_o}$ preserves the increasing filtrations, while $S^-_{\theta_o}$ preserves the decreasing filtrations. The oppositeness condition is thus clearly satisfied.
\end{itemize}

\begin{remark}
Stokes data allow one to recover in an equivalent way the Stokes-filtered local system $(\scL,\scL_\bbullet)$. For example, the monodromy $T_{\theta_o}:\scL_{\theta_o}\isom \scL_{\theta_o}$ is given by $T_{\theta_o}=(S^-_{\theta_o})^{-1}S^+_{\theta_o}$.
\end{remark}

\subsection{Pairings and Stokes matrices}

The pairing \eqref{eq:dualHUf} can be regarded as the fiber at $\theta=1$ of a pairing of local systems
\begin{equation}\label{eq:dualHUfS1}
\cswt\Phi_\infty^r(U,f)\otimes\cswt\Phi_{\infty,\rc}^{2d-r}(U,-f)\csto \csring_{S^1},
\end{equation}
for each $r$, where $\csring_{S^1}$ is seen as $R^{2d}p_!\csring_{S^1\times U}$ and $p$ is the \hbox{projection $S^1\times U\!\csto\!S^1$}. In a way similar to that of Proposition \ref{prop:PDUf}, we obtain:

\begin{proposition}
If $\csring$ is a field, for each $r$ the pairing \eqref{eq:dualHUfS1} is nondegenerate.
\end{proposition}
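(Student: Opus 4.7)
The plan is to promote the fiberwise Poincaré–Verdier duality of Proposition \ref{prop:PDUf} to a duality of local systems on $S^1$, by combining a base-change argument with the elementary fact that a morphism of local systems of finite-dimensional $\csring$-vector spaces is an isomorphism iff it is so on each fiber.

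First, I would identify the stalks at $\theta\in S^1$ of both local systems. By Definition \ref{def:globalvanishingcycles} applied to the pairs $(U,f)$ and $(U,-f)$, one has
\[
\cswt\Phi_\infty^r(U,f)_\theta\simeq\coH^r(U,\rme^{-\sfi\theta}f;\csring)\quand
\cswt\Phi_{\infty,\rc}^{2d-r}(U,-f)_\theta\simeq\coH^{2d-r}_\rc(U,-\rme^{-\sfi\theta}f;\csring).
\]
Second, I would verify that the restriction of the pairing \eqref{eq:dualHUfS1} to the fiber over $\theta$ coincides, under the above identifications, with the Poincaré–Verdier pairing \eqref{eq:dualHUf} of Proposition \ref{prop:PDUf} applied to the pair $(U,\rme^{-\sfi\theta}f)$. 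The pairing \eqref{eq:dualHUfS1} is constructed via the cup product in the relative setting over $S^1$, using the identification $\csring_{S^1}\simeq R^{2d}p_!\csring_{S^1\times U}$; proper base change along $\{\theta\}\hto S^1$ then yields $\coH_\rc^{2d}(U;\csring)\simeq\csring$ on stalks. The compatibility of cup products with proper base change gives the required identification of the stalk pairing with \eqref{eq:dualHUf}.

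Third, I would invoke Proposition \ref{prop:PDUf} for the pair $(U,\rme^{-\sfi\theta}f)$, which is a legitimate pair since $\rme^{-\sfi\theta}f$ is a regular function on $U$ for every $\theta\in S^1$. This gives nondegeneracy of each fiber pairing. The associated morphism of local systems
\[
\cswt\Phi_\infty^r(U,f)\csto\cHom(\cswt\Phi_{\infty,\rc}^{2d-r}(U,-f),\csring_{S^1})
\]
is then a fiberwise isomorphism between local systems of finite-dimensional $\csring$-vector spaces (the target is a local system because $\csring$ is a field and the source of the $\cHom$ has finite-rank free stalks), hence an isomorphism of local systems, which is equivalent to nondegeneracy of \eqref{eq:dualHUfS1}.

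The main obstacle will be step two, namely tracking the compatibility of the relative pushforward construction over $S^1$ with the pointwise Poincaré–Verdier pairing defined via the real oriented blow-up $\cswt X$ and its open subsets $\cswt U_\rmod(\rme^{-\sfi\theta}f)$, $\cswt U_\rrd(-\rme^{-\sfi\theta}f)$. A cleaner route may be to emulate the second (projectivized) sketch of Proposition \ref{prop:PDUf}: perform the computation globally on the family $S^1\times\cswt\PP^1$ by computing the dualizing complex and the Verdier dual of the analogue of $\pQQ_{\cswt U_\rmod(f)}$ in this $S^1$-parametrized setting, and then conclude by applying Verdier duality to the $R^{2d}p_!$ pushforward; the rotation invariance built into the construction of $\cswt\Delta_\rho^{\leqslant\infty}$ in \eqref{eq:Deltainf} makes this family-version of the argument essentially formal.
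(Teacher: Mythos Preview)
Your proposal is correct and follows essentially the same approach as the paper's proof: apply Proposition \ref{prop:PDUf} fiberwise to the pair $(U,\rme^{-\sfi\theta}f)$ and use that $\bR p_!$ commutes with restriction to $\theta$ (i.e., proper base change) to identify the stalk of the pairing \eqref{eq:dualHUfS1} with the pointwise Poincaré--Verdier pairing \eqref{eq:dualHUf}. Your worries about step two are unnecessary---the commutation of $\bR p_!$ with base change is exactly what the paper invokes, and nothing more is needed.
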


\begin{proof}
One can apply Proposition \ref{prop:PDUf} for each $\rme^{-\sfi\theta}f$, by using that $\bR p_!$ commutes with restriction to $\theta$.
\end{proof}

The main result of this section is:

\begin{theorem}\label{th:dualHUfS1}
The pairing \eqref{eq:dualHUfS1} is compatible with the Stokes filtrations.
\end{theorem}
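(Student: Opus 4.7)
The plan is to reformulate the pairing and the Stokes filtrations in terms of perverse sheaves on $\Af1$ via the pushforward formalism, and then to verify the compatibility on $C$-good open intervals of $S^1$. Applying Corollary~\ref{cor:Ufpush} fiberwise in $\theta$, I identify $\cswt\Phi^r_\infty(U,f)\simeq\cswt\Phi_\infty(\cF)$ for $\cF=\pR^{r-d}f_*\pring_U$, and $\cswt\Phi^{2d-r}_{\infty,\rc}(U,-f)\simeq\rho_\pi^*\cswt\Phi_\infty(\cG)$ for $\cG=\pR^{d-r}f_!\pring_U$, where $\rho_\pi\colon S^1\csto S^1$ is the rotation by $\pi$ induced by the substitution $s=-t$ on the target of $-f$. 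Commutation of Verdier duality with proper pushforward together with $\bD\pring_U\simeq\pring_U(d)$ realize $\cF$ and $\cG$ as mutually Verdier-dual perverse sheaves on $\Af1$ (up to Tate twist), and under this identification the pairing \eqref{eq:dualHUfS1} is the one induced by the canonical sheaf pairing $\cF\otimes\cG\csto\omega_{\Af1}$.

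Compatibility is equivalent to the vanishing, for each $c\in C$, of the restricted pairing $\cswt\Phi^r_\infty(U,f)_{\leqslant c}\otimes\cswt\Phi^{2d-r}_{\infty,\rc}(U,-f)_{<-c}\csto\csring_{S^1}$. I verify this on each $C$-good open interval $I\subset S^1$ (such intervals cover~$S^1$). By Proposition~\ref{prop:strict}\eqref{prop:strict1} both Stokes filtrations split canonically on $I$, and Proposition~\ref{prop:StokesF} identifies the graded pieces with $\cswt\pphi_{c'}\cF$ and $\rho_\pi^*\cswt\pphi_{c''}\cG$. For $\theta\in I$ this yields decompositions
\[
\cswt\Phi^r_\infty(U,f)_{\leqslant c}|_I=\csbigoplus_{c'\leqslant_\theta c}(\cswt\pphi_{c'}\cF)|_I,\qquad \cswt\Phi^{2d-r}_{\infty,\rc}(U,-f)_{<-c}|_I=\csbigoplus_{c<_\theta c''}(\rho_\pi^*\cswt\pphi_{c''}\cG)|_I,
\]
using that $-c''<_\theta -c$ is equivalent to $c<_\theta c''$. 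Every pair $(c',c'')$ appearing in the restricted pairing therefore satisfies $c'\leqslant_\theta c<_\theta c''$, and in particular $c'\neq c''$. The graded component on such a pair factors through the sheaf pairing $\pphi_{c'}\cF\otimes\pphi_{c''}\cG\csto\omega_{\Af1}$ (obtained via the commutation of vanishing cycles with $\bD$); since $\pphi_{c'}\cF$ is supported on $\{c'\}\subset\Af1$ and $\pphi_{c''}\cG$ on $\{c''\}$, support-disjointness forces this sheaf pairing, and therefore the graded component, to vanish. Summing yields vanishing of the restricted pairing on $I$, whence globally on $S^1$.

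The main obstacle will be the clean identification, on each $C$-good interval, of the graded component of the pairing with the local Verdier pairing between the vanishing-cycle complexes $\pphi_{c'}\cF$ and $\pphi_{c''}\cG$. This requires tracking the rotation $\rho_\pi$ (equivalently, the substitution $s\mapsto-t$ on the $-f$ side) through the Stokes splittings furnished by Proposition~\ref{prop:strict}\eqref{prop:strict1}, together with the Tate twists coming from $\bD\pring_U\simeq\pring_U(d)$. Once this identification is in place, the rest is a purely combinatorial check using the orders $\leqslant_\theta$ on $C$ and $-C$ together with the disjointness of supports of $\pphi_{c'}\cF$ and $\pphi_{c''}\cG$ for $c'\neq c''$.
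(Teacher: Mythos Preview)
Your reduction to perverse sheaves $\cF=\pR^{r-d}f_*\pring_U$ and $\cG=\pR^{d-r}f_!\pring_U$ on $\Af1$ and the identification $\cswt\Phi^{2d-r}_{\infty,\rc}(U,-f)\simeq\iota^{-1}\cswt\Phi_\infty(\cG)$ are correct and agree with the paper. The gap is in the step where you claim that, after splitting on a $C$-good interval $I$, the $(c',c'')$-component of $\varh|_I$ ``factors through the sheaf pairing $\pphi_{c'}\cF\otimes\pphi_{c''}\cG\to\omega_{\Af1}$''. This factorization is precisely what you would \emph{obtain} once compatibility is known (via Proposition~\ref{prop:strict}\eqref{prop:strict2} applied to the induced morphism $\scL\to\iota^{-1}\scL'^\csvee$), but it is not available beforehand. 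The splitting of Proposition~\ref{prop:strict}\eqref{prop:strict1} is produced abstractly by an inductive gluing argument; the resulting embeddings $\gr_{c'}\scL|_I\hookrightarrow\scL|_I$ are not given by any geometric formula that would force the restriction of $\varh$ to be governed by the pairing of vanishing cycles on $\Af1$. Your own last paragraph correctly identifies this as the main obstacle, but offers no mechanism to resolve it.

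The paper bypasses this circularity entirely. It never invokes Proposition~\ref{prop:strict} in the proof of compatibility. Instead it works directly with the \emph{geometric definition} of the Stokes pieces: $\cswt\Phi_\infty(\cF)_{\leqslant c}=p_!(\cswt\cF_{\cswt\Delta_\rho^{\leqslant c}})$ and similarly for $<c$. The pairing $\varh$ is constructed from the cup product on $S^1\times\Delta_\rho$, so its restriction to $\cswt\Phi_\infty(\cF)_{\leqslant c,\theta}\otimes\cswt\Phi_\infty(\cG)_{<c,\theta+\pi}$ is computed on the intersection of the corresponding subsets of the disc. After replacing $\cswt\Delta_{\rho,\theta+\pi}^{<c}$ by the slightly enlarged $\cswt\Delta_{\rho,\theta+\pi}^{<c+\varepsilon\rme^{\sfi\theta}}$ (which Lemma~\ref{lem:vanishing} allows without changing the cohomology), one finds $\cswt\Delta_{\rho,\theta}^{\leqslant c}\cap\cswt\Delta_{\rho,\theta+\pi}^{<c+\varepsilon\rme^{\sfi\theta}}=\csemptyset$, whence the pairing vanishes. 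This is Proposition~\ref{prop:pairingF}\eqref{prop:pairingF1}, and the theorem follows by applying it to $(\cF,\cG)$. The disjointness-of-subsets argument on the disc replaces your attempted disjointness-of-supports argument on $\Af1$; the former is elementary and requires no prior knowledge of how $\varh$ interacts with the splittings.
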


We first explain the notion of compatibility and give some consequences.

\subsubsection*{Pairings of Stokes-filtered local systems}
As usual for pairings between filtered objects with values in the constant sheaf with trivial filtration jumping only at zero, we pair a term of index $a$ with a term of index $-a$. In the case of Stokes-filtered local systems, we pair an object indexed by $C$ with an object indexed by $-C$.

In order to implement the change of $C$ into $-C$, we consider the involution $\iota:\theta\mto\theta+\pi$ on $S^1$ (this is reminiscent of the sign occurring in Theorem \ref{th:kontsevichyu}\eqref{th:kontsevichyu2}). If $(\scL',\scL'_\bbullet)$ is indexed by $C$, one naturally defines the Stokes-filtered local system $\iota^{-1}(\scL',\scL'_\bbullet)$ indexed by $-C$: the underlying local system is $\iota^{-1}\scL'$ (isomorphic to~$\scL'$) and the Stokes filtration is $(\iota^{-1}\scL')_{\leqslant-c}=\iota^{-1}(\scL'_{\leqslant c})$. The corresponding Stokes data are obtained by exchanging~$\theta_o$ and $\theta'_o$ and inverting the Stokes isomorphisms. For any $c\in C$, we have $\gr_{-c}(\iota^{-1}\scL')=\iota^{-1}\gr_c\scL'\simeq\gr_c\scL'$.

Assume that we are given Stokes-filtered local systems $(\scL,\scL_\bbullet)$ and $(\scL',\scL'_\bbullet)$ indexed by $C$ and a pairing between the local systems $\scL,\iota^{-1}\scL'$:
\[
\varh:\scL\otimes_\csring\iota^{-1}\scL'\csto \csring_{S^1}.
\]
We say that this \index{pairing!of Stokes-filtered local systems}pairing is compatible with the Stokes filtrations (or is a pairing of Stokes-filtered local systems) if, for any $c\in C$, $\varh$ induces the zero morphism on
\[
\scL_{\leqslant c}\otimes_\csring(\iota^{-1}\scL')_{<-c}\quand\scL_{<c}\otimes_\csring(\iota^{-1}\scL')_{\leqslant -c}.
\]
It follows that $\varh$ induces, for each $c\in C$, a pairing
\begin{equation}\label{eq:Pc}
\varh_c:\gr_c\scL\otimes_\csring\iota^{-1}\gr_c\scL'\csto \csring_{S^1}.
\end{equation}
In order to conveniently deal with duality, \emph{we now assume that either~$\csring$ is a field (\eg $\QQ,\CC$) or $\csring=\ZZ$ and the Stokes-filtered local systems we consider are \emph{torsion-free of finite rank} in the sense that each $\gr_c\scL$ (hence also $\scL$) locally $\csring$-free of finite rank.}

The notion of a dual Stokes-filtered local system $(\scL,\scL_\bbullet)^\csvee$ is naturally defined: the underlying local system is the dual local system $\scL^\csvee:=\cHom_\csring(\scL,\csring)$; the Stokes filtration is indexed by $-C$ and is defined by $(\scL^\csvee)_{\leqslant-c}=(\scL_{<c})^\csperp$; it satisfies $\gr_{-c}(\scL^\csvee)\simeq(\gr_c\scL)^\csvee$. It is then clear that the conditions for a Stokes filtration are fulfilled.

The Stokes-filtered local system $\iota^{-1}(\scL',\scL'_\bbullet)^\csvee$ is thus indexed by $C$ and it is natural to consider it as the suitable dual object of $(\scL',\scL'_\bbullet)$. A pairing $\varh$ as above is then nothing but a morphism $(\scL,\scL_\bbullet)\csto\iota^{-1}(\scL',\scL'_\bbullet)^\csvee$ compatible with the Stokes filtrations.

The Stokes filtration associated to $\iota^{-1}\scL^{\prime\csvee}$ gives rise to the Stokes data and decom\-positions
\[
\Bigl(\csbigoplus_{i=1}^nG^{\prime\csvee}_{c_i,\theta'_o},\;\;\csbigoplus_{i=1}^nG^{\prime\csvee}_{c_i,\theta_o},\;\tS^+_{\theta_o},\;\tS^-_{\theta_o}\Bigr)
\]
(\cf\cite[\S3]{H-S09}).

The data of the pairing $\varh$ translates into the data of a pair of morphisms\enlargethispage{\baselineskip}
\[
\varh_{\theta_o,\theta'_o}:\csbigoplus_{i=1}^nG_{c_i,\theta_o}\csto \csbigoplus_{i=1}^nG^{\prime\csvee}_{c_i,\theta'_o},\quad\varh_{\theta'_o,\theta_o}:\csbigoplus_{i=1}^nG_{c_i,\theta'_o}\csto\csbigoplus_{i=1}^nG^{\prime\csvee}_{c_i,\theta_o}
\]
which, due to the compatibility with the Stokes filtrations, are block-diagonal, with blocks $\varh_{\theta_o,\theta'_o}^{(i)},\varh_{\theta'_o,\theta_o}^{(i)}$, according to Proposition \ref{prop:strict}\eqref{prop:strict2}, and which satisfy
\[
\tS^+_{\theta_o}\circ\varh_{\theta_o,\theta'_o}=\varh_{\theta'_o,\theta_o}\circ S^+_{\theta_o},\quad\tS^-_{\theta_o}\circ\varh_{\theta_o,\theta'_o}=\varh_{\theta'_o,\theta_o}\circ S^-_{\theta_o}.
\]
In other words, $\varh_{\theta_o,\theta'_o}$, \resp $\varh_{\theta'_o,\theta_o}$, is the direct sum of pairings
\[
\varh_{\theta_o,\theta'_o}^{(i)}:G_{c_i,\theta_o}\otimes G'_{c_i,\theta'_o}\csto \csring,\quad\resp \varh_{\theta'_o,\theta_o}^{(i)}:G_{c_i,\theta'_o}\otimes G'_{c_i,\theta_o}\csto \csring,
\]
which satisfy, for $g\in \scL_{\theta_o}=\csbigoplus_{i=1}^nG_{c_i,\theta_o}$ and $g'\in \scL'_{\theta_o}=\csbigoplus_{i=1}^nG_{c_i,\theta_o}$,
\[
\varh_{\theta_o,\theta'_o}(g,S^+_{\theta_o}g')=\varh_{\theta'_o,\theta_o}(S^+_{\theta_o}g,g')
\]
and a similar equality with $S^-_{\theta_o}$. This defines a pairing $\varh^+_{\theta_o}$ between $\scL_{\theta_o}$ and $\scL'_{\theta_o}$ by~the formula
\[
\varh^+_{\theta_o}(g,g'):=\varh_{\theta_o,\theta'_o}(g,S^+_{\theta_o}g')=\varh_{\theta'_o,\theta_o}(S^+_{\theta_o}g,g'),\quad \begin{cases}
g\in\csbigoplus_{i=1}^nG_{c_i,\theta_o},\\[3pt]
g'\in\csbigoplus_{i=1}^nG'_{c_i,\theta_o}.
\end{cases}
\]
We similarly have a pairing $\varh^-_{\theta_o}$ between $\scL_{\theta_o}$ and $\scL'_{\theta_o}$ by replacing $S^+_{\theta_o}$ with $S^-_{\theta_o}$ in the above formulas. Since $S^-_{\theta_o}=S^+_{\theta_o}T_{\theta_o}$, we find
\[
\varh'^-_{\theta_o}(g,g')=\varh'^+_{\theta_o}(T_{\theta_o}g,g')=\varh'^+_{\theta_o}(g,T_{\theta_o}g').
\]

\begin{definition}[Seifert pairing]
If $\scL=\scL'$, we call the pairing $\varh^+_{\theta_o}$ the \index{pairing!Seifert --}\emph{Seifert pairing} associated with the pairing $\varh:\scL\otimes\iota^{-1}\scL\csto \csring_{S^1}$.
\end{definition}

\begin{lemma}\label{lem:nondegP}
The following properties are equivalent for a pairing $\varh$ compatible with the Stokes filtrations:
\begin{enumerate}
\item\label{lem:nondegP1}
$\varh$ induces an isomorphism $(\scL,\scL_\bbullet)\isom\iota^{-1}(\scL',\scL'_\bbullet)^\csvee$,
\item\label{lem:nondegP2}
$\varh$ induces an isomorphism $\scL\isom\scL^{\prime\csvee}$,
\item\label{lem:nondegP3}
for each $c\in C$, $\varh_c:\gr_c\scL\otimes\iota^{-1}\gr_c\scL'\csto \csring_{S^1}$ is nondegenerate.
\end{enumerate}
\end{lemma}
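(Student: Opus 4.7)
The plan is to establish the cyclic chain \eqref{lem:nondegP1}$\Rightarrow$\eqref{lem:nondegP2}$\Rightarrow$\eqref{lem:nondegP3}$\Rightarrow$\eqref{lem:nondegP1}, exploiting at each step either the results of Proposition~\ref{prop:strict} and Corollary~\ref{cor:isoStokes} or a direct calculation on graded pieces. Throughout, the pairing $\varh$ is reinterpreted as a morphism $\lambda_\varh:(\scL,\scL_\bbullet)\csto\iota^{-1}(\scL',\scL'_\bbullet)^\csvee$ of Stokes-filtered local systems indexed by $C$, which is legitimate because the assumed compatibility of $\varh$ with the Stokes filtrations is exactly the condition that $\lambda_\varh$ respects the nested subsheaves.

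The implication \eqref{lem:nondegP1}$\Rightarrow$\eqref{lem:nondegP2} is tautological: forgetting the Stokes structure turns an isomorphism of Stokes-filtered local systems into an isomorphism of the underlying local systems, and $(\iota^{-1}\scL')^\csvee\simeq \scL'{}^\csvee$ as local systems (both being $\cHom_\csring(\scL',\csring)$ up to the homeomorphism $\iota$). For \eqref{lem:nondegP2}$\Rightarrow$\eqref{lem:nondegP1} I would apply Corollary~\ref{cor:isoStokes} to the morphism $\lambda_\varh$: the hypothesis is precisely that $\lambda_\varh$ is an isomorphism on underlying local systems, and the corollary upgrades it automatically to an isomorphism of Stokes-filtered local systems.

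For \eqref{lem:nondegP1}$\Rightarrow$\eqref{lem:nondegP3}, I would take the $c$-th graded piece of the isomorphism $\lambda_\varh$; since grading is functorial and $\gr_{-c}\bigl(\iota^{-1}(\scL',\scL'_\bbullet)^\csvee\bigr)\simeq \iota^{-1}(\gr_c\scL')^\csvee$, this yields an isomorphism $\gr_c\scL\isom\iota^{-1}(\gr_c\scL')^\csvee$, which is exactly the statement that $\varh_c$ is nondegenerate. The interesting step is \eqref{lem:nondegP3}$\Rightarrow$\eqref{lem:nondegP2}. Here I would work locally on a $C$-good open interval $I\subset S^1$: by Proposition~\ref{prop:strict}\eqref{prop:strict1}, both $\scL|_I$ and $\scL'|_I$ split canonically as $\csbigoplus_c\gr_c\scL|_I$ and $\csbigoplus_c\gr_c\scL'|_I$; by Proposition~\ref{prop:strict}\eqref{prop:strict2}, the morphism $\lambda_\varh|_I$ is diagonal with respect to these splittings. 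The diagonal blocks are precisely the $\varh_c|_I$, each assumed nondegenerate, so $\lambda_\varh|_I$ is an isomorphism on every $C$-good interval. Since such intervals cover $S^1$, this gives \eqref{lem:nondegP2}.

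The main obstacle will be the matching of indexing conventions on the dual side (the use of $\iota^{-1}$, the passage from $C$ to $-C$, and the orthogonality convention $(\scL^\csvee)_{\leqslant-c}=(\scL_{<c})^\csperp$ rather than $(\scL_{\leqslant c})^\csperp$): one must verify that the graded piece of the dual Stokes filtration really is the dual of the graded piece, so that \eqref{lem:nondegP1}$\Rightarrow$\eqref{lem:nondegP3} yields nondegeneracy of $\varh_c$ and not merely of a shifted variant. Once this bookkeeping is settled, each step is formal, the substantive input being the local splitting and rigidity provided by Proposition~\ref{prop:strict}.
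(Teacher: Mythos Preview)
Your proposal is correct and matches the paper's approach: the equivalence \eqref{lem:nondegP1}$\Leftrightarrow$\eqref{lem:nondegP2} via Corollary~\ref{cor:isoStokes}, and the equivalence \eqref{lem:nondegP2}$\Leftrightarrow$\eqref{lem:nondegP3} via the gradedness of $\lambda_\varh$ on $C$-good intervals (Proposition~\ref{prop:strict}\eqref{prop:strict2}), are exactly the two ingredients the paper invokes. Your announced cyclic chain \eqref{lem:nondegP1}$\Rightarrow$\eqref{lem:nondegP2}$\Rightarrow$\eqref{lem:nondegP3}$\Rightarrow$\eqref{lem:nondegP1} does not quite match the implications you actually write out (you give \eqref{lem:nondegP1}$\Leftrightarrow$\eqref{lem:nondegP2}, \eqref{lem:nondegP1}$\Rightarrow$\eqref{lem:nondegP3}, \eqref{lem:nondegP3}$\Rightarrow$\eqref{lem:nondegP2}), but this is harmless redundancy and the argument is complete.
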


\begin{proof}
The equivalence of Items \ref{lem:nondegP1} and \ref{lem:nondegP2} follows from Corollary \ref{cor:isoStokes}. For that of Items \ref{lem:nondegP2} and~\ref{lem:nondegP3}, one uses that, on any good open interval $I$, the morphism $\varh|_I:\scL|_I\csto\iota^{-1}\scL^{\prime\csvee}|_I$ is graded.
\end{proof}

The following proposition gives an interpretation of the matrix of the Stokes isomorphism $S^+_{\theta_o}$ in terms of matrix of the pairing $\varh^+_{\theta_o}$, called the \emph{Seifert matrix} when $\scL=\scL'$.

\begin{proposition}\label{prop:hS}
Assume that $\varh:\scL\otimes\iota^{-1}\scL'\csto \csring_{S_1}$ is nondegenerate. For each $i=1,\dots,n$, let $\bme_i,\bme'_i$ be $\csring$-bases of $G_{c_i,\theta_o}, G'_{c_i,\theta'_o}$ in which the matrix of $\varh_{\theta_o,\theta'_o}^{(i)}$ is the identity. Then the Seifert matrix of $\varh^+_{\theta_o}$ in the basis $\bme\!=\!\csbigoplus_i\bme_i$ is equal to the Stokes matrix of~$S^+_{\theta_o}$ in the bases $\bme,\bme'$.\qed
\end{proposition}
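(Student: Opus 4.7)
My plan is to compute the matrix of the Seifert pairing $\varh^+_{\theta_o}$ entry by entry and compare it directly with that of the Stokes isomorphism $S^+_{\theta_o}$. Since the Seifert pairing is only defined in the case $\scL=\scL'$, the basis $\bme$ plays the role of a basis of $\scL_{\theta_o}$ on both sides of $\varh^+_{\theta_o}$, while $\bme'$ is a basis of $\scL_{\theta'_o}$; the Stokes matrix of $S^+_{\theta_o}:\scL_{\theta_o}\csto\scL_{\theta'_o}$ in the bases $\bme,\bme'$ and the Seifert matrix in the basis $\bme$ are then square matrices of the same size, and the statement makes sense.

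The computation goes as follows. Unfolding the definition of $\varh^+_{\theta_o}$ yields $M_{pq}:=\varh^+_{\theta_o}(\bme_p,\bme_q)=\varh_{\theta_o,\theta'_o}(\bme_p,\,S^+_{\theta_o}\bme_q)$. Writing $S^+_{\theta_o}\bme_q=\sum_r S_{rq}\,\bme'_r$ to introduce the entries $S_{rq}$ of the Stokes matrix, bilinearity gives $M_{pq}=\sum_r S_{rq}\,\varh_{\theta_o,\theta'_o}(\bme_p,\bme'_r)$. The proposition therefore reduces to the single identity $\varh_{\theta_o,\theta'_o}(\bme_p,\bme'_r)=\delta_{pr}$.

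This identity rests on two ingredients already in place in the excerpt. First, compatibility of $\varh$ with the Stokes filtrations, combined with Proposition~\ref{prop:strict}\eqref{prop:strict2} applied on a good open interval containing $\theta_o$ and $\theta'_o$, forces $\varh_{\theta_o,\theta'_o}$ to be block-diagonal with respect to the decompositions $\scL_{\theta_o}=\csbigoplus_i G_{c_i,\theta_o}$ and $\scL'_{\theta'_o}=\csbigoplus_i G'_{c_i,\theta'_o}$, with diagonal blocks $\varh^{(i)}_{\theta_o,\theta'_o}$; this was extracted explicitly in the discussion immediately preceding the statement. Second, by the very choice of the bases $\bme_i,\bme'_i$, each block $\varh^{(i)}_{\theta_o,\theta'_o}$ has identity matrix. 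Concatenating with a lexicographic ordering (first by block index $i$, then inside each $G_{c_i,\theta_o}$), the global matrix of $\varh_{\theta_o,\theta'_o}$ in the pair of bases $\bme,\bme'$ is therefore the identity, yielding $M_{pq}=S_{pq}$. There is essentially no obstacle here: the proposition reduces to unfolding definitions once the block-diagonalization and the chosen normalization of the bases are in hand; the only mild care needed concerns the index bookkeeping when the blocks $G_{c_i,\theta_o}$ are not one-dimensional, which the lexicographic ordering handles cleanly.
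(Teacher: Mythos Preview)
Your proposal is correct and is precisely the unfolding of definitions that the paper's bare \qed\ encodes: the paper gives no proof, and your computation $\varh^+_{\theta_o}(\bme_p,\bme_q)=\varh_{\theta_o,\theta'_o}(\bme_p,S^+_{\theta_o}\bme_q)=\sum_r S_{rq}\,\delta_{pr}=S_{pq}$ is the intended argument. Your remark that the block-diagonality of $\varh_{\theta_o,\theta'_o}$ comes from Proposition~\ref{prop:strict}\eqref{prop:strict2} and that the blocks are identity by the choice of bases is exactly right.
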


\subsubsection*{Pairings of perverse sheaves on \texorpdfstring{$\Af1$}{A1}}
Let $\cF,\cF'$ be $\csring$-perverse sheaves on $\Af1$ and let $\pairing:\cF\otimes^L_\csring\cF'\csto \csring_{\Af1}[2]$ be a \index{pairing!of perverse sheaves}pairing (an $\csring$-bilinear morphism in $\catD^\rb_{\cc}(\csring_{\Af1})$). Giving~$\pairing$ is equivalent to giving a morphism $\cF\csto\bD\cF'$ in $\catD^\rb_{\cc}(\csring_{\Af1})$, where $\bD\cF'$ is the Poincaré-Verdier dual of~$\cF'$. We say that~$\pairing$ is \emph{nondegenerate} if the latter morphism is an isomorphism. If a nondegenerate pairing exists on $\cF$, then $\bD\cF'$ is also perverse, and so $\cF'$ (and similarly $\cF$) is strongly perverse (\cf Section \ref{subsec:cohomtools}).

We now explain how such a pairing $\pairing$ induces a pairing between the Stokes-filtered local system associated to $\cF$ and $\cF'$ by Proposition \ref{prop:StokesF}, from which we keep the notation. Any pairing $\pairing$ defines, by pullback, a pairing $\cswt\cF\otimes^L\cswt\cF'\csto \csring_{S^1\times\Af1}[2]$, where $\cswt\cF=q^{-1}\cF$ and $q:S^1\times\Af1\csto\Af1$ is the projection.

Let $\cswt\iota=\iota\times\id:S^1\times\Af1 \csto S^1\times\Af1$ be the involution induced by $\iota$. Then there exists a natural morphism
\[
\csring_{\cswt\Delta_\rho^{\leqslant\infty}}\otimes \csring_{\cswt\iota(\cswt\Delta_\rho^{\leqslant\infty})}\csto \csring_{\cswt\Delta_\rho^{\leqslant\infty}\cap\cswt\iota(\cswt\Delta_\rho^{\leqslant\infty})},\quad \bun_{\cswt\Delta_\rho^{\leqslant\infty}}\otimes\bun_{\cswt\iota(\cswt\Delta_\rho^{\leqslant\infty})}\mto\bun_{\cswt\Delta_\rho^{\leqslant\infty}\cap\cswt\iota(\cswt\Delta_\rho^{\leqslant\infty})}.
\]
The intersection $\cswt\Delta_\rho^{\leqslant\infty}\cap\cswt\iota(\cswt\Delta_\rho^{\leqslant\infty})$ is the product of $S^1$ by the open disc $\rond\Delta_\rho$. Together with~$\pairing$, we~can thus consider the pairing
\[
\cswt\pairing:(\csring_{\cswt\Delta_\rho^{\leqslant\infty}}\otimes\cswt\cF)\otimes^L_\csring(\csring_{\cswt\iota(\cswt\Delta_\rho^{\leqslant\infty})}\otimes\cswt\cF')\csto \csring_{S^1\times\rond\Delta_\rho}[2],
\]
and, by applying $\bR p_!$ and taking cohomology in degree zero, the pairing
\[
\varh:\cswt\Phi_\infty(\cF)\otimes\iota^{-1}\cswt\Phi_\infty(\cF')\csto \csring_{S^1}.
\]
Replacing $\cswt\Delta_\rho^{\leqslant\infty}$ with $\cswt\Delta_c$ in the formulas above, we obtain similarly for each $c\in C$ a pairing
\[
\varh_c:\cswt\pphi_c(\cF)\otimes\iota^{-1}\cswt\pphi_c(\cF')\csto \csring_{S^1}.
\]
\enlargethispage{-\baselineskip}

\pagebreak[2]
\begin{proposition}\label{prop:pairingF}
For $\csring$-perverse sheaves $\cF,\cF'$ with a pairing $\pairing$,
\begin{enumerate}
\item\label{prop:pairingF1}
the pairing $\varh:\cswt\Phi_\infty(\cF)\otimes\iota^{-1}\cswt\Phi_\infty(\cF')\csto \csring_{S^1}$ is compatible with the Stokes filtrations, and the induced pairing \eqref{eq:Pc} is equal to $\varh_c$ defined above;

\item\label{prop:pairingF2}
If $\varh_c$ is nondegenerate for each $c\in C$, then each $\cswt\pphi_c(\cF),\cswt\pphi_c(\cF')$, and thus $\cswt\Phi_\infty(\cF),\cswt\Phi_\infty(\cF')$, are $\csring$-free, and $\varh$ is nondegenerate;

\item\label{prop:pairingF3}
if $\pairing$ is nondegenerate, then $\cF$ and $\cF'$ are strongly perverse, $\cswt\Phi_\infty(\cF),\cswt\Phi_\infty(\cF')$ and each $\cswt\pphi_c(\cF),\cswt\pphi_c(\cF')$ are $\csring$-free, $\varh$ and each $\varh_c$ are nondegenerate.
\end{enumerate}
\end{proposition}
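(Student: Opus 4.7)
For \eqref{prop:pairingF1}, the pairing $\varh$ arises as $\bR^0 p_!$ applied to the sheaf-level pairing
\[
\cswt\cF_{\cswt\Delta_\rho^{\leqslant\infty}}\otimes^L\cswt\cF'_{\cswt\iota(\cswt\Delta_\rho^{\leqslant\infty})}\csto\csring_{S^1\times\rond\Delta_\rho}[2]
\]
deduced from $\cswt\pairing$. Compatibility with the Stokes filtrations amounts to showing that, after restricting the supports of the two factors to $\cswt\Delta_\rho^{\leqslant c}$ and $\cswt\iota(\cswt\Delta_\rho^{<c})$ (respectively to $\cswt\Delta_\rho^{<c}$ and $\cswt\iota(\cswt\Delta_\rho^{\leqslant c})$ for the symmetric condition), the resulting pairing vanishes. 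The strategy is to decompose $\cswt\Delta_\rho^{\leqslant c}=\cswt\Delta_\rho^{<c}\cup\cswt\Delta_c$ as in the proof of Proposition~\ref{prop:StokesF}, and to verify vanishing on each piece of the resulting refined intersection by combining the short exact sequences \eqref{eq:ZS} with the vanishing lemma~\ref{lem:vanishing}. The identification of the induced graded pairing with~$\varh_c$ is then immediate: rerunning the construction with $\cswt\Delta_\rho^{\leqslant\infty}$ replaced by the small semi-closed disc $\cswt\Delta_c$ recovers the definition of $\cswt\pphi_c$ and hence of $\varh_c$, in view of the identification $\gr_c\cswt\Phi_\infty(\cF)\simeq\cswt\pphi_c(\cF)$.

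For \eqref{prop:pairingF2}, assume each $\varh_c$ is nondegenerate. A nondegenerate pairing of finitely generated $\csring$-modules with values in $\csring$ kills torsion on both factors, so each $\cswt\pphi_c(\cF)$ and $\cswt\pphi_c(\cF')$ is a locally $\csring$-free local system of finite rank. On any $C$-good open interval $I\subset S^1$, the splitting $\cswt\Phi_\infty(\cF)|_I\simeq\csbigoplus_c\cswt\pphi_c(\cF)|_I$ provided by Proposition~\ref{prop:strict}\eqref{prop:strict1} shows that $\cswt\Phi_\infty(\cF)$ (and symmetrically $\cswt\Phi_\infty(\cF')$) is locally $\csring$-free. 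The nondegeneracy of $\varh$ is then the implication \eqref{lem:nondegP3}$\Rightarrow$\eqref{lem:nondegP1} of Lemma~\ref{lem:nondegP}, which is applicable by virtue of the compatibility established in~\eqref{prop:pairingF1}.

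For \eqref{prop:pairingF3}, nondegeneracy of $\pairing$ means that the associated morphism $\cF\csto\bD\cF'$ is an isomorphism in $\catD^\rb_\cc(\Af1,\csring)$. As $\cF$ is perverse, $\bD\cF'$ is perverse, which by definition says that $\cF'$ is strongly perverse; the symmetric argument applied to the transposed morphism $\cF'\csto\bD\cF$ shows that $\cF$ is strongly perverse as well. Compatibility of $\pphi_c$ with Verdier duality (a consequence of the $\rp$-shift convention, \cf\cite[Th.\,10.4.21]{M-S22}) gives a canonical isomorphism $\pphi_c\cF\simeq\bD\pphi_c\cF'$, and passing to the local systems on $S^1$ produces an isomorphism $\cswt\pphi_c\cF\isom(\iota^{-1}\cswt\pphi_c\cF')^\csvee$ for each $c\in C$ compatible with $\varh_c$. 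This is precisely the nondegeneracy of each $\varh_c$, and the conclusion follows by invoking part~\eqref{prop:pairingF2}.

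The main obstacle is the verification of compatibility in \eqref{prop:pairingF1}: although the geometric intersection $\cswt\Delta_\rho^{\leqslant c}\cap\cswt\iota(\cswt\Delta_\rho^{<c})$ is generally nonempty at each $\theta\in S^1$, the induced pairing of compactly supported hypercohomologies must nevertheless vanish. The cleanest route is probably an inductive argument, built on short exact sequences analogous to \eqref{eq:ZSPhi}, reducing the vanishing to the fact that the only possibly nonzero contribution to the pairing comes from matched indices $c=c'$ in the Stokes gradings, which is captured exactly by $\varh_c$.
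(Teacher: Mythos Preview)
Your treatment of parts \eqref{prop:pairingF2} and \eqref{prop:pairingF3} is correct and in fact spells out what the paper compresses into ``consequences of the results already obtained.''

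For part \eqref{prop:pairingF1}, however, you correctly identify the obstacle---the intersection $\cswt\Delta_{\rho,\theta}^{\leqslant c}\cap\cswt\Delta_{\rho,\theta'}^{<c}$ is nonempty, consisting of roughly half of the small disc $\cswt\Delta_{c,\theta}$---but you do not actually resolve it. Your proposal of ``an inductive argument, built on short exact sequences analogous to \eqref{eq:ZSPhi}'' is left as a sketch, and it is not clear how induction over the order at $\theta$ would handle the vanishing on this half-disc: the problem lives at the single index $c$, not at strictly smaller ones, so the filtration by smaller indices does not obviously help.

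The paper's argument avoids this entirely with a one-line trick you have missed. The point is that $\cswt\Phi_\infty(\cF')_{<c,\theta'}$ can equally well be computed using the slightly shifted set $\cswt\Delta_{\rho,\theta'}^{<c+\varepsilon\rme^{\sfi\theta}}$ in place of $\cswt\Delta_{\rho,\theta'}^{<c}$: the difference between the two sets is a strip of type $[a,b)\times[c,d]$ not meeting $C$, on which $\coH^0_\rc(\,\cdot\,;\cF')$ vanishes by Lemma~\ref{lem:vanishing}. Once this replacement is made, the sets $\cswt\Delta_{\rho,\theta}^{\leqslant c}$ and $\cswt\Delta_{\rho,\theta'}^{<c+\varepsilon\rme^{\sfi\theta}}$ are \emph{disjoint} (choose $\varepsilon$ larger than the radius of $\cswt\Delta_{c,\theta}$), so the sheaf-level pairing, which factors through the intersection of supports, vanishes before one even applies $\bR p_!$. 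The missing idea, then, is not to analyze cohomology on a nonempty intersection, but to exploit the freedom in the choice of representing open set to make that intersection empty.
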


\begin{proof}
For the first point, we have to show that, for each $c,\theta$, the pairing $\varh$ induces zero on $\cswt\Phi_\infty(\cF)_{\leqslant c,\theta}\otimes\cswt\Phi_\infty(\cF')_{<c,\theta'}$ and on $\cswt\Phi_\infty(\cF)_{<c,\theta}\otimes\cswt\Phi_\infty(\cF')_{\leqslant c,\theta'}$. But this is clear since, in the first case for example,
\begin{itemize}
\item
we can compute $\cswt\Phi_\infty(\cF)_{<c,\theta'}$ with the set $\cswt\Delta_{\rho,\theta'}^{<c+\varepsilon\rme^{\sfi\theta}}$, according to Lemma \ref{lem:vanishing},
\item
the sets $\cswt\Delta_{\rho,\theta}^{\leqslant c}$ and $\cswt\Delta_{\rho,\theta'}^{<c+\varepsilon\rme^{\sfi\theta}}$ do not intersect.
\end{itemize}
The identification of both definitions of $\varh_c$ are then clear.

The second and third points are consequences of the results already obtained.
\end{proof}

\begin{proof}[of Theorem \ref{th:dualHUfS1}]
We apply Proposition \ref{prop:pairingF} to $\cF=\pR^{r-d}f_*\pring_U$ and $\cF'=\pR^{2d-r}f_!\pring_U$, and we identify the Stokes-filtered local systems
\[
\iota^{-1}(\cswt\Phi_\infty(\cF'),\cswt\Phi_\infty(\cF')_\bbullet)\simeq(\cswt\Phi_{\infty,\rc}^{2d-r}(U,-f),\cswt\Phi_{\infty,\rc}^{2d-r}(U,-f)_\bbullet).\eqno{\hbox{\rlap{$\sqcap$}$\sqcup$}}
\]
\end{proof}

\subsection{The twisted de~Rham complex with an algebraic parameter}\label{subsec:dRalgparam}
We revisit the results of Section \ref{subsec:algDR} by introducing a rescaling parameter for the function $f$. We~thus consider the rescaling $f\rightsquigarrow f/u$. In contrast with the definition of $\cswt\Phi_\infty(U,f)$, we do not restrict to $|u|=1$ but consider $u\in\Gm$, so that we can analyze the behavior at the limit $u=0$. We will also consider in the next subsection a~formal parametrization by $\CC\lpr u\rpr$. The purpose is to obtain in an algebraic (or formal) way some expressions related to the monodromy operators occurring Section \ref{sec:constStokesfilt}.

\subsubsection*{The twisted de~Rham complex with parameter in \texorpdfstring{$\Gm$}{Gm}}
In algebraic terms, we consider the \index{complex!twisted algebraic de~Rham -- with parameter}twisted de~Rham complex
\[
(\Omega_U^\cbbullet[u,u^{-1}],\rd+\rd f/u),
\]
where the sections of $\Omega_U^k[u,u^{-1}]:=\Omega_U^k\otimes_\CC\CC[u,u^{-1}]$ are Laurent polynomials in~$u$ with coefficients being sections of $\Omega_U^k$, and the differential $\rd$ only concerns the variables in~$U$. In other words, the notation $\rd+\rd f/u$ is a shorthand for $\rd\otimes\id+\rd f\otimes u^{-1}$. As~a consequence, the twisted de~Rham complex is a complex of $\CC[u,u^{-1}]$-modules, and its cohomologies $\coH^r_\dR(U,f/u)$ consist of $\CC[u,u^{-1}]$-modules.

On the other hand, we make the differential operator $\partial_u$ acts on each term of this complex by setting
\[
\partial_u(\omega\otimes u^\ell)=\ell\omega\otimes u^{\ell-1}-f\omega\otimes u^{\ell-2}.
\]
This operator commutes with the differential (this is easily checked if one interprets $\rd+\rd f/u$ as $\rme^{-f/u}\circ\rd\circ\rme^{f/u}$ and the above action of $\partial_u$ as that $\rme^{-f/u}\circ \partial_u\circ\rme^{f/u}$). It~induces therefore a differential operator on each cohomology module $\coH^r_\dR(U,f/u)$. The structure of these cohomology modules are described by the next theorem.

\begin{theorem}
For each $r$, the $\CC[u,u^{-1}]$-module $\coH^r_\dR(U,f/u)$ is free of finite rank equal to $\dim_\CC\coH^r_\dR(U,f)$, and the action of $\partial_u$ defines on it an algebraic connection with a regular singularity at $u=\infty$ and a possibly irregular singularity at $u=0$.
\end{theorem}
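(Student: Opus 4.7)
The plan is to interpret $\coH^r_\dR(U, f/u)$ as the Gauss--Manin module of the family of twisted de~Rham complexes on $U$ parametrized by $u\in\Gm$, prove coherence and freeness via a Kontsevich--Yu type compactification, and finally analyze the behavior at $u=0$ and $u=\infty$ separately.

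First I~would reinterpret $\coH^r_\dR(U, f/u)$ as the hypercohomology of the pushforward by $p:U\times\Gm\csto\Gm$ of the relative twisted complex $(\Omega^\cbbullet_{U\times\Gm/\Gm}, \rd_{\mathrm{rel}}+\rd(f/u))$. The corresponding absolute complex on $U\times\Gm$ endows the answer with a compatible connection whose covariant derivative along $\partial_u$ agrees with the operator displayed before the statement. To prove coherence and freeness, I~would pass to a good projectivization $\csov f:\csov X\csto\PP^1$ of $f$ and apply the relative analogue of Theorem~\ref{th:kontsevichyu}, which replaces the pushforward by that of the bounded complex of locally free $\cO_{\csov X}$-modules $(\Omega_f^\cbbullet,\rd+\rd(f/u))$ along the projective morphism $\csov X\times\Gm\csto\Gm$. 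Coherence follows at once. Uniformity in $u$ of the $E_1$-degeneration of Theorem~\ref{th:E1degalpha} then identifies each fiber with $\csbigoplus_{p+q=r}\coH^q(\csov X,\Omega_f^p)$; in particular the fiber dimension is constant on $\Gm$ and equals $\dim_\CC\coH^r_\dR(U,f)$. A coherent sheaf of constant fiber dimension on a smooth connected curve is locally free, and since $\CC[u,u^{-1}]$ is a principal ideal domain, locally free of constant rank is free of that rank.

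It remains to analyze the singular points. At $u=\infty$, I~would set $v=1/u$; the twisted differential becomes the polynomial deformation $\rd+v\,\rd f$ of the plain de~Rham differential. The Kontsevich--Yu construction in the variable $v$ yields a $\CC[v]$-lattice of the Gauss--Manin module whose fiber at $v=0$ is the ordinary algebraic de~Rham cohomology $\coH^r_\dR(U)$, and a direct computation (using that multiplication by $f$ sends the Kontsevich--Yu sheaves into a controlled finite-order extension thereof) shows that this lattice is preserved, up to a shift of irregular Hodge index, by the action of $v\partial_v$. This is the regular-singular behavior at $v=0$, i.e., at $u=\infty$. At $u=0$, the connection is already part of the construction, and no lattice stable under $u\partial_u$ exists in general: the formal decomposition at $u=0$ involves the exponential factors $\exp(-c/u)$ attached to the critical values $c\in C$ of $f$ (this is the Fourier--Laplace dual of the Gauss--Manin system of~$f$), which generically forces irregularity.

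The main obstacle is the regularity at $u=\infty$: the $v\partial_v$-stability of the Kontsevich--Yu lattice is not literal but requires a parametric refinement of Theorem~\ref{th:E1degalpha}, together with a compatibility statement between $\partial_v$ and the irregular Hodge filtration that organizes the "pole shift" introduced by the multiplication-by-$f$ term coming from the Leibniz rule applied to $\rd+v\,\rd f$.
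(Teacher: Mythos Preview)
Your approach is workable but takes a longer route than the paper's, and the obstacle you flag at the end is precisely what the paper's argument sidesteps entirely.

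The paper's key move is a Fourier--Laplace transform. Setting $v=u^{-1}$ and viewing $\coH^r_\dR(U,vf)$ as a module over the Weyl algebra $\CC[v]\langle\partial_v\rangle$, one makes the substitution $v\leftrightarrow\partial_t$, $\partial_v\leftrightarrow -t$. Under this identification, $\coH^r_\dR(U,vf)$ becomes exactly the $(d-r)$-th Gauss--Manin system of $f$ as a $\CC[t]\langle\partial_t\rangle$-module. Griffiths' regularity theorem then says this module has regular singularities at each $c\in\Af1_t$ and at $t=\infty$. A standard result on Fourier transforms of regular holonomic modules now gives everything at once: the transform has a regular singularity at $v=0$, a possibly irregular one at $v=\infty$, and \emph{no other} singularity. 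The absence of singularities on $\Gm$ forces the module (after tensoring with $\CC[u,u^{-1}]$) to be free, and restricting to $u=1$ identifies the rank with $\dim_\CC\coH^r_\dR(U,f)$.

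Your freeness argument via the relative Kontsevich complex and uniform $E_1$-degeneration is correct and is in fact how the paper later constructs the Kontsevich bundles $\cK^r_\alpha$. But for the singularity analysis, the Fourier trick is far more efficient: it reduces the regularity at $u=\infty$ to Griffiths' classical theorem, with no need to control how $v\partial_v$ interacts with the Kontsevich--Yu filtration. The ``pole shift'' you worry about is real (the Brieskorn lattice has a pole of order two at $u=0$), but the regularity at $v=0$ of the Kontsevich lattice, while true, is established in the paper only after further work on the Brieskorn--Deligne bundles---whereas the Fourier argument gives it for free.
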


\begin{proof}[Sketch]
Setting $v=u^{-1}$, it is convenient to first consider the complex $(\Omega^\cbbullet_U[v],\rd+v\rd f)$, which also comes equipped with an action of $\partial_v$ defined as $\rme^{-vf}\circ \partial_v\circ\rme^{vf}$. Then $\coH^r_\dR(U,vf)$ is a $\CC[v]$-module equipped with a compatible action of $\partial_v$, \ie a module over the Weyl algebra $\CC[v]\langle\partial_v\rangle$ of differential operators with polynomial coefficients in the variable $v$. Let us regard $v$ as the derivation $\partial_t$ with respect to the variable $t=-\partial_v$. Regarded as a $\CC[t]\langle\partial_t\rangle$-module, $\coH^r_\dR(U,vf)$ is interpreted as the $(d-r)$-Gauss-Manin system attached to $f$. The regularity theorem of Griffiths implies that it has a regular singularities at any of its singularity \hbox{$c\in\Af1$} and also at infinity. A standard theorem of differential equations implies that the $\CC[v]\langle\partial_v\rangle$-module $\coH^r_\dR(U,vf)$, regarded as the Laplace transform of a $\CC[t]\langle\partial_t\rangle$-module with regular singularities, has a regular singularity at $v=0$, an possibly irregular one at $v=\infty$, and no other singularity. The statement of the theorem is obtained by tensoring $\coH^r_\dR(U,vf)$ with $\CC[u,u^{-1}]=\CC[v^{-1},v]$ over $\CC[v]$. The assertion on the rank is obtained by identifying the restriction to $u=1$ of the free $\CC[u,u^{-1}]$-module $\coH^r_\dR(U,f/u)$ with $\coH^r_\dR(U,f)$.
\end{proof}

\begin{remark}[Comparison with the topological data]
The sheaf of horizontal sections
\[
\ker\bigl[\partial_u:\coH^r_\dR(U,f/u)^\an\csto\coH^r_\dR(U,f/u)^\an\bigr]
\]
is \index{sheaf!locally constant --}locally constant on $\CC^*$ with stalk isomorphic to the $\CC$-vector space $\coH^r_\dR(U,f)\simeq\coH^r(U,f;\CC)$. When restricting to $|u|=1$, we recover the local system $\cswt\Phi_\infty^r(U,f):=\cswt\Phi_\infty(\pR^{r-d}f_*\CC_U)$ considered in Section \ref{sec:quiverperverse}.

Furthermore, the asymptotic analysis of the meromorphic connection
\[
(\coH^r_\dR(U,f/u)^\an,\partial_u)
\]
near $u=0$ provides Stokes data, that can be expressed as a Stokes-filtered local system with underlying local system being that considered above. This Stokes-filtered local system can be identified with that considered in Section \ref{sec:constStokesfilt} (\cf\cite[Chap.\,XII]{Malgrange91} and~\cite{D-H-M-S17}).
\end{remark}

\subsubsection*{Duality pairing}
The theory of $\cD$-modules enables one to define the cohomology with compact support $\coH^r_{\dR,\rc}(U,f/u)$ as a $\CC[u,u^{-1}]$-module with an action of $\partial_u$ together with a nondegenerate pairing
\[
\coH^r_{\dR}(U,f/u)\otimes_{\CC[u,u^{-1}]}\coH^{2d-r}_{\dR,\rc}(U,-f/u)\csto \coH^{2d}_{\dR,\rc}(U)[u,u^{-1}]\simeq\CC[u,u^{-1}].
\]
We will however produce such cohomology modules with compact support and such pairings with concrete de~Rham complexes by adding a parameter $u$ in those considered in Theorem \ref{th:kontsevichyu} (\cf \cite[Th.\,B]{Bibi22a}).

\begin{theorem}\mbox{}\label{th:kontsevichyuu}
\begin{enumerate}
\item\label{th:kontsevichyuu1}
Restriction to $U$ induces isomorphisms of $\CC[u,u^{-1}]$-modules with a compatible action of $\partial_u$, for all $r\in\NN$:
\begin{align*}
\coH^r_\dR(U,f/u)&\simeq \coH^r\bigl(\csov X,(\Omega_f^\cbbullet[u,u^{-1}],\rd+\rd f/u)\bigr),
\\
\coH^r_{\dR,\rc}(U,f/u)&\simeq \coH^r\bigl(\csov X,(\Omega_f^\cbbullet(-D)[u,u^{-1}],\rd+\rd f/u)\bigr).
\end{align*}
\item\label{th:kontsevichyuu2}
For each $r\in\NN$, the natural de~Rham pairing
\[
\coH^r_\dR(U,f/u)\otimes_\CC\coH^{2d-r}_{\dR,\rc}(U,-f/u)\csto \coH^{2d}_{\dR}(\csov X)[u,u^{-1}]\simeq\CC[u,u^{-1}]
\]
is compatible with the action of $\partial_u$ and is nondegenerate.
\end{enumerate}
\end{theorem}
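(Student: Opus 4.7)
The plan is to adapt the proof of Theorem~\ref{th:kontsevichyu} by inserting the parameter $u$ throughout, exploiting the fact that for $u$ invertible the condition defining $\Omega_f^k$ in \eqref{eq:Omegaf} is unchanged when $f$ is replaced by $f/u$: since $\rd(f/u)\wedge\omega = u^{-1}\,\rd f\wedge\omega$ and $u^{-1}$ is a unit, the same kernel is cut out, whether we work over $\CC$ or over $\CC[u,u^{-1}]$. The whole picture of Theorem~\ref{th:kontsevichyu} should therefore transport along the flat base extension $\CC\hto\CC[u,u^{-1}]$, with only the differential affected by the rescaling.

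For Part~\eqref{th:kontsevichyuu1}, I would run the proof of Theorem~\ref{th:kontsevichyu} with $\CC$ replaced by $\CC[u,u^{-1}]$ and $\rd+\rd f$ by $\rd+\rd f/u$. The chain of quasi-isomorphisms of Remark~\ref{rem:kontsevichyu},
\[
(\Omega_f^\cbbullet[u,u^{-1}],\rd+\rd f/u)\hto(\Omega_X^\cbbullet(\log H)[u,u^{-1}],\rd+\rd f_X/u)\hto(\Omega_{\csov X}^\cbbullet(*D)[u,u^{-1}],\rd+\rd f/u),
\]
is of local analytic nature: the first map is a Koszul-type vanishing at points of $P$, where $f$ is written as the inverse of a monomial; the second is the standard comparison between logarithmic and meromorphic de~Rham complexes on $X$, where $f$ has no poles. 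In both cases the factor $u^{-1}$ appears as a unit in the base ring and the arguments of \cite{E-S-Y13,Yu12} go through verbatim. The rightmost complex computes $\coH^r_\dR(U, f/u)$ by definition. To transport the $\partial_u$-action, I observe that the meromorphic complex $(\Omega_{\csov X}^\cbbullet(*D)[u,u^{-1}],\rd+\rd f/u)$ is stable under the operator $\partial_u(\omega\otimes u^\ell) = \ell\omega\otimes u^{\ell-1} - f\omega\otimes u^{\ell-2}$ (multiplication by $f$ preserves meromorphic forms along $D$), and this operator commutes with $\rd+\rd f/u$; the resulting action on hypercohomology is well-defined and pulls back, through the quasi-isomorphism, to the Kontsevich side. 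The compact-support statement is entirely analogous via $\Omega_f^\cbbullet(-D)$.

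For Part~\eqref{th:kontsevichyuu2}, the termwise wedge product
\[
\Omega_f^k[u,u^{-1}]\otimes_{\cO_{\csov X}[u,u^{-1}]}\Omega_{-f}^{d-k}(-D)[u,u^{-1}]\csto\omega_{\csov X}[u,u^{-1}]
\]
identifies the second factor with the Serre dual of the first, base-changed from the Kontsevich–Yu Serre duality recalled before Theorem~\ref{th:kontsevichyu}. A direct Leibniz computation shows that this pairing intertwines the differentials $\rd+\rd f/u$ and $\rd-\rd f/u$; a parallel Leibniz check, keeping track of the sign $\partial_u(-f/u)=f/u^2$ on the second factor, yields $\partial_u$-compatibility. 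Nondegeneracy then follows from Serre duality for bounded complexes of locally free sheaves on the smooth projective variety $\csov X$, applied after flat base change to $\CC[u,u^{-1}]$. Equivalently, one may specialize at each $u_o\in\Gm$ to invoke Theorem~\ref{th:kontsevichyu}\eqref{th:kontsevichyu2} for $(U, f/u_o)$, showing that the determinant of the pairing matrix has no zero on $\Gm$ and hence is a unit of $\CC[u,u^{-1}]$.

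The main obstacle is pseudo-technical: while the subsheaves $\Omega_f^k$ are not stable under multiplication by $f$, so that the formula for $\partial_u$ does not preserve the Kontsevich complex at the level of sections, the $\partial_u$-action on hypercohomology must nevertheless be produced. The resolution is to define $\partial_u$ on the meromorphic de~Rham complex, where it visibly acts, and then transfer the action through the quasi-isomorphism. Once this subtlety is settled, every remaining step is a mechanical base change of the Kontsevich–Yu arguments from $\CC$ to $\CC[u,u^{-1}]$.
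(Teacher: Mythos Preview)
The paper does not supply its own proof of this theorem: it is stated with a reference to \cite[Th.\,B]{Bibi22a}, and the surrounding text only remarks (after the statement) that the logarithmic complexes on $X$ give the same answer, as in Remark~\ref{rem:kontsevichyu}. Your proposal is therefore not competing with an argument in the paper but with the cited literature, and at the level of a sketch it is sound and matches the strategy the paper points to: base-change the Kontsevich--Yu quasi-isomorphisms and Serre duality from $\CC$ to $\CC[u,u^{-1}]$, using that $u$ is a unit so the kernel \eqref{eq:Omegaf} is unchanged.

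Your identification of the one genuine subtlety---that multiplication by $f$ does not preserve $\Omega_f^k$, so $\partial_u$ must be defined on the meromorphic complex and transported through the quasi-isomorphism---is correct and is exactly the point that requires care. For nondegeneracy in Part~\eqref{th:kontsevichyuu2}, your specialization argument (``the determinant has no zero on $\Gm$'') implicitly uses that $\coH^r_\dR(U,f/u)$ and $\coH^{2d-r}_{\dR,\rc}(U,-f/u)$ are free $\CC[u,u^{-1}]$-modules and that taking hypercohomology commutes with restriction to $u=u_o$; the paper establishes freeness just before the theorem, and base change follows from it, so this is fine once made explicit.
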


As in Remark \ref{rem:kontsevichyu}, the first statement also holds when replacing the Kontsevich complexes with their restriction to $X$, that is:
\begin{equation}\label{eq:UlogH}
\begin{aligned}
\coH^r_\dR(U,f/u)&\simeq \coH^r\bigl(X,(\Omega_X^\cbbullet(\log H)[u,u^{-1}],\rd+\rd f_X/u)\bigr),
\\
\coH^r_{\dR,\rc}(U,f/u)&\simeq \coH^r\bigl(X,(\Omega_X^\cbbullet(\log H)(-H)[u,u^{-1}],\rd+\rd f_X/u)\bigr).
\end{aligned}
\end{equation}

\subsubsection*{The Brieskorn lattice and the Barannikov-Kontsevich theorem}
We consider polynomial coefficients instead of Laurent polynomials, and in order to take into account the action of $\rd f/u$, we consider the subcomplex
\[
(u^{-\cbbullet}\Omega_U^\cbbullet[u],\rd+\rd f/u)\subset(\Omega_U^\cbbullet[u,u^{-1}],\rd+\rd f/u),
\]
where the terms in degree $k$ have a pole of order at most $k$. By multiplying the degree~$k$ term by $u^k$ we obtain the isomorphic complex
\[
(\Omega_U^\cbbullet[u],u\rd+\rd f).
\]
It is natural to try to ``set $u=0$'' in this complex and to compare the result with the complex $(\Omega_U^\cbbullet,\rd f)$, which is a complex in the category of coherent $\cO_U$-modules and whose cohomology sheaves are $\cO_U$-coherent and supported on the critical set of~$f$. However, without any other assumption on $f$, none of these complexes present finiteness properties. For example, if $U=\Af1$ with coordinate $t$ and $f=0$, then
\[
\coH^1\bigl(U,(\Omega_U^\cbbullet[u],u\rd+\rd f)\bigr)\simeq\coker\bigl(u\partial_t:\CC[t,u]\csto\CC[t,u]\bigr)
\]
is identified with the $\CC$-vector space $\CC[t]$ on which $u$ acts by zero. Although one could consider such objects with duality pairing as defined in \cite[Th.\,3.1]{Hartshorne72}, we~will instead consider the logarithmic expressions like in Theorem \ref{th:kontsevichyuu}\eqref{th:kontsevichyuu1} or in \eqref{eq:UlogH}. This construction is analogous to that of the \index{lattice!Brieskorn --}\emph{Brieskorn lattice} in Singularity theory.

\begin{theorem}[The Brieskorn lattices, {\cf\cite[\S8]{S-Y14} and \cite[Th.\,A]{Bibi22a}}]\label{th:A}\mbox{}
\begin{enumerate}
\item\label{th:A1}
The $\CC[u]$-modules
\begin{align*}
&\coH^r\bigl(X,(\Omega_X^\cbbullet(\log H)[u],u\rd+\rd f_X)\bigr),\\
&\coH^r\bigl(X,(\Omega_X^\cbbullet(\log H)(-H)[u],u\rd+\rd f_X)\bigr)
\end{align*}
are $\CC[u]$-free of finite rank, and equipped with an action of $\partial_u$ having a pole of order at most~$2$ at $u=0$, a regular singularity at infinity and no other pole. After tensoring by $\CC[u,u^{-1}]$ these free $\CC[u]$-modules, one recovers $\coH^r_\dR(U,f/u)$ and $\coH^r_{\dR,\rc}(U,f/u)$ respectively.

\item\label{th:A2}
Furthermore, there is a natural perfect pairing compatible with the actions of~$\partial_u$:
\begin{multline*}\label{eq:pairX}
\coH^r\bigl(X,(\Omega_X^\cbbullet(\log H)[u],u\rd+\rd f_X)\bigr)\\\otimes_{\CC[u]} \coH^{2d-r}\bigl(X,(\Omega_X^\cbbullet(\log H)(-H)[u],u\rd-\rd f_X)\bigr)
\csto \CC[u].
\end{multline*}

\item\label{th:A3}
By restricting this pairing modulo $u\CC[u]$, we obtain a perfect pairing
\[
\coH^r\bigl(X,(\Omega_X^\cbbullet(\log H),\rd f_X)\bigr)\\\otimes\coH^{2d-r}\bigl(X,(\Omega_X^\cbbullet(\log H)(-H),-\rd f_X)\bigr)
\csto\CC.
\]
\end{enumerate}
All these objects are independent of the choice of the good projectivization $(X,f)$ of $(U,f)$.
\end{theorem}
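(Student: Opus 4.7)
The plan is to establish all three assertions together, using $E_1$-degeneration of a stupid spectral sequence to obtain $\CC[u]$-freeness of the Brieskorn lattices, then deriving the pairings from Theorem \ref{th:kontsevichyuu}\eqref{th:kontsevichyuu2} and closing the argument by a direct Serre-duality computation on a good compactification.

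For assertion \eqref{th:A1}, I would filter the complex $(\Omega_X^\cbbullet(\log H)[u],u\rd+\rd f_X)$ by the stupid filtration $\sigma^{\geqslant p}$. The associated graded kills $u\rd$ and leaves only the $\cO_X$-linear Koszul differential $\rd f_X\wedge$, so the $E_1$-page of the associated spectral sequence is $E_1^{p,q}=\coH^q\bigl(X,(\Omega_X^p(\log H),\rd f_X)\bigr)[u]$. The goal is to show this spectral sequence degenerates at $E_1$, which yields $\CC[u]$-freeness of the abutment (torsion-freeness being transferred from $\CC[u]$) as well as the rank formula $\sum_{p+q=r}\dim E_1^{p,q}$. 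The degeneration I would deduce as a family version (in the parameter $u$) of the degeneration result \cite{E-S-Y13} already invoked in Theorem \ref{th:E1degalpha}, passing from the Kontsevich--Yu complex on $\csov X$ down to its restriction on $X$ via Remark \ref{rem:kontsevichyu}. Localization at $u\neq 0$ identifies the outcome with $\coH^r_\dR(U,f/u)$ using \eqref{eq:UlogH} and Theorem \ref{th:kontsevichyuu}\eqref{th:kontsevichyuu1}, pinning the rank down to $\dim_\CC\coH^r_\dR(U,f)$. The pole order at most~$2$ of $\partial_u$ at $u=0$ is a direct computation on the complex (the classical Brieskorn lattice property), and regularity at $u=\infty$ follows by reinterpreting the lattice, as in the sketch following Theorem~6.30, as the Laplace transform of the $\CC[t]\langle\partial_t\rangle$-Gauss--Manin system of $f$, which has regular singularities by Griffiths.

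For \eqref{th:A2} and \eqref{th:A3}, the termwise wedge product yields a morphism of complexes
\[
(\Omega_X^\cbbullet(\log H)[u],u\rd+\rd f_X)\otimes(\Omega_X^\cbbullet(\log H)(-H)[u],u\rd-\rd f_X)\csto(\omega_X[u],u\rd),
\]
(a direct check using $\rd\omega_1\wedge\omega_2+(-1)^k\omega_1\wedge\rd\omega_2=\rd(\omega_1\wedge\omega_2)$ kills the $\rd f_X$-contributions), inducing on hypercohomology a $\CC[u]$-bilinear pairing manifestly compatible with $\partial_u$. After inverting~$u$ it reduces to the nondegenerate pairing of Theorem \ref{th:kontsevichyuu}\eqref{th:kontsevichyuu2}; hence the associated $\CC[u]$-linear map from the Brieskorn lattice to the $\CC[u]$-dual of its $-f_X$-counterpart is a morphism of free $\CC[u]$-modules of the same rank, injective with cokernel supported at $u=0$. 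To close the argument I would prove \eqref{th:A3} directly, working with the Kontsevich complexes on $\csov X$ via Remark \ref{rem:kontsevichyu}: the termwise Serre pairing $\Omega_f^k\otimes\Omega_{-f}^{d-k}(-D)\csto\omega_{\csov X}$ identifies each term of one complex with the Serre dual of the corresponding term of the other, and the $\cO$-linear differentials $\rd f\wedge$ and $-\rd f\wedge$ are then mutual transposes; Serre duality on the smooth projective $\csov X$, applied to the finite-dimensional Koszul hypercohomologies from Step~1, yields the perfect pairing \eqref{th:A3}. Specializing the $\CC[u]$-pairing at $u=0$ matches this Serre pairing, forcing the cokernel above to vanish and establishing \eqref{th:A2}.

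The principal obstacle is the $E_1$-degeneration of the stupid spectral sequence in the parameter-$u$ setting, which underlies freeness of the Brieskorn lattices. This is a family variant of the degeneration underlying irregular Hodge theory (Theorem \ref{th:E1degalpha}) and requires carefully tracking logarithmic contributions along the full divisor $D=H\cup P$; I would rely on \cite{E-S-Y13} and \cite{Yu12} for the degeneration on $\csov X$ and on the quasi-isomorphism of the Kontsevich and the logarithmic complexes (\cite[Cor.\,1.4.3]{E-S-Y13}, \cite[Lem.\,2.8]{Bibi22a}) referenced in Remark~\ref{rem:kontsevichyu} to descend to~$X$. Independence from the good projectivization then follows by combining the independence of the generic fiber $\coH^r_\dR(U,f)$ established in Theorem \ref{th:kontsevichyu} with the $\CC[u]$-freeness just obtained, since a free $\CC[u]$-module carrying a connection with pole of order~$2$ at $0$ and regular singularity at $\infty$ is determined by its generic fiber together with the connection.
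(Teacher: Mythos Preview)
Your approach to \eqref{th:A1}--\eqref{th:A3} is essentially the one the paper sketches: replace the logarithmic complexes on $X$ by the Kontsevich complexes on $\csov X$ via Remark~\ref{rem:kontsevichyu}, then invoke the $E_1$-degeneration\,/\,dimension-constancy results of \cite{E-S-Y13} (the Barannikov--Kontsevich theorem) to obtain $\CC[u]$-freeness; the paper defers the duality to \cite{C-Y16}, and your outline (wedge pairing, perfectness at $u=0$ by Serre duality, then Nakayama) is the standard route there.

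There is, however, a genuine gap in your final paragraph. The claim that ``a free $\CC[u]$-module carrying a connection with pole of order~$2$ at $u=0$ and regular singularity at $\infty$ is determined by its generic fiber together with the connection'' is false: already on $(\CC[u,u^{-1}],\partial_u)$ the lattices $\CC[u]$ and $u\CC[u]$ are distinct free $\CC[u]$-submodules, both stable under $u^2\partial_u$, with the same generic fiber and connection. Independence of the good projectivization is therefore not a formal consequence of the properties you have established; the paper obtains it from \cite{Yu12} and \cite[Prop.\,2.3]{C-Y16}, where one compares two good projectivizations through a common refinement and checks that the induced morphisms of Kontsevich (or logarithmic) complexes are filtered quasi-isomorphisms \emph{at the level of complexes}, not merely after inverting~$u$.
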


The freeness property of $\coH^r\bigl(X,(\Omega_X^\cbbullet(\log H)[u],u\rd+\rd f_X)\bigr)$ also follows from a variant of the Barannikov-Kontsevich theorem \cite[\S0.6]{Bibi97b}. The independence on the choice of the good projectivization follows from \cite{Yu12} and \cite[Prop.\,2.3]{C-Y16}. Let us also note that one has a meromorphic (instead of logarithmic) expression of the Brieskorn lattice (\cf \cite[(8.12)]{S-Y14}).

\begin{proof}[Sketch]
The results of \cite{E-S-Y13} allow one to replace the logarithmic complexes on~$X$ with the Kontsevich complexes on $\csov X$, as in Theorem \ref{th:kontsevichyuu}. In \loccit, it is also proved that the dimension of the cohomologies when restricted to any \hbox{$u_o\in\CC$} does not depend on $u_o$. This leads to the $\CC[u]$-freeness property. The duality property has been considered in \cite{C-Y16}.
\end{proof}

\subsection{The case of a formal parameter}
The approach of the previous section \ref{subsec:dRalgparam}, extending with an algebraic parameter~$u$ the results of Section \ref{subsec:algDR}, yields algebraic expressions for differential equations (connections) whose horizontal sections provide the \index{vanishing cycles!global --}\index{sheaf!of global vanishing cycles}global vanishing cycle sheaves $\cswt\Phi_\infty^r(U,f;\CC)$, $\cswt\Phi_{\infty,\rc}^r(U,f;\CC)$ on $S^1$. Furthermore, from the classical theory of differential equations, one can recover algebraically each vector space with monodromy
\[
\bH^r(f_X^{-1}(c);\phi_{f_X-c}(\bR j_*\CC_U)) \quand\bH^r_\rc(f_X^{-1}(c);\phi_{f_X-c}(\bR j_!\CC_U))
\]
by tensoring $\coH^r_\dR(U,f/u)$, respectively $\coH^r_{\dR,\rc}(U,f/u)$ with $\CC\lcr u\rcr$ and extracting a suitable direct summand of them.

As $f$ is possibly non proper, the vector spaces with monodromy
\[
\bH^r(f^{-1}(c);\phi_{f-c}\CC_U)\quand\bH^r_\rc(f^{-1}(c);\phi_{f-c}\CC_U)
\]
differ in general respectively from the latter.

\begin{example}\label{ex:HUUan}
Let $f\in\CC[t]$ be a non-constant polynomial in one variable and let $U$ be the Zariski open set of $\Af1$ complementary to $\{f'=0\}$, so that $\cO(U)=\CC[t,1/f']$. Then $\phip_{f-c}\pCC_U=0$ for any $c\in\CC$ since $f$ has no critical point in $U$, while the complex
\[
\CC[t,1/f']\To{\partial_t-f'}\CC[t,1/f']
\]
has cohomology in degree one only, of dimension \hbox{$\deg f\cdot\#\{f(t)\mid f'(t)=0\}$}.
\end{example}

The question remains to give an algebraic formula for the latter spaces with monodromy. This is the content of Theorem \ref{th:lpru} below.

For a coherent $\cO_U$-module $\cF$, we set
\[
\cF[u]:=\CC[u]\otimes_\CC\cF\quand\cF\lcr u\rcr=\varprojlim_\ell(\cF[u]/u^\ell\cF[u]).
\]
The latter module is in general not equal to $\CC\lcr u\rcr\otimes_\CC\cF$ and for $x\in U$ we have a strict inclusion $\cF\lcr u\rcr_x\subsetneq \cF_x\lcr u\rcr$: a germ of section of $\cF\lcr u\rcr$ at $x\in U$ consists of a formal power series $\sum_nf_nu^n$ where $f_n$ are sections of $\cF$ defined on a fixed neighbourhood of~$x$, while for $\cF_x\lcr u\rcr$ we allow the neighbourhood to be shrunk when $n\csto\infty$. In~par\-ticular, there is a natural morphism $\CC\lcr u\rcr\otimes_\CC\cF\csto\cF\lcr u\rcr$. Letting $\CC\lpr u\rpr$ denote the field of formal Laurent series, we then set $\cF\lpr u\rpr:=\cF\lcr u\rcr[u^{-1}]=\CC\lpr u\rpr\otimes_{\CC\lcr u\rcr}\cF\lcr u\rcr$. Taking cohomology does note commute in general with applying the functor $\lpr u\rpr$.

Let us consider the formally twisted de~Rham complex
\[
(\Omega_U^\cbbullet\lpr u\rpr,\rd+\rd f/u).
\]
For each $r\geqslant0$, the $\CC\lpr u\rpr$-vector space $\bH^r\bigl(U,(\Omega_U^\cbbullet\lpr u\rpr,\rd+\rd f/u)\bigr)$ is equipped with a compatible action of $\partial_u$.

\begin{example}
Let us illustrate the difference between the $\CC\lpr u\rpr$-vector spaces
\[
\bH^r\bigl(U,(\Omega_U^\cbbullet\lpr u\rpr,\rd+\rd f/u)\bigr)\quand\CC\lpr u\rpr\otimes_{\CC[u,u^{-1}]}\bH^r\bigl(U,(\Omega_U^\cbbullet[u,u^{-1}],\rd+\rd f/u)\bigr).
\]
Let us take up the notation of Example \ref{ex:HUUan}. Then the formally twisted de~Rham complex
\[
\cO(U)\lpr u\rpr\To{u\partial_t-f'}\cO(U)\lpr u\rpr
\]
has cohomology equal to zero. Indeed, let us show for example that the differential is onto. This amounts to showing that, given $\psi_{k_o},\psi_{k_o+1},\dots$ in $\CC[t,1/f']$ (with $k_o\in\ZZ$), we can find $\varphi_{k_o},\varphi_{k_o+1},\dots$ in $\CC[t,1/f']$ such that
\[
\psi_{k_o}=-f'\varphi_{k_o},\quad\psi_{k_o+1}=\partial_t\varphi_{k_o}-f'\varphi_{k_o+1},\dots,\psi_{k+1}=\partial_t\varphi_{k}-f'\varphi_{k+1},\dots,
\]
a system which can be solved inductively because $f'$ is invertible in $\CC[t,1/f']$.

On the other hand, the complex
\[
\CC[t,1/f'][u,u^{-1}]\To{u\partial_t-f'}\CC[t,1/f'][u,u^{-1}]
\]
has cohomology in degree one only, and this cohomology is a free $\CC[u,u^{-1}]$-module of rank equal to $\deg f\cdot\#\{f(t)\mid f'(t)=0\}$.
\end{example}

In order to state the next theorem, let us introduce a notation. Let $E$ be a finite dimensional $\CC$-vector space equipped with an automorphism $T$. Given a choice of a logarithm of $T$, that is, writing $T=\exp(-2\pi i M)$ for some $M:E\csto E$, we denote by $\cswh{\mathrm{RH}}{}^{-1}(E,T)$ the $\CC\lpr u\rpr$-vector space $E\lpr u\rpr$ equipped with the connection $\rd+M \rd u/u$. Given $c\in\CC$, we set $\cswh\cE^{-c/u}=(\CC\lpr u\rpr,\rd+c\rd u/u^2)$.

\begin{theorem}[{\cf\cite[Th.\,1]{S-MS12}}]\label{th:lpru}
Each hypercohomology $\CC\lpr u\rpr$-vector space
\[
\bH^r\bigl(U,(\Omega_U^\cbbullet\lpr u\rpr,\rd+\rd f/u)\bigr)
\]
is finite dimensional and, with respect to the action of $\partial_u$, it decomposes as the direct sum of $\CC\lpr u\rpr$-vector spaces with a compatible action of $\partial_u$
\[
\csbigoplus_{c\in\CC}\bH^r\bigl(U,(\Omega_U^\cbbullet\lpr u\rpr,\rd+\rd f/u)\bigr)_c
\]
such that, for each $c\in\CC$, the component $\bigl(\bH^r\bigl(U,(\Omega_U^\cbbullet\lpr u\rpr,\rd+\rd f/u)\bigr)_c, \partial_u\bigr)$ is isomorphic to
\[
\cswh\cE^{-c/u}\otimes_{\CC\lpr u\rpr}\cswh{\mathrm{RH}}{}^{-1}\bigl(\bH^{r-d}(f^{-1}(c),\phip_{f-c}\pCC_U),T\bigr).
\]
\end{theorem}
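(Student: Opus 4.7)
The strategy is to view the formal twisted de Rham complex on $U$ as the formalization at $u=0$ of the Laplace transform of the Gauss--Manin system of $f$, and to combine the Levelt--Turrittin formal classification of meromorphic connections with the Fourier--Laplace dictionary for regular holonomic $\cD_{\Af1}$-modules. Concretely, let $G^r$ denote the relevant cohomology of the Gauss--Manin $\cD_{\Af1}$-module of $f$ in the variable $t$, which by Griffiths' regularity theorem is regular holonomic with singularities contained in $C\cup\{\infty\}$. As recalled in Section~\ref{subsec:dRalgparam}, setting $v=1/u$ and letting $v$ act as $\partial_t$, the algebraic connection $(\coH^r_\dR(U,f/u),\partial_u)$ is the Laplace transform of $G^r$ localized on $\Gm$, with regular singularity at $u=\infty$ and possibly irregular singularity at $u=0$.

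The first step is to check, using the logarithmic/Kontsevich models of Theorems~\ref{th:kontsevichyuu} and~\ref{th:A}, that the natural morphism
\[
\CC\lpr u\rpr\otimes_{\CC[u,u^{-1}]}\coH^r_\dR(U,f/u)\csto\bH^r\bigl(U,(\Omega^\cbbullet_U\lpr u\rpr,\rd+\rd f/u)\bigr)
\]
is an isomorphism; this provides the finite-dimensionality and reduces the statement to the formal classification at $u=0$ of the right-hand side. Levelt--Turrittin (\cite[Chap.\,XII]{Malgrange91}) decomposes this formal meromorphic connection as a direct sum of pieces of the form $\cswh\cE^{\varphi(u)}\otimes R_\varphi$ with $R_\varphi$ formal regular; and the classical Fourier--Laplace dictionary for regular holonomic $\cD_{\Af1}$-modules identifies the exponential factors occurring at $u=0$ with $\{-c/u\mid c\in C\}$ (with $R_\varphi=0$ for every other $\varphi$), yielding the decomposition indexed by $c\in\CC$. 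The same dictionary (a ``stationary phase'' formula for the Laplace transform) identifies each regular piece $R_c$, after twisting back by $\cswh\cE^{c/u}$, with $\cswh{\mathrm{RH}}^{-1}$ applied to the formal vanishing-cycle datum $(\phip_{t-c}G^r,T_c)$ of $G^r$ at $c$.

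\textbf{Main obstacle.} The last identification---writing this vanishing-cycle datum intrinsically as $(\bH^{r-d}(f^{-1}(c),\phip_{f-c}\pCC_U),T)$---is the delicate point. The Gauss--Manin module corresponds under Riemann--Hilbert to the ordinary pushforward $\bR f_*\pCC_U$ on $\Af1$, and $\phip_{t-c}$ does not commute with a non-proper pushforward; on a good projectivization $f_X$ one would naturally obtain $\bH^{r-d}(f_X^{-1}(c),\phip_{f_X-c}(\bR j_*\pCC_U))$ (cf.\ Proposition~\ref{prop:pushphi}), which in general differs from $\bH^{r-d}(f^{-1}(c),\phip_{f-c}\pCC_U)$, as Example~\ref{ex:HUUan} already indicates. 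One must therefore prove that the formal microlocal stalk of $\bR f_*\pCC_U$ at any \emph{finite} $c\in C$ coincides with what is computed from $\phip_{f-c}\pCC_U$ on $U$ itself---equivalently, that the discrepancy between $\pCC_U$ and $\bR j_*\pCC_U$, being supported at infinity, contributes under Fourier--Laplace only exponential factors attached to $c=\infty$, and is therefore invisible to the formal decomposition at finite critical values. Making this precise, via a careful Laplace-side analysis of the cone of $\pCC_U\csto\bR j_*\pCC_U$, is the heart of \cite[Th.\,1]{S-MS12}.
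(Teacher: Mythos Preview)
Your first step is false, and the paper's own Example immediately preceding Theorem~\ref{th:lpru} is a counterexample. There the formal hypercohomology $\bH^1\bigl(U,(\Omega_U^\cbbullet\lpr u\rpr,\rd+\rd f/u)\bigr)$ vanishes, whereas $\CC\lpr u\rpr\otimes_{\CC[u,u^{-1}]}\coH^1_\dR(U,f/u)$ is a $\CC\lpr u\rpr$-vector space of dimension $\deg f\cdot\#\{f(t)\mid f'(t)=0\}>0$. The paper stresses just before the example that taking cohomology does \emph{not} commute with the functor $\lpr u\rpr$, precisely because $\cF\lpr u\rpr$ is strictly larger than $\CC\lpr u\rpr\otimes_\CC\cF$. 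So the logarithmic/Kontsevich models of Theorems~\ref{th:kontsevichyuu} and~\ref{th:A}, which compute $\coH^r_\dR(U,f/u)$, tell you nothing about the formal complex on $U$ itself.

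This is not a side issue you can repair in your ``main obstacle'' paragraph: it is the entire content of the theorem. If your comparison morphism were an isomorphism, stationary phase would hand you $\phip_{t-c}(\pR^{r-d}f_*\pCC_U)\simeq\bH^{r-d}\bigl(f_X^{-1}(c),\phip_{f_X-c}(\bR j_*\pCC_U)\bigr)$, and for this to coincide with $\bH^{r-d}(f^{-1}(c),\phip_{f-c}\pCC_U)$ for every $c$ you would need $f$ to be cohomologically tame---which the theorem does not assume. Your proposed fix (``the discrepancy contributes only exponential factors attached to $c=\infty$'') cannot work either: the Laplace transform of a regular holonomic $\cD_{\Af1}$-module has slopes $\leqslant1$ at $u=0$, so all exponential factors are $\rme^{-c/u}$ with $c$ \emph{finite}; there is no ``$c=\infty$'' slot in which to hide the extra contributions. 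The actual argument of \cite{S-MS12} does not pass through the algebraic Gauss--Manin system at all; it works directly with the complex $(\Omega_U^\cbbullet\lpr u\rpr,\rd+\rd f/u)$ and shows, essentially by a formal division argument exploiting the genuine completion, that this complex localizes to analytic neighborhoods of the critical points of $f$ on $U$, where one can then invoke the local theory.
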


\begin{remark}
The case of cohomology with compact support is not treated in \cite{S-MS12} nor in \cite{Bibi10b}, but should be obtained in the same way. See however \cite{Schefers23} for a similar identification at the level of complexes.
\end{remark}

\subsection{Irregular mixed Hodge theory}\label{subsec:irrHodge}
We can consider the dependence of the irregular Hodge filtration $F_{\irr,\alpha}^\cbbullet\coH(U,f/u)$ ($\alpha$~belonging to a finite subset $\cA\subset\QQ\cap[0,1)$) with respect to the rescaling parameter~$u$ from two distinct point of views:
\begin{enumerate}
\item
From the point of view of Hodge theory, we are led to prove \emph{Griffiths' transversality property} when $u$ varies in $\CC^*$ with respect to the connection (\ie the action of~$\partial_u$) and to understand the limiting behavior when $u\csto\infty$ or $u\csto0$.
\item
From the point of view of Singularity theory, we emphasize the Brieskorn lattice as a free $\CC[u]$-module and we produce the family of irregular Hodge filtrations directly from the Brieskorn lattice and the Deligne canonical extensions at $u=\nobreak\infty$, due to the regular singularity of the action of $\partial_u$ at $u=\infty$. This approach is analogous to that of Varchenko, Pham and Scherk-Steenbrink, M.\,Saito for an isolated singularity of a complex hypersurface (\cf \cite{Varchenko82, Pham83b, S-S85,MSaito89}): in that case, the action of $\partial_u$ on the Brieskorn lattice, while having a pole of order two, has nevertheless a regular singularity there, and one produces the Hodge filtration on the vanishing cycles from the Brieskorn lattice and the Deligne canonical lattices at $u=0$.
\end{enumerate}

We will give an overview of these two aspects and of the way they are related. We mainly follow \cite{S-Y14,K-K-P14,C-Y16,Mochizuki15a} and \cite[Chap.\,3]{Bibi15}, which we refer to for detailed proofs.

\subsubsection*{The variational point of view: the Kontsevich bundles}
Let us fix $(U,f)$ as above and $\alpha\in\cA$. Recall the definition \eqref{eq:Omegalogalpha} of $\Omega_f^\cbbullet(\alpha)$.

\begin{theorem}
The dimension of the twisted de~Rham complex
\[
\coH^r\bigl(\csov X,(\Omega_f^\cbbullet(\alpha),u_o\rd+v_o\rd f)\bigr)
\]
is independent of $(u_o,v_o)\in\CC^2$.
\end{theorem}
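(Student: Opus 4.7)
The plan is to combine the uniform upper bound from the Hodge-de~Rham spectral sequence with a rescaling symmetry, the $E_1$-degeneration of Theorem~\ref{th:E1degalpha} applied to $\lambda f$ for $\lambda \in \Gm$, and an upper semi-continuity argument in a one-parameter family.

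Set $h^{p,q} := \dim\coH^q(\csov X, \Omega_f^p(\alpha))$. For every $(u_o, v_o) \in \CC^2$, the stupid truncation filtration on the complex $(\Omega_f^\cbbullet(\alpha),\, u_o\rd + v_o\rd f)$ is preserved by the degree-one differential and yields a convergent spectral sequence with $E_1$-page $\bigoplus_{p+q=r} \coH^q(\csov X, \Omega_f^p(\alpha))$, independent of $(u_o, v_o)$. This furnishes the uniform upper bound
$$
\dim\coH^r\bigl(\csov X,\,(\Omega_f^\cbbullet(\alpha),\, u_o\rd + v_o\rd f)\bigr) \leq \sum_{p+q=r} h^{p,q}.
$$
Moreover, multiplying the differential by $\lambda \in \Gm$ gives a canonically isomorphic complex, so dimensions are invariant under $(u_o, v_o) \mapsto (\lambda u_o, \lambda v_o)$; it therefore suffices to compute dimensions on the two normalisations $\{u_o = 1\}$ and $\{v_o = 1\}$, together with the origin.

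For $v \in \Gm$, both the subsheaf condition defining $\Omega_f^k$ and the pole divisor $\bP$ are unchanged when $f$ is replaced by $vf$, so $\Omega_{vf}^k(\alpha) = \Omega_f^k(\alpha)$, and the Kontsevich-Yu complex for $vf$ coincides with $(\Omega_f^\cbbullet(\alpha),\, \rd + v\rd f)$. Applying Theorem~\ref{th:E1degalpha} to $vf$ then yields $E_1$-degeneration and hence $\dim\coH^r(1, v) = \sum_{p+q=r} h^{p,q}$ for every $v \in \Gm$. To cover $v = 0$ (\ie the differential $\rd$) I would view the complex as a $\CC[v]$-linear family $(\Omega_f^\cbbullet(\alpha)[v],\, \rd + v\rd f)$ of coherent $\cO_{\csov X}[v]$-modules on the projective variety $\csov X$. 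Since each term is $\CC[v]$-free, the hypercohomology is computed by a bounded complex of finitely generated $\CC[v]$-modules whose specialization at any $v_o \in \CC$ computes the hypercohomology of the specialized differential. Upper semi-continuity then forces $\dim\coH^r(1, 0) \geq \sum_{p+q=r} h^{p,q}$, and the uniform upper bound gives equality.

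The symmetric argument with the $\CC[u]$-family $(\Omega_f^\cbbullet(\alpha)[u],\, u\rd + \rd f)$ deals with the point $(0, 1)$. At the origin $(0,0)$ the differential vanishes, so $\coH^r$ is directly $\bigoplus_{p+q=r}\coH^q(\csov X, \Omega_f^p(\alpha))$, of total dimension $\sum h^{p,q}$. Together with the rescaling symmetry this establishes constancy on all of $\CC^2$. The main obstacle is the cohomology-and-base-change step in the one-parameter family: one must verify that the hypercohomology is representable by a bounded complex of $\CC[v]$-flat modules whose specialization at $v = v_o$ is strictly compatible with the specialized differential. This is handled by the standard Čech or Godement-resolution argument on the projective variety $\csov X$ applied to the $\CC[v]$-flat sheaves $\Omega_f^p(\alpha)[v]$; once in place, upper semi-continuity combined with the uniform upper bound from the first paragraph closes the argument without circularity.
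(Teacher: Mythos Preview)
Your argument is correct and is precisely the standard route to this result. The paper does not supply its own proof here; it states the theorem in the course of defining the Kontsevich bundles and defers to \cite{E-S-Y13}, \cite{K-K-P14}, \cite{S-Y14}, \cite{C-Y16} and \cite[Chap.\,3]{Bibi15} for details. The argument in those references is exactly the one you have reconstructed: the stupid-truncation spectral sequence gives the uniform upper bound $\sum_{p+q=r}h^{p,q}$; the $E_1$-degeneration of Theorem~\ref{th:E1degalpha}, applied after rescaling $f\mapsto vf$ (which does not change $\Omega_f^\cbbullet(\alpha)$), gives equality at all $(1,v)$ with $v\in\Gm$; and then upper semi-continuity of the fibre dimensions of the perfect complex $\bR\pi_*\bigl(\Omega_f^\cbbullet(\alpha)[v],\rd+v\rd f\bigr)$ over $\Af1_v$ (and symmetrically over $\Af1_u$) forces equality at the remaining boundary points. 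Your handling of the base-change step via a \v Cech model on the projective variety $\csov X$ is the right justification; the only cosmetic point is that one may phrase it more invariantly as properness of $\csov X\to\Spec\CC$ plus flatness of the terms over $\CC[v]$, yielding a perfect complex on $\Af1_v$ to which Grauert-type semi-continuity applies directly.
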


It follows that we can define a vector bundle $\cK^r_\alpha$ on $\PP^1$, called the \index{bundle!Kontsevich --}\emph{Kontsevich bundle with index $\alpha$}, by decomposing $\PP^1=\Af1_v\cup\Af1_u$ with $u=v^{-1}$ on $\Gm: \Af1_v\cap\Af1_u $, and setting
\begin{align*}
\cK^r_\alpha|_{\Af1_v}&=\coH^r\bigl(\csov X,(\Omega_f^\cbbullet(\alpha)[v],\rd+v\rd f)\bigr),\\
\cK^r_\alpha|_{\Af1_u}&=\coH^r\bigl(\csov X,(\Omega_f^\cbbullet(\alpha)[u],u\rd+\rd f)\bigr)\\
&\simeq\coH^r\bigl(\csov X,(u^{-\cbbullet}\Omega_f^\cbbullet(\alpha)[u],\rd+\rd f/u)\bigr),\\
\cK^r_\alpha|_{\Gm}&=\coH^r\bigl(\csov X,(\Omega_f^\cbbullet(\alpha)[u,u^{-1}],\rd+\rd f/u)\bigr)\\
&\simeq\coH^r\bigl(\csov X,(\Omega_f^\cbbullet(\alpha)[v,v^{-1}],\rd+v\rd f)\bigr),
\end{align*}

In a way analogous to that of Theorem \ref{th:E1degalpha}, the filtration by the stupid truncation of each of these twisted de~Rham complexes degenerates at $E^1$ and induces a filtration $F^\cbbullet_{\irr,\alpha}\cK^r_\alpha$ indexed by $\NN$. Its restriction at $u=1$ is the filtration considered in that theorem. Furthermore, the Kontsevich bundle $\cK^r_\alpha$ is naturally equipped with a meromorphic connection $\nabla$, corresponding to the action of $\partial_u$ in the chart $\Af1_u$, which has a double pole at $u=0$, a simple pole at $v=0$, and no other pole.

\begin{proposition}\label{prop:HNK}
The filtration $F^\cbbullet_{\irr,\alpha}\cK^r_\alpha|_{\Gm}$ satisfies the Griffiths transversality property $\nabla(F^p_{\irr,\alpha}\cK^r_\alpha|_{\Gm})\subset\Omega^1_{\Gm}\otimes F^{p-1}_{\irr,\alpha}\cK^r_\alpha|_{\Gm}$ for all $p\in\NN$, and
\[
\forall p\in\NN,\;\exists h_\alpha^p\in\NN,\quad \gr^p_{F_{\irr,\alpha}}\cK^r_\alpha\simeq\cO_{\PP^1}(p)^{h_\alpha^p}.
\]
\end{proposition}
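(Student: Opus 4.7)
The plan is to identify the graded pieces by a direct transition-function computation between the two charts of $\PP^1$, and then to establish Griffiths transversality by inspecting the action of $\nabla$ on representatives in the twisted de~Rham complex.

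\emph{Step 1: the graded pieces.} The parametric $E_1$-degeneration extending Theorem~\ref{th:E1degalpha} to the complexes $(\Omega^\cbbullet_f(\alpha)[v],\rd+v\rd f)$ on $\Af1_v$ and $(\Omega^\cbbullet_f(\alpha)[u],u\rd+\rd f)$ on $\Af1_u$ (which is the essence of the vector bundle structure of~$\cK^r_\alpha$, \cf Theorem~\ref{th:A}) yields
\[
\gr^p_{F_{\irr,\alpha}}\cK^r_\alpha|_{\Af1_v}\simeq H_\alpha^{p,r-p}[v],\qquad \gr^p_{F_{\irr,\alpha}}\cK^r_\alpha|_{\Af1_u}\simeq H_\alpha^{p,r-p}[u],
\]
where $H_\alpha^{p,r-p}:=\coH^{r-p}\bigl(\csov X,\Omega^p_f(\alpha)\bigr)$, of dimension some $h_\alpha^p\in\NN$. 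On $\Gm$ these two complexes are identified by the chain isomorphism $\psi$ which multiplies the degree-$k$ component by $u^k$, as one checks from $(u\rd+\rd f)\circ\psi=\psi\circ(\rd+v\rd f)$. Since $\psi$ preserves stupid truncations and $\gr^p$ only retains the degree-$p$ component, on the overlap the two local trivialisations of the graded piece differ by multiplication by $u^p$. This is precisely the transition cocycle of $\cO_{\PP^1}(p)$, so
\[
\gr^p_{F_{\irr,\alpha}}\cK^r_\alpha\simeq H_\alpha^{p,r-p}\otimes_\CC\cO_{\PP^1}(p)\simeq \cO_{\PP^1}(p)^{h_\alpha^p}.
\]

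\emph{Step 2: Griffiths transversality on $\Gm$.} I work in the $v$-chart. The connection $\nabla=\rd v\otimes\partial_v$ acts on $(\Omega^\cbbullet_f(\alpha)[v],\rd+v\rd f)$ as the chain map
\[
\nabla_{\partial_v}(\omega v^\ell)=\ell\,\omega\, v^{\ell-1}+f\omega\, v^\ell,
\]
obtained from $\rme^{-vf}\partial_v\rme^{vf}=\partial_v+f\cdot\,$. For a representative $\omega\in\sigma^{\geqslant p}\Omega^\cbbullet_f(\alpha)[v]$ of a class in $F^p_{\irr,\alpha}\cK^r_\alpha$, the first summand preserves $\sigma^{\geqslant p}$, hence lands in $F^p\subset F^{p-1}$. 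For the second, if $\omega_k\in\Omega^k_f(\alpha)$, then $f\omega_k\in\Omega^k_f(\alpha+1)$: indeed $\rd f\wedge(f\omega_k)=f\,\rd f\wedge\omega_k\in\Omega^{k+1}_{\csov X}(\log D)(\lfloor(\alpha+1)\bP\rfloor)$ by the Kontsevich condition on $\omega_k$. Hence $f\omega\in\sigma^{\geqslant p}\Omega^\cbbullet_f(\alpha+1)[v]$, representing a class in $F^{p}_{\irr,\alpha+1}\cK^r_{\alpha+1}$. By the filtered quasi-isomorphism $\Omega^\cbbullet_f(\alpha)\hto\Omega^\cbbullet_f(\alpha+1)$ (a parameter version of Theorem~\ref{th:E1degalpha}(1); \cf also Remark~\ref{rem:FirrQ}) together with the index-shift relation $F^{p}_{\irr,\alpha+1}=F^{p-1}_{\irr,\alpha}$ forced by $F^p_{\irr,\beta}=F^{p-\beta}_\irr$, this class identifies with an element of $F^{p-1}_{\irr,\alpha}\cK^r_\alpha|_{\Gm}$. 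Adding both contributions gives $\nabla\bigl(F^p_{\irr,\alpha}\cK^r_\alpha|_{\Gm}\bigr)\subset\Omega^1_{\Gm}\otimes F^{p-1}_{\irr,\alpha}\cK^r_\alpha|_{\Gm}$.

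\emph{Main obstacle.} The transition-function argument for the graded pieces is essentially bookkeeping once the parametric $E_1$-degeneration is granted. The real difficulty lies in Step~2, specifically in the strictness ingredient: the class of $f\omega$, a priori only living in the $(\alpha+1)$-Kontsevich complex, must be identifiable with an element of $F^{p-1}$ in the $\alpha$-Kontsevich complex via the quasi-isomorphism. This strictness of the inclusion of Kontsevich-Yu complexes with respect to the irregular Hodge filtration is the technical heart of irregular Hodge theory, established in \cite{S-Y14,K-K-P14,Mochizuki15a,Bibi22a}, and ultimately rests on local computations in coordinates adapted to the normal-crossing divisor $D=H\cup P$.
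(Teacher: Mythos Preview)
Your argument is correct. The paper itself does not supply a proof of this proposition: at the start of Section~\ref{subsec:irrHodge} it defers all detailed proofs to \cite{S-Y14,K-K-P14,C-Y16,Mochizuki15a} and \cite[Chap.\,3]{Bibi15}, and the text following the proposition only records its interpretation (the irregular Hodge filtration as the Harder-Narasimhan filtration of $\cK^r_\alpha$).

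Your Step~1 is the standard transition-cocycle computation, essentially as in \cite{K-K-P14}: once the parametric $E_1$-degeneration is known (which, as you observe, is equivalent to the freeness underlying Theorem~\ref{th:A} together with the fiberwise degeneration of Theorem~\ref{th:E1degalpha}), the gluing map $\psi$ acts by $u^p$ on $\gr^p$, and the identification with $\cO_{\PP^1}(p)^{h^p_\alpha}$ drops out. Your Step~2 is also a valid route; the key subtlety is that the chain-level operator $\partial_v+f$ does not preserve the Kontsevich complex $\Omega^\cbbullet_f(\alpha)[v]$ but only maps it into $\Omega^\cbbullet_f(\alpha+1)[v]$, and you correctly isolate the needed input---that the inclusion is a filtered quasi-isomorphism realising the index shift $F^p_{\irr,\alpha+1}=F^{p-1}_{\irr,\alpha}$---and attribute it to the cited references. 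An alternative approach, taken in \cite{S-Y14} and \cite[Chap.\,3]{Bibi15}, proves Griffiths transversality as part of a package via the rescaling construction and mixed twistor $\cD$-modules; that route is less elementary but yields the strictness statements simultaneously rather than as a separate input.
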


In other words, the filtration $F^\cbbullet_{\irr,\alpha}\cK^r_\alpha$ can be recovered by the mere datum of the \index{bundle!Kontsevich --}Kontsevich bundle $\cK^r_\alpha$, as being its \index{filtration!Harder-Narasimhan --}\emph{Harder-Narasimhan filtration}.

\subsubsection*{The singularity theory point of view: the Brieskorn-Deligne bundles}
Let us consider the free $\CC[u,u^{-1}]$-module
\[
\coH^r_\dR(U,f/u)\simeq \coH^r\bigl(\csov X,(\Omega_f^\cbbullet[u,u^{-1}],\rd+\rd f/u)\bigr)
\]
with its action of $\partial_u$, that we now regard as a connection. On the one hand, the \index{lattice!Brieskorn --}Brieskorn lattice
\begin{equation}\label{eq:Brieskornlattice}
G_r(U,f):=\coH^r\bigl(\csov X,(u^{-\cbbullet}\Omega_f^\cbbullet[u],\rd+\rd f/u)\bigr)
\end{equation}
is a free $\CC[u]$-module generating $\coH^r_\dR(U,f/u)$ as a $\CC[u,u^{-1}]$-module. On the other hand, since $\nabla$ has a regular singularity at $v=0$, there exists for each $\alpha\in\cA$ a free $\CC[v]$-submodule $V_\alpha(U,f/u)$ which generates it over $\CC[v,v^{-1}]=\CC[u^{-1},u]$, such that the connection $\nabla$ has a simple pole on it and its residue has eigenvalues contained in $[-\alpha,-\alpha+1)$ (it is called a \index{lattice!Deligne canonical --}\emph{Deligne canonical lattice} of $\coH^r_\dR(U,f/u)$). Since
\[
V_\alpha(U,f)|_{\Gm}=\coH^r_\dR(U,f/u)=G_r(U,f)|_{\Gm},
\]
we can glue $V_\alpha(U,f)$ with $G_r(U,f)$ to produce the \index{bundle!Brieskorn-Deligne --}\emph{Brieskorn-Deligne} vector bundle~$\cH^r_\alpha$ equipped with a connection having a pole of order one at $v=0$ and of order two at $u=0$.

\begin{theorem}
For each $\alpha\in\cA$, the bundles $\cK^r_\alpha$ and $\cH^r_\alpha$ are isomorphic.
\end{theorem}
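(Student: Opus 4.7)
The plan is to produce the isomorphism $\cK^r_\alpha\simeq\cH^r_\alpha$ chart by chart, then check that the three identifications agree on the overlaps so as to glue into a global isomorphism on $\PP^1$. Both bundles are, by construction, built by gluing two lattices inside the same $\CC[u,u^{-1}]$-module with connection, namely $\coH^r_\dR(U,f/u)$ (for $\cH^r_\alpha$) and $\cK^r_\alpha|_{\Gm}$ (for $\cK^r_\alpha$). So the preliminary step is to identify these two restrictions. By Theorem~\ref{th:kontsevichyuu}\eqref{th:kontsevichyuu1} the restriction morphism $\csov X\csto X$ identifies the expression
\[
\coH^r\bigl(\csov X,(\Omega_f^\cbbullet(\alpha)[u,u^{-1}],\rd+\rd f/u)\bigr)
\]
with $\coH^r_\dR(U,f/u)$: indeed, the $\alpha$-shift disappears after inverting $u$, because Theorem~\ref{th:E1degalpha}\eqref{th:E1degalpha} (or the argument of Yu in \cite{Yu12}) furnishes a quasi-isomorphism of the $u$-rescaled complexes between $(\Omega_f^\cbbullet,\rd+\rd f/u)$ and $(\Omega_f^\cbbullet(\alpha),\rd+\rd f/u)$ away from $u=0$. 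Under this identification $\cK^r_\alpha|_{\Gm}\simeq\cH^r_\alpha|_{\Gm}$ canonically.

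Next I would identify the fiber of each bundle at $u=0$, that is, over $\Af1_u$. The Brieskorn lattice $G_r(U,f)$ is the $\CC[u]$-free module \eqref{eq:Brieskornlattice}, while the $\Af1_u$-chart of the Kontsevich bundle is
\[
\coH^r\bigl(\csov X,(u^{-\cbbullet}\Omega_f^\cbbullet(\alpha)[u],\rd+\rd f/u)\bigr).
\]
Here one must upgrade the quasi-isomorphism $(\Omega_f^\cbbullet,\rd+\rd f)\hto(\Omega_f^\cbbullet(\alpha),\rd+\rd f)$ of Theorem~\ref{th:E1degalpha}\eqref{th:E1degalpha} to hold also with the $\CC[u]$-parameter present and with the correct pole-order weighting $u^{-k}$ on the degree-$k$ term. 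Since both complexes now give $\CC[u]$-free modules of the same rank by Theorem~\ref{th:A}\eqref{th:A1}, and the natural inclusion is a quasi-isomorphism after inverting $u$ and also modulo~$u$ (the latter reducing, via the graded piece, to the statement that $\Omega_f^\cbbullet\hto\Omega_f^\cbbullet(\alpha)$ with differential $\rd f$ is a quasi-isomorphism, a consequence of the Kontsevich-Yu degeneration arguments), an elementary $u$-adic Nakayama argument forces the inclusion to be an isomorphism. This gives $\cK^r_\alpha|_{\Af1_u}\simeq G_r(U,f)=\cH^r_\alpha|_{\Af1_u}$.

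The main obstacle is then the identification at the other pole, $v=0$, where we must prove that the $\Af1_v$-chart of $\cK^r_\alpha$, namely $\coH^r\bigl(\csov X,(\Omega_f^\cbbullet(\alpha)[v],\rd+v\rd f)\bigr)$, coincides, inside $\coH^r_\dR(U,f/u)=\cK^r_\alpha|_{\Gm}$, with the Deligne canonical lattice $V_\alpha(U,f/u)$ characterized by the eigenvalues of its residue lying in $[-\alpha,-\alpha+1)$. The freeness in $v$ and the fact that the connection $\nabla=\partial_u$ has a \emph{simple} pole at $v=0$ on this chart are read off the shape of the complex $(\Omega_f^\cbbullet(\alpha)[v],\rd+v\rd f)$, since $\partial_u=-v^2\partial_v$ and the complex is $\CC[v]$-linear modulo $v$; uniqueness of the Deligne lattice then reduces the problem to the eigenvalue statement. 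For this one computes the residue endomorphism on the fiber at $v=0$, which is the $r$-th hypercohomology of $(\Omega_f^\cbbullet(\alpha),\rd)$, and compares it with the action of the residue of the Gauss-Manin connection after Laplace transform, using the description of the irregular Hodge filtration in terms of the V-filtration of Kashiwara-Malgrange: J.-D.\,Yu's identification \cite{Yu12} of $F_{\irr,\alpha}$ with the filtration obtained from $\lfloor\alpha\bP\rfloor$ translates precisely into the statement that the residue has eigenvalues in $[-\alpha,-\alpha+1)$. The hard part will be making this residue computation rigorous in the relative (logarithmic) setting on~$\csov X$; here I would follow the local-to-global method of \cite{S-Y14,C-Y16,Mochizuki15a}, reducing to local models at the strata of $D$ where $f$ is a monomial and carrying out the eigenvalue bookkeeping explicitly. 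Once the three chart-wise identifications are established, compatibility on the overlaps is automatic because all of them are inside the same $\CC[u,u^{-1}]$-module $\coH^r_\dR(U,f/u)$, so the isomorphisms glue to yield $\cK^r_\alpha\simeq\cH^r_\alpha$ on $\PP^1$.
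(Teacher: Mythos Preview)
The paper is a survey and does not supply its own proof of this theorem; it simply refers the reader to \cite{S-Y14,K-K-P14,C-Y16,Mochizuki15a} and \cite[Chap.\,3]{Bibi15} for detailed arguments. Your chart-by-chart plan is the natural one and is in line with how those references proceed: identify both bundles inside the common $\CC[u,u^{-1}]$-module $\coH^r_\dR(U,f/u)$, match the $\Af1_u$-lattice with the Brieskorn lattice, and match the $\Af1_v$-lattice with the Deligne canonical lattice~$V_\alpha$.

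Two points deserve sharpening. First, in the $\Af1_u$ step your Nakayama argument needs the inclusion $(\Omega_f^\cbbullet,\rd f)\hto(\Omega_f^\cbbullet(\alpha),\rd f)$ to be a quasi-isomorphism, not merely that the two hypercohomologies have the same dimension; this is indeed a consequence of the $E_1$-degeneration results in \cite{E-S-Y13}, but it is a separate statement from Theorem~\ref{th:E1degalpha} as written and should be isolated. Second, and more seriously, your justification at $v=0$ is close to circular: you invoke Yu's identification of $F_{\irr,\alpha}$ via $\lfloor\alpha\bP\rfloor$ to pin down the residue eigenvalues, but the link between the $\alpha$-shifted Kontsevich lattice at $v=0$ and the $V$-filtration (i.e., the Deligne lattice) is precisely the content of the theorem. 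The honest route, followed in \cite[\S8]{S-Y14} and \cite[Chap.\,3]{Bibi15}, is to compute the residue directly on $\coH^r\bigl(\csov X,(\Omega_f^\cbbullet(\alpha),\rd)\bigr)$ via the identification with the limiting mixed Hodge structure of $\coH^r(U,f^{-1}(t))$ as $t\to\infty$, where the $\alpha$-twist by $\lfloor\alpha\bP\rfloor$ produces exactly the eigenvalue window $[-\alpha,-\alpha+1)$ through the standard relation between the order of pole along $P$ and the monodromy eigenvalue $\exp(-2\pi\sfi\alpha)$; alternatively, Mochizuki's twistor approach \cite{Mochizuki15a} bypasses this by realizing both lattices as pieces of the same integrable mixed twistor $\cD$-module. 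Either way, the local-model computation you allude to is the crux and cannot be replaced by an appeal to \cite{Yu12} alone.
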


It follows that the irregular Hodge filtration can be recovered from the Brieskorn-Deligne bundle as its \index{filtration!Harder-Narasimhan --}Harder-Narasimhan filtration.

\subsubsection*{Behavior on \texorpdfstring{$\Gm$}{Gm}}
One checks, in a way analogous to that of Theorem \ref{th:E1degalpha}, that the natural morphisms $\cK^r_\beta|_{\Gm}\csto\cK^r_\alpha|_{\Gm}\csto\coH_\dR^r(U,f/u)$ are isomorphisms for $\beta\leqslant\alpha\in\cA$, so that, by restricting to $\Gm$ the Harder-Narasimhan filtration of each $\cK^r_\alpha$, we obtain a family of filtrations $F_{\irr,\alpha}^\cbbullet\coH_\dR^r(U,f/u)$ indexed by $\alpha\in\cA$. Furthermore, for $\beta\leqslant\alpha\in\cA$, the quasi-isomorphism
\[
(\Omega_f^\cbbullet(\beta)[u,u^{-1}],\rd+\rd f/u)\csto(\Omega_f^\cbbullet(\alpha)[u,u^{-1}],\rd+\rd f/u)
\]
is filtered with respect to the filtration by stupid truncation, so that we obtain, for all $p\in\NN$, the inclusion
\[
F_{\irr,\beta}^p\coH_\dR^r(U,f/u)\subset F_{\irr,\alpha}^p\coH_\dR^r(U,f/u).
\]
We can thus regard the irregular Hodge filtration as indexed by $-\cA+\ZZ\subset\QQ$ by setting, for $\sfp=p-\alpha\in-\cA+\ZZ$,
\[
F_{\irr}^\sfp\coH_\dR^r(U,f/u):=F_{\irr,\alpha}^p\coH_\dR^r(U,f/u).
\]

\begin{proposition}\label{prop:grFirr}
The irregular Hodge filtration $F_{\irr}^\sfp\coH_\dR^r(U,f/u)$ is a filtration by subbundles, that is, each $\CC[u,u^{-1}]$-module
\[
\gr_{F_{\irr}}^\sfp\coH_\dR^r(U,f/u):=F_{\irr}^\sfp\coH_\dR^r(U,f/u)/F_{\irr}^{>\sfp}\coH_\dR^r(U,f/u)
\]
is free, where ${>}\sfp$ denotes the successor of~$\sfp$ in $-\cA+\ZZ$. 
\end{proposition}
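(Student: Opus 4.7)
The plan is to deduce the freeness of $\gr^\sfp_{F_\irr}\coH^r_\dR(U,f/u)$ from Proposition~\ref{prop:HNK} together with the fact that $\CC[u,u^{-1}]$ is a principal ideal domain. The key observation is that one need not construct a single vector bundle on $\PP^1$ carrying the full $\sfp$-indexed filtration simultaneously; it suffices to see that each term $F^\sfp_\irr\coH^r_\dR(U,f/u)$ is already a \emph{direct summand} of the ambient free $\CC[u,u^{-1}]$-module $\coH^r_\dR(U,f/u)$, and then invoke the following triviality over a PID: if $A\subset B\subset M$ are submodules of a free module $M$ with $A$ and $B$ both direct summands of $M$, then $A$ is pure in $B$, hence $B/A$ is torsion-free and therefore free.

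First I would unpack Proposition~\ref{prop:HNK}. For each $\alpha\in\cA$, the graded pieces $\gr^p_{F_{\irr,\alpha}}\cK^r_\alpha\simeq\cO_{\PP^1}(p)^{h^p_\alpha}$ are locally free on $\PP^1$. Since extensions of locally free sheaves on $\PP^1$ are locally free, a descending induction on $p$, starting from the trivial case $F^{d+1}_{\irr,\alpha}\cK^r_\alpha=0$, shows that each $F^p_{\irr,\alpha}\cK^r_\alpha$ is locally free; a dual ascending induction gives that the quotient $\cK^r_\alpha/F^p_{\irr,\alpha}\cK^r_\alpha$ is locally free as well, so $F^p_{\irr,\alpha}\cK^r_\alpha$ is a subbundle of $\cK^r_\alpha$. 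Restricting to $\Gm=\mathrm{Spec}\,\CC[u,u^{-1}]$, both $F^p_{\irr,\alpha}\coH^r_\dR(U,f/u)$ and its quotient in $\coH^r_\dR(U,f/u)$ are finitely generated projective, hence free, $\CC[u,u^{-1}]$-modules, and the corresponding short exact sequence splits. Thus $F^p_{\irr,\alpha}\coH^r_\dR(U,f/u)$ is a direct summand of $\coH^r_\dR(U,f/u)$.

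Next I would translate this to the $\sfp$-indexed filtration. By definition $F^\sfp_\irr=F^p_{\irr,\alpha}$ for $\sfp=p-\alpha\in-\cA+\ZZ$, and writing $\cA=\{0=\alpha_0<\alpha_1<\cdots<\alpha_{N-1}\}$, the successor $>\sfp$ in $-\cA+\ZZ$ is again of the form $p'-\alpha'$ with $\alpha'\in\cA$ (namely $p-\alpha_{i-1}$ if $\alpha=\alpha_i$ with $i\geqslant1$, and $(p+1)-\alpha_{N-1}$ if $\alpha=\alpha_0$). By the previous paragraph both $F^\sfp_\irr\coH^r_\dR(U,f/u)$ and $F^{>\sfp}_\irr\coH^r_\dR(U,f/u)$ are direct summands of $\coH^r_\dR(U,f/u)$, and applying the PID lemma with $A=F^{>\sfp}_\irr$, $B=F^\sfp_\irr$, $M=\coH^r_\dR(U,f/u)$ yields freeness of $\gr^\sfp_{F_\irr}\coH^r_\dR(U,f/u)=B/A$, as required.

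The only nontrivial step is verifying that Proposition~\ref{prop:HNK} really delivers the subbundle property; this relies on the mild fact that extensions of locally free sheaves on $\PP^1$ are locally free. The main conceptual difficulty one sidesteps is the construction of a single vector bundle on $\PP^1$ realising the full $\sfp$-indexed filtration as a filtration by subbundles, which would require comparing the different Kontsevich bundles $\cK^r_\alpha$ beyond their common restriction to $\Gm$; by working directly over the PID $\CC[u,u^{-1}]$ one never has to make this comparison explicit.
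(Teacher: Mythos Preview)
Your proof is correct and rests on the same input as the paper's, namely Proposition~\ref{prop:HNK}. The only difference is in the final algebraic step. The paper sandwiches $F_\irr^\sfp$ between $F_\irr^{>\sfp}$ and $F_\irr^{>\sfp-1}$; since $>\sfp$ and $>\sfp-1$ differ by~$1$, they both equal $F_{\irr,\beta}^q$ and $F_{\irr,\beta}^{q-1}$ for the \emph{same} $\beta\in\cA$, so Proposition~\ref{prop:HNK} directly gives that one is a subbundle of the other, and then a fibre-by-fibre injectivity argument yields constancy of $\dim i_{u_o}^*\gr_{F_\irr}^\sfp$. Your route instead observes that each $F_\irr^\sfp$ is separately a direct summand of the ambient free $\CC[u,u^{-1}]$-module and invokes the PID fact that if $A\subset B\subset M$ with $A$ a direct summand of $M$, then $B/A\hookrightarrow M/A$ is a submodule of a free module, hence free. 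This is slightly cleaner: it avoids the sandwiching trick, and in fact you do not even need that $B$ is a direct summand, only that $A$ is. Both arguments are short; yours has the minor advantage of making transparent that no comparison between the bundles $\cK^r_\alpha$ for different $\alpha$ is required beyond their common restriction to $\Gm$.
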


\begin{proof}
From Proposition \ref{prop:HNK} one deduces that for each $\alpha\in\cA$, the filtration $F_{\irr,\alpha}^\cbbullet\coH_\dR^r(U,f/u)$ is a filtration by subbundles. For any $u_o\in\CC^*$, let $i_{u_o}:\{u_o\}\hto\nobreak\Gm$ denote the inclusion. Since $F_{\irr}^{>\sfp}\coH_\dR^r(U,f/u)$ is a subbundle of $F_{\irr}^{>\sfp-1}\coH_\dR^r(U,f/u)$, the composed morphism
\[
i_{u_o}^*F_{\irr}^{>\sfp}\coH_\dR^r(U,f/u)\csto i_{u_o}^*F_{\irr}^\sfp\coH_\dR^r(U,f/u)\csto i_{u_o}^*F_{\irr}^{>\sfp-1}\coH_\dR^r(U,f/u)
\]
is injective. Therefore, $i_{u_o}^*F_{\irr}^{>\sfp}\coH_\dR^r(U,f/u)\csto i_{u_o}^*F_{\irr}^\sfp\coH_\dR^r(U,f/u)$ is injective and the dimension of $i_{u_o}^*\gr_{F_{\irr}}^\sfp\coH_\dR^r(U,f/u)$ is the difference between the ranks of the bundles $F_{\irr}^\sfp\coH_\dR^r(U,f/u)$ and $F_{\irr}^{>\sfp}\coH_\dR^r(U,f/u)$, so is independent of $u_o$. It~follows that $\gr_{F_{\irr}}^\sfp\coH_\dR^r(U,f/u)$ is $\CC[u,u^{-1}]$-free.
\end{proof}

\subsubsection*{Limiting properties}
At the limit $u\csto\infty$ (\ie $v\csto0$) where the connection $\nabla$ has a regular singularity, the filtration $F_{\irr}^\cbbullet\coH_\dR^r(U,f/u)$ behaves in the same way as a the Hodge filtration of a polarizable Hodge module in the sense of \cite{MSaito86}. Furthermore, one can show that, for each $\alpha\in\cA$, the limit filtration of $F_{\irr,\alpha}^\cbbullet\coH_\dR^r(U,f/u)$ is the Hodge filtration of a \index{Hodge!mixed -- structure}mixed Hodge structure, isomorphic to the \index{filtration!limiting Hodge --}limiting Hodge filtration of the mixed Hodge structure on $\coH^r(U,f^{-1}(t);\CC)$ when $t\csto\infty$ on the generalized eigenspace corresponding the eigenvalue $\exp(-2\pi\sfi\alpha)$ of the monodromy.

On the other hand, the limit at $u=0$ is much less understood.

\subsubsection*{The spectrum at infinity}
An important Hodge invariant of a germ of holomorphic function is its spectrum, which is a set of pairs consisting of a rational number and a multiplicity, that one can encode by a polynomial with rational exponents and integral coefficients (\cf\eg\cite[\S9.7]{Steenbrink22}). We will provide a similar invariant for regular functions $f:U\csto\Af1$ and we will relate it with the irregular Hodge filtration.

The construction of $\cH^r_\alpha$ can be extended with indices $\alpha+q\in\cA+\ZZ$: we set
\[
\cH^r_{\alpha+q}=\cO_{\PP^1}(-q\cdot\{0\})\otimes\cH^r_\alpha\simeq \cO_{\PP^1}(-q)\otimes\cH^r_\alpha.
\]
In other words, $\cH^r_{\alpha+q}$ is obtained by gluing $G_r(U,f)$ with $V_{\alpha+q}(U,f)$. The family $(\cH^r_{\beta})_{\beta\in\cA+\ZZ}$ is an increasing family of locally free sheaves. By definition, we have
\[
\Gamma(\PP^1,\cH^r_{\beta})=V_{\beta}(U,f)\cap G_r(U,f)\subset\coH^r_\dR(U,f/u),
\]
and, for $\beta=\alpha+q$, multiplication by $u^{-q}$ induces an isomorphism
\[
V_{\beta}(U,f)\cap G_r(U,f)\simeq V_{\alpha}(U,f)\cap u^{-q}G_r(U,f).
\]
The following definition is analogous to that for germs of functions (\cf\eg\cite[(7.3)]{S-S85}, with a shift however):

\begin{definition}\label{def:spectrum}
The \index{spectrum at infinity}\emph{spectrum at infinity in degree $r$} of $(U,f)$ is the set of pairs $(\beta,\nu_\beta^r)$ with $\beta\in\cA+\ZZ$ and
\[
\nu_\beta^r=\dim\frac{V_{\beta}\cap G_r}{[V_{<\beta}\cap G_r]+[V_{\beta}\cap uG_r]}.
\]
It can be encoded in the generating polynomial $\sum_\beta\nu_\beta^r t^\beta$.
\end{definition}

\begin{proposition}\label{prop:spectrumFirr}
With the notation of Proposition \ref{prop:grFirr}, we have, for each \hbox{$\beta\in\QQ$}:
\[
\nu_\beta^r=\rk\gr_{F_\irr}^{-\beta}\coH^r_\dR(U,f/u).
\]
\end{proposition}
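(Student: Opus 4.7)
The plan is to express $\nu^r_\beta$ as a second finite difference of the function $d(\beta):=\dim(V_\beta\cap G_r)$ and then to evaluate it by combining the Grothendieck splitting of the Brieskorn--Deligne bundle $\cH^r_\beta$ with the identification $\Gamma(\PP^1,\cH^r_\beta)=V_\beta\cap G_r$.

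The inclusions $V_{<\beta}\cap G_r\subset V_\beta\cap G_r$ and $V_\beta\cap uG_r\subset V_\beta\cap G_r$ have intersection $V_{<\beta}\cap uG_r$, so inclusion--exclusion transforms Definition~\ref{def:spectrum} into
\[
\nu^r_\beta=\dim(V_\beta\cap G_r)-\dim(V_{<\beta}\cap G_r)-\dim(V_\beta\cap uG_r)+\dim(V_{<\beta}\cap uG_r).
\]
Multiplication by $v=u^{-1}$ shifts the residue eigenvalues of $\nabla$ at $v=0$ by $-1$, hence sends $V_\beta\cap uG_r$ isomorphically onto $V_{\beta-1}\cap G_r$, and similarly for the predecessor. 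Writing $\beta=\alpha+q$ with $\alpha\in\cA$ and $q\in\ZZ$ and restricting to the $\alpha$-isotypic part of the nearby cycles at $v=0$, on which the jumps of $V_\bbullet$ occur only at $\alpha+\ZZ$ so that the predecessor of $\beta$ in $\cA+\ZZ$ is $\beta-1$, the identity becomes the clean second finite difference
\[
\nu^r_\beta=d(\beta)-2d(\beta-1)+d(\beta-2).
\]

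Next I would use the identification $\cK^r_\alpha\simeq\cH^r_\alpha$ stated just before the proposition, together with Proposition~\ref{prop:HNK} and Grothendieck's splitting theorem on $\PP^1$, to write
\[
\cH^r_\alpha\simeq\csbigoplus_p\cO_{\PP^1}(p)^{h^p_\alpha},\qquad h^p_\alpha:=\rk\gr^p_{F_{\irr,\alpha}}\cK^r_\alpha.
\]
The relation $\cH^r_{\alpha+q}\simeq\cO_{\PP^1}(-q\cdot\{0\})\otimes\cH^r_\alpha$ then produces a line-bundle decomposition of $\cH^r_\beta$ whose summand degrees depend affinely on $q=\beta-\alpha$ with slope $\pm1$; taking global sections gives the closed formula $d(\beta)=\sum_p\max\bigl(0,\deg_\beta(p)+1\bigr)\cdot h^p_\alpha$, where $\deg_\beta(p)$ denotes the resulting degree of the $p$-th summand.

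The final step is a short combinatorial verification: the second finite difference of the truncated affine function $\beta\mapsto\max(0,\deg_\beta(p)+1)$ is $0$ everywhere except at the unique threshold value of $\beta$ at which its argument crosses zero, where it equals $+1$. Summing over $p$ and matching the threshold against the reindexing $\sfp=p-\alpha$ of Remark~\ref{rem:FirrQ} identifies $\nu^r_\beta$ with $h^{\alpha-\beta}_\alpha=\rk\gr^{-\beta}_{F_\irr}\coH^r_\dR(U,f/u)$.

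I expect the main obstacle to lie in the careful bookkeeping of sign conventions relating the Deligne lattice indexing, the direction of multiplication by $u$, the twist by $\cO_{\PP^1}(-q\cdot\{0\})$, and the reindexing $\sfp=p-\alpha$, especially in handling the general $\cA$ where the predecessor of $\beta\in\cA+\ZZ$ need not be $\beta-1$; however, once one restricts to a single $\alpha$-isotypic piece of the nearby cycles as above, the proof reduces to a single Riemann--Roch evaluation on $\PP^1$ together with the finite-difference identity.
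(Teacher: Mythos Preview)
The paper does not supply a proof of this proposition, so I can only assess your argument on its own merits.

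Your overall strategy---reduce $\nu^r_\beta$ to a finite difference of $d(\gamma):=\dim(V_\gamma\cap G_r)=\dim\Gamma(\PP^1,\cH^r_\gamma)$ and evaluate via the Grothendieck splitting of the Brieskorn--Deligne bundle---is correct and is the natural route. The inclusion--exclusion step and the identity $v\cdot(V_\beta\cap uG_r)=V_{\beta-1}\cap G_r$ (hence $\dim(V_\beta\cap uG_r)=d(\beta-1)$) are fine.

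There is, however, a genuine gap in your reduction to the symmetric second difference $d(\beta)-2d(\beta-1)+d(\beta-2)$. You obtain it by ``restricting to the $\alpha$-isotypic part of the nearby cycles at $v=0$'', but this is not legitimate: the isotypic decomposition is a splitting of the connection on $\Gm$ (equivalently, of the $V$-filtration at the \emph{regular} point $v=0$), whereas the Brieskorn lattice $G_r$ is a lattice at the \emph{irregular} point $u=0$. There is no reason for $G_r$ to split as $\bigoplus_\alpha(G_r\cap M_\alpha)$, so you cannot simply restrict the function $d(\cdot)$ to a single isotypic piece. When $|\cA|>1$ your displayed second difference is in general not equal to $\nu^r_\beta$.

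The good news is that the restriction is unnecessary. Keeping the honest predecessor $\beta^-$ in $\cA+\ZZ$, inclusion--exclusion together with $\dim(V_\gamma\cap uG_r)=d(\gamma-1)$ gives
\[
\nu^r_\beta=\bigl[d(\beta)-d(\beta-1)\bigr]-\bigl[d(\beta^-)-d(\beta^- -1)\bigr]=:e(\beta)-e(\beta^-).
\]
Now write $\beta=\alpha+q$ with $\alpha\in\cA$ and use $\cH^r_{\alpha+q}\simeq\cH^r_\alpha(q)$ (note the sign: the family $(\cH^r_\gamma)_\gamma$ is \emph{increasing}, so the twist must raise degree) together with Proposition~\ref{prop:HNK} to get $d(\alpha+q)=\sum_p h^p_\alpha\max(0,p+q+1)$, hence
\[
e(\alpha+q)=\#\{p:p\geqslant -q\text{ weighted by }h^p_\alpha\}=\rk F^{-q}_{\irr,\alpha}.
\]
Applying the same computation to $\beta^-$ (which lies in $\alpha^-+\ZZ$ for the appropriate $\alpha^-\in\cA$) yields $e(\beta^-)=\rk F^{-q'}_{\irr,\alpha^-}$ with $(-q')-\alpha^-$ the successor of $(-q)-\alpha$ in $-\cA+\ZZ$. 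Therefore
\[
\nu^r_\beta=\rk F^{-q}_{\irr,\alpha}-\rk F^{-q'}_{\irr,\alpha^-}=\rk\gr^{-\beta}_{F_\irr}\coH^r_\dR(U,f/u),
\]
exactly by the definition of the $\QQ$-indexed filtration $F^\sfp_\irr$ in Remark~\ref{rem:FirrQ}. So your argument goes through once you drop the isotypic shortcut and carry the genuine predecessor through the Riemann--Roch computation.
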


\subsection{Other cohomology theories}

We shortly introduce other cohomological structures that can be attached to a pair $(U,f)$ and indicate the relations, when they exist, with the cohomological structures considered in this section.

\subsubsection*{Exponential mixed Hodge structures}
This theory explains the notion of weight that we have introduced for $\coH^r(U,f;\QQ)$ and $\coH^r_\rc(U,f;\QQ)$. It makes use of the category $\MHM(\Af1)$ of \index{Hodge!mixed -- module}\emph{mixed Hodge modules} on the affine line $\Af1$ \cite{MSaito87} which is briefly summarized in \cite{Steenbrink22}. Any mixed Hodge module~$M$ on $\Af1$ has an associated $\QQ$-perverse sheaf~$\cF$. The category $\EMHS$ of \index{Hodge!exponential mixed -- structure}\emph{exponential mixed Hodge structures}, introduced by Kontsevich and Soibelman \cite{K-S10}, is the full subcategory of $\MHM(\Af1)$ whose objects~$M$ have an associated perverse sheaf $\cF$ with vanishing global hypercohomology. Furthermore, given any mixed Hodge module~$M$ on~$\Af1$, one can functorially associate with it a mixed Hodge module belonging to $\EMHS$. Each object of $\EMHS$ has a weight filtration, which can be distinct from the weight filtration as a mixed Hodge module. To each object is associated a ``Betti fiber'', which is nothing but that defined by Corollary \ref{cor:pervexact}, and inherits a weight filtration.

To the pair $(U,f)$ one associates the mixed Hodge modules whose associated \index{sheaf!perverse --}perverse sheaves are $\pR^{r-d}f_*(\pring_U)$ and $\pR^{r-d}f_!(\pring_U)$, and the corresponding objects in $\EMHS$, that one can write $\coH^r_{\sEMHS}(U,f)$ and $\coH^r_{\sEMHS,\rc}(U,f)$. The Betti fibers are nothing but $\coH^r(U,f;\QQ)$ and $\coH^r_\rc(U,f;\QQ)$ respectively, and the inherited weight filtration is that considered at the end of Section \ref{subsec:singcoh}.

Furthermore, to each object of $\EMHS$ is associated a de~Rham fiber, and for $\coH^r_{\sEMHS}(U,f)$ and $\coH^r_{\sEMHS,\rc}(U,f)$, the de~Rham fibers are nothing but $\coH^r_{\dR}(U,f)$ and $\coH^r_{\dR,\rc}(U,f)$ respectively. We can regard the irregular Hodge filtration as the Hodge part of an object of $\EMHS$.

The category $\EMHS$ is equipped with a tensor product, making it a $\QQ$-linear neutral \index{Tannakian category}Tannakian category. For $\coH^r_{\sEMHS}(U,f)$ and $\coH^r_{\sEMHS,\rc}(U,f)$, this reflects the \index{Thom-Sebastiani}Thom-Sebastiani property for the Betti and de~Rham fibers. Furthermore, the \index{filtration!weight --}weight and \index{filtration!irregular Hodge --}irregular Hodge filtrations on these respective fibers behave well under tensor product. A particular case was already observed in \cite{S-S85}.

The category of exponential Nori motives constructed in \cite{F-J17} regards the expo\-nential mixed Hodge structures $\coH^r_{\sEMHS}(U,f)$ and $\coH^r_{\sEMHS,\rc}(U,f)$ respectively as a realization of an object $[U,\csemptyset,f,r,0]$ and $[X,H,f_X,r,0]$ of a category $\mathbf{M}^{\exp}(\mathbbm{k})$ if~$U$ and~$f$ are defined over a subfield $\mathbbm{k}$ of $\CC$.

\subsubsection*{The motivic Milnor fiber at infinity}
Following the general formalism of Denef and Loeser for the motivic Milnor fiber, a \index{Milnor!motivic -- fiber}motivic Milnor fiber $S_{f,\infty}$ at infinity for $(U,f)$ has been constructed by Matsui-Takeuchi \cite[\S4]{M-T09} and by Raibaut \cite{Raibaut10, Raibaut12}. It is an object of the Grothendieck ring of varieties over $\Gm$ with a $\Gm$ action. It is constructed by means of a projectivization of $f$, but Raibaut has shown the independence of such a choice.

This object $S_{f,\infty}$ gives a quick access to the \index{spectrum at infinity}spectrum of $f$ at infinity and allows for a simple computation of it when, for example, $f$ is a \index{Laurent polynomial}Laurent polynomial which is nondegenerate with respect to its \index{Newton!polyhedron}\index{polyhedron!Newton --}Newton polyhedron (\cf Section \ref{subsec:Laurent} for this notion and \cite{Takeuchi23} for a survey on this topic).

\subsubsection*{Exponentially twisted cyclic homology}
From a categorical point of view, it is a standard question to ask how much a quasi-projective variety $U$ is determined from the bounded derived category $\catD^\rb(\mathrm{coh}(U))$ simply denoted by $\catD^\rb(U)$. In particular, let us assume that $U$ and $U'$ have Fourier-Mukai equivalent derived categories. This means that there exists a kernel, that is, an object $\cK$ of $\catD^\rb(U\times U')$, such that the integral transformation functor with kernel~$\cK$:
\[
\bR p'_*\Bigl(\bL p^*(\cbbullet)\otimes^L\cK\Bigl):\catD^\rb(U)\csto\catD^\rb(U')
\]
is an equivalence of categories. It follows from results by Keller \cite{Keller98} that, under such an assumption, the odd, \resp even, de~Rham cohomologies of $U$ and $U'$ are isomorphic.

Shklyarov \cite{Shklyarov17} has extended this kind of results to pairs $(U,f)$ with $U$ quasi-projective. Namely, given pairs $(U,f)$ and $(U',f')$, assume that the kernel $\cK$ is an object of $\catD^\rb(U\times_{\Af1} U')$ and that the associated Fourier-Mukai transformation is an equivalence of categories $\catD^\rb(U)\isom\catD^\rb(U')$. Then there exist isomorphisms between the odd \resp even $\CC[v]\langle\partial_v\rangle$-modules
\[
\csbigoplus_{r \mathrm{odd}/\mathrm{even}}\coH^r_\dR(U,vf)\simeq\csbigoplus_{r \mathrm{odd}/\mathrm{even}}\coH^r_\dR(U',vf').
\]

In both cases ($f=0$ with Keller and $f:U\csto\Af1$ with Shklyarov), the main step is an identification between the de~Rham cohomology and the \index{periodic cyclic homology}periodic cyclic homology associated to the pair \resp triple consisting of the category of perfect complexes on $U$ (bounded complexes quasi-isomorphic to bounded complexes of locally free $\cO_U$-modules) and its subcategory of acyclic such complexes \resp and the regular function~$f$.

\section{Tameness on smooth affine varieties}\label{sec:tameness}

\subsubsection*{Definitions of tameness}
In the global setting, \index{tameness}\emph{tameness} of $f:U\csto\Af1$ is a property that is intended to be the analogue of having an isolated critical point for a germ of holomorphic function. It~consists of two conditions:
\begin{itemize}
\item
each critical point of $f$ is isolated (since $f$ is a regular function on $U$ quasi-projective, this implies that the set of critical points of $f$ is finite),
\item
``infinity is not a critical point for $f$''.
\end{itemize}
The \emph{global Milnor number $\mu(f)$} is the sum, over all critical points $x\in U$ of $f$, of the corresponding Milnor numbers $\mu(f,x)$.

One should not expect a unique definition of the second property, but instead various criteria adapted to each particular class of examples.

Let us list some of them. We start ``from inside''.
\begin{enumerate}
\item\label{ex:broughton}
$f:\Af{d}\csto \Af1$ is tame in the sense of Broughton (\cf\cite[Lem.\,4.3]{Broughton88}) if there exists $\varepsilon>0$ and a compact $K\subset U$ such that $\|\partial f\|\geqslant \varepsilon$ out of $K$.

\item\label{ex:malgrange}
Tameness in the sense of Malgrange means that, when $f(x)$ remains bounded, there exists $\varepsilon>0$ such that $\|x\|\|\partial f(x)\|\geqslant \varepsilon$ for $\|x\|$ sufficiently large.

\item\label{ex:nemethi}
In \cite{N-Z92} is introduced the notion of M-tameness for such a polynomial, which gives global Milnor fibrations in big balls in $\Af{d}$, that covers both previous situations (the notion of quasi-tameness was also previously considered in \cite{Nemethi86}). One can extend this notion to any function $f$ on a smooth affine variety by embedding this variety in an affine space and by considering the induced metric on it (\cf\cite{D-S02a}).

\item\label{ex:KouchGLN}
A specific class of examples of Item \ref{ex:broughton} consists of \index{convenient (Laurent) polynomial}convenient and \index{nondegenerate (Laurent) polynomial}nondegenerate polynomials with respect to their Newton polyhedron at infinity, in a given coordinate system of $\Af{n}$, as defined by Kouchnirenko \cite{Kouchnirenko76}. Their toric analogues (Laurent polynomials) will be considered in Section \ref{subsec:Laurent}. On the other hand, García López and Némethi \cite{G-N95} have defined a nice class of polynomials that are quasi-tame.
\end{enumerate}

For our purposes, tameness will be considered from a cohomological point of view. We fix the ring $\csring$ to be $\ZZ$ or to be one of the fields $\QQ,\RR,\CC$.

\begin{enumerate}\setcounter{enumi}{4}
\item\label{ex:Katz}
A cohomological condition was introduced by Katz in the arithmetic $\ell$\nobreakdash-adic setting \cite[Prop.\,14.13.3(2)]{Katz90}: let $C^\cbbullet$ denote the cone of the natural morphism of complexes $\bR f_!\csring_U\csto\bR f_*\csring_U$ on the affine line~$\Af1$; by definition, we have a long exact sequence
\[
\cdots\csto R^kf_!\csring_U\csto R^kf_*\csring_U\csto\cH^k(C^\cbbullet)\csto R^{k+1}f_!\csring_U\csto\cdots;
\]
in general, $C^\cbbullet$ has constructible cohomology; the condition is that all cohomology sheaves $\cH^k(C^\cbbullet)$ are constant sheaves of $\csring$-modules of finite type on the affine line $\Af1$.

It an be shown that the geometric tameness conditions considered in \eqref{ex:broughton}--\eqref{ex:KouchGLN} all imply the Katz tameness condition with $\csring=\ZZ$.
\end{enumerate}

Tameness ``from outside'' is defined for a suitable choice of projectivization $f_X:X\csto\Af1$ of $f$. For a polynomial $f$, the standard partial compactification of the graph $U\subset\PP^d\times \Af1$ of $f$ is natural to consider, but others can also prove useful.

\begin{enumerate}\setcounter{enumi}{5}
\item\label{ex:cohtame}
We say (\cf \cite{Bibi96bb}) that $f:U\csto\Af1$ is \emph{cohomologically $\csring$-tame} if there exists a projectivization of $f$ given by a commutative diagram
\[
\xymatrix{
U\ar@{^{ (}->}[r]^-{j}\ar[rd]_(.35){f}& X\ar[d]^-{f_X}\\
&\Af1
}
\]
where $X$ is quasi-projective and $f_X$ is projective, such that both complexes $\bR j_*\csring_U$ and $\bR j_!\csring_U$ have no \index{vanishing cycles!at infinity}vanishing cycles at infinity, namely, for all $c\in\Af1$, the vanishing cycle complexes $\phi_{f_X-c}(\bR j_*\csring_U)$ and $\phi_{f_X-c}(\bR j_!\csring_U)$ are supported in at most a finite number of points, all at finite distance, \ie contained in $U$. Note that, if $\csring$ is a field, Verdier duality yields that the condition on one complex implies the condition on the other one.

\item
An example of a cohomologically $\QQ$-tame function on an affine open subset of~$\Af3$ which is not a torus $(\Gm)^3$ is the rational function
\[
f(x,y,z)=y(2+z)+\frac{x^2}{(xy-1)(1+z)}
\]
which is regular on $U=\Af3\moins\{(xy-1)(1+z)=0\}$. Its tameness is proved in \cite[App.]{G-S15}, where the authors propose such a function as a Landau-Ginzburg model for the mirror of a $3$-dimensional quadric.
\end{enumerate}

For polynomial functions on the affine space, a relation between ``inside'' and ``outside'' tameness does exist. In fact, Parusi\'nski has shown \cite{Parusinski95} that $f:\Af{d}\csto \Af1$ is cohomologically tame with respect to the standard partial compactification of the graph $U\subset\PP^d\times \Af1$ of $f$ if and only if it satisfies the Malgrange condition.

\subsubsection*{Purpose of this section}
Tame functions provide a framework where the general properties developed in Section \ref{sec:cohUf} acquire a simpler expression and emphasize purity (in the sense of Hodge theory). This is parallel to properties of isolated singularities of holomorphic functions. However, the algebraic setting allows more specific properties, like those related to positivity.

\subsection{Cohomological properties of tame functions}

\emph{For the sake of simplicity, we assume in the remaining part of this section \ref{sec:tameness} that~$U$ is affine of dimension $d\geqslant2$.}

We will make use of the $\ZZ$-tameness condition of Katz, so we first make precise an equivalent definition when $U$ is affine, and its relation with other tameness conditions.

\begin{proposition}\label{propimdirperv}
The function $f:U\csto\Af1$ is $\csring$-tame in the sense of Katz if and only if the following properties are satisfied:
\begin{enumerate}
\item\label{propimdirperv1}
the perverse complex $\pR^{r-d}\!f_*(\pring_U)$ is zero for $r>d$ and, if $r<d$, is the constant sheaf shifted by one with fiber isomorphic to $\coH^{r-1}(U,\csring)$ on the affine line~$\Af1$;
\item\label{propimdirperv1b}
the perverse complex $\pR^{r-d}\!f_!(\pring_U)$ is zero for $r<d$ and, if $r>d$, is the constant sheaf shifted by one with fiber isomorphic to $\coH^{r-1}_\rc(U,\csring)$ on the affine line $\Af1$;
\item\label{propimdirperv2}
The kernel and the cokernel (in the perverse sense) of the natural morphism $\pR^0\!f_!\pring_U\csto \pR^0\!f_*\pring_U$ are constant sheaves shifted by one (with the same rank) on~$\Af1$.
\end{enumerate}
Furthermore, if $f$ is cohomologically $\csring$-tame, or if $f$ is M-tame, then $f$ is $\csring$-tame in the sense of Katz.
\end{proposition}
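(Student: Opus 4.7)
\emph{Setup and Artin vanishing.} My plan is to apply the long exact sequence of perverse cohomology sheaves attached to the distinguished triangle
\[
\bR f_!\,\pring_U \csto \bR f_*\,\pring_U \csto C^\cbbullet[d] \xrightarrow{+1}
\]
on $\Af1$, obtained by shifting Katz's triangle by $d$. Since $U$ and $\Af1$ are affine, the map $f$ is affine, so Artin's theorem on affine morphisms applies: $\bR f_*$ is right $t$-exact and $\bR f_!$ is left $t$-exact for the perverse $t$-structure. This yields immediately $\pR^k f_*\,\pring_U = 0$ for $k>0$ and $\pR^k f_!\,\pring_U = 0$ for $k<0$, supplying the vanishing halves of \eqref{propimdirperv1} and \eqref{propimdirperv1b} with no hypothesis on $f$.

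\emph{Katz $\Rightarrow$ \eqref{propimdirperv1}--\eqref{propimdirperv2}.} Under Katz's assumption each $\cH^j(C^\cbbullet)$ is a constant sheaf on $\Af1$. Because $\Af1$ is simply connected and $\mathrm{Ext}^{\geqslant1}$ between constant sheaves on $\Af1$ vanishes, $C^\cbbullet$ splits in $\catD^\rb_\cc(\csring_{\Af1})$ as $\bigoplus_j \cH^j(C^\cbbullet)[-j]$, and a direct computation gives that $\pcH^k(C^\cbbullet[d])$ is constant perverse, with fiber $\cH^{k+d-1}(C^\cbbullet)$. Each of these fibers is determined by the long exact sequence of global sections
\[
\coH^m_\rc(U,\csring)\csto\coH^m(U,\csring)\csto\cH^m(C^\cbbullet)\csto\coH^{m+1}_\rc(U,\csring)\csto\cdots,
\]
combined with Artin's absolute vanishings $\coH^m(U,\csring)=0$ for $m>d$ and $\coH^m_\rc(U,\csring)=0$ for $m<d$. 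Splicing the perverse LES then produces isomorphisms $\pR^k f_*\,\pring_U \simeq \pcH^k(C^\cbbullet[d])$ for $k\leqslant-2$ and $\pR^k f_!\,\pring_U\simeq\pcH^{k-1}(C^\cbbullet[d])$ for $k\geqslant2$, which identifies them with the asserted constant perverse sheaves. The residual six-term bridge
\[
0 \csto \pR^{-1}f_*\,\pring_U \csto \pcH^{-1}(C^\cbbullet[d]) \csto \pR^0 f_!\,\pring_U \csto \pR^0 f_*\,\pring_U \csto \pcH^0(C^\cbbullet[d]) \csto \pR^1 f_!\,\pring_U \csto 0
\]
exhibits the kernel and cokernel of $\pR^0 f_!\csto \pR^0 f_*$ as, respectively, a quotient and a subobject of the constant perverse sheaves $\pcH^{-1}(C^\cbbullet[d])$ and $\pcH^0(C^\cbbullet[d])$, hence themselves constant, yielding \eqref{propimdirperv2}; the equality of their ranks is extracted by tracking Euler characteristics (or equivalently by Poincaré--Verdier duality) along the same bridge. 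The converse direction is obtained by running the LES in reverse: under \eqref{propimdirperv1}--\eqref{propimdirperv2}, every $\pcH^k(C^\cbbullet[d])$ is seen to be a constant perverse sheaf, and since $\Af1$ is simply connected this is equivalent to constancy of every $\cH^j(C^\cbbullet)$.

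\emph{Cohomological and M-tameness imply Katz.} For the final implication I fix a good projectivization with proper $f_X:X\csto\Af1$, open inclusion $j:U\hto X$, and closed complement $i:H\hto X$. Since $f_X$ is proper, $\bR f_{X*}=\bR f_{X!}$, and applying it to the triangle $\bR j_!\,\csring_U \csto \bR j_*\,\csring_U \csto i_*i^{-1}\bR j_*\,\csring_U \csto {+1}$ yields
\[
C^\cbbullet \simeq \bR(f_X|_H)_*\bigl(i^{-1}\bR j_*\,\csring_U\bigr).
\]
Now cohomological $\csring$-tameness says $\pphi_{f_X-c}(\bR j_*\csring_U)$ and $\pphi_{f_X-c}(\bR j_!\csring_U)$ are supported at finitely many points, all lying in $U$; via Proposition~\ref{prop:pushphi} and the commutation of vanishing cycles with proper pushforward, this forces $\pphi_{t-c}(C^\cbbullet)=0$ for every $c\in\Af1$, so $C^\cbbullet$ has locally constant cohomology sheaves on the simply connected $\Af1$, hence constant ones. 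For M-tameness the same conclusion is obtained by invoking the global Milnor fibration at infinity, which provides the uniform product trivialization of the boundary stratum needed to run the same $\pphi$-vanishing argument.

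\emph{Main obstacle.} The technical heart is the middle-degree analysis: verifying that the kernel and cokernel appearing in the bridge are constant perverse sheaves of equal rank requires a careful chase through the LES, and it is here that Poincar\'e--Verdier duality between $\bR f_*$ and $\bR f_!$ must be invoked to enforce the balance of \eqref{propimdirperv2}. A secondary difficulty lies in the implication from geometric tameness to Katz tameness, where one must combine the $\pphi$-vanishing on both $\bR j_*\csring_U$ and $\bR j_!\csring_U$ with proper base change in order to deduce the global constancy of $C^\cbbullet$ from the pointwise absence of vanishing cycles at infinity.
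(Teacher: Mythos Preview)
Your approach is essentially that of the paper: both combine Artin vanishing for the affine morphism $f$ with the long exact sequence of perverse cohomology attached to the cone triangle, the paper merely rephrasing the equivalence ``Katz $\Leftrightarrow$ perverse cohomology of the cone is constant'' through micro-support while you argue directly. Two technical slips deserve mention.

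First, the claim that $C^\cbbullet$ splits as $\bigoplus_j\cH^j(C^\cbbullet)[-j]$ is false in general when $\csring=\ZZ$: one has $\mathrm{Ext}^1_{\Af1}(M_{\Af1},N_{\Af1})\simeq\mathrm{Ext}^1_\csring(M,N)$, which need not vanish. Fortunately the splitting is unnecessary: on the full subcategory of $\catD^\rb_\cc(\csring_{\Af1})$ consisting of complexes with constant cohomology, the perverse $t$-structure coincides with the standard one shifted by~$1$, so $\pcH^k(C^\cbbullet[d])=\cH^{k+d-1}(C^\cbbullet)[1]$ holds without any splitting.

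Second, your ``long exact sequence of global sections'' is miswritten: applying $\bR\Gamma(\Af1,-)$ to the triangle yields $\coH^m(\Af1,\bR f_!\csring_U)$, not $\coH^m_\rc(U,\csring)$, and these differ. What you actually need---and what does hold---is the vanishing $\coH^m(\Af1,\bR f_!\csring_U)=0$ for $m\leqslant d-2$ (a consequence of Artin for $f_!$ together with the perverse Leray spectral sequence), which suffices for the fiber identifications when $k\leqslant-2$. The remaining case $k=-1$ (fiber $\coH^{d-2}(U)$) and its dual are not covered by your bridge argument as stated; the paper handles all cases uniformly via the perverse Leray spectral sequence for $f_*$ (resp.\ $f_!$), which degenerates at $E_2$ for the reasons you identify.

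Your argument that cohomological tameness implies Katz tameness, via vanishing of $\pphi_{t-c}(C^\cbbullet)$ deduced from the triangle on~$X$, is correct and in fact more explicit than the paper, which simply cites \cite{Bibi97b} and \cite{D-S02a} for these implications.
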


\begin{proof}[Sketch]
The notion of micro-support is useful for explaining the latter property \hbox{(\cf \cite{K-S90,Dimca04,M-S22})}: Katz' condition is equivalent to the property that the micro-support of the cone $\cC$ of the natural morphism $\bR f_!\pring_U\csto\bR f_*\pring_U$ is reduced to the zero section of the cotangent bundle of $\Af1$, and this, in turn, is equivalent to the property that its perverse cohomology sheaves are constant. The long exact sequence of perverse cohomology attached to the corresponding distinguished triangle, together with Artin's vanishing result (\cf \cite[Prop.\,10.3.17]{K-S90} or \cite[Prop.\,5.2.13]{Dimca04}), implies that the assertions of Proposition \ref{propimdirperv} are satisfied, and the converse holds in the same way.

It remains to identify the constant local systems. This is obtained by analyzing the perverse Leray spectral sequence
\[
E_2^{p,q}=\coH^p(\Af1,\pR^qf_*\pring_U)\implies \coH^{p+q}(U,\pring_U),
\]
and the compact support analogue with $\pR^qf_!\pring_U$. Due to the vanishing and constancy statements in Items \ref{propimdirperv1} and \ref{propimdirperv2}, one obtains that it degenerates at $E_2$. Then for $r<d$, the fiber of the shifted constant sheaf $\pR^{r-d}f_*\pring_U$, which is isomorphic to $\coH^{-1}(\Af1,\pR^{r-d}f_*\pring_U)$, is thus isomorphic to $\coH^{r-1}(U,\csring_U)$, and we have a similar argument for Item \ref{propimdirperv2}.

A proof for the last assertions can be found in \cite{D-S02a} for M-tameness and in \cite{Bibi97b} for cohomological tameness.
\end{proof}

This is an example where manipulating perverse cohomology instead of usual cohomology leads to simpler and clearer results. Let us mention a consequence of the theory of Hodge modules (\cf\cite{MSaito86,MSaito87}), due to purity of the image $\pR^0\!f_{{!}{*}}\pQQ_U$ of the natural morphism $\pR^0\!f_!\pQQ_U\csto \pR^0\!f_*\pQQ_U$.

\begin{corollary}\label{cor:semisimple}
The perverse complex $\pR^0\!f_{{!}{*}}\pQQ_U$ is semi-simple as such; in particular, the shifted local system on the complement of its singularities is a direct sum of irreducible local systems.
\end{corollary}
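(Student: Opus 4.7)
The plan is to deduce the corollary from two ingredients in Saito's theory of mixed Hodge modules: the weight estimates for the perverse pushforward functors, and the fundamental semisimplicity theorem for polarizable pure Hodge modules.

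First, since $U$ is smooth of pure dimension $d$, the shifted constant sheaf $\pQQ_U = \QQ_U[d]$ underlies a polarizable pure Hodge module of weight $d$ (the trivial variation of Hodge structure). The key input is Saito's weight estimate for the pushforwards of a mixed Hodge module under an arbitrary morphism: if $M$ has weights $\leqslant w$ (\resp $\geqslant w$), then $\pH^k(\bR f_! M)$ has weights $\leqslant w+k$ (\resp $\pH^k(\bR f_* M)$ has weights $\geqslant w+k$). Applied to $M = \pQQ_U$ and $k=0$, this tells us that $\pR^0 f_!\pQQ_U$ carries a mixed Hodge module structure of weights $\leqslant d$, while $\pR^0 f_*\pQQ_U$ carries one of weights $\geqslant d$.

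Next, the morphism $\pR^0 f_!\pQQ_U \csto \pR^0 f_*\pQQ_U$ is a morphism of mixed Hodge modules, so its image $\pR^0 f_{!*}\pQQ_U$ inherits the structure of a mixed Hodge module. As a quotient of $\pR^0 f_!\pQQ_U$ it has weights $\leqslant d$; as a subobject of $\pR^0 f_*\pQQ_U$ it has weights $\geqslant d$. Consequently $\pR^0 f_{!*}\pQQ_U$ is \emph{pure of weight $d$}. This is the purity statement highlighted in the remark preceding the corollary.

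Finally, Saito's theorem (\cite{MSaito86}) asserts that the category of polarizable pure Hodge modules of a fixed weight on a fixed variety is semisimple: every such Hodge module is a direct sum of simple pure Hodge modules. In particular, the underlying perverse sheaf of $\pR^0 f_{!*}\pQQ_U$ is semisimple in $\Perv(\Af1;\QQ)$. Restricting to the (Zariski open) complement of its singular support, each simple summand becomes a shifted irreducible local system, yielding the desired decomposition as a direct sum of irreducible local systems. The only real obstacle is that the argument rests entirely on the heavy machinery of Saito's theory cited in the statement; granting this, the reasoning is formal, and it is worth noting that no tameness hypothesis actually enters — the semisimplicity of the middle image holds for any regular function on a smooth variety.
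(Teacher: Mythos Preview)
Your argument is correct and is exactly the one the paper has in mind: the sentence preceding the corollary already attributes it to ``purity of the image $\pR^0 f_{!*}\pQQ_U$'' together with Saito's theory \cite{MSaito86,MSaito87}, and you have simply spelled out the weight-estimate mechanism that yields this purity. Your final observation that tameness plays no role in the semisimplicity itself is also accurate; the corollary is placed here only because Proposition~\ref{propimdirperv} identifies $\pR^0 f_{!*}\pQQ_U$ concretely in the tame case.
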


Let $C(f)$ be the set of critical values of $f$ (which coincides with the set of typical and atypical values in the tame case, at least for what concerns cohomology). For $t\notin C(f)$, the cohomology $\coH^{d-1}_\mathrm{mid}(f^{-1}(t);\QQ)$ (image of $\coH^{d-1}_\rc$ in $\coH^{d-1}$) is the stalk at~$t$ of a \index{sheaf!locally constant --}locally constant sheaf on $\Af1\moins C(f)$, and corresponds to a linear representation of $\pi_1(\Af1\moins C(f),\star)$ for some base point $\star$. The second part of Corollary \ref{cor:semisimple} asserts that this representation is a direct sum of \emph{irreducible representations}.

We now consider the \index{vanishing cycles!global --}global vanishing cycles.

\begin{theorem}\label{th:tame}
Let $f:U\csto\Af1$ be an $\csring$-tame function (in the sense of Katz, say). Then the local systems of global vanishing cycle $\cswt\Phi_\infty^r(U,f;\csring)$ and $\cswt\Phi_{\infty,\rc}^r(U,f;\csring)$ (\cf Definition \ref{def:globalvanishingcycles}) are zero for $r\neq d$ and are $\csring$-free of rank $\mu(f)$ if $r=d$. Furthermore, the natural morphism $\cswt\Phi_{\infty,\rc}^d(U,f;\csring)\csto\cswt\Phi_\infty^d(U,f;\csring)$ forgetting supports is an isomorphism. If~$\csring=\QQ$, their fibers at any $\theta$ are pure of weight $d$.
\end{theorem}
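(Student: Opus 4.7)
The plan is to transport the computation to the affine line via Remark~\ref{rem:Phiperv}\eqref{rem:Phiperv1}, which yields $\cswt\Phi_\infty^r(U,f;\csring)\simeq\cswt\Phi_\infty(\pR^{r-d}f_*\pring_U)$ (and similarly with $f_!$), together with the fact that $\cF\mapsto\cswt\Phi_\infty(\cF)$ is exact on $\Perv(\Af1;\csring)$. Under tameness, Proposition~\ref{propimdirperv}\eqref{propimdirperv1}--\eqref{propimdirperv1b} says each of these pushforwards in a degree $r\neq d$ is either zero or a constant perverse sheaf $M_{\Af1}[1]$. It therefore suffices first to check the vanishing $\cswt\Phi_\infty(M_{\Af1}[1])=0$: this is a direct stalk computation, since $\cswt\Phi_\infty(M_{\Af1}[1])_\theta=\coH^1_\rc(\cswt\Delta_{\rho,\theta};M)$, and the semi-closed disc of Figure~\ref{fig:tDeltaR} has vanishing compactly supported cohomology (excision against the closed boundary arc reduces to the contractible pair consisting of the closed disc and that arc). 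This disposes of the assertions in degrees $r\neq d$.

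For $r=d$, Proposition~\ref{propimdirperv}\eqref{propimdirperv2} provides two short exact sequences of perverse sheaves relating $\pR^0 f_!\pring_U$ to $\pR^0 f_*\pring_U$ through their common perverse image, with kernel and cokernel both constant shifted. Exactness of $\cswt\Phi_\infty$ together with the vanishing of the previous paragraph yields at once the isomorphism $\cswt\Phi_{\infty,\rc}^d(U,f;\csring)\isom\cswt\Phi_\infty^d(U,f;\csring)$. The rank is then computed via the Stokes filtration (Proposition~\ref{prop:StokesF}): each graded piece satisfies $\gr_c\cswt\Phi_\infty^d(U,f;\csring)\simeq\cswt\pphi_c(\pR^0 f_*\pring_U)$, and Proposition~\ref{prop:pushphi} identifies the latter with (the local-system version of) $\pR^0 f_{X*}\pphi_{f_X-c}(\bR j_*\pring_U)$. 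Tameness ensures that $\pphi_{f_X-c}(\bR j_*\pring_U)$ is concentrated on the finite set of critical points of $f$ lying above $c$, where it reduces to the local vanishing cycle complex; by Milnor's classical results recalled in the introduction, the stalk at each such $x$ is a free $\csring$-module of rank equal to the local Milnor number $\mu(f,x)$. Summing over all critical points yields total rank $\mu(f)$. Local freeness of the local system $\cswt\Phi_\infty^d(U,f;\csring)$ on $S^1$ then follows from freeness of each Stokes graded piece combined with the canonical local splittings on $C$-good open intervals given by Proposition~\ref{prop:strict}\eqref{prop:strict1}.

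For the purity statement when $\csring=\QQ$, I invoke Corollary~\ref{cor:weights}: $\coH^d(U,f;\QQ)$ has weights $\geqslant d$, $\coH^d_\rc(U,f;\QQ)$ has weights $\leqslant d$, and the isomorphism of the previous paragraph forces both to coincide with $\coH^d_{\mathrm{mid}}(U,f;\QQ)$, which is pure of weight $d$. The fiber at an arbitrary $\theta\in S^1$ is controlled by the tame pair $(U,\rme^{-\sfi\theta}f)$, so the same argument applies uniformly. The step I expect to be the main obstacle is verifying cleanly, under Katz's formulation of tameness alone, that $\pphi_{f_X-c}(\bR j_*\pring_U)$ is indeed supported on the critical points of $f$ inside $U$ rather than possibly on the boundary locus $H\cap f_X^{-1}(c)$: this is automatic under cohomological tameness, but in the Katz setting it has to be extracted from the structural characterization of Proposition~\ref{propimdirperv} combined with the commutation statement of Proposition~\ref{prop:pushphi}.
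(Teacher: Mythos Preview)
Your argument follows essentially the same route as the paper's: reduce to perverse sheaves on $\Af1$ via Remark~\ref{rem:Phiperv}, kill the constant pieces using the vanishing of compactly supported cohomology on the semi-closed disc (the paper cites Lemma~\ref{lem:vanishing} and \eqref{eq:Ufpush} for this), deduce the isomorphism $\cswt\Phi_{\infty,\rc}^d\simeq\cswt\Phi_\infty^d$ from exactness and Proposition~\ref{propimdirperv}\eqref{propimdirperv2}, and obtain freeness and rank from the local splitting into the $\gr_c$ pieces together with the bouquet description of the local Milnor fibre. The purity argument via Corollary~\ref{cor:weights} is also the paper's.

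The one place where you are more explicit than the paper is the rank step: you pass through Proposition~\ref{prop:pushphi} to reach $\pphi_{f_X-c}(\bR j_*\pring_U)$ on $X$ and then worry about possible support on $H$. The paper simply writes the graded piece as ``$\cswt\phip_{f-c}\pring_U$'' and invokes the bouquet of spheres, without routing through $X$ at all. Your flagged concern---that under Katz tameness alone one must still argue that no boundary contribution appears---is legitimate, and the paper's proof is in fact no more detailed on this point than yours; it tacitly treats the graded piece as the local vanishing-cycle space on $U$. So you have not introduced a gap that the paper avoids; you have rather made visible a step that the paper's terse write-up leaves implicit.
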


\begin{proof}
The last assertion follows from Corollary \ref{cor:weights}. The vanishing property and the isomorphism property are consequences of properties of the perverse sheaves described in Proposition \ref{propimdirperv}, according to the expressions \eqref{eq:Ufpush} and the vanishing property $\coH^k(\wAfu_\rmod;\cF)=0$ for all $k$ if $\cF$ is constant (Lemma \ref{lem:vanishing}).

The freeness property is local on $S^1$, so we can identify $\cswt\Phi_\infty^d(U,f;\csring)$ with $\csbigoplus_{c\in C}\cswt\phip_{f-c}\pring_U$. Since the Milnor fiber of $f$ at a critical point $x\in f^{-1}(c)$ is a bouquet of $\mu(f,x)$ spheres, each $\cswt\phip_{f-c}\pring_U$ is locally $\csring$-free of rank $\sum_{x\in f^{-1}(c)}\mu(f,x)$.
\end{proof}

The pairing \eqref{eq:dualHUfS1} is non-trivial in degree $d$ only and reads
\[
\varh:\cswt\Phi_\infty^d(U,f)\otimes\cswt\Phi_{\infty}^d(U,-f)\csto \csring_{S^1}.
\]
In the local situation, each $\varh_c$ is known to be nondegenerate (it is isomorphic to the Seifert pairing, if one identifies $\cswt\Phi_{\infty}^d(U,-f)$ with $\iota^{-1}\cswt\Phi_{\infty}^d(U,f)$). Then Proposition \ref{prop:pairingF}\eqref{prop:pairingF2} implies that $\varh$ is nondegenerate. It follows from Proposition \ref{prop:hS} that, in suitable bases of $\coH^d(U,f;\csring)$ and $\coH^d(U,-f;\csring)$, the matrix of $\varh$ is equal to the Stokes matrix (\cf Proposition \ref{prop:hS}).

If Poincaré duality holds with $\csring$-coefficients on $U^\an$ (\eg for $\csring=\ZZ$, $U^\an$ can be the product $\Af{k}\times\Gm^{d-k}$), then from the long exact sequence of relative cohomology of the pair $(U,f^{-1}(t))$ for $t\notin C$ one concludes that the cohomology of $f^{-1}(t)$ is $\csring$-free and, from Lemma \ref{lem:sperv}, that $\pR^0\!f_*\pring_U$ is strongly perverse.

\subsection{The spectrum at infinity}

\subsubsection*{The twisted de~Rham complex and the Brieskorn lattice}

By the comparison theorem \ref{th:isoBdR} and Theorem \ref{th:tame}, we find that, for $f$ tame in the sense of Katz, $\coH^r_{\dR}(U,f)$ and $\coH^r_{\dR,\rc}(U,f)$ are zero for $r\neq d$ and that both coincide if $r=d$, with $\dim\coH^d_{\dR}(U,f)=\mu(f)$.

The Brieskorn lattices (\cf Theorem \ref{th:A}) thus vanish for $r\neq d$ and the Brieskorn lattice in degree $d$, that we denote by $G_d(U,f)\subset G(U,f):=\coH^d_{\dR}(U,f/u)=\coH^d_{\dR,\rc}(U,f/u)$, is~a free $\CC[u]$-module of rank $\mu(f)$.

When $f$ is tame, the \index{lattice!Brieskorn --}Brieskorn lattice can be computed with a twisted de~Rham complex on $U$ (instead of a logarithmic de~Rham complex on $X$). Furthermore, as~$U$ is affine, one can compute the hypercohomology of the twisted de~Rham complex (over $\CC[u]$ or $\CC[u,u^{-1}]$) as the cohomology of the complex of global sections on $U$. In~such a case, it is more convenient to deal with the lattice
\[
G_0(U,f):=u^dG_d(U,f),
\]
that is usually called the \index{lattice!Brieskorn --}\emph{Brieskorn lattice of $f$}. We~have (\cf \cite[Prop.\,8.13]{S-Y14}):

\begin{theorem}
If $f:U\csto\Af1$ is tame (in the sense of Katz, say), then the inclusion $G_0(U,f)\subset G(U,f)$ reads
\[
G_0(U,f)=\frac{\Omega^d(U)[u]}{(u\rd+\rd f)\Omega^{d-1}(U)[u]}\hto \frac{\Omega^d(U)[u,u^{-1}]}{(u\rd+\rd f)\Omega^{d-1}(U)[u,u^{-1}]}.
\]
\end{theorem}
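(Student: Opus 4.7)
The plan is to reduce the claimed identity to a global-sections computation on the affine variety~$U$, using tameness to discard the contribution of the boundary divisor $D=\csov X\moins U$ of a good projectivization.

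First I would rescale the Brieskorn complex. Multiplying the degree-$k$ term of $(u^{-\cbbullet}\Omega_U^\cbbullet[u], \rd+\rd f/u)$ by $u^k$ yields an isomorphic complex of sheaves of $\CC[u]$-modules $(\Omega_U^\cbbullet[u], u\rd+\rd f)$, and in top degree this amounts to multiplication by~$u^d$. Granting for the moment that under tameness the natural restriction to $U$ induces an isomorphism
\[
u^d G_d(U,f) \simeq \coH^d\bigl(U,(\Omega_U^\cbbullet[u], u\rd+\rd f)\bigr),
\]
the conclusion then follows because $U$ is affine and each $\Omega_U^k[u]$ is quasi-coherent: Cartan's Theorem~B replaces this hypercohomology by the cohomology of the complex of global sections, which in top degree is the quotient $\Omega^d(U)[u]/(u\rd+\rd f)\Omega^{d-1}(U)[u]$. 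The resulting map into $G(U,f)=\coH^d_\dR(U,f/u)$ is induced by localizing $\CC[u]$ at $u$ (replacing $[u]$ by $[u,u^{-1}]$), which is precisely the map displayed in the theorem; it is injective because Theorem~\ref{th:A} guarantees that $G_d(U,f)$ is a $\CC[u]$-lattice in $G(U,f)$, in particular torsion-free over $\CC[u]$.

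The main obstacle is establishing the displayed reduction from~$\csov X$ to~$U$. I would attack this by comparing the naive complex on $U$ with the Kontsevich complex on $\csov X$ used to define $G_d(U,f)$ in Theorem~\ref{th:A}: the cone of the natural morphism is supported on $D$, and tameness in the sense of Katz (Proposition~\ref{propimdirperv}) forces the perverse direct images $\pR^0 f_*\pring_U$ and $\pR^0 f_!\pring_U$ to differ from the middle extension only by shifted constant sheaves on $\Af1$. Twisting by the exponential connection $\rd+\rd f/u$ annihilates such constant contributions (since the operator $u\partial_t$ has trivial kernel and cokernel on $\CC[t][u,u^{-1}]$), so the boundary terms drop out in degree~$d$.

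A more hands-on route, in the spirit of \cite{S-Y14}, is to argue directly that the global-sections cokernel
\[
H := \Omega^d(U)[u]/(u\rd+\rd f)\Omega^{d-1}(U)[u]
\]
is $\CC[u]$-free of rank $\mu(f)$ by computing its fibres: specialization at $u=0$ yields the global Jacobian ring $\Omega^d(U)/\rd f\wedge\Omega^{d-1}(U)$, of total dimension $\mu(f)$ by tameness, while specialization at any $u_o\in\CC^*$ yields a fibre of the Gauss--Manin system $G(U,f)$, again of dimension $\mu(f)$ by Theorem~\ref{th:tame}. Constancy of fibre dimension then gives freeness, and the natural map $H\csto G(U,f)$, which is generically an isomorphism, must identify $H$ with the $\CC[u]$-lattice $u^dG_d(U,f)$ since both are free of rank $\mu(f)$ and generate $G(U,f)$ after inverting~$u$. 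The hard part in either approach is exactly the control of the log- and pole-contributions at $D$, which is the one place where tameness is indispensable; without it, Laurent polynomials in~$u$ are truly needed, as Example \eqref{eq:HUUan} already shows for $f\equiv0$ on a non-proper $U$.
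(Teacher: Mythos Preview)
The paper does not prove this theorem; it only cites \cite[Prop.\,8.13]{S-Y14}. So there is nothing to compare against, and your outline must stand on its own.

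Your first paragraph (rescaling, then passing to global sections by affineness) is correct and is indeed how the statement reduces to a single comparison. The difficulty, as you say, is the ``displayed reduction'' identifying $u^dG_d(U,f)$ with the naive quotient $H$ on~$U$. Here both of your approaches have genuine gaps.

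In your first approach, the input from Proposition~\ref{propimdirperv} lives on the Betti side: it controls the perverse pushforwards $\pR^0f_!\pring_U\to\pR^0f_*\pring_U$ on~$\Af1$. What you need, however, is a statement about de~Rham complexes on~$\csov X$ versus~$U$ \emph{before} pushing forward and \emph{over} $\CC[u]$, not $\CC[u,u^{-1}]$. The exponential-twist-kills-constants argument works after inverting~$u$ (that is how one gets $G(U,f)$ right), but at $u=0$ the differential degenerates to $\rd f\wedge$, and constancy of Betti cones says nothing about the coherent sheaf $\Omega_f^\cbbullet/u$ along~$D$. A Riemann--Hilbert bridge alone does not close this.

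In your second approach there are two issues. First, constancy of fibre dimension implies freeness only once you know $H$ is \emph{finitely generated} over $\CC[u]$; this is not automatic, since $\Omega^d(U)[u]$ is of infinite type, and establishing it is precisely where tameness enters in \cite{S-Y14} (via a filtration/coherence argument on the compactification). Second, and more seriously, your final step is false as stated: two free $\CC[u]$-lattices of the same rank generating the same $\CC[u,u^{-1}]$-module need not coincide (think of $\CC[u]$ and $u\CC[u]$ inside $\CC[u,u^{-1}]$). You must exhibit the natural restriction map $u^dG_d(U,f)\to H$ (induced by $\Omega_f^k\hookrightarrow j_*\Omega_U^k$) and argue it is an isomorphism, e.g.\ by checking it modulo~$u$ against the Jacobian ring; merely matching ranks is not enough.
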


\subsubsection*{The spectrum at infinity}
The \index{spectrum at infinity}spectrum of $G_0(U,f)$ is the set of pairs $(\beta,\nu_\beta)$ computed with $G_0(U,f)$ instead of $G_d(U,f)$, so that $\nu_{\beta}=\nu_{\beta-d}^d$ and therefore, according to Proposition \ref{prop:spectrumFirr}, the relation with the irregular Hodge filtration of $\coH^d_\dR(U,f)$ is
\begin{equation}\label{eq:spFirr}
\nu_\beta=\rk\gr_{F_\irr}^{d-\beta}\coH^d_\dR(U,f).
\end{equation}
The perfect pairing compatible with $\partial_u$ of Theorem \ref{th:A}\eqref{th:A2} reads (due to the definition above of $G_0(U,f)$)
\[
G_0(U,f)\otimes_{\CC[u]}G_0(U,-f)\csto u^d\CC[u],
\]
and we can identify $G_0(U,-f)$ with the pullback $\iota^*G_0(U,f)$ by the involution $\iota:u\mto -u$, that we also denote by $\csov G_0(U,f)$. This is $G_0(U,f)$ as a $\CC$-vector space, and the action of $\CC[u]\langle u^2\partial_u\rangle$ is defined as follows: if $g\in G_0(U,f)$, we denote it by $\csov g$ when considering it as an element of $\csov G_0(U,f)$; we then set $P(u,u^2\partial_u)\cdot\csov g:=\csov{P(-u,-u^2\partial_u)\cdot g}$ in $\csov G_0(U,f)$. The spectrum of $\csov G_0(U,f)$ coincides with that of $G_0(U,f)$.

\begin{corollary}\label{cor:symspectrum}
If $f:U\csto\Af1$ is tame, the spectrum of $f$ at infinity is symmetric with respect to $d/2$ and is contained in $[0,d]$.
\end{corollary}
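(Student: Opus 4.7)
The plan is to deduce both statements from the identification \eqref{eq:spFirr} between the spectrum at infinity and the ranks of the graded pieces of the irregular Hodge filtration, combined with Yu's duality theorem (Theorem \ref{th:yuduality}) and the tameness hypothesis. The key observation is that tameness collapses the cohomology to a single degree $r=d$ where $\coH^d_\dR(U,f)$ and $\coH^d_{\dR,\rc}(U,f)$ coincide, so that duality becomes a self-duality of a single vector space.

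First, for the containment in $[0,d]$: by \eqref{eq:spFirr} we have $\nu_\beta=\rk\gr^{d-\beta}_{F_\irr}\coH^d_\dR(U,f)$. Applying the vanishing statement recalled in Remark \ref{rem:FirrQ}, which asserts $\gr^\sfp_{F_\irr}\coH^r_\dR(U,f)=0$ for $\sfp<0$ (via Yu's complex), we obtain $\nu_\beta=0$ for $\beta>d$. This handles one half of the containment.

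Next, for the symmetry around $d/2$: apply Theorem \ref{th:yuduality} with $r=d$. Since $f$ is tame, the hypercohomology vanishes outside degree $d$, and the natural forgetting-supports morphism $\coH^d_{\dR,\rc}(U,\pm f)\to\coH^d_\dR(U,\pm f)$ is an isomorphism (as recalled at the beginning of Section \ref{sec:tameness} using Theorem \ref{th:tame} and the comparison Theorem \ref{th:isoBdR}). Yu's duality therefore gives, for each $\sfp$, a nondegenerate pairing
\[
\gr^\sfp_{F_\irr}\coH^d_\dR(U,f)\otimes\gr^{d-\sfp}_{F_\irr}\coH^d_\dR(U,-f)\csto\CC,
\]
so the dimensions on the two sides are equal. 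Translating back via \eqref{eq:spFirr} applied to both $f$ and $-f$, this reads $\nu_\beta(G_0(U,f))=\nu_{d-\beta}(G_0(U,-f))$. Finally, one invokes the observation made just before the corollary that the spectrum of $\csov G_0(U,f)=\iota^*G_0(U,f)$ coincides with that of $G_0(U,f)$ (pullback by $u\mto-u$ respects the defining $V_\bbullet\cap u^\bbullet G_0$ stratification), identifying $\nu_\beta(G_0(U,-f))=\nu_\beta(G_0(U,f))$. Combining yields $\nu_\beta=\nu_{d-\beta}$, and symmetry together with the first step forces $\nu_\beta=0$ also for $\beta<0$, completing the containment in $[0,d]$.

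The main obstacle is verifying carefully the compatibility asserted in the last step: that the $\CC[u]$-module pullback $\iota^*G_0(U,f)$ indeed has the same spectrum as $G_0(U,f)$, and that under this pullback the Deligne canonical lattice $V_\alpha$ is sent to $V_\alpha$ (up to the obvious renumbering by the eigenvalues of the residue of $\nabla$, which for the fractional part $\alpha\in[0,1)$ is unchanged by $u\mto-u$). Once this bookkeeping is done, the rest of the argument is a direct chain of identifications.
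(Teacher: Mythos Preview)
Your proposal is correct and follows essentially the same approach as the paper: the paper invokes Yu's duality (Theorem \ref{th:yuduality}) together with the identification \eqref{eq:spFirr} for the symmetry, and the remark after Theorem \ref{th:yuduality} (which combines Remark \ref{rem:FirrQ} with duality to get $\gr^\sfp_{F_\irr}=0$ for $\sfp\notin[0,d]$) for the containment. Your version makes the tameness input ($\coH^d_{\dR,\rc}=\coH^d_\dR$) and the role of the $\iota$-pullback identification $G_0(U,-f)\simeq\csov G_0(U,f)$ more explicit, and derives the lower bound $\beta\geqslant0$ from symmetry rather than directly from the dual vanishing---but these are minor rearrangements of the same ingredients.
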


\begin{proof}
The first assertion follows from the behavior of spectrum with respect to duality (\cf\eg\cite[Prop.\,3.3]{Bibi97b}) or also from the behavior of the irregular Hodge filtration with respect to duality provided by Theorem \ref{th:yuduality} and the identification \eqref{eq:spFirr}. The second assertion follows similarly from the remark after Theorem \ref{th:yuduality} (another argument has been given in \cite{Bibi97b}).
\end{proof}

\subsection{An example: Laurent polynomials}\label{subsec:Laurent}
In this section, we consider the case where the affine variety $U$ is the torus $(\Gm)^d$. If~we choose coordinates $x_1,\dots,x_d$, then~$f$ is a Laurent polynomial in $\CC[x_1^{\pm1},\dots,x_d^{\pm1}]$. Let us write $f=\sum_{m\in\ZZ^d}a_mx^m$. The \index{Newton!polyhedron}\index{polyhedron!Newton --}\emph{Newton polyhedron} $\Delta(f)$ is the convex hull in~$\RR^d$ of the set $\{0\}\cup\{m\in\ZZ^d\mid\nobreak a_m\neq\nobreak0\}$. We say that $f$ is \index{nondegenerate (Laurent) polynomial}\emph{nondegenerate with respect to $\Delta(f)$} if for each face $\sigma$ of $\Delta(f)$ not containing $0$, setting $f_\sigma=\sum_{m\in\sigma}a_mx^m$, the functions
\[
f_ \sigma,\frac{\partial f_ \sigma}{\partial x_1},\dots, \frac{\partial f_ \sigma}{\partial x_d}
\]
do not vanish simultaneously on $U$. We say that $f$ is \index{convenient (Laurent) polynomial}convenient if $0$ belongs to the interior of $\Delta(f)$. In such a case, an argument similar to that of Broughton \cite{Broughton88} shows that $f$ is M-tame, and therefore tame in the sense of Katz. It follows from \cite{Kouchnirenko76} that
\[
\mu(f)=\dim\coH^d(U,f;\QQ)=d!\mathrm{Vol}_d(\Delta(f)).
\]

\subsubsection*{The Newton filtration and the Brieskorn lattice}

Let us assume that $f$ convenient and nondegenerate with respect to $\Delta(f)$. For each face $\sigma$ of dimension $d-1$ of the boundary $\partial\Delta(f)$, we denote by $L_\sigma$ the linear form with coefficients in~$\QQ$ such that $L_\sigma\equiv1$ on $\sigma$. For $g\in\CC[x,x^{-1}]$, we set $\deg_\sigma(g)=\max_mL_\sigma(m)$, where the max is taken on the exponents $m$ of monomials appearing in $g$, and $\deg(g)=\max_\sigma\deg_\sigma(g)$.

\begin{remark}\label{rem:Newton}\mbox{}
\begin{enumerate}
\item\label{rem:Newton2}
For $g,h\in\CC[x,x^{-1}]$, we have $\deg(gh)\leqslant\deg(g)+\deg(h)$ with equality if and only if there exists a face $\sigma$ such that $\deg(g)=\deg_\sigma(g)$ and $\deg(h)=\deg_\sigma(h)$.
\item\label{rem:Newton3}
As $0$ belongs to the interior of $\Delta(f)$, we have $\deg(g)\geqslant0$ for any $g\in\CC[x,x^{-1}]$ and $\deg(g)=0$ if and only if $g\in\CC$. This would not remain true without this assumption of convenience.
\item
Let us set $\sfrac{\rd x}{x}=\Psfrac{\rd x_1}{x_1}\wedge\cdots\wedge\Psfrac{\rd x_d}{x_d}$. If $\omega\in\Omega^d(U)$, we write $\omega=g\rd x/x$ and we define $\delta(\omega)$ to be $\delta(g)$.
\end{enumerate}
\end{remark}

The increasing \index{filtration!Newton}\index{Newton!filtration}\emph{Newton filtration} $\cN_\bbullet\Omega^d(U)$ indexed by $\QQ$ is defined by\vspace*{-3pt}
\[
\cN_\beta\Omega^d(U):=\{g\rd x/x\in\Omega^d(U)\mid\deg(g)\leqslant\beta\}.
\]
The previous remark shows that $\cN_\beta\Omega^d(U)\!=\!0$ for $\beta\!<\!0$ and \hbox{$\cN_0\Omega^d(U)\!=\!\CC \rd x/x$}. We~extend this filtration to $\Omega^d(U)[u]$ by setting\vspace*{-3pt}
\[
\cN_\beta\Omega^d(U)[u]:=\cN_\beta\Omega^d(U)+u\cN_{\beta-1}\Omega^d(U)+\cdots+u^k\cN_{\beta-k}\Omega^d(U)+\cdots
\]
and we induce it on the \index{lattice!Brieskorn --}Brieskorn lattice $G_0(U,f)$:

\begin{definition}
The Newton filtration of the Brieskorn lattice is defined by
\begin{align*}
\cN_\beta G_0&= \image\bigl[\cN_\beta\Omega^d(U)[u]\hookrightarrow\Omega^d(U)[u] \rightarrow G_0\bigr]\\
&=\cN_\beta\Omega^d(U)[u]\big/\bigl((u\rd+\rd f)\Omega^{d-1}(U)[u]\cap \cN_\beta\Omega^d(U)[u]\bigr).
\end{align*}
\end{definition}

Recall the family $(V_\beta(U,f/u))_{\beta\in-\cA+\ZZ}$ of $\CC[u^{-1}]$-submodules of $\coH^d_\dR(U,f)$ such that the residue of the connection at the origin has eigenvalues in $[-\beta,-\beta+1)$ (\cf Section~\ref{subsec:irrHodge}). The following theorem allows a combinatorial computation of the spectrum of~$f$ at infinity.

\begin{theorem}[{\cite[Th.\,4.5]{D-S02a}}]\label{th:Newton}
Assume that $f$ is convenient and nondegenerate with respect to its Newton polyhedron. Then the Newton filtration $\cN_\bbullet G_0$ of the Brieskorn lattice coincides with the filtration $V_\bbullet(U,f/u)\cap G_0$.
\end{theorem}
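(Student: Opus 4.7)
The strategy is to extend the Newton filtration from $G_0$ to the Gauss-Manin module $G(U,f/u) = G_0[u^{-1}]$ by
\[
\cN_\beta G(U,f/u) := \textstyle\bigcup_{k \geq 0} v^k \cdot \cN_{\beta+k} G_0, \qquad v := u^{-1},
\]
and verify that this extension satisfies the characterizing axioms of the Kashiwara-Malgrange $V$-filtration at $v = 0$; the theorem then follows by uniqueness and intersection with $G_0$. The ``formal'' properties---increasing, exhaustive, indexed by a finite union of cosets of $\ZZ$ in $\QQ$, stable under $\CC[v]$ with $v \cdot \cN_\beta \subseteq \cN_{\beta-1}$, with equality for $\beta$ sufficiently negative---follow from convenience (which forces $\cN_\beta G_0 = 0$ for $\beta<0$ and $\cN_0 G_0 = \CC \cdot \rd x/x$) and the triangular behavior of $\delta$ under multiplication (Remark \ref{rem:Newton}). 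The substantive point is that $v\partial_v + \beta$ must act nilpotently on each $\gr^\cN_\beta$.

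The mechanism producing this nilpotency is an Euler-type identity in $G_0$. For each facet $\sigma$ of $\Delta(f)$ not containing $0$, let $\xi_\sigma := \sum_i L_\sigma(e_i) x_i \partial_{x_i}$. Using $\mathcal{L}_{\xi_\sigma}(\rd x/x) = 0$ (as $\mathcal{L}_{x_i \partial_{x_i}}(\rd x_j/x_j) = 0$ for all $i,j$) and the identity $\rd f \wedge \iota_{\xi_\sigma}(\rd x/x) = \xi_\sigma(f)\,\rd x/x$ (valid since $\rd f \wedge \rd x/x = 0$), a direct expansion gives
\[
(u\rd + \rd f)\bigl(x^m\, \iota_{\xi_\sigma}(\rd x/x)\bigr) = \beta u \cdot x^m \tfrac{\rd x}{x} + \xi_\sigma(f) \cdot x^m \tfrac{\rd x}{x}
\]
for any monomial with $L_\sigma(m) = \beta$. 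In $G_0$ this reads $\beta u \cdot x^m \rd x/x \equiv -\xi_\sigma(f) x^m \rd x/x$, and after multiplication by $v$ combined with the connection formula $v\partial_v [\omega] = fv[\omega]$ valid for $\omega \in \Omega^d(U) \subset G(U,f/u)$, one obtains
\[
(v\partial_v + \beta)\,[x^m \tfrac{\rd x}{x}] \equiv v \cdot (f - \xi_\sigma(f)) \cdot [x^m \tfrac{\rd x}{x}] \pmod{(u\rd+\rd f)\Omega^{d-1}(U)[u]}.
\]
Since $f - \xi_\sigma(f) = (f - f_\sigma) - \xi_\sigma(f - f_\sigma)$ has support strictly below the hyperplane $\{L_\sigma = 1\}$, the remainder $v \cdot (f - \xi_\sigma(f))\, x^m \rd x/x$ has net Newton degree $< \beta$ whenever $m$ lies in the interior of $\sigma$; for $m$ on the boundary between several facets, one applies the identity simultaneously for each facet achieving $\delta(x^m)$ and uses nondegeneracy of $f$ on each to rewrite the residual top-degree term via the facet Jacobian ideal $(x_1 \partial_1 f_\sigma, \ldots, x_d \partial_d f_\sigma)$.

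Finally, Kouchnirenko's dimension formula $\mu(f) = d!\,\mathrm{Vol}_d(\Delta(f)) = \dim G_0/uG_0$, together with an explicit monomial basis of the quotient by the torus Jacobian ideal $(x_1 \partial_1 f, \ldots, x_d \partial_d f)$, shows that the induced filtration on $G_0/uG_0$ already attains total graded dimension $\mu(f)$, so it cannot be strictly smaller than $V_\bullet(U,f/u) \cap G_0 \bmod u$. Uniqueness of the Kashiwara-Malgrange $V$-filtration at $v = 0$ then forces $\cN_\bullet G(U,f/u) = V_\bullet(U,f/u)$, and intersecting with $G_0$ gives the theorem. The principal obstacle lies in the boundary case of the Euler identity: when a monomial $x^m$ attains $\delta(x^m) = \beta$ on several facets at once, the individual corrections interact, and controlling the cumulative error requires a facet-by-facet reduction using nondegeneracy---exactly parallel to Kouchnirenko's own argument for his dimension formula. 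This, together with the simultaneous induction on $\beta$ and on the $u$-degree needed to pass from single monomials to arbitrary elements of $\cN_\beta G_0$, is where both convenience (forcing $\delta \geq 0$, so the induction terminates) and facet-wise nondegeneracy (to absorb the leading Jacobian obstruction) are genuinely used.
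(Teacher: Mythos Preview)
Your strategy is that of the cited source \cite{D-S02a}: extend the Newton filtration to all of $G(U,f/u)$, verify the axioms of the Kashiwara--Malgrange $V$-filtration at $v=0$, and conclude by uniqueness. The Euler identity you derive is correct and is indeed the key computational input; your treatment of a monomial whose Newton degree is attained on a single facet $\sigma$ is fine, and the connection formula $v\partial_v[\omega]=fv[\omega]$ for $\omega\in\Omega^d(U)$ is the right one in the $G_0$-presentation.

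Two points deserve tightening. First, your final paragraph conflates two distinct routes. If you verify \emph{all} the $V$-filtration axioms for $\cN_\bbullet G$---in particular that each $\cN_\beta G$ is a coherent $\CC[v]$-submodule, which you do not address---then uniqueness finishes the proof and the dimension count is superfluous. If instead you only establish nilpotency of $v\partial_v+\beta$ on $\gr^{\cN}_\beta$, this gives $\cN_\beta G\subset V_\beta$; a separate argument is then needed for the reverse inclusion, and a graded-dimension comparison on $G_0/uG_0$ can supply it, but you should say explicitly which route you are taking rather than invoking both.

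Second, the step you correctly flag as the principal obstacle---monomials $x^m$ whose degree $\beta$ is attained on several facets simultaneously---is not actually handled by your sketch. Combining the Euler identities for the relevant facets leaves a residual top-degree term governed by the ideal generated by the $\xi_\sigma(f_\tau)$ for the various faces involved, and controlling it requires the nondegeneracy hypothesis on \emph{every} face of $\partial\Delta(f)$ (not only the facets) together with the Newton-filtered Koszul-type complex of Kouchnirenko. In \cite{D-S02a} this is organized not monomial by monomial but via that filtered complex; your outline names the right ingredients but does not yet carry out the reduction.
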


If we define the \index{Newton!spectrum}\emph{Newton spectrum} in a way similar to that of Definition \ref{def:spectrum} by using the Brieskorn lattice and the Newton filtration, we conclude that the Newton spectrum satisfies the symmetry and bound properties like in Corollary \ref{cor:symspectrum}, a~property that is far from obvious a priori.

\begin{remark}
In \cite{Douai20}, one finds various combinatorial properties of the Hodge numbers and the spectrum attached to a convenient nondegenerate Laurent polynomial.
\end{remark}

\subsubsection*{Application to a conjecture of Katzarkov-Kontsevich-Pantev \texorpdfstring{\cite{K-K-P14}}{cite}}

(See also \cite{Shamoto17, Harder17, Przhiyalkovski18}.)
Let $\Delta\subset\RR^d$ be a simplicial polyhedron with vertices in $\ZZ^d$, having $0$ in its interior, and which is \index{polyhedron!reflexive --}\emph{reflexive} in the sense of \cite[\S4.1]{Batyrev94}, that is, any linear form $L_\sigma$ considered above has coefficients in $\ZZ$ which are coprime. It is known (\cf\loccit) that $0$ is then the only integral point in the interior of $\Delta$ and that the dual polyhedron $\Delta^\csvee=\{\ell\in(\RR^d)^\csvee\mid\ell(m)\geqslant-1,\,\forall m\in\Delta\}$ is also reflexive. Let~$\Sigma$ be the cone on $\Delta^\csvee$ with apex $0$, which is also the dual fan of $\Delta$. It determines a toric variety $X$. Let us assume that $X$ is non singular (\ie each $(d-1)$-dimensional face of $\Delta^\csvee$ is a simplex, the vertices of which form a basis of $\ZZ^d$.

Let $V(\Delta^\csvee)$ be the set of vertices of $\Delta^\csvee$ and, for $\bma=(a_v)_{v\in V(\Delta^\csvee)}\in(\CC^*)^{V(\Delta^\csvee)}$, let us consider the Laurent polynomial $f_{\bma}=\sum_{v\in V(\Delta^\csvee)}a_vx^v\in\CC[x_1^{\pm1},\cdots,x_d^{\pm1}]$, that we consider as a regular function on $U=(\Gm)^d$. We have $\Delta(f)=\Delta^\csvee$ and $f_{\bma}$ is convenient and nondegenerate. Reflexivity implies that the jumps of the Newton filtration $\cN_\bbullet\Omega^d(U)$ are integer, hence so are the spectral numbers and the irregular Hodge numbers, as well as the jumps of the $V$-filtration $V_\bbullet(U,f_{\bma})\subset\coH^d_\dR(U,f_{\bma}/u)$ considered in Section \ref{subsec:irrHodge}. One also interprets this integrality property as the unipotency of the monodromy operator on the cohomology operator on $\coH^d(U,f_{\bma}^{-1}(t);\QQ)$ with respect to $t\mto\rme^{\sfi\theta}t$ for $t\gg0$ and $\rme^{\sfi\theta}\in S^1$. We denote by $W_\bbullet\coH^d(U,f_{\bma}^{-1}(t);\QQ)$ the monodromy weight filtration centered at $d$ associated to the nilpotent part of the monodromy (\cf\cite[p.\,661]{Steenbrink22}).\footnote{It should not be confused with the weight filtration in Corollary \ref{cor:weights}, from which it is in fact a limit.}

A toric special case of a conjecture of Katzarkov-Kontsevich-Pantev \cite{K-K-P14} (still not solved in general, \cf \cite{L-P16} for an overview) asserts:\vspace*{-3pt}
\begin{equation}\label{eq:KKP}
\forall p\geqslant0,\quad\dim\gr^p_{F_\irr}\coH^d(U,f_{\bma})=\gr_{2p}^W\coH^d(U,f_{\bma}^{-1}(t);\QQ).
\end{equation}

\begin{theorem}[\cf\cite{Bibi17}]
The previous equality holds for any $\bma\in(\CC^*)^{V(\Delta^\csvee)}$.
\end{theorem}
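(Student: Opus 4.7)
The strategy is to compute both sides of \eqref{eq:KKP} as ranks attached to the Kontsevich bundle $\cK^d_0$ and to its limit behavior at $v=0$, and then reduce to a Hodge-Tate statement for the limiting mixed Hodge structure on $\coH^d(U,f_{\bma}^{-1}(t);\CC)$, which is in turn established by the toric/combinatorial input coming from reflexivity.

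First I would handle the left-hand side. Since $\Delta^\csvee$ is reflexive, each linear form $L_\sigma$ on a codimension-one face $\sigma\not\ni 0$ has integer coefficients, so the Newton degree $\deg$ of Section~\ref{subsec:Laurent} takes integer values on $\ZZ^d$. Combined with Theorem~\ref{th:Newton}, this forces the $V$-jumps of the Brieskorn lattice $G_0(U,f_{\bma})$ to lie in $\ZZ$; equivalently, the finite set $\cA\subset\QQ\cap[0,1)$ of Section~\ref{subsec:IrrMHS} reduces to $\{0\}$, which is another way of stating the unipotency of monodromy around $t=\infty$. Proposition~\ref{prop:spectrumFirr}, together with the degree shift $G_0=u^dG_d$, then yields
\[
\dim\gr^p_{F_\irr}\coH^d_\dR(U,f_{\bma})=\nu_{d-p},
\]
where $\nu_\beta$ is read off combinatorially from $\cN_\bbullet\Omega^d(U)$ modulo relations coming from $\rd f_{\bma}$.

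Next I would pass to the limit at $v=0$. Proposition~\ref{prop:grFirr} shows that $F_\irr^\cbbullet\coH^d_\dR(U,f_{\bma}/u)$ is a filtration by $\CC[u,u^{-1}]$-subbundles of a trivial bundle, and it extends across $v=0$ as a filtration by subbundles of $\cK^d_0$ (the Harder-Narasimhan filtration, by Proposition~\ref{prop:HNK}). The rank of each graded piece is therefore independent of the fiber, and by the limit interpretation of Section~\ref{subsec:irrHodge}---valid for the full cohomology because $\cA=\{0\}$, so that there is no need to isolate a generalized eigenspace---the rank at $v=0$ coincides with $\dim\gr^p_{F_{\lim}}\coH^d(U,f_{\bma}^{-1}(t);\CC)$ of the limiting mixed Hodge structure. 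Thus \eqref{eq:KKP} is equivalent to the Hodge-Tate statement $\dim\gr^p_{F_{\lim}}=\dim\gr^W_{2p}$ for this limit MHS. To establish Hodge-Tateness I would use a lattice-point description of the graded pieces $\gr^p_\cN\Omega^d(U)$ of the Jacobian-like module $\Omega^d(U)/(\rd+\rd f_{\bma})\Omega^{d-1}(U)$: each $\gr^p_\cN$ admits a basis indexed by lattice points $m\in\ZZ^d$ with $\deg(m)=p$, which is precisely the combinatorial input underlying the Batyrev-Borisov-Danilov-Khovanskii description of the mixed Hodge structure of toric hypersurfaces. Matching this with the Clemens-Schmid-type description of the limit of $\coH^d(U,f_{\bma}^{-1}(t);\CC)$, where semisimplicity of the underlying local system (Corollary~\ref{cor:semisimple}) and purity of $\pR^0 f_{\bma{!}{*}}\pQQ_U$ give control on the graded pieces, identifies each $\gr^W_{2p}$ with a Tate Hodge structure of type $(p,p)$ of dimension $\nu_{d-p}$, while $\gr^W_\ell=0$ for $\ell$ odd.

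The main obstacle lies in this last step. While integrality of the spectrum (a direct consequence of reflexivity) immediately forces the limit Hodge filtration to jump only at integers, it does not by itself force the Hodge-Tate concentration, i.e.\ the vanishing of the Hodge numbers $h^{a,b}_{2p}$ for $a\neq b$. Pinning down this bidegree information cannot be done by $\cD$-module or differential-equation arguments alone: it requires the combinatorial input that classes carried by monomials $x^m$ with $\deg(m)=p$ contribute only to the $(p,p)$ part, which is essentially a translation of the Batyrev-Borisov stringy Hodge number formula for Calabi-Yau hypersurfaces in the dual toric variety. Once that combinatorial dictionary is in place, the palindromic symmetry of the spectrum (Corollary~\ref{cor:symspectrum}) matches the Hard Lefschetz symmetry of the weight filtration, and the comparison \eqref{eq:KKP} follows on the nose.
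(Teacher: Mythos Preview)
Your reduction of \eqref{eq:KKP} to the Hodge-Tate property of the limiting mixed Hodge structure at $t=\infty$ is correct: since $\cA=\{0\}$ by reflexivity, the limit of $F_\irr^\cbbullet$ at $v=0$ is the whole limiting Hodge filtration, and $\dim\gr^p_{F_{\lim}}=\dim\gr^W_{2p}$ for all $p$ is exactly Hodge-Tateness. But the proposal stops at precisely the point that carries all the content. Your last two paragraphs do not prove Hodge-Tateness; they list ingredients (Batyrev--Borisov--Danilov--Khovanskii, Clemens--Schmid, Corollary~\ref{cor:semisimple}) without saying how they assemble. In particular, the semisimplicity of $\pR^0f_{!*}\pQQ_U$ and the palindromic symmetry of the spectrum give no information on the off-diagonal Hodge numbers $h^{a,b}_{2p}$ with $a\neq b$, and the Danilov--Khovanskii formulas describe the MHS of a \emph{fixed} smooth hypersurface in a toric variety, not the limit at $t=\infty$ of the pencil $f_{\bma}^{-1}(t)$; a bridge is needed and you do not build one. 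As written, the argument is a reformulation of the statement rather than a proof.

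The paper proceeds quite differently and avoids proving Hodge-Tateness directly. It first shows that both sides of \eqref{eq:KKP} are independent of $\bma\in(\CC^*)^{V(\Delta^\csvee)}$: the left-hand side via semicontinuity of the spectrum at infinity together with constancy of $\mu(f_{\bma})$ (Kouchnirenko), the right-hand side because the family $\coH^d_\dR(U,f_{\bma}/u)$ is a bundle with a flat connection having a regular singularity along $v=0$, so the conjugacy class of the monodromy, hence its weight filtration, is locally constant in $\bma$. One may then specialize to $\bma=\mathbf{1}$. There the Borisov--Chen--Smith formula identifies the toric Chow ring of the smooth Fano $X$ with $\gr^\cN_\bbullet(\QQ[x,x^{-1}]/(\partial f))$, and under this identification $f=c_1(TX)$. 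A Varchenko-type argument identifies multiplication by $f$ on this graded ring with the nilpotent part of the monodromy on $\coH^d_\dR(U,f)$. Hard Lefschetz for cup product with $c_1(TX)$ (anti-ampleness of $K_X$) then says that the Newton grading is exactly the monodromy weight grading centered at $d$, which is \eqref{eq:KKP}. The decisive input is thus Hard Lefschetz on the Chow ring at a single special parameter, not a general Hodge-Tate statement for the limit MHS.
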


\begin{proof}[Sketch]
We only indicate the main ideas of the proof, showing the interplay between algebraic geometry and singularity theory.
\begin{enumerate}
\item
From Kouchnirenko's formula, $\mu(f_{\bma})$ is independent of $\bma$.
\item
A first step is to show that, separately, the left-hand side and the right-hand side of \eqref{eq:KKP} are independent of $\bma$ as long as no entry of $\bma$ vanishes. For the left-hand side, according to \eqref{eq:spFirr}, one can argue with the spectrum, and one shows that the spectrum at infinity of $f_{\bma}$ is independent of $\bma$, due to the semi-continuity property of \cite{N-S97} and the constancy of $\mu(f_{\bma})$. For the right-hand side, one shows that the family $\coH^d_{\dR}(U,f_{\bma}/u)$ is an algebraic vector bundle on $\Gm\times(\Gm)^{V(\Delta^\csvee)}$ with a flat connection having regular singularities along $\{0\}\times(\Gm)^{V(\Delta^\csvee)}$.
\item
The second step is to prove the result when $a_v=1$ for all $v\in V(\Delta^*)$. We set $f=f_\mathbf{1}$. One knows from \cite{Batyrev94} that $X$ is a smooth Fano projective variety (\ie the canonical bundle $K_X$ is anti-ample), and (\cf\cite{Fulton93}), one can interpret $f$ as the first Chern class $c_1(TX)$ of $X$. In particular, by anti-ampleness of $K_X$, the cup product with $c_1(TX)$ satisfies the Hard Lefschetz property on the $\QQ$-Chow ring of $X$. Due to a formula of Borisov-Chen-Smith \cite{B-C-S05}, this Chow ring is identified with the graded ring with respect to the Newton filtration:
\[
\gr_\bbullet^\cN(\QQ[x,x^{-1}]/(\partial f).
\]
Then one identifies multiplication by $c_1(TX)=f$ on this ring tensored by $\CC$ with the action of the monodromy on $\coH^d_\dR(U,f)$, by an argument similar to that used by Varchenko \cite{Varchenko81}. The Hard Lefschetz property implies then the desired equality~\eqref{eq:KKP}.
\end{enumerate}
\end{proof}

\begin{acknowledgement}
The author wishes to thank José Seade for inviting them to contribute to this volume and the anonymous referee for his careful reading of the manuscript and for his comments.
\end{acknowledgement}

\providecommand{\sortnoop}[1]{}\providecommand{\eprint}[1]{\href{http://arxiv.org/abs/#1}{\texttt{arXiv\string:\allowbreak#1}}}

\begin{theindex}
\mtaddtocont{\protect\contentsline{mtchap}{\indexname}{\thepage}\hyperhrefextend}

  \item bundle
    \subitem Brieskorn-Deligne --, \hyperpage{48}
    \subitem Kontsevich --, \hyperpage{47, 48}

  \indexspace

  \item cohomology
    \subitem algebraic de~Rham --, \hyperpage{21}
    \subitem de~Rham -- with compact support, \hyperpage{22}
  \item complex
    \subitem algebraic de~Rham --, \hyperpage{21}
    \subitem algebraically constructible --, \hyperpage{6}
    \subitem Kontsevich --, \hyperpage{22}
    \subitem logarithmic de~Rham --, \hyperpage{21}
    \subitem of nearby cycles, \hyperpage{7}
    \subitem of vanishing cycles, \hyperpage{7}
    \subitem perverse --, \hyperpage{6}
    \subitem strongly perverse --, \hyperpage{7}, \hyperpage{33}
    \subitem twisted algebraic de~Rham --, \hyperpage{21}
    \subitem twisted algebraic de~Rham -- with parameter, 
		\hyperpage{41}
    \subitem twisted meromorphic de~Rham --, \hyperpage{25}
  \item convenient (Laurent) polynomial, \hyperpage{53}, \hyperpage{58}

  \indexspace

  \item duality
    \subitem de~Rham --, \hyperpage{22}
    \subitem Poincaré-Verdier --, \hyperpage{6}

  \indexspace

  \item exponential growth, \hyperpage{15}

  \indexspace

  \item filtration
    \subitem Harder-Narasimhan --, \hyperpage{48}
    \subitem Hodge --, \hyperpage{24}
    \subitem indexed by $\QQ$, \hyperpage{25}
    \subitem indexed by $C$, \hyperpage{30}, \hyperpage{34}
    \subitem irregular Hodge --, \hyperpage{25}, \hyperpage{51}
    \subitem limiting Hodge --, \hyperpage{50}
    \subitem Newton, \hyperpage{59}
    \subitem opposite --, \hyperpage{36}
    \subitem Stokes --, \hyperpage{30}, \hyperpage{33, 34}
    \subitem weight --, \hyperpage{3}, \hyperpage{19, 20}, 
		\hyperpage{27}, \hyperpage{51}

  \indexspace

  \item Hodge
    \subitem exponential mixed -- structure, \hyperpage{51}
    \subitem filtration, \hyperpage{24}
    \subitem irregular -- numbers, \hyperpage{27}
    \subitem irregular -- structure, \hyperpage{4}, \hyperpage{10}
    \subitem mixed -- module, \hyperpage{51}
    \subitem mixed -- structure, \hyperpage{3}, \hyperpage{9}, 
		\hyperpage{24}, \hyperpage{50}
    \subitem numbers, \hyperpage{27}
    \subitem Tate-twisted -- structure, \hyperpage{27}

  \indexspace

  \item lattice
    \subitem Brieskorn --, \hyperpage{3}, \hyperpage{44}, 
		\hyperpage{48}, \hyperpage{57}, \hyperpage{59}
    \subitem Deligne canonical --, \hyperpage{48}
  \item Laurent polynomial, \hyperpage{52}

  \indexspace

  \item Milnor
    \subitem ball, \hyperpage{2, 3}, \hyperpage{17}
    \subitem motivic -- fiber, \hyperpage{51}
  \item moderate growth, \hyperpage{15}

  \indexspace

  \item Newton
    \subitem filtration, \hyperpage{59}
    \subitem polyhedron, \hyperpage{52}, \hyperpage{58}
    \subitem spectrum, \hyperpage{59}
  \item nondegenerate (Laurent) polynomial, \hyperpage{53}, 
		\hyperpage{58}

  \indexspace

  \item pairing
    \subitem de~Rham --, \hyperpage{22}, \hyperpage{26}
    \subitem intersection --, \hyperpage{21}
    \subitem of perverse sheaves, \hyperpage{40}
    \subitem of Stokes-filtered local systems, \hyperpage{38}
    \subitem period --, \hyperpage{21}
    \subitem Poincaré --, \hyperpage{18}
    \subitem Seifert --, \hyperpage{39}
  \item period, \hyperpage{10}
  \item periodic cyclic homology, \hyperpage{52}
  \item polyhedron
    \subitem Newton --, \hyperpage{52}, \hyperpage{58}
    \subitem reflexive --, \hyperpage{59}
  \item projectivization, \hyperpage{5}
  \item pushforward, \hyperpage{6}
    \subitem with proper support, \hyperpage{6}

  \indexspace

  \item rapid decay, \hyperpage{15}
  \item real oriented blow-up, \hyperpage{12}, \hyperpage{14}

  \indexspace

  \item set
    \subitem bifurcation --, \hyperpage{5}
    \subitem critical --, \hyperpage{4}
    \subitem of atypical critical values, \hyperpage{5}
    \subitem of critical values, \hyperpage{4}
  \item sheaf
    \subitem locally constant --, \hyperpage{6}, \hyperpage{8}, 
		\hyperpage{11}, \hyperpage{29}, \hyperpage{31}, 
		\hyperpage{33, 34}, \hyperpage{42}, \hyperpage{56}
    \subitem of global vanishing cycles, \hyperpage{45}
    \subitem perverse --, \hyperpage{6, 7}, \hyperpage{13}, 
		\hyperpage{51}
    \subitem perverse cohomology --, \hyperpage{9}, \hyperpage{11}, 
		\hyperpage{13, 14}
  \item spectrum at infinity, \hyperpage{50}, \hyperpage{52}, 
		\hyperpage{57}
  \item Stokes
    \subitem -filtered local system, \hyperpage{34}
    \subitem arguments, \hyperpage{33}
    \subitem data, \hyperpage{36}
    \subitem filtration, \hyperpage{30}, \hyperpage{33, 34}
    \subitem graded -- -filtered local system, \hyperpage{33}

  \indexspace

  \item t-structure, \hyperpage{6}
  \item tameness, \hyperpage{5}, \hyperpage{53}
  \item Tannakian category, \hyperpage{51}
  \item Thom-Sebastiani, \hyperpage{26}, \hyperpage{51}

  \indexspace

  \item vanishing cycles, \hyperpage{3}
    \subitem at infinity, \hyperpage{17}, \hyperpage{29}, 
		\hyperpage{54}
    \subitem functor, \hyperpage{7--9}, \hyperpage{18}
    \subitem global --, \hyperpage{6}, \hyperpage{17}, \hyperpage{29}, 
		\hyperpage{45}, \hyperpage{56}

\end{theindex}
\end{document}